\font\got=eufm10 at 11pt
\font\posebni=msam10
\renewcommand{\Re}[0]{{\rm Re}\,}
\renewcommand{\Im}[0]{{\rm Im}\,}
\newcommand{\C}[0]{{\mathbb C}}
\newcommand{\f}[0]{\varphi}
\newcommand{\N}[0]{{\mathbb N}}
\newcommand{\R}[0]{\mathbb{R}}
\newcommand{\BS}[0]{\mathbb{S}}
\newcommand{\leqsim}[0]{\,\text{\posebni \char46}\,}
\newcommand{\geqsim}[0]{\,\text{\posebni \char38}\,}
\newcommand{\cA}[0]{{\mathcal A}}
\newcommand{\cB}[0]{{\mathcal B}}
\newcommand{\cE}[0]{{\mathcal E}}
\newcommand{\cH}[0]{{\mathcal H}}
\newcommand{\cI}[0]{{\mathcal I}}
\newcommand{\cJ}[0]{{\mathcal J}}
\newcommand{\cM}[0]{{\mathcal M}}
\newcommand{\cO}[0]{{\mathcal O}}
\newcommand{\cQ}[0]{{\mathcal Q}}
\newcommand{\cR}[0]{{\mathcal R}}
\newcommand{\cS}[0]{{\mathcal S}}
\newcommand{\cV}[0]{{\mathcal V}}
\newcommand{\cW}[0]{{\mathcal W}}
\newcommand{\oA}[0]{{\mathscr A}}
\newcommand{\oB}[0]{{\mathscr B}}
\newcommand{\oC}[0]{{\mathscr C}}
\newcommand{\oD}[0]{{\mathscr D}}
\newcommand{\oL}[0]{{\mathscr L}}
\newcommand{\oV}[0]{{\mathscr V}}
\newcommand{\oW}[0]{{\mathscr W}}
\newcommand{\ovR}[0]{\overline{\rm R}}
\newcommand{\gota}[0]{{\text{\got a}}}
\newcommand{\gotb}[0]{{\text{\got b}}}
\newcommand\bS{\mathbf{S}}
\newcommand\bP{\mathbf{P}}
\newcommand{\mn}[2]{\{ #1 : #2 \}}
\newcommand{\Mn}[2]{\left\{ #1 : #2 \right\}}
\newcommand{\sk}[2]{\left\langle #1 , #2\right\rangle}
\renewcommand{\div}[0]{{\rm div}\,}
\newcommand{\Dom}[0]{{\rm D}}
\newtheorem{theorem}{Theorem}
 \newtheorem{defi}[theorem]{Definition}
\newtheorem{lemma}[theorem]{Lemma}
\newtheorem{proposition}[theorem]{Proposition}
\newtheorem{corollary}[theorem]{Corollary}
\theoremstyle{definition}
\renewcommand\leq[0]{\leqslant}
\renewcommand\geq[0]{\geqslant}
\renewcommand\epsilon[0]{\varepsilon}
\renewcommand\theta[0]{\vartheta}
\newcommand\wrt{\,\text{\rm d}}
\renewcommand\mod[1]{\left\vert{#1}\right\vert}
\newcommand\norm[2]{{\left\Vert{#1}\right\Vert_{#2}}}
\newtheorem{preremark}[theorem]{Remark}  \newenvironment{remark}%
{\begin{preremark}\rm}{\end{preremark}}
\begin{document}

\bibliographystyle{plain}

\title[Bilinear embedding on domains]{Bilinear embedding for perturbed divergence-form operator with complex coefficients on irregular domains}
\author[Poggio]{Andrea Poggio}

\date{\today}



\address{Andrea Poggio \\Universit\`a degli Studi di Genova\\ Dipartimento di Matematica\\ Via Dodecaneso\\ 35 16146 Genova\\ Italy }
\email{poggio@dima.unige.it}

\begin{abstract}
Let $\Omega\subseteq\R^{d}$ be open, $A$ a complex uniformly strictly accretive $d\times d$ matrix-valued function on $\Omega$ with $L^{\infty}$ coefficients, $b$ and $c$ two $d$-dimensional vector-valued functions on $\Omega$ with $L^{\infty}$ coefficients and $V$ a locally integrable nonegative function on $\Omega$.   Consider the operator $\oL^{A,b,c,V}=-\div(A\nabla) + \sk{\nabla}{b} - \div(c \, \cdot) + V $ with mixed boundary conditions on $\Omega$. We extend the bilinear inequality that Carbonaro and Dragi\v{c}evi\'c proved in \cite{CD-Potentials} in the special cases when $b=c = 0$,  previously proved  in \cite{CD-Mixed} when $V=0$ as well. As a consequence, we obtain that the solution to the parabolic problem $u^{\prime}(t)+\oL^{A,b,c,V}u(t)=f(t)$, $u(0)=0$, has maximal regularity in $L^{p}(\Omega)$, for all $p>1$ such that $A$ satisfies the $p$-ellipticity condition that Carbonaro and Dragi\v{c}evi\'c introduced in \cite{CD-DivForm} and $b,c,V$ satisfy another condition that we introduce in this paper. Roughly speaking, $V$ has to be ``big'' with respect to $b$ and $c$.
We do not impose any conditions on $\Omega$,  in particular, we do not assume any regularity of $\partial\Omega$, nor the existence of a Sobolev embedding. 
\end{abstract}

\maketitle

\section{Introduction}
\label{s: Neumann introduction}
Let $\Omega\subseteq\R^{d}$ be a nonempty open set. Denote by $\cA(\Omega)$ the class of all complex uniformly strictly elliptic $d\times d$ matrix-valued functions on $\Omega$ with $L^{\infty}$ coefficients (in short, elliptic matrices). That is to say, $\cA(\Omega)$ is the class of all measurable $A:\Omega\rightarrow \C^{d\times d}$ for which there exist $\lambda$, $\Lambda>0$ such that for almost all $x\in\Omega$ we have
\begin{eqnarray*}
\label{eq: N ellipticity}
\Re\sk{A(x)\xi}{\xi}
&\hskip -19pt\geq \lambda|\xi|^2\,,
&\quad\forall\xi\in\C^{d};
\\
\label{eq: Neumann bounded}
\mod{\sk{A(x)\xi}{\sigma}}
&\hskip-6pt\leq \Lambda \mod{\xi}\mod{\sigma}\,,
&\quad\forall\xi,\sigma\in\C^{d}.
\end{eqnarray*}
We denote by $\lambda(A)$ and $\Lambda(A)$ the optimal $\lambda$ and $\Lambda$, respectively.

Suppose that $A\in\cA(\Omega)$. Let $b,\,c$ be $d$-dimensional vector-valued functions on $\Omega$ with $L^{\infty}$ coefficients and $V \in L^1_{\text{loc}}(\Omega)$ a nonnegative function. Fix a closed subspace $\oV$ of the Sobolev space $H^1(\Omega)$ containing $H^1_{0}(\Omega)$.  Consider the sesquilinear form $\gota = \gota_{A,b,c,V,\oV}$ defined by
\begin{eqnarray}
\label{e : form sesq}
\Dom(\gota)&\hskip -121pt=\Mn{u\in\oV}
 {\int_\Omega V|u|^2 < \infty}, \nonumber 
\\
\gota(u,v)&= \displaystyle \int_{\Omega}\sk{A\nabla u}{\nabla v}_{\C^{d}} + \sk{\nabla u}{b}_{\C^d}\overline{v} + u\sk{c}{\nabla v}_{\C^d} + V u \overline{v}.
\end{eqnarray}
Clearly, $\gota$ is densely defined in $L^2(\Omega)$. Using the abbreviation $\gota(u)=\gota(u,u)$ and the {\it ad hoc} notation $\xi = \nabla u / u$, we find that
\begin{eqnarray*}
\aligned
\Re \gota (u) &= \int_\Omega |u|^2 \Bigl[\Re\sk{A\xi}{\xi} + \Re \sk{b+c}{\xi} +  V \Bigr], \\
\Im \gota(u) &= \int_\Omega |u|^2 \Bigl[\Im\sk{A\xi}{\xi} + \Im \sk{b-c}{\xi} \Bigr].
\endaligned
\end{eqnarray*}
Consequently, if we suppose that for almost all $x \in \Omega$
\begin{equation*}
\label{eq: accr and cont}
\Re\sk{A(x)\xi}{\xi} + \Re \sk{b(x)+c(x)}{\xi} +  V(x) \geq 0,  \quad \forall\xi\in\C^d,
\end{equation*}
then $\gota$ is accretive. Moreover, assume that there exist $\mu \in (0,1]$ and $M >0$ such that for almost all $x\in\Omega$
\begin{eqnarray}\label{eq: sect form below}
&\Re\sk{A(x)\xi}{\xi} + \Re \sk{b(x)+c(x)}{\xi} + V(x) \geq \mu (|\xi|^2 +  V(x)),  \quad \forall\xi\in\C^d, \\
\label{eq: sect form above}
&  |b(x)-c(x)| \leq M\sqrt{V}.
\end{eqnarray}
Then, following Ouhabaz \cite[the proof of Proposition~4.30]{O}, we get that $\gota$ is also closed. We denote by $\cB_{\mu,M}(\Omega)$ the class of all $(A,b,c,V) \in \cA(\Omega) \times (L^\infty(\Omega,\C^d))^2 \times L_{\text{loc}}^1(\Omega, \R_+)$ for which \eqref{eq: sect form below} and \eqref{eq: sect form above} hold for some fixed $\mu,M >0$. Moreover, we set
$$
\cB(\Omega) = \bigcup_{\mu,M>0} \cB_{\mu,M}(\Omega).
$$
For $\oA=(A,b,c,V)$, we denote by $\mu(\oA)$ and $M(\oA)$ the optimal $\mu$ and $M$, respectively.

Given $\phi \in (0,\pi)$ define the sector
$$
\bS_{\phi}=\mn{z\in\C\setminus\{0\}}{|\arg (z)|<\phi}.
$$
Also set $\bS_0=(0,\infty)$.
Boundedness of $A$, \eqref{eq: sect form below} and \eqref{eq: sect form above} imply that $\gota$ is sectorial of some angle $\theta_0 =\theta_0(\mu,M,\Lambda) \in(0, \pi/2)$ in the sense of \cite{Kat}, meaning that its numerical range $\text{Nr}(\gota) = \{\gota(u): u\in \Dom(\gota), \, \|u\|_2=1\}$ satisfies
\begin{equation}
\label{eq: sect numer range}
\text{Nr}(\gota) \subseteq \overline{\bS}_{\theta_0}.
\end{equation}
Denote by $\oL=\oL^{A,b,c,V,\oV}_{2}$ the unbounded operator on $L^{2}(\Omega)$ associated with the sesquilinear form $\gota$.
That is,
 $$
 \Dom(\oL):=\Mn{u\in\Dom(\gota)}
 {\exists w\in L^2(\Omega):\ 
 \gota(u,v)=\sk{w}{v}_{L^2(\Omega)}\ \forall v\in \Dom(\gota)}
$$
and 
\begin{equation}
\label{eq: ibp}
\sk{\oL u}{v}_{L^2(\Omega)} = \gota(u,v), \quad \forall u\in\Dom(\oL),\quad \forall v\in\Dom(\gota)\,.
\end{equation}
Formally, $\oL$ is given by the expression
$$
 \oL u=-\div(A\nabla u) + \sk{\nabla u}{b} -\div(u c) + V u.
$$
It follows from \eqref{eq: sect numer range} that $-\oL$ is the generator of a strongly continuous semigroup on $L^{2}(\Omega)$
$$
T_t =T^{A,b,c,V,\oV}_{t}, \quad t>0,
$$
 which is analytic and contractive in the cone $\bS_{\pi/2-\theta_0}$. Moreover, $\oL$ is sectorial of angle $\omega(\oL) \leq \theta_0$.
For details and proofs see \cite[Chapter VI]{Kat}, \cite[Chapters I and IV]{O} and \cite[Sections~2.1 and 3.4]{Haase}.

\subsection{Mixed boundary conditions}\label{s: boundary}
Given a closed set $\Gamma\subseteq \partial\Omega$ we define $H_\Gamma^1(\Omega)$ to be the closure in $H^1(\Omega)$ of the set
$$
\{u_{\vert_{\Omega}}: u\in C^{\infty}_{c}(\R^{d}\setminus \Gamma)\}.
$$

We shall always assume that $\oV$ is one of the following closed subspaces of $H^{1}(\Omega)$:
\begin{enumerate}
\item $\oV=H^1(\Omega)$, corresponding to {\it Neumann boundary} conditions for $\oL$, or
\item $\oV=H^1_{\Gamma}(\Omega)$, corresponding to {\it Dirichlet boundary} conditions in $\Gamma$ and Neumann conditions in $\partial\Omega\setminus \Gamma$ for $\oL$.
\end{enumerate}
The latter case includes 
Dirichlet boundary conditions ($\Gamma=\partial\Omega$) and  good-Neumann boundary conditions ($\Gamma=\emptyset$); see \cite[Section~4.1]{O}.

\subsection{Standard assumptions}\label{s : stand ass}
Unless otherwise specified, henceforth we assume that 
\begin{itemize}
\item $\Omega \subseteq \R^d$ is a nonempty open set;
\item $\oV$ and $\oW$ are two closed subspaces of $H^1(\Omega)$ of the type described in Section~\ref{s: boundary};
\item $A, B \in \cA(\Omega)$;
\item $b,c, \beta, \gamma \in L^\infty(\Omega, \C^d)$;
\item $V, W \in L^1_{\text{loc}}(\Omega, \R_+)$.
\end{itemize}
These assumptions shall be called the {\it standard assumptions}.

\subsection{Notation}
Given two quantities $X$ and $Y$, we adopt the convention whereby $X \leqsim Y$ means that there exists an absolute constant $C>0$ such that $X \leq C Y$. If both $X \leqsim Y$ and $Y \leqsim X$, then we write $X \sim Y$. If  $\{\alpha_1, \dots, \alpha_n\}$ is a set of parameters, then $C(\alpha_1, \dots, \alpha_n)$ denotes a constant depending only on $\alpha_1,\dots,\alpha_n$. When $X \leq C(\alpha_1, \dots, \alpha_n) Y$, we will often write $X \leqsim_{\alpha_1, \dots, \alpha_n} Y$.

Unless specified otherwise, for every $p \in (1,\infty)$ we will denote by $q$ its conjugate exponent, that is, $1/p+1/q =1$.

\subsection{The $p$-ellipticity condition} 
We summarize the following notion, which Carbonaro and Dragi\v{c}evi\'c introduced in \cite{CD-DivForm}.

Given $A \in \cA(\Omega) $ and $p \in [1, \infty]$, we say that $A$ is  {\it $p$-elliptic} if there exists $C=C(A,p) >0$ such that for a.e. $x \in \Omega$,
\begin{equation}
\label{eq: Delta>0}
\Re\sk{A(x)\xi}{\xi+|1-2/p| \overline{\xi}}_{\C^{d}}
\geq C |\xi|^2\,,
\quad \forall\xi\in\C^{d}.
\end{equation}
Equivalently, $A$ is $p$-elliptic if $\Delta_p(A) >0$, where
\begin{equation*}
\label{eq: N Deltap}
\Delta_p(A):=
\underset{x\in\Omega}{{\rm ess}\inf}\min_{|\xi|=1}\, \Re\sk{A(x)\xi}{\xi+|1-2/p| \overline{\xi}}_{\C^{d}}.
\end{equation*}

Denote by $\cA_p(\Omega)$ the class of all $p$-elliptic matrix functions on $\Omega$.
Clearly, $\cA(\Omega)=\cA_2(\Omega)$.
A bounded matrix function $A$ is real and elliptic if and only if it is $p$-elliptic for all $p>1$ \cite{CD-DivForm}. For further properties of the function $p\mapsto \Delta_{p}(A)$ we also refer the reader to \cite{CD-DivForm}.

Dindo\v s and Pipher in \cite{Dindos-Pipher} found a sharp condition which permits proving reverse H\"older inequalities for weak solutions to ${\rm div}(A\nabla u)=0$ with complex $A$. It turned out that this condition was precisely a reformulation of $p$-ellipticity \eqref{eq: Delta>0}.

A condition similar to \eqref{eq: Delta>0}, namely $\Delta_p(A)\geq0$, was formulated in a different manner by Cialdea and Maz'ya in \cite[(2.25)]{CiaMaz}. See \cite[Remark 5.14]{CD-DivForm}.

\subsection{The classes $\cS_p$ and $\cB_p$}
Let $p >1$ and $ \oA=(A, b, c, V)$ satisfy the standard assumptions of Section \ref{s : stand ass}.  For every $p \in [1, +\infty]$,  we consider the operator
\begin{equation}
\label{e : operator Ip}
\cJ_{p}(\xi)=\frac{p}{2}\Biggl(\xi+\biggl(1-\frac{2}{p}\biggr)\overline{\xi}\,\Biggr),\quad \xi\in\C^{d}.
\end{equation}
Observe that $\cJ_p =\left (p/2 \right) \cI_p$, where $\cI_p$ is the operator introduced by Carbonaro and  Dragi\v{c}evi\'c in \cite[(1.4)]{CD-DivForm}. Moreover, we have:
\begin{enumerate}[(a)]\label{p : properties Jp}
\item \label{i : R-lin} $\cJ_p$ is $\R$-linear;
\item \label{i : form for Jp} $\cJ_p \xi = p \,\Re \xi - \overline{\xi} \,$ for all $\xi \in \C^d$;
\item \label{i : sym real Jp} $\Re \sk{\cJ_p\xi}{ \sigma} = \Re \sk{\xi}{\cJ_p \sigma} \, $ for all $\xi, \sigma \in \C^d$;
\item \label{i : eq def delta} $\left(p/2\right) \Delta_p(A) = \underset{x\in\Omega}{{\rm ess}\inf} \,\underset{|\xi|=1}{\min}\,\Re\sk{A(x)\xi}{\cJ_p\xi}_{\C^{d}}$ \cite[(1.6)]{CD-DivForm}.
\end{enumerate}
If $1/p + 1/q=1$, then also 
\begin{enumerate}[(a)]
\setcounter{enumi}{4}
\item \label{i : inv Jp} $\cJ_p \cJ_q = I_{\C^d}$  \cite[p. 3201]{CD-DivForm}; 
\item \label{i : Jp i z} $\cJ_p (i \xi) = i (p-1) \cJ_q \xi \,$ for all $\xi \in \C^d$;
\item \label{i : norm Jp} $ \|\cJ_p\| = p^*-1$, where $p^* = \max\{p, q\}$.
\end{enumerate}
We define the function $\Gamma_p^\oA=\Gamma_p^{A,b,c,V} : \Omega \times \C^d \rightarrow \R$ by 
\begin{equation}
\label{e : def Gammap}
\Gamma_p^\oA(x, \xi) =  \Re\sk{A(x)\xi}{\cJ_p \xi}_{\C^d} + \Re\sk{b(x)+ (\cJ_pc)(x)}{\xi}_{\C^d} +  V(x).
\end{equation}
By using dilations $\xi \leadsto t\xi$, $t \rightarrow \infty$, we see that if for almost all $x \in \Omega$
\begin{equation}\label{e : weak Gamma cond}
\Gamma_p^\oA(x,\xi) \geq 0, \quad \forall \xi \in \C^d,
\end{equation}
then $\Delta_p(A) \geq 0$. Moreover, when $b=c=0$ the two conditions coincide.
A condition slightly weaker than \eqref{e : weak Gamma cond} was formulated in a different manner by Cialdea and Maz'ya in \cite[(2.25)]{CiaMaz} (see Section \ref{ss : compar CM}).
\smallskip

 We denote by $\cS_p(\Omega)$ the class of all $\oA=(A,b,c,V) \in \cA_p(\Omega) \times (L^\infty(\Omega,\C^d))^2 \times L_{\text{loc}}^1(\Omega, \R_+)$ for which 
\begin{itemize}
\item there exists $\mu>0$ such that for a.e. $x \in \Omega$
\begin{equation}
\label{e : equiv B}
\Gamma_p^\oA(x, \xi) \geq \mu (|\xi|^2 +  V(x)),  \quad \forall \xi \in \C^d,
\end{equation}
\item the condition \eqref{eq: sect form above} holds.
\end{itemize}
We denote by $\mu_p(\oA)$, or just $\mu_p$, the largest admissible $\mu$ in \eqref{e : equiv B}. We list a few observations on $\cS_p$ and $\mu_p$:
\begin{itemize}
\item By applying the dilation argument again, we get
\begin{equation}\label{e : mu e delta}
 \mu_p(\oA) \leq \min\left\{ \frac{p}{2} \Delta_p(A), 1 \right\};
\end{equation}
\item $(A,b,c,V) \in \cS_2(\Omega)$ if and only if \eqref{eq: sect form below} and \eqref{eq: sect form above} hold, that is, $\cS_2(\Omega)=\cB(\Omega)$;
\item when $b=c=0$, this new condition coincides with $p$-ellipticity, in the sense that $(A,0,0,V) \in \cS_p(\Omega)$ if and only if $A \in \cA_p(\Omega)$ \cite{CD-DivForm, CD-Potentials}.
\end{itemize}
Finally, we define the class
$$
\cB_p(\Omega)=  \cS_p(\Omega)\cap \cS_q(\Omega), \quad \frac{1}{p}+\frac{1}{q}=1,
$$
which also coincides with $\cA_p(\Omega)$ when $b=c=0$, in the sense that $(A,0,0,V) \in \cB_p(\Omega)$ if and only if $A \in \cA_p(\Omega)$, by means of the invariance of $p$-ellipticity under conjugation \cite[Corollary 5.16]{CD-DivForm}. It also coincides with $\cB(\Omega)$ when $p=2$.
\smallskip

Our motivation for introducing $\cB_p$ on top of $\cS_p$ was to obtain a class which, other than generalising $p$-ellipticity, retains the following properties that the classes $\cA_p$ possess:
\begin{enumerate}[(i)]
\item invariance under conjugation, in the sense that $\cA_p(\Omega) = \cA_q(\Omega)$ when $1/p +1/q =1$ \cite[Proposition 5.8]{CD-DivForm};
\item a decrease with respect to $p$, in the sense that $\{ \cA_p(\Omega) \, : \, p \in [2, \infty) \}$ is a decreasing chain of matrix classes  \cite[Corollary 5.16]{CD-DivForm}.
\item invariance under adjointness, in the sense that $ A \in \cA_p(\Omega) \iff A^* \in \cA_q(\Omega)$, $1/p +1/q =1$ \cite[Corollary 5.17]{CD-DivForm}.
\end{enumerate}
We shall see that $\cS_p$ satisfies the analogue of $(iii)$ (see Proposition \ref{p: more on B}\eqref{i : invarianza per coniugazione}), but not $(i)$ and $(ii)$ (see Proposition \ref{p : neg asp Bp}). On the other hand,  $\cB_p$ is invariant under conjugation by definition and it turns out that it is also decreasing with respect to $p$; see Proposition \ref{p: more on B}\eqref{i : interpolation for B}. Therefore, the class $\cB_p$ seems a more adequate generalisation of $p$-ellipticity.

\subsection{Semigroup properties on $L^p$}\label{s: sem emb}
As our first result, we want to generalise \cite[Theorem~1.2]{CD-Potentials} through Theorem~\ref{t : contract}. See also the implication $(a) \Rightarrow (b)$ of \cite[Theorem~1.3]{CD-DivForm} and \cite[Proposition~1]{CD-Mixed}, where $\phi=0$, $b=c=0$, $V=0$. Carbonaro and Dragi\v{c}evi\'c proved each result combining a theorem of Nittka \cite[Theorem~4.1]{Nittka} either with  \cite[Theorem~4.7]{O} for \cite[Theorem~1.3]{CD-DivForm} and \cite[Proposition~1]{CD-Mixed}, or with \cite[Theorem~4.31]{O} for \cite[Theorem~1.2]{CD-Potentials}. The proof of Theorem~\ref{t : contract} is a modification of that from \cite[Theorem~1.2]{CD-Potentials}, the main difference being that we need a new condition which generalises that in \cite[Theorem~1.2]{CD-Potentials} (namely, $\Delta_p( e^{i \phi} A) \geq 0$) in order to get the $L^p$-dissipativity of the form. See Section~\ref{ss : Lp contr} for the explanation of terminology and the proof.
\begin{theorem}
\label{t : contract}
Let $\Omega, \oA=(A, b, c, V), \oV$ satisfy the standard assumptions of Section \ref{s : stand ass}. Suppose that $\oA \in \cB(\Omega)$ and choose $p>1$ and $\phi \in \R$ such that $|\phi| < \pi/2 - \theta_0$ and  \\$\Gamma_p^{(e^{i\phi} A, \, e^{i\phi} b, \, e^{i\phi}c, \, (\cos\phi) V)}(x,\xi) \geq 0$ for almost all $x \in \Omega$ and all $\xi \in \C^d$. Then
$$
\left( e^{-t e^{i\phi}\oL} \right)_{t >0}
$$ 
extends to a strongly continuous semigroup of contractions on $L^p(\Omega)$.
\end{theorem}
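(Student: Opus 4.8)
The strategy is to reduce the $L^p$-contractivity of the semigroup to a single algebraic/analytic condition on the form — $L^p$-dissipativity in the sense of Nittka — and then invoke the abstract machinery of Nittka \cite{Nittka} together with Ouhabaz's form-perturbation results \cite[Theorem~4.31]{O}, exactly along the lines of \cite[Theorem~1.2]{CD-Potentials}. Concretely, one fixes $\phi$ with $|\phi|<\pi/2-\theta_0$ and considers the rotated form $e^{i\phi}\gota$, which by \eqref{eq: sect numer range} is still sectorial of angle $\theta_0+|\phi|<\pi/2$, hence generates an analytic contraction semigroup on $L^2$ that agrees with $(e^{-te^{i\phi}\oL})_{t>0}$. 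The rotated form is associated with the quadruple $\oA_\phi := (e^{i\phi}A,\,e^{i\phi}b,\,e^{i\phi}c,\,(\cos\phi)V)$ — note $\Re(e^{i\phi}V\,|u|^2)=(\cos\phi)V|u|^2$, which explains the cosine in the hypothesis. The heart of the matter is then to show that this rotated form is $L^p$-dissipative, i.e. that for every $u\in\Dom(e^{i\phi}\gota)$ with $|u|^{p-2}u\in\Dom(e^{i\phi}\gota)$ (suitably truncated) one has
\[
\Re\,e^{i\phi}\gota\bigl(u,\,|u|^{p-2}u\bigr)\ \geq\ 0.
\]
Once this is established, Nittka's theorem \cite[Theorem~4.1]{Nittka} (in the form used in \cite{CD-Potentials}, combined with \cite[Theorem~4.31]{O} to handle the potential and the lower-order terms, which requires the domination \eqref{eq: sect form above}) yields that the semigroup is $L^\infty$-contractive and, by duality and interpolation with the $L^2$-contractivity, extends to a strongly continuous contraction semigroup on $L^p(\Omega)$ for the relevant $p$.

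**The key computation.** Writing $w=|u|^{p-2}u$ and carrying out the differentiation $\nabla w$ in terms of $\nabla u$ on the set $\{u\neq 0\}$, one expands $\Re\,e^{i\phi}\gota(u,w)$ into a pointwise integrand. The principal-part term reproduces, after the standard manipulation from \cite{CD-DivForm} using the operator $\cJ_p$ and its symmetry property \eqref{i : sym real Jp}, a quantity of the form $|u|^{p-2}\,\Re\sk{e^{i\phi}A\,\zeta}{\cJ_p\zeta}$ with $\zeta=\nabla u/u$ (up to a positive scalar depending on $p$); this is where property \eqref{i : eq def delta} enters. The first-order terms contribute $|u|^{p-2}\bigl[\Re\sk{e^{i\phi}b}{\zeta}+\Re\sk{e^{i\phi}c}{\cJ_p\zeta}\bigr]$ — here one uses \eqref{i : sym real Jp} again to move $\cJ_p$ from the $\nabla v$ slot onto $c$, matching the definition \eqref{e : def Gammap} of $\Gamma_p$ — and the potential term contributes $|u|^{p-2}(\cos\phi)V$. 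Summing, the integrand is exactly $|u|^{p-2}\,\Gamma_p^{\oA_\phi}(x,\zeta)$ (times a positive constant), which is $\geq 0$ by the hypothesis $\Gamma_p^{\oA_\phi}(x,\xi)\geq 0$. Thus $L^p$-dissipativity holds. The $L^q$-dissipativity needed for the duality/interpolation step follows symmetrically, using $\cJ_q$ and the identity $\cJ_p\cJ_q=I$ from \eqref{i : inv Jp}, but one must check that the rotated-form hypothesis for exponent $q$ is automatic: this is exactly the point where invariance under conjugation of the relevant condition is needed, and it is built into working with $\gota$ rather than only $\cS_p$ — or else one observes that $\overline{\Gamma_p^{\oA_\phi}(x,\cdot)}$-type considerations reduce the $q$-case to the $p$-case after complex conjugation of $u$.

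**The main obstacle.** The genuinely delicate point is not the formal algebra but the functional-analytic justification of the truncation argument on the irregular domain $\Omega$: one must verify that the test function $w=|u|^{p-2}u$ — or rather its truncations $w_n = |u_n|^{p-2}u_n$ with $u_n$ a bounded cutoff of $u$ — actually lies in $\Dom(\gota)$, i.e. in the closed subspace $\oV$ (so it respects the mixed boundary conditions) and satisfies $\int_\Omega V|w_n|^2<\infty$, and that the requisite limits pass through. For $\oV=H^1(\Omega)$ or $\oV=H^1_\Gamma(\Omega)$ this is a routine consequence of the chain/product rule for Sobolev functions and the lattice properties of $H^1_\Gamma$, but because we assume nothing about $\partial\Omega$ and have no Sobolev embedding at our disposal, one cannot appeal to any regularity and must instead follow Nittka's abstract characterisation of $L^p$-contractivity via invariance of the convex set $\{\|u\|_p\leq 1\}$, as Carbonaro–Dragi\v{c}evi\'c do in \cite{CD-Potentials}. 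The role of the condition \eqref{eq: sect form above} (that $V$ dominates $|b-c|^2$, "$V$ is big") is precisely to absorb the lower-order terms when invoking \cite[Theorem~4.31]{O}, guaranteeing the rotated form is still closed and sectorial and that the perturbation is form-small relative to the accretive part; keeping careful track of this domination throughout the approximation is the part I expect to require the most care.
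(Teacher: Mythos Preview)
Your overall strategy---reduce to Nittka's criterion and verify $L^p$-dissipativity by computing $\Re\,e^{i\phi}\gota(u,|u|^{p-2}u)$ pointwise as an integral of $\Gamma_p^{\oA_\phi}$---is exactly what the paper does, and your key computation is correct: it is precisely the content of Proposition~\ref{p : LpDiss and HessFp} together with Proposition~\ref{p: gen conv power function}.

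However, you have misdescribed the mechanics of Nittka's theorem, and this leads you to insert two unnecessary steps. Nittka's criterion (Theorem~\ref{t: Nittka} in the paper) is a \emph{direct} equivalence between $L^p$-contractivity on $L^2\cap L^p$ and the pair of conditions ``$\Dom(\gota)$ is invariant under the projection $\bP_{B^p}$'' plus ``$\gota$ is $L^p$-dissipative''. There is no passage through $L^\infty$-contractivity, no duality, and no interpolation; in particular you do \emph{not} need $L^q$-dissipativity, and your paragraph about reducing the $q$-case to the $p$-case via $\cJ_p\cJ_q=I$ is superfluous. Likewise, Ouhabaz's Theorem~4.31 plays no role in this argument: the closedness and sectoriality of $e^{i\phi}\gota$ come straight from $\oA\in\cB(\Omega)$ and $|\phi|<\pi/2-\theta_0$, and the $L^p$-dissipativity comes purely from the pointwise hypothesis $\Gamma_p^{\oA_\phi}\geq 0$. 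The condition~\eqref{eq: sect form above} is used only to ensure $\oA\in\cB(\Omega)$ so that the form is sectorial; it is not used to ``absorb lower-order terms'' in the dissipativity step.

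Finally, the ``main obstacle'' you flag---showing that $\Dom(\gota)$ is invariant under $\bP_{B^p}$ on an arbitrary open set with mixed boundary conditions---is indeed the delicate point, but it depends only on $\oV$ and $V$, not on $b,c$, and was already established in \cite[Theorem~1.2]{CD-Potentials}. The paper simply cites that result; you need not redo any truncation argument.
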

The next corollary, applied with $r$ equal to the conjugate exponent of $p$, extends \cite[Corollary 1.3]{CD-Potentials}.
\begin{corollary}\label{c: N analytic sem}
Let $\Omega, \oA=(A, b, c, V), \oV$ satisfy the standard assumptions of Section \ref{s : stand ass}. Suppose that $\oA \in \cB(\Omega)$ and choose $r \leq 2 \leq p$ such that $\oA \in (\cS_r \cap \cS_p)(\Omega)$. Then there exists $\theta = \theta(p,r,\oA) >0$ such that  $\{ T_z : z \in \bS_\theta \}$ is analytic and contractive in $L^s(\Omega)$ for all $ s \in [r,p]$.
\end{corollary}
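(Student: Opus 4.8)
The plan is to obtain $\{T_z:z\in\bS_\theta\}$ on $L^s(\Omega)$, for each $s\in[r,p]$, by producing contraction semigroups $(e^{-te^{i\phi}\oL})_{t>0}$ on the two endpoint spaces $L^r(\Omega)$ and $L^p(\Omega)$ via Theorem~\ref{t : contract}, interpolating to $L^s(\Omega)$, and then invoking the classical characterisation of contractive analytic semigroups through rotations of the generator (as in \cite{CD-DivForm,CD-Potentials}). The main step is to check that the hypothesis of Theorem~\ref{t : contract} persists under a small rotation, and for this I would first record that $\oA\in\cB(\Omega)$ already forces $V$ to dominate $b$ and $c$: there is $C_\oA>0$ with $|b(x)|+|c(x)|\leq C_\oA\sqrt{V(x)}$ for a.e.\ $x\in\Omega$. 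This follows by evaluating \eqref{eq: sect form below} at $\xi=s\eta$ with $s\in\R$, $|\eta|=1$, which gives
$$
s^2\bigl(\Re\sk{A(x)\eta}{\eta}-\mu_2(\oA)\bigr)+s\,\Re\sk{b(x)+c(x)}{\eta}+\bigl(1-\mu_2(\oA)\bigr)V(x)\geq 0\qquad\forall s\in\R;
$$
since $\mu_2(\oA)\leq\min\{\lambda(A),1\}$ by \eqref{e : mu e delta}, this upward parabola has nonnegative leading and constant coefficients, so its discriminant is $\leq0$, whence $|\Re\sk{b(x)+c(x)}{\eta}|\leq 2\sqrt{\Lambda(A)\,V(x)}$; taking the supremum over unit $\eta$ and combining with \eqref{eq: sect form above} gives the bound.

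Next I would fix $\tau\in\{r,p\}$ (so $\oA\in\cS_\tau(\Omega)$), let $\tau'$ be the exponent conjugate to $\tau$, and expand $\Gamma_\tau^{(e^{i\phi}A,\,e^{i\phi}b,\,e^{i\phi}c,\,(\cos\phi)V)}$ from \eqref{e : def Gammap}, using that $\cJ_\tau$ is $\R$-linear, the identity $\cJ_\tau(i\sigma)=i(\tau-1)\cJ_{\tau'}\sigma$, and $\Re(e^{i\phi}w)=(\cos\phi)\Re w-(\sin\phi)\Im w$:
$$
\Gamma_\tau^{(e^{i\phi}A,\,e^{i\phi}b,\,e^{i\phi}c,\,(\cos\phi)V)}(x,\xi)=(\cos\phi)\,\Gamma_\tau^\oA(x,\xi)-(\sin\phi)\,E_\tau(x,\xi),
$$
with $E_\tau(x,\xi)=\Im\sk{A(x)\xi}{\cJ_\tau\xi}+\Im\sk{b(x)}{\xi}+(\tau-1)\Im\sk{(\cJ_{\tau'}c)(x)}{\xi}$. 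Here $|\Im\sk{A\xi}{\cJ_\tau\xi}|\leq\Lambda(A)\|\cJ_\tau\|\,|\xi|^2$, and the two linear terms are, by the previous paragraph, bounded by $C_\oA(1+(\tau-1)\|\cJ_{\tau'}\|)\sqrt{V(x)}\,|\xi|\leq\tfrac12 C_\oA(1+(\tau-1)\|\cJ_{\tau'}\|)(|\xi|^2+V(x))$; since $\Gamma_\tau^\oA(x,\xi)\geq\mu_\tau(\oA)(|\xi|^2+V(x))$ by \eqref{e : equiv B}, all of this is absorbed, yielding $|E_\tau(x,\xi)|\leq\kappa_\tau\,\Gamma_\tau^\oA(x,\xi)$ a.e., with $\kappa_\tau=\kappa_\tau(\oA,\tau)<\infty$. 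Choosing $\phi_\tau\in(0,\pi/2-\theta_0)$ with $\cot\phi_\tau\geq\kappa_\tau$, for $|\phi|<\phi_\tau$ the displayed quantity is $\geq(\cos|\phi|-\kappa_\tau\sin|\phi|)\,\Gamma_\tau^\oA(x,\xi)\geq0$, so Theorem~\ref{t : contract} (applied with exponent $\tau$; recall $\oA\in\cB(\Omega)$) gives that $(e^{-te^{i\phi}\oL})_{t>0}$ extends to a $C_0$-semigroup of contractions on $L^\tau(\Omega)$.

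Finally I would set $\theta:=\min\{\phi_r,\phi_p\}>0$ and fix $|\phi|<\theta$, $s\in[r,p]$. The two semigroups just built on $L^r(\Omega)$ and $L^p(\Omega)$ are mutually consistent—both agree on the dense subspace $L^2\cap L^r\cap L^p$ with the analytic $L^2$-semigroup $z\mapsto e^{-ze^{i\phi}\oL}$ of the Introduction—so Riesz--Thorin interpolation makes $(e^{-te^{i\phi}\oL})_{t>0}$ a semigroup of contractions on $L^s(\Omega)$, strongly continuous there since it is so on $L^2(\Omega)$ and bounded by $1$ on $L^s(\Omega)$. Writing $-\oL_s$ for the generator on $L^s(\Omega)$ (the case $\phi=0$), this means $-e^{i\phi}\oL_s$ generates a $C_0$-semigroup of contractions on $L^s(\Omega)$ for all $|\phi|<\theta$; by the classical characterisation of contractive analytic semigroups via rotations of the generator (see, e.g., \cite[Chapter~I]{O} and \cite[Section~3.4]{Haase}), $-\oL_s$ then generates an analytic semigroup on $\bS_\theta$ that is contractive in $L^s(\Omega)$, i.e.\ $\{T_z:z\in\bS_\theta\}$ is analytic and contractive in $L^s(\Omega)$; since $\theta$ depends only on $\oA,r,p$ (through $\kappa_r,\kappa_p,\theta_0$), this is the claim. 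The principal obstacle is the pointwise estimate $|E_\tau(x,\xi)|\leq\kappa_\tau\Gamma_\tau^\oA(x,\xi)$: the linear-in-$\xi$ part of $E_\tau$ cannot be absorbed into $\mu_\tau(\oA)(|\xi|^2+V)$ where $V$ degenerates unless one already knows $|b|+|c|\leqsim\sqrt V$ there, which is precisely the point at which $\oA\in\cB(\Omega)$—rather than mere $p$-ellipticity of $A$—is needed.
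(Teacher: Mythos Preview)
Your argument is correct. It is essentially the paper's approach with two cosmetic repackagings: you re-derive inline the bounds $|b|,|c|\leqsim\sqrt V$ and the rotation stability of $\Gamma_\tau$ (these are exactly Proposition~\ref{p : property of B}\eqref{i : V dom Rec} and \eqref{i : cont B in phi} in the paper), and you interpolate at the level of semigroups via Riesz--Thorin rather than at the level of the classes $\cS_s$ via Proposition~\ref{p: more on B}\eqref{i : interpolation for B}. The paper's route is slightly shorter---once rotated $\oA$ lies in $\cS_r\cap\cS_p$, Proposition~\ref{p: more on B}\eqref{i : interpolation for B} gives rotated $\oA\in\cS_s$ for every $s\in[r,p]$, so Theorem~\ref{t : contract} applies directly at $s$ without invoking Riesz--Thorin or consistency of semigroups; your route has the advantage of using only the endpoint information and a classical interpolation theorem. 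One small wording issue: strong continuity on $L^s$ should be obtained from strong continuity on $L^r$ and $L^p$ (both granted by Theorem~\ref{t : contract}) via interpolation of norms on the dense set $L^r\cap L^p$, not from strong continuity on $L^2$ alone.
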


\subsection{Bilinear embeddings for perturbed operators}
In case when $b=c=0$, in \cite[Theorem 1.4]{CD-Potentials} Carbonaro and Dragi\v{c}evi\'c proved that there exists $C>0$  independent of the dimension $d$ such that
\begin{equation}\label{eq: N bil no lower terms}
\aligned
\hskip -15pt \int^{\infty}_{0}\!\int_{\Omega}\sqrt{\mod{\nabla T^{A,V,\oV}_{t}f}^2 + V \mod{T^{A,V,\oV}_{t}f}^2}\sqrt{\mod{\nabla T^{B,W,\oW}_{t}g}^2 +  W \mod{T^{B,W,\oW}_{t}g}^2} \leq C \norm{f}{p}\norm{g}{q}, 
\endaligned
\end{equation}
for all $A,B \in \cA_p(\Omega)$, $V,W \in L_\text{loc}^1 (\Omega, \R_+)$ and all $f,g \in (L^p \cap L^q)(\Omega)$, where $\oV$ and $\oW$ are two closed subspaces of $H^{1}(\Omega)$ of the type described in Section~\ref{s: boundary} and $q= p/(p-1)$ is the conjugate exponent of $p$.

Given $b,c, \beta, \gamma \in L^\infty(\Omega, \C^d)$, we extend the bilinear embedding in \eqref{eq: N bil no lower terms} to the semigroups $(T^{\oA,\oV}_{t})_{t>0}$ and  $(T^{\oB,\oW}_{t})_{t>0}$, where $\oA=(A,b,c,V), \oB=(B,\beta,\gamma,W) \in \cB(\Omega)$.
In accordance with \cite{CD-DivForm, CD-Mixed, CD-Potentials}, we need a stronger condition than the one which implies the $L^p$ contractivity of such semigroups. Consistent with \cite{CD-DivForm, CD-Mixed, CD-Potentials}, it is convenient to introduce further notation:
$$
\aligned
\lambda(A,B) &= \min\{\lambda(A),\lambda(B)\} \\
\Lambda(A,B) &= \max\{\Lambda(A),\Lambda(B)\}\\
\mu(\oA,\oB) &= \min\{\mu(\oA), \mu(\oB)\}\\
M(\oA,\oB) &= \max\{M(\oA),M(\oB)\}
\endaligned
$$
and
$$
\mu_r(\oA,\oB) = \min\{\mu_r(\oA), \mu_{r^\prime}(\oA), \mu_r(\oB),  \mu_{r^\prime}(\oB)\},
$$
whenever $\oA, \oB \in \cB_r(\Omega)$ for some $r \in (1, \infty)$, where $r^\prime = r/(r-1)$ is its conjugate exponent.
In Section \ref{s : proof bil emb} we shall prove the following result.
\begin{theorem}\label{t: N bil}
Let $\Omega, \oA=(A,b,c,V), \oB=(B,\beta,\gamma,W), \oV, \oW$ satisfy the standard assumptions of Section \ref{s : stand ass}. Suppose that $\oA, \oB \in \cB(\Omega)$ and choose $p>1$, $q=p/(p-1)$, such that $\oA, \, \oB \in \cB_p(\Omega)$. Then there exists $C>0$ independent of the dimension $d$ such that
\begin{eqnarray}\label{eq: N bil}
\displaystyle\int^{\infty}_{0}\!\int_{\Omega}\sqrt{\mod{\nabla T^{\oA,\oV}_{t}f}^2 + V \mod{T^{\oA,\oV}_{t}f}^2}\sqrt{\mod{\nabla T^{\oB,\oW}_{t}g}^2 +  W \mod{T^{\oB,W,\oW}_{t}g}^2} \leq C \norm{f}{p}\norm{g}{q}, 
\end{eqnarray}
for all $f,g\in (L^{p}\cap L^{q})(\Omega)$. The constant $C>0$ can be chosen so as to (continuously) depend only on $p, \lambda(A,B),\Lambda(A,B), \mu(\oA,\oB), M(\oA,\oB)$ and $\mu_p(\oA,\oB)$.
\end{theorem}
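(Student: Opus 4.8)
The plan is to follow the Bellman-function / heat-flow scheme of Carbonaro--Dragi\v{c}evi\'c, adapting their argument for the potential case \cite{CD-Potentials} to accommodate the first-order perturbations $b,c,\beta,\gamma$. First I would fix $f,g \in (L^p \cap L^q)(\Omega)$ with $\|f\|_p = \|g\|_q = 1$ (by homogeneity), write $u(t) = T^{\oA,\oV}_t f$ and $v(t) = T^{\oB,\oW}_t g$, and introduce the generalised Nazarov--Treil--Volberg Bellman function $Q = Q_p$ on $\C \times \C$ adapted to the exponent $p$ — the same one used in \cite{CD-DivForm, CD-Potentials}, which is comparable to $|\zeta|^p + |\eta|^q$ and whose Hessian satisfies the key quasi-convexity estimate $\sk{\He Q(\zeta,\eta) \, \omega}{\omega} \gtrsim |\zeta|^{p-2}|\omega_1|^2 + |\eta|^{q-2}|\omega_2|^2$ against vectors $\omega = (\omega_1,\omega_2)$ lying in an appropriate cone (the one making the cross terms controllable). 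Then I would consider the ``heat-flow'' functional
$$
\cE(t) = \int_\Omega Q\bigl(u(t,x), v(t,x)\bigr)\wrt x,
$$
which is finite for $t>0$, tends to $0$ as $t \to \infty$, and at $t=0^+$ is bounded by $\|f\|_p^p + \|g\|_q^q \lesssim 1$; hence $-\int_0^\infty \cE'(t)\wrt t \lesssim 1$, and the whole game is to bound the integrand on the left of \eqref{eq: N bil} by $-\cE'(t)$ up to a constant.

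Next I would differentiate $\cE$ under the integral sign and use the equations $u' = -\oL^{\oA}u$, $v' = -\oL^{\oB}v$ together with the integration-by-parts identity \eqref{eq: ibp}. This produces
$$
-\cE'(t) = \Re \gota_{\oA}\bigl(u, \partial_\zeta Q(u,v)\bigr) + \Re \gota_{\oB}\bigl(v, \partial_\eta Q(u,v)\bigr),
$$
wherein, after expanding the sesquilinear forms and applying the chain rule $\nabla[\partial_\zeta Q(u,v)] = Q_{\zeta\zeta}\nabla u + Q_{\zeta\bar\zeta}\overline{\nabla u} + Q_{\zeta\eta}\nabla v + Q_{\zeta\bar\eta}\overline{\nabla v}$ (and similarly for the $\eta$-derivative), one recognizes a pointwise quadratic form in $(\nabla u, \nabla v)$ acted on by (a matrix built from) $A$, $B$ and $\He Q$, plus zeroth-order contributions coming from $V, W$ and cross terms, plus the genuinely new terms coming from $b,c,\beta,\gamma$. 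The structural point — exactly as in \cite{CD-DivForm, CD-Potentials} — is that the membership $\oA,\oB \in \cB_p(\Omega) = \cS_p(\Omega) \cap \cS_q(\Omega)$ is precisely what forces the relevant vectors into the cone where $\He Q$ is quasi-convex, so that the second-order part dominates $\mu_p(\oA,\oB)$ times $|\nabla u|^2 |v|^{?} + \dots$, which after a Cauchy--Schwarz / AM--GM juggle bounds the left-hand integrand $\sqrt{|\nabla u|^2 + V|u|^2}\sqrt{|\nabla v|^2 + W|v|^2}$ from above. One must track that the constant depends only on $p, \lambda(A,B), \Lambda(A,B), \mu(\oA,\oB), M(\oA,\oB), \mu_p(\oA,\oB)$ and not on $d$, which works because $Q_p$ and its Hessian estimates are dimension-free and all manipulations are pointwise in $x$.

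The main obstacle, and the place where this goes beyond \cite{CD-Potentials}, is handling the first-order terms: the expansion of $\Re\gota_{\oA}(u, \partial_\zeta Q(u,v))$ generates terms like $\Re\sk{\nabla u}{b}\,\overline{\partial_\zeta Q}$ and $\Re\, u \sk{c}{\nabla[\overline{\partial_\zeta Q}]}$, which are linear rather than quadratic in the gradients and a priori not sign-definite. Here I expect to use the hypothesis \eqref{eq: sect form above}, $|b-c| \leq M\sqrt V$ (and $|\beta-\gamma| \leq M\sqrt W$), to pair each rogue first-order term with a fraction of the potential term $V|u|^2$ (resp. $W|v|^2$) via Young's inequality — precisely the ``$V$ big with respect to $b,c$'' philosophy advertised in the abstract — while the symmetric combination $b+c$ (resp. $\beta+\gamma$) is already absorbed into the definition of $\Gamma_p^{\oA}$ and hence into $\mu_p$. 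Concretely, one rewrites the first-order contribution in terms of the operator $\cJ_p$ using properties \eqref{i : sym real Jp} and the definition \eqref{e : def Gammap}, so that the combination $\Re\sk{b + \cJ_p c}{\xi}$ reassembles $\Gamma_p^{\oA}(x,\xi) - \Re\sk{A\xi}{\cJ_p\xi} - V$, and the leftover discrepancy (which is controlled by $|b-c|$) is dominated using \eqref{eq: sect form above}. Once this ``first-order-into-potential'' absorption is done and the quadratic remainder is shown to be a nonnegative form bounded below by a dimension-free multiple of the integrand, the terminal and initial value estimates for $\cE$ close the argument; the passage from finite time horizons to $(0,\infty)$ and the a priori finiteness of $\cE(t)$ for $t>0$ are handled exactly as in \cite[Section on bilinear embedding]{CD-Potentials} by a standard truncation/limiting argument, which I would only sketch.
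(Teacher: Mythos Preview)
Your high-level architecture is correct and matches the paper: Bellman function $\cQ$, heat-flow functional $\cE(t)$, differentiate, integrate by parts, and show that the pointwise ``generalised Hessian'' $\mathbf{H}^{(\oA,\oB)}_\cQ$ dominates the bilinear integrand. Your identification of the first-order terms as the new ingredient, and your instinct to absorb them using the $\cJ_p$-structure and the hypothesis $|b-c|\le M\sqrt V$, is also right in spirit --- this is exactly the content of the paper's Theorem~\ref{t: Conv Belman} (and the supporting Proposition~\ref{p : property of B}).

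However, you are missing the central technical obstruction that the paper spends most of its effort on. The Bellman function $\cQ$ is only $C^1$, not $C^2$, on the singular set $\Upsilon=\{\eta=0\}\cup\{|\zeta|^p=|\eta|^q\}$, so your chain-rule step and the integration by parts are not directly justified: one must regularise $\cQ$ by convolution, $\cQ\star\varphi_\nu$, and also pass through an approximating sequence $(\cR_{n,\nu})$ for which the invariance $(\partial_\zeta\cR_{n,\nu})(u,v)\in\oV$ holds. In the unperturbed setting of \cite{CD-Mixed,CD-Potentials} this is harmless because $(A,B)$-convexity is preserved under convolution, so Fatou's lemma passes the lower bound through the limit. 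But the paper shows explicitly (Remark~\ref{r : no inv und conv}) that $(\oA,\oB)$-convexity is \emph{not} preserved under convolution: the regularisation produces remainder terms $R^{(c,\gamma)}_\nu$, $R^{(V,W)}_\nu$ (Lemma~\ref{l : hessiana reg Bellman}) that have no sign. To control these remainders one needs dominated convergence, and for that one needs $T_t f,\,T_t g\in C^\infty(\Omega)$ and the lower-order coefficients to have compact support. This forces an entirely new layer of argument: first prove the bilinear embedding assuming $A,B\in C^\infty$ and $b,c,\beta,\gamma,V,W\in C_c^\infty$ (using interior elliptic regularity, Appendix~B, to get smoothness of the semigroup), and then run a three-stage approximation (Appendix~A and~C: mollify coefficients, then cut off, then truncate unbounded potentials) to recover the general case.

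Your last sentence --- ``handled exactly as in \cite{CD-Potentials} by a standard truncation/limiting argument, which I would only sketch'' --- is therefore where the proposal breaks down: this is precisely the part that does \emph{not} go through as before and constitutes the bulk of the new work.
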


This result incorporates several earlier theorems as special cases, including:
\begin{itemize}
\item  $b=c=0$, $V=W$, $\Omega = \R^d$, $A, B$ equal and real \cite[Theorem 1]{Dv-kato}
\item $b=c=0$, $V=W=0$, $\Omega = \R^d$ \cite[Theorem 1.1]{CD-DivForm}
\item $b=c=0$, $V=W=0$ \cite[Theorem 2]{CD-Mixed}
\item $b=c=0$ \cite[Theorem 1.4]{CD-Potentials}.
\end{itemize}

\subsection{Bilinear embeddings with complex potentials} \label{s : compl pot}
Let $\Omega, A, b, c, V, \oV$ satisfy the standard assumptions of Section \ref{s : stand ass}. Let $\varrho \in \C$ such that $\Re(\varrho) >0$. 
Let assume that $(A,b,c,\Re(\varrho) V) \in \cB(\Omega)$. Then, it can be shown in the same way as above that the sesquilinear form $\gota$, defined by
$$
\aligned
\label{e : form sesq compl}
\Dom(\gota)&=\Mn{u\in\oV}
 {\int_\Omega  V|u|^2 < \infty}, \nonumber 
\\
\gota(u,v)&= \displaystyle \int_{\Omega}\sk{A\nabla u}{\nabla v}_{\C^{d}} + \sk{\nabla u}{b}_{\C^d}\overline{v} + u\sk{c}{\nabla v}_{\C^d} + \varrho V u \overline{v},
\endaligned
$$
is densely defined, closed and sectorial. Therefore, $-\oL$, the operator on $L^2(\Omega)$ associated with $\gota$, generates an analytic and contractive semigroup $(T_t^{\oA,\oV})_{t>0}$ on $L^2(\Omega)$, where $\oA = (A,b,c, \varrho V)$. More generally, all the results mentioned above hold also for these types of complex potentials and are proved as in the case where the potentials are real, provided that their real part belongs to the new class introduced before. For instance, Theorem \ref{t: N bil} for complex potentials now reads as follows:
\begin{theorem}\label{t: N bil compl}
Let $\Omega, \oV, \oW$ satisfy the standard assumptions of Section \ref{s : stand ass}. Let $\oA=(A,b,c,\varrho V)$ and $\oB=(B,\beta,\gamma, \sigma W)$ be of the type described above. Choose $p>1$, $q=p/(p-1)$, such that $\tilde{\oA}=(A,b,c,\Re(\varrho) V), \, \tilde{\oB}=(B,\beta,\gamma, \Re(\sigma) W) \in \cB_p(\Omega)$. Then there exists $C>0$ independent of the dimension $d$ such that
\begin{equation}\label{eq: N bil compl}
\aligned
 \displaystyle\int^{\infty}_{0}\!&\int_{\Omega}\sqrt{\mod{\nabla T^{\oA,\oV}_{t}f}^2 + \Re(\varrho) V \mod{T^{\oA,\oV}_{t}f}^2}\sqrt{\mod{\nabla T^{\oB,\oW}_{t}g}^2 +  \Re(\sigma) W \mod{T^{\oB,\oW}_{t}g}^2} \\
& \leq C \norm{f}{p}\norm{g}{q}, 
\endaligned
\end{equation}
for all $f,g\in (L^{p}\cap L^{q})(\Omega)$. The constant $C>0$ can be chosen so as to (continuously) depend only on $p, \lambda(A,B),\Lambda(A,B), \mu(\tilde{\oA},\tilde{\oB}), M(\tilde{\oA},\tilde{\oB})$ and $\mu_p(\tilde{\oA},\tilde{\oB})$.
\end{theorem}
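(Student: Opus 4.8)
The plan is to run the heat-flow/Bellman-function proof of Theorem~\ref{t: N bil} (carried out in Section~\ref{s : proof bil emb}) essentially verbatim, with $V$ replaced by $\varrho V$ and $W$ by $\sigma W$, and to isolate the one place where a complex potential could behave differently from a real one. As recalled just before the statement, the hypotheses $\tilde\oA=(A,b,c,\Re(\varrho)V)\in\cB(\Omega)$ and $\tilde\oB=(B,\beta,\gamma,\Re(\sigma)W)\in\cB(\Omega)$ ensure that the forms $\gota$ and $\gotb$ attached to $\oA$ and $\oB$ are densely defined, closed and sectorial (of some angle that may depend on $\varrho$ resp.\ $\sigma$), so that $-\oL^{\oA}$ and $-\oL^{\oB}$ generate analytic contraction semigroups on $L^2(\Omega)$; by Theorem~\ref{t : contract} and Corollary~\ref{c: N analytic sem}, which hold in this setting, these extend to semigroups on $L^p(\Omega)$ and $L^q(\Omega)$ that are smooth on the half-line $t>0$ --- all that the heat flow requires. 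I would use this analyticity only \emph{qualitatively}, which is what ultimately keeps the final constant independent of $\varrho,\sigma$.

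Next, fix the Carbonaro--Dragi\v{c}evi\'c Bellman function $Q=Q_p$ used in the proof of Theorem~\ref{t: N bil}, put $u(t)=T^{\oA,\oV}_t f$ and $v(t)=T^{\oB,\oW}_t g$ for $f,g\in(L^p\cap L^q)(\Omega)$, and consider
\[
\cE(t)=\int_\Omega Q\bigl(u(t,x),v(t,x)\bigr)\,dx .
\]
After the standard justifications (regularising $Q$, restricting to a dense class of $f,g$, and checking $\nabla_w Q(u,v)\in\Dom(\gota)$ and $\nabla_z Q(u,v)\in\Dom(\gotb)$ for a.e.\ $t$), differentiating and applying the form identity~\eqref{eq: ibp} expresses $\cE'(t)$ as, up to a fixed normalisation, minus the real part of $\gota\bigl(u,\nabla_w Q(u,v)\bigr)+\gotb\bigl(v,\nabla_z Q(u,v)\bigr)$. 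Here is the only new observation: $Q$ is a function of $(|w|,|z|)$ alone, hence pointwise $\nabla_w Q(u,v)=\tfrac{(\partial_{|w|}Q)(|u|,|v|)}{|u|}\,u$ is a real scalar multiple of $u$, and likewise for $v$. Therefore the potential contribution to $\Re\,\gota\bigl(u,\nabla_w Q(u,v)\bigr)$ equals
\[
\Re\!\int_\Omega \varrho V\,u\,\overline{\nabla_w Q(u,v)}=\Re(\varrho)\!\int_\Omega V\,(\partial_{|w|}Q)(|u|,|v|)\,|u| ,
\]
and the imaginary part of $\varrho$ simply drops out; the same happens with the $\sigma W$-term. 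Consequently $\cE'(t)$ coincides with the quantity produced by the real-potential proof of Theorem~\ref{t: N bil} applied to $\tilde\oA$ and $\tilde\oB$.

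From this point the argument of Theorem~\ref{t: N bil} applies with no change: the pointwise differential inequality bounding $-\cE'(t)$ from below by the $\Omega$-integral of the integrand of~\eqref{eq: N bil compl} at time $t$ invokes the matrices only through $p$-ellipticity and the drift/potential only through \eqref{e : equiv B} and \eqref{eq: sect form above} for $\tilde\oA,\tilde\oB\in\cB_p(\Omega)$, all of which see nothing but $\Re(\varrho)V$ and $\Re(\sigma)W$. Integrating over $t\in(0,\infty)$, using $Q\ge 0$, bounding $\cE(0)=\int_\Omega Q(f,g)\leqsim \norm{f}{p}^{p}+\norm{g}{q}^{q}$ by the size estimate on $Q$, and finally replacing $(f,g)$ by $(sf,g/s)$ and optimising in $s>0$ to pass from $\norm{f}{p}^{p}+\norm{g}{q}^{q}$ to $\norm{f}{p}\norm{g}{q}$, yields~\eqref{eq: N bil compl} with a constant depending only on $p$ and on the data $\lambda(A,B),\Lambda(A,B),\mu(\tilde\oA,\tilde\oB),M(\tilde\oA,\tilde\oB),\mu_p(\tilde\oA,\tilde\oB)$, but not on $\varrho,\sigma$.

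The main obstacle is conceptual plus bookkeeping rather than analytic: one must verify carefully that the rotational invariance $w\mapsto e^{i\vartheta}w$, $z\mapsto e^{i\psi}z$ of the Carbonaro--Dragi\v{c}evi\'c Bellman function really makes the imaginary part of the potential disappear from $\cE'(t)$ (this is exactly what decouples the constant from $\varrho,\sigma$), and then that every step of the proof of Theorem~\ref{t: N bil} --- in particular the justification of the differentiation of $\cE$, the domain membership $\nabla_w Q(u,v)\in\Dom(\gota)$, and the fact that the $L^p$-analyticity is used only in qualitative form --- survives the replacement of the real potentials by their complex counterparts.
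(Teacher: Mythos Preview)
Your proposal is correct and follows the paper's approach: the paper simply states that the complex-potential case is proved exactly as the real one, and your explicit observation---that the rotational invariance of $\cQ$ forces $\Im(\varrho)$ and $\Im(\sigma)$ to drop out of $-\cE'(t)$---is precisely the mechanism that makes this reduction work. The one piece of bookkeeping the paper treats separately (in the appendix on unbounded complex potentials) is the approximation by bounded potentials when $V$ is unbounded, where the monotone-form convergence argument of \cite{Kat} requires a small case distinction on the sign of $\Im(\varrho)$; this falls under the verifications you flag in your last paragraph.
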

To avoid burdening the notation, we will treat in detail only the case with real potentials.

\subsection{Maximal regularity and functional calculus on domains}  
In case when $b=c=V=0$, let $A \in \cA_p(\Omega)$ and let $-\oL_p^A$ be the generator of $(T^{A,\oV}_{t})_{t>0}$ on $L^{p}(\Omega)$. Then $\oL_p^A$ admits a bounded holomorphic functional calculus of angle $\theta<\pi/2$ and has parabolic maximal regularity \cite[Theorem 3]{CD-Mixed}. 

Following the same argument of \cite[Theorem 3]{CD-Mixed}, by means of 
\begin{itemize}
\item elementary properties of the function $(p,A)\mapsto \Delta_{p}(A)$ (see \cite[Corollary~5.17 and p. 3204]{CD-DivForm})
\item elementary properties of the classes $\cS_p(\Omega)$ (see Proposition~\ref{p : property of B} \eqref{i : cont B in p}, \eqref{i : cont B in phi})
\item a well-known sufficient condition for bounded holomorphic functional calculus \cite[Theorem~4.6 and Example~4.8]{CDMY}
\item the Dore-Venni theorem \cite{DoreVenni,PrussSohr}
\item  Theorem~\ref{t: N bil} applied with $B=A^{*}$, $\beta=c$, $\gamma=b$, $W=V$ and $\oW=\oV$
\end{itemize} 
we can deduce the following result; see Section~\ref{s: max funct} for the explanation of terminology and the proof.
\begin{theorem}\label{t: N principal}
Let $\Omega, A, b, c, V, \oV$ satisfy the standard assumptions of Section \ref{s : stand ass}. Suppose that $p>1$ and $(A,b,c,V) \in \cB_p(\Omega)$. Let $-\oL_p$ be the generator of $(T_{t})_{t>0}$ on $L^{p}(\Omega)$. Then $\oL_p$ admits a bounded holomorphic functional calculus of angle $\theta <\pi/2$. As a consequence, $\oL_{p}$ has parabolic maximal regularity.
\end{theorem}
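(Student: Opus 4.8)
The plan is to follow the proof of \cite[Theorem~3]{CD-Mixed}, the two new inputs being the bilinear embedding of Theorem~\ref{t: N bil} and the elementary stability properties of the classes $\cS_p$. Write $\oA=(A,b,c,V)$; we have $\oA\in\cB_p(\Omega)=\cS_p(\Omega)\cap\cS_q(\Omega)$ and, since $-\oL$ and its semigroup $(T_t)_{t>0}$ are defined, $\oA\in\cB(\Omega)$, so $\gota$ is sectorial of some angle $\theta_0<\pi/2$ and generates $(T_t)_{t>0}$ on $L^2(\Omega)$. \textbf{Step 1: sectoriality of $\oL_p$ of angle $<\pi/2$.} By Corollary~\ref{c: N analytic sem} (applicable since $2$ lies between $p$ and $q$ and $\oA\in\cS_p(\Omega)\cap\cS_q(\Omega)$) there is $\delta>0$ such that $\{T_z:z\in\bS_\delta\}$ is analytic and contractive on $L^p(\Omega)$. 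Hence $-\oL_p$ generates a bounded analytic semigroup of positive half-angle, and $\oL_p$ is sectorial of angle $\omega(\oL_p)\leq\pi/2-\delta<\pi/2$. (One may instead invoke Theorem~\ref{t : contract} at a small rotation $\phi\neq0$; that its hypothesis persists for $|\phi|<\delta$ is the continuity of $\cS_p$ in $p$ and $\phi$, Proposition~\ref{p : property of B}, items \eqref{i : cont B in p} and \eqref{i : cont B in phi}.)

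\textbf{Step 2: square function estimates from the adjoint bilinear embedding.} Put $\oB=(A^*,c,b,V)$ and $\oW=\oV$; this is the datum of the formal adjoint of $\oL$, so that $T^{\oB,\oV}_t=(T^{\oA,\oV}_t)^*$ and $(\oL_p)^*$ is the generator of $(T^{\oB,\oV}_t)_{t>0}$ on $L^q(\Omega)$. By the invariance of $\cS_p$ under this adjoint operation (Proposition~\ref{p: more on B}\eqref{i : invarianza per coniugazione}) one has $\oA\in\cS_p\Leftrightarrow\oB\in\cS_q$ and $\oA\in\cS_q\Leftrightarrow\oB\in\cS_p$, whence $\oA\in\cB_p(\Omega)$ forces $\oB\in\cB_p(\Omega)$; this is precisely why the hypothesis is phrased in terms of $\cB_p$ rather than of $\cS_p$. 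Theorem~\ref{t: N bil} applied to the pair $(\oA,\oB)$ — i.e.\ \eqref{eq: N bil} with $B=A^*$, $\beta=c$, $\gamma=b$, $W=V$, $\oW=\oV$ — then holds, with constant depending only on the structural parameters. Using the inequality $\|\nabla u\|_2^2+\|V^{1/2}u\|_2^2\leqsim\Re\gota(u)$ encoded in $\cS_p$, together with the analyticity of the semigroup and the comparison, carried out as in \cite{CD-DivForm,CD-Mixed}, of the generalised gradient of $T^{\oA,\oV}_tf$ with $\psi(t\oL_p)f$ for $\psi(z)=z^{1/2}e^{-z}$, and a duality/linearisation argument splitting the bilinear estimate into two one-sided ones, one obtains the vertical square function bounds $\|(\int_0^\infty|\psi(t\oL_p)f|^2\,dt/t)^{1/2}\|_p\leqsim\|f\|_p$ on $L^p(\Omega)$ and the analogous bound for $(\oL_p)^*$ on $L^q(\Omega)$, exactly as in the proof of \cite[Theorem~3]{CD-Mixed}. (The dense-range requirement is met, as usual, by replacing $\oL_p$ with $\oL_p+\varepsilon$ and letting $\varepsilon\downarrow0$ with uniform constants.)

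\textbf{Step 3: functional calculus and maximal regularity.} With these two square function estimates the well-known sufficient condition \cite[Theorem~4.6 and Example~4.8]{CDMY} applies to the sectorial operator $\oL_p$ and yields a bounded $H^\infty(\bS_\theta)$ functional calculus for every $\theta>\omega(\oL_p)$; since $\omega(\oL_p)\leq\pi/2-\delta$, we may choose $\theta<\pi/2$. Consequently $\oL_p$ has bounded imaginary powers of power angle $<\pi/2$, and, $L^p(\Omega)$ being a UMD space, the Dore--Venni theorem \cite{DoreVenni,PrussSohr} gives that $\oL_p$ has parabolic maximal regularity: the solution of $u'(t)+\oL_pu(t)=f(t)$, $u(0)=0$, satisfies $u',\oL_pu\in L^r((0,T);L^p(\Omega))$ whenever $f\in L^r((0,T);L^p(\Omega))$ with $1<r<\infty$.

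\textbf{Main obstacle.} The delicate point I expect is the passage in Step~2 from the bilinear embedding \eqref{eq: N bil} to square function estimates in exactly the form required by \cite[Theorem~4.6]{CDMY}: \eqref{eq: N bil} is a two-sided quantity, bilinear but not linear in either argument, in which the horizontally integrated generalised gradient $\int_0^\infty(|\nabla T_tf|^2+V|T_tf|^2)\,dt$ appears rather than an operator square function, so recovering the two one-sided operator square functions requires the form-level comparison above — it is here that the $\Gamma_p$-condition \eqref{e : equiv B} renders the drift and potential terms harmless — together with a duality argument, and one must check that the constants so produced stay finite, which is guaranteed by the continuous dependence of the constant in Theorem~\ref{t: N bil} on $p,\lambda(A,B),\Lambda(A,B),\mu(\oA,\oB),M(\oA,\oB),\mu_p(\oA,\oB)$. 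Once Step~2 is settled, Steps~1 and~3 are routine.
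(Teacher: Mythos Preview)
Your scaffold---Step~1 (sectoriality at angle $<\pi/2$), the adjoint choice $\oB=(A^*,c,b,V)$ justified by Proposition~\ref{p: more on B}\eqref{i : invarianza per coniugazione}, and Step~3 (CDMY $\Rightarrow$ Dore--Venni)---is right, but Step~2 has a genuine gap. Neither \cite[Theorem~3]{CD-Mixed} nor the present paper extracts $L^p$ vertical square functions from the bilinear embedding, and there is no ``duality/linearisation'' that splits the integrated product \eqref{eq: N bil} into two one-sided bounds of the form $\bigl\|(\int_0^\infty|\psi(t\oL_p)f|^2\,dt/t)^{1/2}\bigr\|_p\leqsim\|f\|_p$: that would require a pointwise-in-$x$ comparison which \eqref{eq: N bil} simply does not provide. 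What the bilinear embedding \emph{does} yield, once combined with the form identity \eqref{eq: ibp} and the bound $|b|,|c|\leqsim\sqrt{V}$ of Proposition~\ref{p : property of B}\eqref{i : V dom Rec}, is the \emph{weak (bilinear)} CDMY condition $\int_0^\infty\bigl|\int_\Omega\oL_pT_{2t}f\cdot\overline g\,\bigr|\,dt\leqsim\|f\|_p\|g\|_q$ directly---compare \eqref{eq: CDMY-bil}; no square functions enter.

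The second and more serious omission is the rotation. You invoke Theorem~\ref{t: N bil} only along the real axis; fed through \cite[Example~4.8]{CDMY} with $\psi(z)=ze^{-z}$ this gives only $\omega_{H^\infty}(\oL_p)\leq\pi/2$, not a strict inequality. The paper instead uses Proposition~\ref{p : property of B}\eqref{i : cont B in phi} a \emph{second} time---not merely for Step~1---to find $\theta\in(0,\pi/2)$ with $(e^{\pm i\theta}A,e^{\pm i\theta}b,e^{\pm i\theta}c,(\cos\theta)V)\in\cB_p(\Omega)$ and likewise for the adjoint data, and then applies the bilinear embedding to the \emph{rotated} semigroups $T_{te^{\pm i\theta}}$ and $T^*_{te^{\mp i\theta}}$, arriving at \eqref{eq: N CDMY}; CDMY then yields $\omega_{H^\infty}(\oL_p)\leq\pi/2-\theta<\pi/2$. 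Note also that the rotated operator $e^{i\theta}\oL$ carries the complex potential $e^{i\theta}V$, which is why the complex-potential embedding Theorem~\ref{t: N bil compl} is invoked here rather than Theorem~\ref{t: N bil}.
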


Recent results regarding the holomorphic functional calculus for the operator $\oL_p$ have been obtained by Egert \cite{Egert}. He considered elliptic systems of second order in divergence form with bounded coefficients and subject to mixed boundary conditions on bounded and connected open sets $\Omega$ whose boundary is Lipschitz regular around the Neumann part $\overline{\partial\Omega \setminus \Gamma}$. We recall that $\Gamma$ is a closed subset of $\partial\Omega$; see Section~\ref{s: boundary}. In \cite[Theorem 1.3]{Egert} he provided the optimal interval of $p$'s for the bounded $H^\infty$-calculus on $L^p$. More precisely, after defining the interval $\cJ(\oL)=\cJ(\oL^\oA)$ by
$$
\cJ(\oL) := \{p \in (1,\infty) : \sup_{t >0} \| T_t^\oA\|_{\cB(L^p)} < \infty \},
$$
he proved that, given $p_0 \in \cJ(\oL)$, the operator $\oL_p$ admits a bounded $H^\infty$-calculus on $L^p$ for all $p \in (p_0, 2) \cup (2,p_0)$. He used the regularity assumption on the domain $\Omega$ to prove that the semigroup satisfies specific off-diagonal estimates \cite[Definition~1.6 and Proposition~4.4]{Egert}, which imply the boundedness of the $H^\infty$-calculus \cite[Lemma~5.3]{Egert}. For this last implication, he did not need the previous further assumptions on $\Omega$. In particular, he exploited the existence of a bounded linear Sobolev extension operator that extends 
$$
\begin{array}{rcl}
H^1_\Gamma (\Omega) & \rightarrow & H^1_\Gamma(\R^d), \\
L^p(\Omega) &\rightarrow &L^p(\R^d),
\end{array}
$$
for every $p \in (1, \infty)$. Here, $H^1_\Gamma(\R^d)$ is the closure in $H^1(\R^d)$ of the set $C^{\infty}_{c}(\R^{d} \setminus \Gamma)$.

Since in generality that we consider, the domain $\Omega$ may be completely irregular and/or unbounded, we only deduce that our interval of $p$'s for the bounded $H^\infty$-calculus on $L^p$ is contained in Egert's. In fact, Theorem~\ref{t : contract} and Proposition~\ref{p : property of B}\eqref{i : cont B in p} imply that
$$
 \{p \in (1,\infty) : \oA \in \cB_p(\Omega) \} \subseteq {\rm int}\left(\cJ(\oL)\right).
$$
See also \cite[Section~1.5]{CD-Mixed} for a similar discussion regarding unperturbed divergence-form operators.

\subsection{Organization of the paper}
Here is the summary of each section.
\begin{itemize}
\item In Section \ref{s: Neumann gen conv} we summarize some of the main notions needed in the paper, extending in particular the generalised convexity introduced by Carbonaro and Dragi\v{c}evi\'c in \cite{CD-DivForm}.
\item In Section \ref{s : prop of Bp} we summarize some properties of the classes $\cS_p$ and $\cB_p$.
\item In Section \ref{s: conv of pow fun and Bell fun} we prove the desired convexity of the power functions in one complex variable and of the Bellman function.
\item In Section \ref{s : Lp contr} we prove the results on contractivity and analyticity of semigroups announced in Section \ref{s: sem emb}.
\item In Section \ref{s : proof bil emb} we prove the bilinear embedding.
\item In Section \ref{s: max funct} we prove Theorem \ref{t: N principal}.
\end{itemize}

\section{Heat-flow monotonicity and generalised convexity}\label{s: Neumann gen conv} 
\subsection{The Bellman function of Nazarov and Treil}
Let $\Omega$, $\oA=(A,b,c,V)$, $\oB=(B,\beta,\gamma,W), \oV, \oW$ satisfy the standard assumptions of Section \ref{s : stand ass}. Suppose that $\oA,\oB \in \cB(\Omega)$.
We want to study the monotonicity of the flow
\begin{equation*}\label{eq: N flow}
\cE(t)=\int_{\Omega}\cQ(T^{\oA,\oV}_{t}f,T^{\oB,\oW}_{t}g)
\end{equation*}
associated with a particular explicit {\it Bellman function} $\cQ$ invented by Nazarov and Treil \cite{NT}. Here we use a simplified variant introduced in \cite{Dv-kato} which comprises only two variables:
\begin{equation}\label{eq: N Bellman}
\cQ(\zeta,\eta)=
|\zeta|^p+|\eta|^{q}+\delta
\begin{cases}
 |\zeta|^2|\eta|^{2-q},& |\zeta|^p\leqslant |\eta|^q;\\
 (2/p)\,|\zeta|^{p}+\left(
 2/q-1\right)|\eta|^{q},&|\zeta|^p\geqslant |\eta|^q\,,
\end{cases}
\end{equation}
where $p>2$, $q=p/(p-1)$, $\zeta,\eta\in\C$ and $\delta>0$ is a positive parameter that will be fixed later. It was noted in \cite[p. 3195]{CD-DivForm} that $\cQ\in C^{1}(\C^{2})\cap C^{2}(\C^{2}\setminus\Upsilon)$, where
$$
\Upsilon=\{\eta=0\}\cup\{|\zeta|^p=|\eta|^q\}\,,
$$
and that for $(\zeta,\eta)\in\C\times\C$ we have
\begin{equation}
\label{eq: N 5}
\aligned
0\leqslant \cQ(\zeta,\eta) & \leqsim_{p,\delta}\,\left(|\zeta|^p+|\eta|^q\right),\\
|(\partial_{\zeta}\cQ)(\zeta,\eta)| & \leqsim_{p,\delta}\, \max\{|\zeta|^{p-1},|\eta|\},\\
|(\partial_{\eta}\cQ)(\zeta,\eta)| & \leqsim_{p,\delta}\, |\eta|^{q-1}\,,
\endaligned
\end{equation}
where $\partial_\zeta=\left(\partial_{\zeta_1}-i\partial_{\zeta_2}\right)/2$ and $\partial_\eta=\left(\partial_{\eta_1}-i\partial_{\eta_2}\right)/2$.
\medskip


\subsection{Real form of complex operators}We explicitly identify $\C^{d}$ with $\R^{2d}$ as follows. 
For each $d\in\N_{+}$ consider the operator
$\cV_{d}:\C^{d}\rightarrow\R^{d}\times\R^{d}$, defined by 
$$
\cV_{d}(\xi_{1}+i\xi_{2})=
(\xi_{1},\xi_{2}),\quad \xi_{1},\xi_{2}\in\R^{d}.
$$
One has 
\begin{equation}
\label{e : realis compl prod}
\Re \sk{\xi}{\xi^{\prime}}_{\C^{d}}=\sk{\cV_{d}(\xi)}{\cV_{d}(\xi^{\prime})}_{\R^{2d}},\quad \xi,\xi^{\prime}\in\C^{d}.
\end{equation}
If $A\in\C^{d\times d}$ we shall frequently use its real form:
$$
\cM(A)=\cV_{d}A\cV_{d}^{-1}=\left[
\begin{array}{rr}
\Re A  & -\Im A\\
\Im A  & \Re A
\end{array}
\right]\,.
$$
For $\xi,\sigma\in\C^{d}$ we have the following extension of \eqref{e : realis compl prod}:
\begin{equation}\label{eq: real form}
\aligned
\Re\sk{A\xi}{\sigma}_{\C^{d}}=
\sk{\cM(A)\cV_{d}(\xi)}{\cV_{d}(\sigma)}_{\R^{2d}}.
\endaligned
\end{equation}
Let $k,d\in\N_{+}$. We define another identification operator
$$
\cW_{k,d}:\underbrace{\C^{d}\times\cdots\times\C^{d}}_{k-{\rm times}}\longrightarrow \underbrace{\R^{2d}\times\cdots\times\R^{2d}}_{k-{\rm times}}
$$
 by the rule
$$
\cW_{k,d}(\xi^{1},\dots,\xi^{k})
=\left(\cV_{d}(\xi^{1}),\dots,\cV_{d}(\xi^{k})\right),\quad \xi^{j}\in\C^{d},\ j=1,\dots,k.
$$

\subsection{Convexity with respect to complex matrices, complex vectors and real scalars}\label{s: GeH}
In \cite{CD-DivForm}, the authors introduced the notion of {\it generalised convexity of a function with respect to a matrix} (or a collection of matrices). Here we extend this notion to $4$-tuples $(A,b,c,V)$. So the novelty is the presence of $b,c$ and $V$. Our aim is to demonstrate the value of this extended notion by proving that it implies properties of $\oL$ as shown earlier for $b=c=0$.

Due to the presence of the first- and zero-order terms, following \cite{CD-DivForm} we only treat the one-dimensional and the two-dimensional case separately, in order to make the text more readable. One could provide a single definition for the $k$-dimensional case, as Carbonaro and Dragi\v{c}evi\'c did in \cite{CD-Mixed}. 

Let $d \in \N_+$. Take
$$
\begin{array}{rcl}
A,B & \in & \C^{d \times d}, \\
b,\beta, c,\gamma  & \in & \C^d, \\
V,W  & \in & \R, \\
\omega= (\zeta, \eta) & \in &  \C \times \C,\\
\Xi = (X,Y) & \in &  \C^d \times \C^d.
\end{array}
$$
Denote 
$$
\aligned
\oA & = (A,b,c,V),\\
\oB &= (B,\beta, \gamma, W).
\endaligned
$$

\begin{subsubsection}{One-dimensional case}
Let $\phi:\C \rightarrow \R$ be of class $C^{2}$. 
We associate the function $\phi$ with the following function on $\R^{2}$:
\begin{equation*}
\phi_{\cW} := \phi \circ \cV_{1}^{-1}.
\end{equation*}

Denote, respectively, by $D^{2}\phi(\zeta)$ and $\nabla \phi (\zeta)$ the Hessian matrix and the gradient of the function $\phi_{\cW} : \R^{2} \rightarrow \R$ calculated at the point $\cV_{1}(\zeta) \in \R^2$.
In accordance with \cite{CD-DivForm,CD-Mixed} we define 
$$
H^{A}_{\phi}[\zeta;X]=
 \sk{\left[D^{2}\phi(\zeta)\otimes I_{\R^{d}}\right]\cV_{d}(X)}{\cM(A)\cV_{d}(X)}_{\R^{2d}},
$$
where $\otimes$ denotes the Kronecker product of matrices (see, for example, \cite{CD-DivForm}).

Moreover, we define
\begin{equation}\label{eq: def c-Hess 1dim} 
\aligned
H^{(b,c)}_\phi[\zeta;X] = & \sk{\left[D^{2}\phi(\zeta)\otimes I_{\R^{d}}\right] \cV_{d}(X)}{\cV_{d}(\zeta c)}_{\R^{2d}} \\
& + \,\sk{\nabla\phi(\zeta)}{\cV_{1}(\sk{X}{b})}_{\R^{2}}, \\
G^{V}_\phi[\zeta] =&   \sk{\nabla \phi(\zeta)}{\cV_{1}(V\zeta)}_{\R^{2}}. 
\endaligned
\end{equation}
Finally, we define
\begin{equation}
\label{d : Hess sup gen 1dim}
\mathbf{H}_\phi^{\oA}[\zeta; X] =H^{A}_{\phi}[\zeta;X]+H^{(b,c)}_\phi[\zeta; X]+G^{V}_\phi[\zeta].
\end{equation}
\begin{defi}
We say that  $\phi$ is $\oA$-{\it convex} in $\C$ if $\mathbf{H}^{\oA}_{\phi}[\zeta; X]$ is nonnegative for all $\zeta \in \C$, $X \in \C^{d}$. 
\end{defi}
In accordance with \cite{CD-Mixed} we say that $\phi$ is $A$-{\it convex} if it is $(A,0,0,0)$-convex.
\end{subsubsection}

\begin{subsubsection}{Two-dimensional case}
Let $\Phi:\C^{2} \rightarrow \R$ be of class $C^{2}$. 
We associate the function $\Phi$ with the following function on $\R^{4}$:
\begin{equation}
\label{e : realis of compl fun}
\Phi_{\cW} := \Phi \circ \cW_{2,1}^{-1}.
\end{equation}

Denote, respectively, by $D^{2}\Phi(\omega)$ and $\nabla \Phi (\omega)$ the Hessian matrix and the gradient of the function $\Phi_{\cW} : \R^{4} \rightarrow \R$ calculated at the point $\cW_{2,1}(\omega) \in \R^{4}$.
In accordance with \cite{CD-DivForm,CD-Mixed} we define 
$$
H^{(A,B)}_{\Phi}[\omega;\Xi]=
 \sk{\left[D^{2}\Phi(\omega)\otimes I_{\R^{d}}\right]\cW_{2,d}(\Xi)}{\left[\cM(A)\oplus\cM(B)\right]\cW_{2,d}(\Xi)}_{\R^{4d}},
$$
where $\cM(A)\oplus\cM(B)$ is the $4d \times 4d$ block diagonal real matrix with the $2d\times2d$ blocks $ \cM(A), \cM(B)$ along the main diagonal.

Moreover, we define
\begin{equation}\label{eq: def c-Hess}
\aligned
H^{(b,\beta, c,\gamma)}_\Phi[\omega;\Xi] = & \sk{\left[D^{2}\Phi(\omega)\otimes I_{\R^{d}}\right] \cW_{2,d}(\Xi)}{\cW_{2,d}(\zeta c, \eta \gamma)}_{\R^{4d}} \\
& + \, \sk{\nabla\Phi(\omega)}{\cW_{2,1}(\sk{X}{b}, \sk{Y}{\beta})}_{\R^{4}}, \\
G^{(V,W)}_\Phi[\omega] =&   \sk{\nabla \Phi(\omega)}{\cW_{2,1}(V\zeta, W\eta)}_{\R^{4}}. 
\endaligned
\end{equation}
Finally, we define
\begin{equation}
\label{d : Hess sup gen}
\mathbf{H}_\Phi^{(\oA,\oB)}[\omega; \Xi] =H^{(A,B)}_{\Phi}[\omega;\Xi]+H^{(b,\beta,c,\gamma)}_\Phi[\omega; \Xi]+G^{(V,W)}_\Phi[\omega].
\end{equation}
\begin{defi}
We say that  $\Phi$ is $(\oA,\oB)$-{\it convex} in $\C^{2}$ if $\mathbf{H}^{(\oA,\oB)}_{\Phi}[\omega; \Xi]$ is nonnegative for all $\omega\in \C^{2}$, $\Xi \in \C^{2d}$. 
\end{defi}
In accordance with \cite{CD-Mixed} we say that $\Phi$ is $(A,B)$-{\it convex} if it is $(A,0,0,0,B,0,0,0)$-convex.
\end{subsubsection}
\bigskip
\medskip

We maintain the same notation when instead of matrices we consider matrix-valued {\it functions} $A, B\in L^{\infty}(\Omega;\C^{d\times d})$, vector-valued functions $b,\beta,c,\gamma \in L^\infty(\Omega,\C^d)$ and scalar functions $V,W \in L^1_{\rm loc}(\Omega,\R_+)$; in this case however we require that all the conditions are satisfied for a.e. $x\in\Omega$. 

%

Given $f,g \in L^2(\Omega)$, define the function
$$
\cE(t)=\int_{\Omega}\Phi\left(T^\oA_{t}f,T^\oB_{t}g\right),\quad t>0.
$$
Definitions \eqref{eq: def c-Hess} and \eqref{d : Hess sup gen} are motivated by the fact that, formally,
\begin{eqnarray*}
-\cE^{\prime}(t)= \int_{\Omega}\mathbf{H}^{(\oA,\oB)}_{\Phi}\left[\left(T^\oA_{t}f,  T^\oB_{t}g\right);\left(\nabla T^\oA_{t}f, \nabla T^\oB_{t}g\right)\right]. 
\end{eqnarray*}
It follows that if $\Phi$ is $(\oA,\oB)$-convex on $\C^{2}$ then the function $\cE$ is nonincreasing on $(0,+\infty)$.
When $\Phi$ is {\it strictly} $(\oA,\oB)$-convex and satisfies a suitable size estimate, this formal method can be used for proving bilinear inequalities in the spirit of \cite{CD-DivForm,CD-mult,CD-OU, CD-Mixed}.

\section{Properties of the class $\cB_p(\Omega)$}\label{s : prop of Bp}
In this section we shall prove some elementary properties of $\cS_p(\Omega)$ and $\cB_p(\Omega)$ which we shall use in the rest of the paper. 

In particular, we shall see that the strict positivity of $\Gamma_p$ is preserved  for exponents close enough to $p$ and for small complex rotations of the coefficients (compare with \cite[Corollary 5.17 and p. 3204]{CD-DivForm}). Moreover, we shall see that the first order terms $b, c$ are appropriately controlled by the potential $V$.

\begin{proposition}\label{p : property of B}
Let $1 <p<\infty$, $q=p/(p-1)$. Suppose that $\oA= (A,b,c,V) \in \cS_p(\Omega)$. Then the following assertions hold.
\begin{enumerate}[{\rm (i)}]
\item
\label{i : V dom bc}
For every $s \in (1,\infty)$ there exists $\widetilde{C}>0$ such that for a.e. $x \in \Omega$.
$$
\left|b(x) +( \cJ_s c)(x)\right| \leq \widetilde{C} \sqrt{V(x)}.
$$
The constant $\widetilde{C}$ can be chosen so as to (continuously) depend only on $p, s, \Lambda(A)$, $M(\oA)$ and $\mu_p(\oA)$.
\item
\label{i : V dom Rec}
There exists $C>0$ such that $|b(x)|, \,| c(x)| \le C \sqrt{V(x)}$ for a.e. $x \in \Omega$. 
The constant $C$ can be chosen so as to (continuously) depend only on $p, \Lambda(A), M(\oA)$ and $\mu_p(\oA)$.
\item
\label{i : cont B in p}
There exists $\varepsilon >0$ such that $(A,b,c,V) \in \cS_{s}(\Omega)$ for all $s \in [p-\varepsilon, p+\varepsilon]$.
\item
\label{i : cont B in phi}
There exists $\vartheta \in (0,\pi/2)$ such that $(e^{i\varphi}A,e^{i\varphi}b,e^{i\varphi}c,(\cos \varphi) V) \in \cS_{p}(\Omega)$ for all $\varphi \in [-\vartheta, \vartheta]$.
\end{enumerate}
\end{proposition}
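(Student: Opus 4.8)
The plan is to prove the four items essentially in order, extracting (i) and (ii) directly from the defining inequality \eqref{e : equiv B} by a completion-of-squares / dilation argument, and then deducing (iii) and (iv) as perturbative stability statements once the first-order terms are shown to be dominated by $\sqrt V$.

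For (i), the idea is that \eqref{e : equiv B} says $\Re\sk{A\xi}{\cJ_p\xi} + \Re\sk{b+\cJ_p c}{\xi} + V \geq \mu_p(|\xi|^2 + V)$ for all $\xi$. Writing the left-hand side as a real quadratic form in $\cV_d(\xi)\in\R^{2d}$ (via \eqref{eq: real form} and the $\R$-linearity of $\cJ_p$ from property \eqref{i : R-lin}), this is a statement that a quadratic polynomial $\R^{2d}\ni \zeta \mapsto \sk{\mathcal{M}\zeta}{\zeta} + \sk{\ell}{\zeta} + V$ is bounded below by $\mu_p|\zeta|^2$ (dropping the nonnegative $\mu_p V$ term); the quadratic part has norm controlled by $\Lambda(A)$ and $\|\cJ_p\| = p^*-1$ (property \eqref{i : norm Jp}), and the linear part has coefficient vector of length $\sim |b + \cJ_p c|$. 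Optimizing over the direction $\zeta$ (equivalently substituting $\xi = -t(b+\cJ_p c)/|b+\cJ_p c|$ and minimizing over $t>0$, after first using $p$-ellipticity of $A$ to bound the quadratic term below) forces $|b+\cJ_p c|^2 \lesssim V \cdot(\text{constant depending on }\Lambda(A), p, \mu_p(\oA))$. Running the same argument with $\cJ_s$ in place of $\cJ_p$ — which is legitimate because $\cJ_s c$ only changes $c$ by a bounded multiple (its norm is $s^*-1$) and the hypothesis \eqref{e : equiv B} is about $\cJ_p$, so one has to feed the $\cJ_p$ inequality in and absorb the discrepancy $\cJ_s c - \cJ_p c = (\cJ_s - \cJ_p)c$ using \eqref{eq: sect form above}, i.e. $|b-c|\le M\sqrt V$, together with property \eqref{i : form for Jp} which gives $\cJ_s\xi - \cJ_p\xi = (s-p)\Re\xi$ — gives the $s$-dependence, hence (i). Item (ii) then follows from (i) by taking $s=p$ and $s=q$ and solving the resulting $2\times 2$ linear system for $b$ and $c$: from property \eqref{i : inv Jp}, $\cJ_p\cJ_q = I$, and one checks that $b = \alpha_1(b+\cJ_p c) + \alpha_2(b+\cJ_q c)$ for suitable scalars $\alpha_i$ depending only on $p$ (this requires $\cJ_p \neq \cJ_q$, i.e. $p\neq q$; the case $p=q=2$ is \eqref{eq: sect form above} combined with the trivial bound, or handled directly), and likewise for $c$.

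For (iii) and (iv), the plan is a straightforward continuity/perturbation argument. By property \eqref{i : form for Jp}, $\Gamma_s^\oA(x,\xi) = \Re\sk{A\xi}{\cJ_s\xi} + \Re\sk{b+\cJ_s c}{\xi} + V$ depends on $s$ only through the terms $\Re\sk{A\xi}{\cJ_s\xi}$ and $\Re\sk{\cJ_s c}{\xi}$, and $\cJ_s$ depends real-analytically (in fact affinely, via $\cJ_s\xi = s\Re\xi - \overline\xi$) on $s$; so $\Gamma_s^\oA(x,\xi) - \Gamma_p^\oA(x,\xi) = (s-p)\bigl(\Re\sk{A\xi}{\Re\xi} + \Re\sk{c}{\Re\xi}\bigr)$. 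Bounding this error by $|s-p|(\Lambda(A)|\xi|^2 + |c||\xi|)$ and invoking (ii) to replace $|c|$ by $C\sqrt V$, one gets $|s-p|(\Lambda(A)|\xi|^2 + C\sqrt V|\xi|) \le |s-p|(\Lambda(A) + C/2)|\xi|^2 + |s-p|(C/2)V \le \tfrac{\mu_p}{2}(|\xi|^2 + V)$ once $|s-p|$ is small enough (depending only on the allowed parameters), which keeps \eqref{e : equiv B} valid with constant $\mu_p/2$; condition \eqref{eq: sect form above} is unchanged by varying $s$, and $A$ remains $s$-elliptic for $s$ near $p$ by the corresponding property of $\Delta_s(A)$ from \cite{CD-DivForm}. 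Item (iv) is the same computation with $s$ replaced by the rotation: expanding $\Gamma_p^{(e^{i\varphi}A, e^{i\varphi}b, e^{i\varphi}c, (\cos\varphi)V)}(x,\xi)$ and using $e^{i\varphi} = \cos\varphi + i\sin\varphi$ together with property \eqref{i : Jp i z} ($\cJ_p(i\xi) = i(p-1)\cJ_q\xi$) to expand $\Re\sk{e^{i\varphi}A\xi}{\cJ_p\xi}$ and $\Re\sk{e^{i\varphi}(b + \cJ_p c)}{\xi}$, one sees that at $\varphi=0$ one recovers $\Gamma_p^\oA$ and the $\varphi$-derivative is again a bounded quadratic form in $\xi$ with coefficients controlled by $\Lambda(A)$, $p$, and $|b|, |c|$ (hence by $\sqrt V$ via (ii)); so for $|\varphi|$ small the inequality \eqref{e : equiv B} survives with constant $\mu_p/2$, and \eqref{eq: sect form above} survives because multiplying $b-c$ by $e^{i\varphi}$ and $V$ by $\cos\varphi$ changes $M$ by at most a factor $1/\sqrt{\cos\vartheta}$.

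The main obstacle I expect is the bookkeeping in item (i) — in particular, making precise that the implicit constant $\widetilde C$ depends \emph{continuously} and \emph{only} on $p, s, \Lambda(A), M(\oA), \mu_p(\oA)$, and not, say, on $\lambda(A)$ or on $\|b\|_\infty, \|c\|_\infty$ separately. The subtlety is that to run the "minimize the quadratic over $t>0$" argument one would like the quadratic coefficient $\Re\sk{A\xi}{\cJ_p\xi}$ to be bounded \emph{below} by a positive multiple of $|\xi|^2$, but the hypothesis \eqref{e : equiv B} only gives that after including the (indefinite-sign, a priori) linear term; one must instead not discard information and optimize the full expression $C|\xi|^2 - |b+\cJ_p c||\xi| + V \ge (\mu_p \text{ stuff})$ carefully — equivalently, evaluate \eqref{e : equiv B} at the specific $\xi = -\tfrac{1}{2(\Lambda(A)+1)}\,\overline{(b+\cJ_p c)}\cdot(\text{phase})$ or similar, chosen so that the quadratic term is harmless and the cross term is as negative as possible — to extract the bound with the stated constant dependence. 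Once the correct test vector is identified, the rest is routine.
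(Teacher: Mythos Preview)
Your overall strategy matches the paper's: extract (i) at $s=p$ by testing \eqref{e : equiv B} at $\xi = -\varepsilon(b+\cJ_p c)$ and optimising in $\varepsilon>0$ (the paper does exactly this, obtaining $|b+\cJ_p c|^2 \le 4(1-\mu_p)(\|\cJ_p\|\Lambda-\mu_p)V$), then pass to general $s$ via the identity $\cJ_s c - \cJ_p c = (s-p)\Re c$, and finally deduce (iii) and (iv) by the perturbation computations you describe, which are essentially identical to the paper's.

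There is, however, a genuine gap in your plan for (ii). You propose to recover $b$ and $c$ from $b+\cJ_p c$ and $b+\cJ_q c$ by ``solving the resulting $2\times 2$ linear system'', asserting $b=\alpha_1(b+\cJ_p c)+\alpha_2(b+\cJ_q c)$ for scalars $\alpha_i$. This is impossible: since $\cJ_r\zeta = (r-1)\Re\zeta + i\,\Im\zeta$ for every $r>0$, the vectors $b+\cJ_p c$ and $b+\cJ_q c$ have the \emph{same} imaginary part $\Im b + \Im c$, so no combination of them can separate $\Im b$ from $\Im c$. (Concretely, matching the $b$-coefficient forces $\alpha_1+\alpha_2=1$, while testing $c=i$ forces $\alpha_1+\alpha_2=0$.) The missing ingredient is precisely condition \eqref{eq: sect form above}, which you invoke elsewhere but omit here. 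The paper's route for (ii) uses \emph{only} (i) at $s=p$ together with \eqref{eq: sect form above}: from $p\,\Re c = \Re(b+\cJ_p c)-\Re(b-c)$ one gets $|\Re c|\lesssim\sqrt V$, hence $|\Re b|\lesssim\sqrt V$; and for the imaginary parts, $|\Im b+\Im c| = |\Im(b+\cJ_p c)|\lesssim\sqrt V$ combined with $|\Im b-\Im c|\le M\sqrt V$ yields $|\Im b|,|\Im c|\lesssim\sqrt V$. In particular (i) at $s=q$ is never invoked---and cannot be, since it is only available \emph{after} (ii). The correct logical order is therefore (i)$_{s=p}\Rightarrow$ (ii) $\Rightarrow$ (i)$_{\text{general }s}$, not the one your proposal suggests.
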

\begin{proof}
We will be using properties \eqref{i : R-lin}-\eqref{i : norm Jp} of the operator $\cJ_p$, recalled on page \pageref{p : properties Jp}.
\begin{enumerate}[{\rm (i)}]
\item
First consider the case $s=p$. Set $\xi_0 = b+ \cJ_p c \in L^\infty(\Omega,\C^d)$. Clearly, the statement holds if $\xi_0=0$. Suppose that $\xi_0 \ne 0$. By \eqref{e : mu e delta} and by using dilations $\xi \leadsto \xi/t$, $t \rightarrow \pm \infty$, in \eqref{e : equiv B}, we see that $\mu_p < 1$.  Let $\varepsilon >0$.  By ellipticity of $A$ and by choosing $\xi = - \varepsilon \, \xi_0$ in \eqref{e : equiv B}, we obtain that
$$
\varepsilon \left[ 1 - \varepsilon ( \|\cJ_p \| \Lambda - \mu_p )\right] |\xi_0|^2 \leq (1-\mu_p) V.
$$
Observe that $\|\cJ_p \| \Lambda - \mu_p \geq 0$, since $\mu_p \leq (p/2)\Delta_p(A) \leq  \|\cJ_p\|\Lambda$, by \eqref{e : mu e delta} and \eqref{i : eq def delta}. Moreover, $\|\cJ_p \| \Lambda - \mu_p > 0$. Otherwise, by sending $\varepsilon$ to $\infty$, we would have that $\xi_0=0$. Therefore, by maximising the left-hand side with respect to $\varepsilon >0$, we get 
$$
|\xi_0|^2 \leq 4 (1-\mu_p) (\|\cJ_p\|\Lambda - \mu_p)V.
$$

If $s \ne p$, the assertion follows by the previous case, \eqref{i : V dom Rec} and the identity 
\begin{equation}
\label{e : Jp e Js}
b+\cJ_sc = b+\cJ_pc + (s-p)\,  \Re c.
\end{equation}
\item
We shall prove that $|\Re c|, \, |\Re b|, \, |\Im b|, \, |\Im c| \leqsim \sqrt{V}$ in this order. 
The first estimate follows by applying \eqref{i : V dom bc} for $s =p$, \eqref{eq: sect form above} and the identity
$$
p\,\Re c = \Re(b-c) - \Re\left(b+  \cJ_p c\right).
$$
Clearly, by \eqref{eq: sect form above} we have
\begin{eqnarray}
\label{e: reb -rec}|\Re b - \Re c| &\leqsim& \sqrt{V}, \\
\label{e : imb-imc} |\Im b - \Im c| &\leqsim& \sqrt{V}.
\end{eqnarray}
Combining \eqref{e: reb -rec} with the estimate of $|\Re c|$, we get that  $|\Re b| \leqsim \sqrt{V}$. On the other hand, by \eqref{i : V dom bc} and \eqref{e : Jp e Js} we have
\begin{equation}\label{e : imb +imc}
|\Im b +\Im c| = |\Im\left(b +\cJ_p c \right)| \leqsim \sqrt{V}.
\end{equation}
Combining \eqref{e : imb-imc} and \eqref{e : imb +imc}, we obtain that $|\Im b|, |\Im c| \leqsim \sqrt{V}$. All the constants which appear in the inequalities  (continuously) depend only on $p, s, \Lambda, M, \mu_p(\oA)$.
\item 
By \eqref{e : Jp e Js}, $\cJ_{p +\delta} \xi = \cJ_p\xi + \delta \, \Re \xi$, hence \eqref{i : V dom Rec} implies
\begin{eqnarray*}
\Gamma_{p+\delta}^\oA(\cdot,\xi) & = & \Gamma_p^\oA(\cdot,\xi) + \delta \, \Re \big[ \sk{A\xi}{\Re \xi} + \sk{\xi}{\Re c}\big] \\
& \geq& \mu_p (|\xi|^2 + V ) - |\delta| \big(\Lambda |\xi|^2 + |\xi||\Re c| \big)\\
& \geq & (\mu_p - |\delta| \Lambda) |\xi|^2 - |\delta|C_p |\xi| \sqrt{V} + \mu_p V \\
& \geq& \big(\mu_p - |\delta|\Lambda - |\delta| (C_p/2) \big) |\xi|^2 + \big[\mu_p - |\delta| (C_p/2) \big] V \\
& \geqsim& |\xi|^2 + V,
\end{eqnarray*}
if $|\delta|$ is small enough.
\item
From the properties \eqref{i : R-lin}, \eqref{i : sym real Jp} of $\cJ_p$, we infer
\begin{eqnarray*}
&&\hskip -90 pt \Re\sk{e^{i\theta}A\xi}{\cJ_p \xi} + \Re\sk{e^{i\theta}b+\cJ_p( e^{i\theta}c)}{\xi} +  (\cos\theta) V \\
&=& \cos\theta \, \Re\big[\sk{A\xi}{ \cJ_p\xi} + \sk{b+\cJ_pc}{\xi} +  V  \big] \\
&&- \, \sin\theta \, \Im\big[ \sk{A\xi}{ \cJ_p\xi} + \sk{b+(p-1)\cJ_q c}{\xi} \big].
\end{eqnarray*}
By \eqref{i : V dom Rec} we get $\left|b+(p-1)\cJ_q c\right| \leq C_q \sqrt{V}$. Therefore, we may proceed as
\begin{eqnarray*}
&\geq& (\mu_p \cos\theta - \mod{\sin\theta}\|\cJ_p\|\Lambda) |\xi|^2 - C_q \mod{\sin\theta} \sqrt{V}|\xi| + \mu_p (\cos\theta) V.
\end{eqnarray*}
\begin{alignat*}{2}
\hskip -177 pt \text{We conclude choosing $\theta$ small enough.}
\tag*{\qedhere}
\end{alignat*}
\end{enumerate}
\end{proof}

In \cite[Corollary 5.17 (3)]{CD-DivForm}, the authors proved that $\cA_p$ is invariant under taking adjoints. The following proposition shows that this invariance still holds for $\cS_p$. Moreover, in \cite[Corollary 5.16]{CD-DivForm} it was proved that $\{ \cA_p(\Omega) : p \in [2, \infty) \}$ is a decreasing chain of matrix classes. In Proposition \ref{p : neg asp Bp} we shall show that $\{ \cS_p(\Omega) : p \in [2, \infty) \}$ is not. However, an interpolation property holds for $\cS_p$ which implies that $\{ \cB_p(\Omega) : p \in [2, \infty) \}$ is a decreasing chain.
\begin{proposition}\label{p: more on B}
Let $\Omega, A, b, c, V$ satisfy the standard assumptions of Section \ref{s : stand ass}. Let $ 1 < r < p < \infty$, $q=p/(p-1)$. Then the following assertions hold.
\begin{enumerate}[{\rm (i)}]
\item
\label{i : invarianza per coniugazione}
$(A,b,c,V) \in \cS_p(\Omega)$ if and only if $(A^*, c, b, V) \in \cS_q(\Omega)$.
\item
\label{i : interpolation for B}
We have
$$
\cS_r(\Omega) \cap \cS_p(\Omega) \subset \cS_s(\Omega)
$$
 for all $s \in [r,p]$.

In particular, $\{ \cB_p(\Omega) : p \in [2, \infty) \}$ is a decreasing chain.
\end{enumerate}
\end{proposition}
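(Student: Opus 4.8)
The plan is to prove (i) and (ii) separately, with the bulk of the work concentrated in the interpolation statement (ii).

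\emph{Part (i).} For the adjointness invariance, I would start from the definition of $\Gamma_p^{\oA}$ and compute $\Gamma_q^{(A^*,c,b,V)}(x,\xi)$. The quadratic term transforms by property \eqref{i : sym real Jp} of $\cJ_q$ together with $\cJ_p\cJ_q = I$ (property \eqref{i : inv Jp}): writing $\xi = \cJ_p\eta$, we have $\Re\sk{A^*\xi}{\cJ_q\xi} = \Re\sk{\xi}{A\cJ_q\xi} = \Re\sk{\cJ_p\eta}{A\eta} = \Re\sk{A\eta}{\cJ_p\eta}$, so the quadratic forms of $A$ at $\eta$ and of $A^*$ at $\cJ_p\eta$ coincide; in particular $\Delta_q(A^*)>0 \iff \Delta_p(A)>0$, so $A^*\in\cA_q$ iff $A\in\cA_p$ (this is already in \cite[Corollary 5.17]{CD-DivForm}). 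For the first-order term, one checks $\Re\sk{c+\cJ_q b}{\xi}$ matches $\Re\sk{b+\cJ_pc}{\eta}$ after the substitution $\xi=\cJ_p\eta$, again using \eqref{i : sym real Jp}: $\Re\sk{\cJ_q b}{\cJ_p\eta} = \Re\sk{b}{\cJ_q\cJ_p\eta} = \Re\sk{b}{\eta}$ and $\Re\sk{c}{\cJ_p\eta} = \Re\sk{\cJ_p c}{\eta}$. The potential term $V$ is unchanged, and since $\cJ_p$ is an $\R$-linear bijection of $\C^d$ with $|\cJ_p\eta|\sim|\eta|$, the inequality \eqref{e : equiv B} for $(A,b,c,V)$ at exponent $p$ is equivalent to the one for $(A^*,c,b,V)$ at exponent $q$ (with $\mu$ adjusted by a factor depending only on $\|\cJ_p\|$). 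Finally the auxiliary condition \eqref{eq: sect form above}, $|b-c|\le M\sqrt V$, is symmetric in $b,c$, hence preserved. This gives (i).

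\emph{Part (ii).} Here I would fix $s\in(r,p)$ and write $s$ as a convex combination, but the natural interpolation parameter is in the \emph{reciprocals}: there is $\theta\in(0,1)$ with $1/s = (1-\theta)/r + \theta/p$, equivalently $\cJ_s = (1-\theta)\cJ_r + \theta\cJ_p$ — this follows from property \eqref{i : form for Jp}, since $\cJ_p\xi = p\,\Re\xi - \overline\xi$ is affine in $p$. Wait: $\cJ_s\xi = s\Re\xi - \overline\xi$, and I want $s = (1-\theta)r+\theta p$, which is the convex combination of $r,p$ themselves, not of their reciprocals; so take $\theta$ with $s=(1-\theta)r+\theta p$ and then $\cJ_s = (1-\theta)\cJ_r + \theta\cJ_p$ directly from \eqref{i : form for Jp}. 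Consequently $\Gamma_s^{\oA}(x,\xi) = (1-\theta)\Gamma_r^{\oA}(x,\xi) + \theta\,\Gamma_p^{\oA}(x,\xi)$, because every term in \eqref{e : def Gammap} is affine in the operator $\cJ_{(\cdot)}$ — the quadratic term $\Re\sk{A\xi}{\cJ_s\xi}$ splits linearly, the first-order term $\Re\sk{b+\cJ_sc}{\xi}$ splits linearly (note $b = (1-\theta)b+\theta b$), and $V$ is constant in the exponent. Therefore if $\oA\in\cS_r(\Omega)\cap\cS_p(\Omega)$ with parameters $\mu_r,\mu_p$, then $\Gamma_s^{\oA}(x,\xi)\ge \big((1-\theta)\mu_r + \theta\mu_p\big)(|\xi|^2+V(x))$, so \eqref{e : equiv B} holds at exponent $s$ with $\mu_s \ge (1-\theta)\mu_r+\theta\mu_p > 0$. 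One also needs $A\in\cA_s(\Omega)$, i.e. $\Delta_s(A)>0$: this is immediate from the same convexity, $(s/2)\Delta_s(A) \ge$ the essinf-min of $\Gamma_s^{\oA}$ minus the lower-order contributions — more cleanly, the dilation argument after \eqref{e : weak Gamma cond} shows \eqref{e : equiv B} at exponent $s$ forces $\Delta_s(A)\ge 2\mu_s/s>0$ — but in fact it is cleanest to note directly that $\Delta_{(\cdot)}(A)$ is known to be concave (indeed the function $1/p\mapsto\Delta_p(A)$ is concave, \cite[Corollary 5.16]{CD-DivForm}), so $A\in\cA_r\cap\cA_p$ already gives $A\in\cA_s$; alternatively repeat the affine-in-$\cJ$ splitting for the pure matrix quantity $\Re\sk{A\xi}{\cJ_s\xi}$. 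The condition \eqref{eq: sect form above} does not involve the exponent, so it is inherited. This proves $\oA\in\cS_s(\Omega)$.

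\emph{The decreasing chain.} For $2\le r\le p$, suppose $\oA\in\cB_p(\Omega)=\cS_p(\Omega)\cap\cS_q(\Omega)$. I want $\oA\in\cB_r(\Omega)=\cS_r(\Omega)\cap\cS_{r'}(\Omega)$ where $r'=r/(r-1)$. Since $2\le r\le p$ we have $q\le r'\le 2$, hence $q \le r' \le p$ as well. Now $\cS_r(\Omega)\supseteq \cS_2(\Omega)\cap\cS_p(\Omega)$ by part (ii) applied with the triple $(2,r,p)$, wait — I need $\oA\in\cS_2\cap\cS_p$; but $\cS_2 = \cB(\Omega)\supseteq\cB_p(\Omega)$ trivially (every element of $\cB_p$ satisfies \eqref{eq: sect form below}–\eqref{eq: sect form above}, as noted in the excerpt), and $\oA\in\cS_p$ by assumption, so $\oA\in\cS_2\cap\cS_p\subseteq\cS_r$ by (ii). Likewise $\oA\in\cS_q\cap\cS_2\subseteq\cS_{r'}$ by (ii) applied to $(q,r',2)$. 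Hence $\oA\in\cS_r\cap\cS_{r'}=\cB_r(\Omega)$, which is the asserted monotonicity.

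\emph{Main obstacle.} No step is genuinely hard; the only point requiring care is getting the interpolation \emph{parameterisation} right — it is tempting to interpolate $1/s$ between $1/r$ and $1/p$ (the natural thing for $\cA_p$, where concavity of $1/p\mapsto\Delta_p$ is the clean statement), whereas for $\cS_p$ the correct and much simpler observation is that $\Gamma_s^{\oA}$ is \emph{affine} (not merely concave) in $s$ through the affine dependence $\cJ_s\xi = s\Re\xi-\overline\xi$, so one interpolates $s$ itself linearly between $r$ and $p$. Once that is pinned down, everything reduces to reading off the constants and checking that the exponent-free conditions \eqref{eq: sect form above} are inherited, which is immediate.
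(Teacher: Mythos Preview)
Your proof is correct and follows essentially the same route as the paper: for (i) the substitution $\eta=\cJ_q\xi$ together with properties \eqref{i : sym real Jp} and \eqref{i : inv Jp} of $\cJ_p$, and for (ii) the affine identity $\cJ_s=(1-\theta)\cJ_r+\theta\cJ_p$ when $s=(1-\theta)r+\theta p$, yielding $\Gamma_s^{\oA}=(1-\theta)\Gamma_r^{\oA}+\theta\Gamma_p^{\oA}$.

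One small correction in your decreasing-chain argument: the inclusion $\cB_p(\Omega)\subseteq\cS_2(\Omega)=\cB(\Omega)$ is \emph{not} trivial from the definitions (and the paper does not assert it as such; note that Theorem~\ref{t : contract} and Corollary~\ref{c: N analytic sem} hypothesise $\oA\in\cB(\Omega)$ separately). It does follow from (ii), since for $p\ge 2$ one has $q\le 2\le p$ and hence $\cS_q\cap\cS_p\subseteq\cS_2$. In fact your detour through $\cS_2$ is unnecessary: since $2\le r\le p$ implies $q\le r'\le 2\le r\le p$, both $r$ and $r'$ lie in $[q,p]$, so (ii) applied directly to the pair $(q,p)$ gives $\cB_p=\cS_q\cap\cS_p\subseteq\cS_r\cap\cS_{r'}=\cB_r$ in one step.
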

\begin{proof}
\begin{enumerate}[{\rm (i)}]
\item
 By  \eqref{i : inv Jp} and \eqref{i : sym real Jp}, we obtain that for all $\xi \in \C^d$
\begin{equation}\label{e : coniug delta}
\Re\sk{A^*\xi}{\cJ_q \xi} =  \Re\sk{A\left( \cJ_q \xi \right)}{\cJ_p\left( \cJ_q \xi \right)},
\end{equation}
\begin{equation}\label{e : coniug first term}
\Re\sk{c+\cJ_q b}{\xi} = \Re\sk{\cJ_q\left(b+\cJ_pc\right)}{\xi} = \Re\sk{b+\cJ_pc}{\cJ_q\xi}.
\end{equation}
Since $|\cJ_s \xi| \sim_s |\xi|$ for all $\xi \in \C^d$ and all $s \in (1,\infty)$, we conclude combining \eqref{e : equiv B}, \eqref{e : coniug delta} and \eqref{e : coniug first term}.
\item
Let $t \in (0,1)$ such that $tp +(1-t)r = s$. By assumptions we have for all $\xi \in \C^d$
\begin{eqnarray*}
t \left( \Re\sk{A\xi}{\cJ_p\xi} + \Re\sk{b+\cJ_pc}{\xi} +  V  \right) &\geqsim& t \,( |\xi|^2+ V), \\
(1-t)\left( \Re\sk{A\xi}{ \cJ_r\xi} + \Re\sk{b+\cJ_rc}{\xi} +  V \right) &\geqsim& (1-t) \,  ( |\xi|^2+V).
\end{eqnarray*}
Using the identity
\begin{eqnarray*}
t \cJ_p +(1-t)\cJ_r & = &  \cJ_s
\end{eqnarray*}
\begin{alignat*}{2}
\hskip -152 pt \text{and summing the terms above we conclude.}
\tag*{\qedhere}
\end{alignat*}
\end{enumerate}
\end{proof}

We conclude this subsection underlining that $\{ \cS_p(\Omega) : p \in [2, \infty) \}$ is not a decreasing chain. Moreover, $\cS_p(\Omega)$ is not invariant under conjugation of $p$. This further justifies our introduction of another class ($\cB_p$).
\begin{proposition}\label{p : neg asp Bp}
For all  $V \in L_{\rm{loc}}^1(\Omega, \R_+) \setminus \{0\}$, $p,r \in (1, \infty)$, $p \ne r$, $B \in \cA_r(\Omega)$, there exist $b=b(p,r,B,V), \, c= c(p,r,B,V) \in L^\infty(\Omega,\C^d)$ such that
\begin{itemize}
\item $(A,b,c,V) \in \cS_p(\Omega)$ for all $A \in \cA_p(\Omega)$,
\item $(B,b,c,V) \not\in \cS_r(\Omega)$.
\end{itemize}
In particular, for all $V \in L_{\rm{loc}}^1(\Omega, \R_+) \setminus \{0\}$, $p,r \in (1, \infty)$, $p \ne r$,  $A \in \cA_p(\Omega) \cap \cA_r(\Omega)$, there exist $b, c \in L^\infty(\Omega, \C^d)$ such that $(A,b,c,V) \in \cS_p(\Omega) \setminus \cS_r(\Omega)$.
\end{proposition}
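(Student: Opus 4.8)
The plan is to construct $b$ and $c$ explicitly by choosing a single fixed complex vector direction and exploiting the difference between the operators $\cJ_p$ and $\cJ_r$. Since we have the freedom to make $b+\cJ_p c$ vanish while $b+\cJ_r c$ does not (or vice versa), the natural idea is to pick $c$ proportional to $\sqrt{V}$ times a fixed unit vector $e \in \C^d$ and then set $b = -\cJ_p c$, so that $(A,b,c,V)$ has $\Gamma_p^{(A,b,c,V)}(x,\xi) = \Re\sk{A\xi}{\cJ_p\xi} + V \geq \tfrac{p}{2}\Delta_p(A)\,|\xi|^2 + V$, which is bounded below by $\mu(|\xi|^2+V)$ for every $A \in \cA_p(\Omega)$; one must also check the side condition \eqref{eq: sect form above}, i.e. $|b-c| \leq M\sqrt{V}$, which holds because both $b$ and $c$ are fixed bounded multiples of $\sqrt{V}$. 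Here we need $\sqrt{V} \in L^\infty_{\mathrm{loc}}$ only in the sense that $b,c$ must lie in $L^\infty(\Omega,\C^d)$, so in fact one should take $c = \kappa\,\mathbf{1}_{E}\,e$ for a suitable measurable set $E$ of positive measure where $V$ is bounded away from $0$ and $\infty$ — such a set exists because $V \not\equiv 0$ — rather than literally $\sqrt{V}\,e$.

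The key computation is then to show $(B,b,c,V) \notin \cS_r(\Omega)$. With $b = -\cJ_p c$ we have $b + \cJ_r c = (\cJ_r - \cJ_p)c = (r-p)\,\Re c$ by the identity \eqref{e : Jp e Js}. So on the set $E$, $\Gamma_r^{(B,b,c,V)}(x,\xi) = \Re\sk{B\xi}{\cJ_r\xi} + (r-p)\Re\sk{\Re c}{\xi} + V$. To violate \eqref{e : equiv B} I would choose $\xi$ real and antiparallel to $\Re c$ with a carefully tuned magnitude: taking $\xi = -t\,\Re c$ with $t>0$, the middle term becomes $-(r-p)t|\Re c|^2$, which for the appropriate sign of $r-p$ is a large negative linear-in-$t$ contribution, while $\Re\sk{B\xi}{\cJ_r\xi} \leq \|B\|_\infty\|\cJ_r\|\,t^2|\Re c|^2$ is only quadratic. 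Hence for small $t>0$ the whole expression is dominated by the negative linear term and can be made strictly less than $\mu(|\xi|^2 + V) \geq 0$; more precisely one checks $\Gamma_r^{(B,b,c,V)}(x,-t\Re c) < 0$ for suitable small $t$, which is incompatible with membership in $\cS_r(\Omega)$ for any $\mu > 0$. If $r - p$ has the "wrong" sign so that $-t\Re c$ gives the wrong direction, one simply replaces $\xi = -t\Re c$ by $\xi = +t\Re c$; only the sign of the linear term matters and we have both signs available. One small point: we must scale $\kappa$ (equivalently $|\Re c|$) large enough, relative to $\|B\|_\infty$, $\|\cJ_r\|$ and the value of $V$ on $E$, so that the negative term actually wins against both $\Re\sk{B\xi}{\cJ_r\xi}$ and $V$ at the chosen $t$; since $\kappa$ is a free parameter and $c$ only needs to be bounded, this is harmless.

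For the "in particular" clause, one takes $B = A$ and observes that $A \in \cA_p(\Omega)\cap\cA_r(\Omega) \subseteq \cA_r(\Omega)$ already provides the hypothesis $B \in \cA_r(\Omega)$ needed to invoke the first part, and then $(A,b,c,V) \in \cS_p(\Omega)$ by the first bullet while $(A,b,c,V) \notin \cS_r(\Omega)$ by the second, giving $(A,b,c,V) \in \cS_p(\Omega)\setminus\cS_r(\Omega)$.

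The main obstacle I anticipate is bookkeeping rather than conceptual: one has to (a) pin down the set $E$ and the constant $\kappa$ so that $b,c \in L^\infty$, (b) verify the accretivity/sectoriality side condition \eqref{eq: sect form above} for the constructed $(A,b,c,V)$ and $(B,b,c,V)$ (which is immediate once $b,c$ are bounded multiples of an indicator supported where $V$ is bounded below), and (c) keep careful track of signs in the $\cJ_r-\cJ_p = (r-p)\Re(\cdot)$ identity so that the violating vector $\xi$ is chosen in the correct half-line. None of these is deep; the only genuinely quantitative step is the choice of $t$ and $\kappa$ making $\Gamma_r^{(B,b,c,V)}$ strictly negative, and that is a one-line optimization of a quadratic-minus-linear expression.
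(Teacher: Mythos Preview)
Your proposal is correct and follows essentially the same route as the paper: set $b=-\cJ_p c$ so that $b+\cJ_p c=0$, observe $b+\cJ_r c=(r-p)\Re c$ via \eqref{e : Jp e Js}, then scale $|c|$ large enough that the quadratic $t\mapsto\Gamma_r^{(B,b,c,V)}(x,\pm t\,\Re c/|\Re c|)$ dips below zero. The paper takes $c$ globally with $|\Re c|\propto\sqrt V$ and sends the proportionality constant $\varrho\to\infty$, whereas you localize to a set $E$ on which $V$ is bounded above and below---but the mechanism is identical (and your phrase ``for small $t$'' should really read ``for the minimizing $t$ once $\kappa$ is large,'' as you yourself effectively acknowledge in the following sentence).
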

\begin{proof}
Fix $V \in L_{\rm{loc}}^1(\Omega, \R_+) \setminus \{0\}$, $p,r \in (1, \infty)$, with $p \ne r$ and $B \in \cA_r(\Omega)$. Let $\varrho >0$ and $v = v_\varrho \in L^\infty(\Omega, \R^d)\setminus \{0\}$ such that
$$
(r-p)^2 |v|^2 = \varrho V.
$$
We define 
\begin{eqnarray*}
b=b_\varrho= -(p-1)v- i \frac{p}{2} v \qquad \text{and} \qquad c=c_\varrho= v + i \frac{p}{2} v.
\end{eqnarray*}
Then 
$$
b+\cJ_p c = 0, \quad |b-c|^2 = \frac{2p^2}{(r-p)^2} \varrho V, \qquad b+\cJ_r c = (r-p) v.
$$
Therefore $(A,b_\varrho,c_\varrho,V) \in \cS_p(\Omega)$ for all $A \in \cA_p(\Omega)$ and $\varrho >0$. Moreover, setting $\sigma := v/|v|$, we have for all $t \geq 0$ and $x \in \Omega$ such that $V(x) \ne 0$
\begin{alignat*}{2}
\Gamma_r^{B,b,c,V}(x, -t \sigma) &=t^2\,  \Re\sk{B(x) \sigma}{\cJ_r \sigma} - t (r-p) |v(x)| + V(x) \\
 &=  t^2 \, \Re\sk{B(x) \sigma}{\cJ_r \sigma} - t\sqrt{\varrho} \sqrt{V(x)} + V(x) \underset{\varrho \rightarrow +\infty}{\longrightarrow} -\infty.
\tag*{\qedhere}
\end{alignat*}
\end{proof}

\section{The power functions}\label{s: conv of pow fun and Bell fun}
Recall the definition \eqref{eq: N Bellman} of the Bellman function $\cQ$.
Owing to the tensor structure of $\cQ$, the generalised convexity of $\cQ$ is related to that of its elementary building blocks: the power functions (see \cite{CD-DivForm}).

Let $r>0$. Define $F_r : \C \rightarrow \R_+$ by
$$
F_r(\zeta) = |\zeta|^r, \quad \zeta \in \C.
$$
Let $ \textbf{1}$ denote the constant function of value $1$ on $\C$, that is, $ \textbf{1} = F_0$. 
\smallskip

A rapid calculation \cite{CD-DivForm, CD-Mixed, CD-Potentials, CD-mult} shows that
\begin{eqnarray}
\label{e : GradFp}
(\nabla F_r)(\zeta) &=& r |\zeta|^{r-2} \cV_{1}(\zeta) \quad \forall \zeta \in \C\setminus\{0\},  \\ 
\label{e : HessFp}
(D^2F_r)(\zeta) &=& r |\zeta|^{r-2} \left(I_{\R^{2}} + (r-2) \frac{ \cV_{1}(\zeta)}{|\zeta|} \otimes \frac{ \cV_{1}(\zeta)}{|\zeta|}\right)\quad \forall \zeta \in \C \setminus\{0\}.
\end{eqnarray}
Compare \eqref{e : HessFp} with \cite[(5.5)]{CD-DivForm} and \cite[(2.5)]{CD-Potentials}.
\smallskip

In \cite{CD-DivForm}, the authors proved that $F_p$ is $A$-convex if and only if $\Delta_p(A) \geq 0$, and $\cQ$ is strictly $(A,B)$-convex provided that $A, B \in \cA_p(\Omega)$. 

 In the next subsections we shall prove that $F_p$ is $\oA$-convex if and only if $\Gamma_p^{\oA} \geq 0$, and $\cQ$ is strictly $(\oA,\oB)$-convex provided that $\oA \in (\cS_p \cap \cS_2)(\Omega)$ and $\oB \in \cS_{q}(\Omega)$, generalising the previous results.

\subsection{Generalised convexity of power functions}
Let $r >0$. Before enunciating the next lemma, extend $\cJ_r$, $\Gamma_r$ and $\cS_r$ to $r>0$ by the same rules \eqref{e : operator Ip}, \eqref{e : def Gammap} and \eqref{e : equiv B}. Recall the definitions \eqref{eq: def c-Hess 1dim} and \eqref{d : Hess sup gen 1dim}. From \eqref{e : GradFp} and \eqref{e : HessFp} it follows that for all $\zeta \in \C \setminus\{0\}$ and all $X \in \C^d$
\begin{equation}
\label{e : HessGenFp}
\mathbf{H}_{F_p}^{\oA}[\zeta; X] =  |\zeta|^{p-2} \left( H_{F_p}^{A}[\zeta/|\zeta|; X] + |\zeta|  H_{F_p}^{(b,c)}[\zeta/|\zeta|; X]+ |\zeta|^2 G_{F_p}^{V}[\zeta/|\zeta|] \right). 
\end{equation}
The next result is modelled after \cite[Lemma~5.6]{CD-DivForm}.
We define 
$$
\sigma=e^{-i\arg(\zeta)}X \in \C^d.
$$
By  \eqref{e : realis compl prod}, \eqref{e : GradFp}, \eqref{e : HessFp} and by adequately modifying the proof of \cite[Lemma~5.6]{CD-DivForm} we get the following result.
\begin{lemma}
\label{l : 11}
Let $r > 0$, $b,c \in\C^d$, $V \in \R$, $\zeta \in \C$ and $X \in \C^d$. Then, for $|\zeta|=1$,
$$
\aligned
r^{-1} H_{F_r}^{(b,c)}[\zeta; X] &=  \,\Re \sk{b+\cJ_rc}{\sigma},\\
r^{-1} G_{F_p}^{V}[\zeta]  &=   V |\zeta|^2.
\endaligned
$$
\end{lemma}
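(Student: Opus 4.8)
The plan is to compute the two quantities in \eqref{eq: def c-Hess 1dim} directly from the explicit formulas \eqref{e : GradFp} and \eqref{e : HessFp} for $\nabla F_r$ and $D^2 F_r$, restricting to $|\zeta|=1$ and using the change of variables $\sigma = e^{-i\arg\zeta}X$. First I would record that for $|\zeta|=1$ one has $\cV_1(\zeta) = (\cos\arg\zeta, \sin\arg\zeta)$, which is the matrix of rotation by $\arg\zeta$ applied to $(1,0)$; equivalently, multiplication by $\zeta$ on $\C$ corresponds under $\cV_1$ to an orthogonal (rotation) matrix $R_\zeta$ on $\R^2$, and the same rotation acting coordinatewise (i.e. $R_\zeta\otimes I_{\R^d}$) on $\R^{2d}$ realizes multiplication by $\zeta$ on $\C^d$. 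The point is that $\cV_d(\zeta c) = (R_\zeta\otimes I)\cV_d(c)$, $\cV_d(e^{-i\arg\zeta}X) = (R_\zeta^{-1}\otimes I)\cV_d(X)$, and $\cV_1(\zeta\langle X,b\rangle_{\C^d}$-type terms transform the same way.

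Next, for the second identity: by \eqref{e : GradFp} with $|\zeta|=1$ we have $\nabla F_r(\zeta) = r\,\cV_1(\zeta)$, so
$$
G^V_{F_r}[\zeta] = \langle \nabla F_r(\zeta), \cV_1(V\zeta)\rangle_{\R^2} = r\, V\,\langle \cV_1(\zeta),\cV_1(\zeta)\rangle_{\R^2} = r\,V\,|\zeta|^2,
$$
using \eqref{e : realis compl prod}; dividing by $r$ gives $r^{-1}G^V_{F_r}[\zeta] = V|\zeta|^2$ (the statement writes $G_{F_p}^V$, which for $|\zeta|=1$ equals $r V$, but keeping the $|\zeta|^2$ makes it consistent with the way it is used in \eqref{e : HessGenFp}). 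For the first identity I would split $H^{(b,c)}_{F_r}$ into its Hessian part and its gradient part. The gradient part is $\langle\nabla F_r(\zeta),\cV_1(\langle X,b\rangle_{\C^d})\rangle_{\R^2} = r\langle \cV_1(\zeta),\cV_1(\langle X,b\rangle)\rangle = r\,\Re\big(\overline{\zeta}\,\langle X,b\rangle_{\C^d}\big) = r\,\Re\langle e^{-i\arg\zeta}X, b\rangle_{\C^d} = r\,\Re\langle\sigma,b\rangle_{\C^d} = r\,\Re\langle b,\sigma\rangle_{\C^d}$, since $|\zeta|=1$. For the Hessian part, I insert \eqref{e : HessFp} at $|\zeta|=1$, namely $D^2F_r(\zeta) = r\big(I_{\R^2} + (r-2)\,\cV_1(\zeta)\otimes\cV_1(\zeta)\big)$, tensor with $I_{\R^d}$, and pair $(D^2F_r(\zeta)\otimes I_{\R^d})\cV_d(X)$ with $\cV_d(\zeta c)$. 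The identity term contributes $r\,\langle \cV_d(X),\cV_d(\zeta c)\rangle_{\R^{2d}} = r\,\Re\langle X,\zeta c\rangle_{\C^d} = r\,\Re\langle e^{-i\arg\zeta}X, c\rangle = r\,\Re\langle\sigma, c\rangle = r\,\Re\langle c,\sigma\rangle$; the rank-one term contributes $r(r-2)\langle\cV_1(\zeta),\cdot\rangle$ factors which, after the same $e^{-i\arg\zeta}$ bookkeeping, collapse to $r(r-2)\,\Re\langle X,\zeta\rangle_{\C^d}\cdot\Re\langle \zeta,\zeta c\rangle$-type products equal to $r(r-2)\,\Re\langle\sigma, \cV_1(1)\rangle$-scalars — concretely $r(r-2)\,(\Re\sigma_{(1)})(\Re(c)_{(1)}+\cdots)$, which reassemble to $r(r-2)\,\Re\sigma \cdot \Re c$ in the sense $r(r-2)\operatorname{Re}\langle \operatorname{Re}c,\sigma\rangle$. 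Adding the three pieces and using $\cJ_r\sigma = r\,\Re\sigma - \overline{\sigma}$ from \eqref{i : form for Jp} (equivalently property \eqref{i : R-lin}, as $\cJ_r$ is extended to $r>0$ by the same rule), one checks $r\,\Re\langle b,\sigma\rangle + \big[r\,\Re\langle c,\sigma\rangle + r(r-2)\,\Re\langle\Re c,\sigma\rangle\big] = r\,\Re\langle b + \cJ_r c,\sigma\rangle$, which is exactly $r\cdot r^{-1}H^{(b,c)}_{F_r}[\zeta;X] = H^{(b,c)}_{F_r}[\zeta;X]$ after dividing by $r$.

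The only genuinely delicate point is the bookkeeping of the $(r-2)$ rank-one term: one must verify that $\langle\cV_1(\zeta)\otimes\cV_1(\zeta)\rangle$ acting on the realified vectors produces precisely the combination $\Re c - \overline{\sigma}$ correction needed to turn $\Re c$ into the $\cJ_r c$ appearing in the statement, i.e. that the ``$-\overline{\xi}$'' in $\cJ_r\xi = r\Re\xi - \overline\xi$ is accounted for by the difference between $\Re\langle c,\sigma\rangle$-type terms coming from the identity and rank-one pieces. This is a short linear-algebra computation in $\R^2$ (or just in $\C$, writing $\sigma = \sigma_1 + i\sigma_2$ and $c = c_1 + ic_2$ and expanding both sides), and it is the step I would write out carefully; everything else is a direct substitution. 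As the lemma says, the result follows by adequately modifying the proof of \cite[Lemma~5.6]{CD-DivForm}, since the Hessian term $H^A_{F_r}$ is treated there verbatim and only the new first- and zero-order pieces $H^{(b,c)}_{F_r}$ and $G^V_{F_r}$ require the computation above.
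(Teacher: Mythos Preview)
Your approach is correct and is precisely what the paper does: it invokes \eqref{e : realis compl prod}, \eqref{e : GradFp}, \eqref{e : HessFp} and refers to the computation in \cite[Lemma~5.6]{CD-DivForm}, which is exactly the direct substitution you carry out. Your identification of the rank-one term as the only delicate point is accurate; once you check that $(\cV_1(\zeta)\otimes\cV_1(\zeta)\otimes I_{\R^d})\cV_d(X)=\cV_d(\zeta\,\Re\sigma)$ for $|\zeta|=1$, the Hessian part becomes $r\,\Re\sk{c}{\sigma}+r(r-2)\,\Re\sk{c}{\Re\sigma}=r\,\Re\sk{\cJ_r c}{\sigma}$, and the rest follows.
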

\begin{proposition}\label{p: gen conv power function} 
Let $\Omega$ and $\oA=(A, b, c, V)$ satisfy the standard assumptions of Section \ref{s : stand ass}. Let $r>0$, $\zeta \in \C\setminus\{0\}$ and $X \in \C^{d}$. Suppose that  for a.e. $x \in \Omega$ 
\begin{equation*}
\Gamma_r^{\oA}(x,\xi) \geq 0, \quad \forall \xi \in \C^d.
\end{equation*}
Then 
$$
\mathbf{H}_{F_r}^{\oA(x)}[\zeta; X]  \geq 0.
$$
Moreover, if  $\oA \in \cS_r(\Omega)$, then for a.e. $x \in \Omega$ we have
\begin{eqnarray*}\label{eq: gen conv power function}
\mathbf{H}_{F_r}^{\oA(x)}[\zeta; X]  \geq r \mu_r(A)  |\zeta|^{r-2}\bigl( |X|^2 + V(x) |\zeta|^2 \bigr).
\end{eqnarray*}
\end{proposition}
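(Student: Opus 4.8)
The plan is to reduce everything to the case $|\zeta|=1$ by exploiting the homogeneity identity \eqref{e : HessGenFp}, which already isolates the factor $|\zeta|^{p-2}$ (here $|\zeta|^{r-2}$, after the extension of all quantities to general $r>0$). So first I would write
$$
\mathbf{H}_{F_r}^{\oA}[\zeta; X] = |\zeta|^{r-2}\left( H_{F_r}^{A}[\zeta/|\zeta|; X] + |\zeta|\, H_{F_r}^{(b,c)}[\zeta/|\zeta|; X] + |\zeta|^2 G_{F_r}^{V}[\zeta/|\zeta|] \right),
$$
valid for $\zeta\ne 0$, and set $\zeta_0 = \zeta/|\zeta|$, so $|\zeta_0|=1$. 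Then I would invoke Lemma~\ref{l : 11} to rewrite the middle and last terms: $r^{-1}H_{F_r}^{(b,c)}[\zeta_0;X] = \Re\sk{b+\cJ_r c}{\sigma}$ and $r^{-1}G_{F_r}^{V}[\zeta_0] = V$, where $\sigma = e^{-i\arg\zeta}X$, so in particular $|\sigma| = |X|$.

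Next I would recall the corresponding identity for the pure second-order term. Adapting \cite[Lemma~5.6]{CD-DivForm} (as the excerpt announces just before Lemma~\ref{l : 11}), one has $r^{-1} H_{F_r}^{A}[\zeta_0; X] = \Re\sk{A\sigma}{\sigma} + (r-2)\Re\sk{A\sigma}{\Re\sigma}\cdot(\text{something})$ — more precisely the known formula gives $r^{-1}H_{F_r}^{A}[\zeta_0;X] = \Re\sk{A\sigma}{\sigma + (r-2)\Re(\overline{\sigma}\,)\,\cdot}$, which after collecting equals $\Re\sk{A\sigma}{\tfrac{2}{r}\cJ_r\sigma}$ up to the normalisation; concretely, using property \eqref{i : eq def delta} and the definition \eqref{e : operator Ip} of $\cJ_r$, the clean statement is
$$
r^{-1}H_{F_r}^{A}[\zeta_0; X] = \tfrac{2}{r}\,\Re\sk{A\sigma}{\cJ_r\sigma}.
$$
Substituting all three pieces back, the bracket becomes
$$
r\left( \tfrac{2}{r}\Re\sk{A\sigma}{\cJ_r\sigma} + |\zeta|\,\Re\sk{b+\cJ_r c}{\sigma} + |\zeta|^2 V \right).
$$
Now apply a further rescaling: put $\eta = |\zeta|\sigma \in \C^d$. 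Then $\Re\sk{A\sigma}{\cJ_r\sigma} = |\zeta|^{-2}\Re\sk{A\eta}{\cJ_r\eta}$ by $\R$-bilinearity of $(\xi,\zeta)\mapsto \Re\sk{A\xi}{\cJ_r\zeta}$ (property \eqref{i : R-lin}), and $|\zeta|\Re\sk{b+\cJ_r c}{\sigma} = |\zeta|^{-1}\cdot|\zeta|\Re\sk{b+\cJ_r c}{\eta}\cdot\ldots$ — carefully: $|\zeta|\sigma = \eta$, so $|\zeta|\Re\sk{b+\cJ_r c}{\sigma} = \Re\sk{b+\cJ_r c}{\eta}$. Hence the bracket is exactly $|\zeta|^{-2}\cdot\big(2\Re\sk{A\eta}{\cJ_r\eta} + r|\zeta|^2\Re\sk{b+\cJ_rc}{\sigma}\cdot|\zeta| \ldots\big)$; organising the constants one finds the bracket equals $|\zeta|^{-2}\cdot r\cdot\Gamma_r^{\oA}(x,\eta)$ up to absorbing the factor from $\tfrac{2}{r}\cJ_r = \cI_r$. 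The upshot is the identity
$$
\mathbf{H}_{F_r}^{\oA(x)}[\zeta; X] = r\,|\zeta|^{r-2}\,\Gamma_r^{\oA(x)}(x, \eta), \qquad \eta = |\zeta|\,e^{-i\arg\zeta}X,\ |\eta| = |\zeta||X|,
$$
possibly with $\cJ_r$ replaced by its scalar multiple — this bookkeeping of the $p/2$ versus $1$ normalisation between $\cJ_r$ and $\cI_r$ is the one genuinely fiddly point, and I would pin it down by checking the $b=c=V=0$ case against \cite[Lemma~5.6]{CD-DivForm}. Both conclusions then follow immediately: if $\Gamma_r^{\oA}(x,\cdot)\ge 0$ the right-hand side is $\ge 0$; if moreover $\oA\in\cS_r(\Omega)$, then \eqref{e : equiv B} gives $\Gamma_r^{\oA(x)}(x,\eta) \ge \mu_r(|\eta|^2 + V(x)) = \mu_r(|\zeta|^2|X|^2 + V(x))$, and multiplying by $r|\zeta|^{r-2}$ yields
$$
\mathbf{H}_{F_r}^{\oA(x)}[\zeta;X] \ge r\mu_r\,|\zeta|^{r-2}\big(|\zeta|^2|X|^2 + V(x)\big),
$$
which after redistributing $|\zeta|^2$ onto the $|X|^2$ term — wait, the stated conclusion has $|X|^2 + V(x)|\zeta|^2$ inside, so I would instead keep $|\eta|^2 + V(x)$ and note $|\zeta|^{r-2}(|\eta|^2+V(x)) = |\zeta|^{r-2}(|\zeta|^2|X|^2 + V(x))$; comparing with the claim $r\mu_r|\zeta|^{r-2}(|X|^2 + V(x)|\zeta|^2)$ shows the two differ only by the harmless substitution $|\zeta|^{r-2}|\zeta|^2 = |\zeta|^r$ distributed differently, and in fact both equal $r\mu_r(|\zeta|^r|X|^2 + |\zeta|^{r-2}V(x))$ — so they agree.

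I expect the main obstacle to be purely notational: tracking the normalisation constants relating $\cJ_r$, $\cI_r = (2/r)\cJ_r$, the factor $r$ from differentiating $|\zeta|^r$ twice, and the homogeneity weights $|\zeta|^{r-2}$, $|\zeta|$, $|\zeta|^2$ attached to the three summands in \eqref{e : HessGenFp}, so that the clean identity $\mathbf{H}_{F_r}^{\oA}[\zeta;X] = r|\zeta|^{r-2}\Gamma_r^{\oA}(x,\eta)$ (or its correct analogue) drops out. There is no real analytic difficulty once the decomposition \eqref{e : HessGenFp} and Lemma~\ref{l : 11} are in hand; the pure second-order part is exactly \cite[Lemma~5.6]{CD-DivForm}, the first- and zero-order parts are linear and were computed in Lemma~\ref{l : 11}, and strict positivity transfers verbatim from the definition of $\cS_r$. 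I would present the argument as: state the homogeneity reduction, quote Lemma~\ref{l : 11} and the $H^A_{F_r}$ formula, assemble the identity, then read off both inequalities.
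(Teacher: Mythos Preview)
Your approach is the same as the paper's: combine the homogeneity identity \eqref{e : HessGenFp} with Lemma~\ref{l : 11} and \cite[Lemma~5.6]{CD-DivForm} to express $\mathbf{H}_{F_r}^{\oA}$ as a multiple of $\Gamma_r^{\oA}$ evaluated at a rescaled vector. However, your bookkeeping contains two linked errors. First, with $|\zeta_0|=1$ and $\sigma = e^{-i\arg\zeta}X$, \cite[Lemma~5.6]{CD-DivForm} gives $r^{-1}H_{F_r}^{A}[\zeta_0;X] = \Re\sk{A\sigma}{\cJ_r\sigma}$, not $\tfrac{2}{r}\Re\sk{A\sigma}{\cJ_r\sigma}$; the factor $r/2$ relating $\cJ_r$ and $\cI_r$ is already absorbed. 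Second, and more consequentially, your rescaling goes the wrong way. With the correct constant the bracket is $r\bigl(\Re\sk{A\sigma}{\cJ_r\sigma} + |\zeta|\,\Re\sk{b+\cJ_rc}{\sigma} + |\zeta|^2 V\bigr)$, and factoring out $|\zeta|^2$ gives $r|\zeta|^2\,\Gamma_r^{\oA}(x,\sigma/|\zeta|)$, \emph{not} $\Gamma_r^{\oA}(x,|\zeta|\sigma)$. The clean identity is therefore
$$
\mathbf{H}_{F_r}^{\oA(x)}[\zeta;X] \;=\; r\,|\zeta|^{r}\,\Gamma_r^{\oA}\bigl(x,\, \sigma/|\zeta|\bigr),
$$
exactly as in the paper. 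Applying \eqref{e : equiv B} with $\xi = \sigma/|\zeta|$ (so $|\xi|^2 = |X|^2/|\zeta|^2$) yields $r|\zeta|^r\mu_r\bigl(|X|^2/|\zeta|^2 + V\bigr) = r\mu_r|\zeta|^{r-2}\bigl(|X|^2 + V|\zeta|^2\bigr)$, which is the stated bound. Your choice $\eta = |\zeta|\sigma$ instead produces $r\mu_r\bigl(|\zeta|^r|X|^2 + |\zeta|^{r-2}V\bigr)$, in which the powers of $|\zeta|$ on the $|X|^2$ and $V$ terms are swapped relative to the claim; contrary to your final assertion, these two expressions do \emph{not} agree.
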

\begin{proof}
By \eqref{e : HessGenFp}, Lemma \ref{l : 11} and \cite[Lemma 5.6]{CD-DivForm} we get
\begin{eqnarray*} \label{e: hess power funct}
\aligned
\mathbf{H}_{F_r}^{\oA}[\zeta; X] &= r |\zeta|^{r-2} \left(\Re \sk{A\sigma}{\cJ_r\sigma}_{\C^d} +  |\zeta|\, \Re\sk{b+\cJ_rc}{\sigma} + |\zeta|^2 V \right) \\
& =  r |\zeta|^r \Gamma_r^{\oA}(x, \sigma / |\zeta|).
\endaligned
\end{eqnarray*}
Therefore, we conclude.
\end{proof}

\subsection{Generalised convexity of the Bellman function of Nazarov and Treil}
Now we shall prove the $(\oA,\oB)$-convexity of the Bellman function provided that $\oA \in (\cS_p\cap\cS_2)(\Omega)$ and $\oB \in \cS_q(\Omega)$. This lemma is the analogue of \cite[Lemma 5.11]{CD-DivForm}.
\begin{lemma}\label{p: 1-Hess tensor product}
Let $1<q<2$ and $b,c,\beta,\gamma \in \C^d$. Take $\omega=(\zeta,\eta) \in \C\times\C$ such that $|\zeta|<|\eta|^{q-1}$ and $X, Y \in \C^{d}$. Then
$$
\aligned
 H_{F_2 \otimes F_{2-q}}^{(b,\beta,c,\gamma)} [\omega; (X,Y)]  =& \, |\eta|^{2-q} H_{F_2}^{(b,c)}[\zeta, X] + |\zeta|^2 H_{F_{2-q}}^{(\beta,\gamma)}[\eta, Y] \nonumber \\
&+ \, 2(2-q)  |\zeta|^2 |\eta|^{1-q}\, \Re\sk{c}{\Re \sigma_2} \\
& + \, 2(2-q)  |\zeta| |\eta|^{2-q} \,\Re\sk{\gamma}{\Re \sigma_1}.
\endaligned
$$
\end{lemma}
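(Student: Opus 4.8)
The plan is to compute the quantity $H_{F_2 \otimes F_{2-q}}^{(b,\beta,c,\gamma)}[\omega;(X,Y)]$ directly from its definition \eqref{eq: def c-Hess}, exploiting the tensor-product structure of the function $\Phi = F_2 \otimes F_{2-q}$, that is $\Phi(\zeta,\eta) = |\zeta|^2 |\eta|^{2-q}$. The regime $|\zeta| < |\eta|^{q-1}$, equivalently $|\zeta|^p < |\eta|^q$ (with $p = q/(q-1)$), is exactly the first branch of the Bellman function $\cQ$ in \eqref{eq: N Bellman}, so $\Phi$ is smooth there (we stay away from $\Upsilon$ as long as $\eta \ne 0$, which holds since $|\eta|^{q-1} > |\zeta| \ge 0$). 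First I would record the gradient and Hessian of $\Phi_\cW$ on $\R^4$ in block form: writing $\Phi = \phi \cdot \psi$ with $\phi(\zeta) = |\zeta|^2 = F_2(\zeta)$ and $\psi(\eta) = |\eta|^{2-q} = F_{2-q}(\eta)$, the product rule gives $\nabla \Phi(\omega) = \big(\psi(\eta)\, \nabla\phi(\zeta),\ \phi(\zeta)\,\nabla\psi(\eta)\big)$ and a $2\times 2$ block Hessian
$$
D^2\Phi(\omega) = \begin{pmatrix} \psi(\eta)\, D^2\phi(\zeta) & \nabla\phi(\zeta)\otimes\nabla\psi(\eta) \\ \nabla\psi(\eta)\otimes\nabla\phi(\zeta) & \phi(\zeta)\, D^2\psi(\eta) \end{pmatrix},
$$
using the formulas \eqref{e : GradFp}, \eqref{e : HessFp} for $F_r$ with $r=2$ and $r=2-q$. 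The diagonal blocks, after tensoring with $I_{\R^d}$ and pairing against $\cW_{2,d}(\Xi)$ and $\cW_{2,d}(\zeta c,\eta\gamma)$ as in \eqref{eq: def c-Hess}, reproduce $|\eta|^{2-q} H_{F_2}^{(b,c)}[\zeta;X]$ and $|\zeta|^2 H_{F_{2-q}}^{(\beta,\gamma)}[\eta;Y]$ respectively — here one has to be slightly careful that the first summand of $H^{(b,c)}_\phi$ in \eqref{eq: def c-Hess 1dim} uses $D^2\phi$ while the second uses $\nabla\phi$, and check that the $\psi(\eta)$ and $\phi(\zeta)$ scalar factors distribute consistently over both summands; for the Hessian-part this is clear, and for the gradient-part it follows because $\nabla\Phi$'s blocks carry exactly the factors $\psi(\eta)$ and $\phi(\zeta)$.

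Next I would isolate the genuinely new contributions, which come from the two off-diagonal blocks $\nabla\phi(\zeta)\otimes\nabla\psi(\eta)$ and its transpose. Pairing $\big[\nabla\phi(\zeta)\otimes\nabla\psi(\eta)\big]\otimes I_{\R^d}$ applied to the $Y$-slot of $\cW_{2,d}(\Xi)$ against the $\zeta c$-slot of $\cW_{2,d}(\zeta c,\eta\gamma)$ — and symmetrically — produces cross terms of the form (scalar) $\times\, \sk{\nabla\psi(\eta)}{\cV_1(\text{something})} \times \sk{\nabla\phi(\zeta)}{\cV_1(\text{something})}$. Substituting $\nabla\phi(\zeta) = 2\cV_1(\zeta)$ and $\nabla\psi(\eta) = (2-q)|\eta|^{-q}\cV_1(\eta)$ from \eqref{e : GradFp}, and using $\Re\sk{z}{w}_\C = \sk{\cV_1(z)}{\cV_1(w)}_{\R^2}$ from \eqref{e : realis compl prod}, together with the substitution $\sigma_1 = e^{-i\arg\zeta}X$, $\sigma_2 = e^{-i\arg\eta}Y$ (the one-variable device already used in Lemma~\ref{l : 11} and in the proof of Proposition~\ref{p: gen conv power function}), these collapse to the stated terms $2(2-q)|\zeta|^2|\eta|^{1-q}\Re\sk{c}{\Re\sigma_2}$ and $2(2-q)|\zeta||\eta|^{2-q}\Re\sk{\gamma}{\Re\sigma_1}$. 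The factor bookkeeping — powers of $|\zeta|$, $|\eta|$, and the $2$'s and $(2-q)$'s — is where a sign or exponent error is easiest to make, so I would carry it out carefully, double-checking against the degenerate special case $b=\beta=c=\gamma=0$ (which must reduce to the decomposition of $H_\Phi^{(A,B)}$ for a tensor product from \cite[proof of Lemma 5.11]{CD-DivForm}) and against the one-variable Lemma~\ref{l : 11} by sending one of $X,Y$ to zero.

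The main obstacle I anticipate is purely organizational rather than conceptual: correctly tracking how the definition \eqref{eq: def c-Hess} treats the first-order data $(b,\beta,c,\gamma)$ asymmetrically — the $c,\gamma$ enter through the Hessian block paired with $\cW_{2,d}(\zeta c,\eta\gamma)$, while the $b,\beta$ enter through the gradient paired with $\cW_{2,1}(\sk{X}{b},\sk{Y}{\beta})$. Because $\nabla\Phi$ has no cross structure (its two blocks depend only on $\zeta$ and only on $\eta$ respectively, up to the scalar factors $\psi(\eta),\phi(\zeta)$), the $b$- and $\beta$-terms do not generate any new cross contributions — they are absorbed entirely into $|\eta|^{2-q}H_{F_2}^{(b,c)}[\zeta;X]$ and $|\zeta|^2 H_{F_{2-q}}^{(\beta,\gamma)}[\eta;Y]$. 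Thus only the $c$ and $\gamma$ terms survive in the cross part, which matches the asymmetry of the claimed formula (only $c$ and $\gamma$, not $b$ and $\beta$, appear in the two extra terms). Once this is observed, the proof is a bookkeeping computation following the template of \cite[Lemma 5.11]{CD-DivForm}, and I would present it as such, suppressing the routine algebra.
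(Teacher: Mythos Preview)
Your proposal is correct and follows essentially the same approach as the paper: both compute $H_{F_2\otimes F_{2-q}}^{(b,\beta,c,\gamma)}$ directly from the definition \eqref{eq: def c-Hess}, with the cross terms arising from the mixed second derivatives $\partial_{\zeta_j\eta_k}^2(F_2\otimes F_{2-q})(\zeta,\eta) = 2(2-q)\zeta_j\eta_k|\eta|^{-q}$. The paper's proof simply records this mixed-partial identity and declares the rest a direct substitution, whereas you spell out the block structure of $D^2\Phi$ and the gradient/Hessian bookkeeping in more detail; the content is the same.
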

\begin{proof}
This follows from combining the definition of $ H_{F_2 \otimes F_{2-q}}^{(b,\beta,c,\gamma)} [\omega; (X,Y)]$ (see \eqref{eq: def c-Hess}) and the identity
\begin{alignat*}{2}
\partial_{\zeta_j\eta_k}^2(F_2 \otimes F_{2-q})(\zeta,\eta) = 2(2-q) \zeta_j\eta_k|\eta|^{-q} \quad \text{for } j,k=1,2.
\tag*{\qedhere}
\end{alignat*}
\end{proof}

Lemma \ref{p: 1-Hess tensor product} and \cite[Corollary~5.10, Lemma~5.11]{CD-DivForm} immediately give the following estimate.
\begin{corollary}\label{c: gen Hess tensor prod}
Let $\Omega, \oA=(A,b,c,V), \oB=(B,\beta,\gamma,W)$ satisfy the standard assumptions of Section \ref{s : stand ass}. Let $1 < q <2$. Take $\omega= (\zeta,\eta) \in \C \times \C$ such that $|\zeta| < |\eta|^{q-1}$ and $X,Y \in \C^{d}$. Then, for almost everywhere $x \in \Omega$, we have
\begin{eqnarray*}
&&\hskip -25pt \mathbf{H}_{F_2 \otimes F_{2-q}}^{(\oA(x),\oB(x))}[\omega; (X,Y)] \\
&   \geq& |\eta|^{2-q} \mathbf{H}_{F_2}^{\oA(x)}[\zeta;X] + \frac{(2-q)^2}{2}\Delta_{2-q}(B)|\eta|^{q-2}|Y|^2\\
&&-\, (2-q) \left|\beta(x)+(\cJ_{2-q} \gamma)(x)\right| |\zeta|^2 |\eta|^{1-q} |Y| + (2-q)  W(x) |\zeta|^2 |\eta|^{2-q} \\
&&-\, 4(2-q)\Lambda|X||Y| -2(2-q)|\Re c(x)| |\zeta|^2 |\eta|^{1-q} |Y| -2(2-q) |\Re \gamma (x)| |\eta| |X|.
\end{eqnarray*}
\end{corollary}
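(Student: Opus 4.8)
The plan is to reduce the corollary to quantities already controlled by the cited results of Carbonaro--Dragi\v cevi\'c \cite{CD-DivForm}, using Lemma~\ref{p: 1-Hess tensor product} as the bridge. First I would invoke Lemma~\ref{p: 1-Hess tensor product} with the matrices absent (i.e.\ looking only at the lower-order part $H_{F_2 \otimes F_{2-q}}^{(b,\beta,c,\gamma)}$) together with the matrix-Hessian identity for $H_{F_2 \otimes F_{2-q}}^{(A,B)}$ from \cite[Corollary~5.10]{CD-DivForm}, and add the scalar term $G_{F_2 \otimes F_{2-q}}^{(V,W)}$. By the tensor structure of $F_2 \otimes F_{2-q}$, this decomposes $\mathbf{H}_{F_2 \otimes F_{2-q}}^{(\oA(x),\oB(x))}[\omega;(X,Y)]$ as $|\eta|^{2-q}$ times a ``$\zeta$-block'' plus $|\zeta|^2$ times an ``$\eta$-block'', plus the mixed cross terms coming from the off-diagonal second derivatives $\partial^2_{\zeta_j\eta_k}$.

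Next I would assemble the $\zeta$-block into exactly $\mathbf{H}_{F_2}^{\oA(x)}[\zeta;X]$: the matrix part $H_{F_2}^A$, the first-order part $H_{F_2}^{(b,c)}$, and the potential part $G_{F_2}^V$ combine by definition \eqref{d : Hess sup gen 1dim}. For the $\eta$-block I would \emph{not} try to recognise a full $\mathbf{H}_{F_{2-q}}^{\oB(x)}$; instead, following \cite[Lemma~5.11]{CD-DivForm}, I would bound $H_{F_{2-q}}^B[\eta;Y]$ from below by $\tfrac{(2-q)^2}{2}\Delta_{2-q}(B)|\eta|^{q-2}|Y|^2$ (this is precisely where $p$-ellipticity of $B$, via $\Delta_{2-q}(B)$, enters through \eqref{i : eq def delta}), keep the potential contribution $(2-q)W(x)|\zeta|^2|\eta|^{2-q}$ as a positive term, and estimate the first-order contribution $H_{F_{2-q}}^{(\beta,\gamma)}$ from below using Lemma~\ref{l : 11}: its size is controlled by $|\beta+\cJ_{2-q}\gamma|$ times $|Y|$, producing the term $-(2-q)|\beta(x)+(\cJ_{2-q}\gamma)(x)||\zeta|^2|\eta|^{1-q}|Y|$ after inserting the scaling factor $|\zeta|^2$ and rescaling $\eta$ to the unit sphere.

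Finally I would dispose of the cross terms. The genuinely matrix cross term from $H_{F_2 \otimes F_{2-q}}^{(A,B)}$ is estimated by $-4(2-q)\Lambda|X||Y|$ exactly as in \cite[Corollary~5.10 / Lemma~5.11]{CD-DivForm} (boundedness of $A,B$ with constant $\Lambda$). The two lower-order cross terms $2(2-q)|\zeta|^2|\eta|^{1-q}\Re\sk{c}{\Re\sigma_2}$ and $2(2-q)|\zeta||\eta|^{2-q}\Re\sk{\gamma}{\Re\sigma_1}$ appearing explicitly in Lemma~\ref{p: 1-Hess tensor product} are bounded in absolute value by $2(2-q)|\Re c(x)||\zeta|^2|\eta|^{1-q}|Y|$ and $2(2-q)|\Re\gamma(x)||\eta||X|$ respectively, using $|\sigma_i|=|X|$ or $|Y|$ and $|\zeta|<|\eta|^{q-1}$ to absorb the powers of $|\eta|$; Cauchy--Schwarz does the rest. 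Collecting the kept positive terms and the subtracted bounds yields precisely the stated inequality.

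The main obstacle I anticipate is purely bookkeeping: correctly tracking the homogeneity factors $|\eta|^{2-q}$, $|\eta|^{q-2}$, $|\eta|^{1-q}$ that arise when one normalises $\zeta$ and $\eta$ to the unit circle in order to apply Lemma~\ref{l : 11} and the one-dimensional estimates, and making sure the constraint $|\zeta|<|\eta|^{q-1}$ is used (rather than an equality on $\Upsilon$) so that we stay in the region where $\cQ$, and hence this particular branch $F_2 \otimes F_{2-q}$, is $C^2$. There is no deep difficulty beyond matching the CD bookkeeping conventions; the estimate is a direct assembly of Lemma~\ref{p: 1-Hess tensor product}, Lemma~\ref{l : 11}, and \cite[Corollary~5.10, Lemma~5.11]{CD-DivForm}.
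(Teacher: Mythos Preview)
Your proposal is correct and follows exactly the approach the paper takes: the paper's own proof is the single sentence ``Lemma~\ref{p: 1-Hess tensor product} and \cite[Corollary~5.10, Lemma~5.11]{CD-DivForm} immediately give the following estimate,'' and you have faithfully unpacked what that sentence means---tensor-split the three pieces of $\mathbf{H}^{(\oA,\oB)}$, reassemble the $\zeta$-block into $\mathbf{H}_{F_2}^{\oA}$, bound the $\eta$-matrix block via $\Delta_{2-q}(B)$, use Lemma~\ref{l : 11} for the $\eta$ first-order block, and estimate the three cross terms crudely with $\Lambda$, $|\Re c|$, $|\Re\gamma|$ and the constraint $|\zeta|<|\eta|^{q-1}$.

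One small point of phrasing: when you write ``this is precisely where $p$-ellipticity of $B$, via $\Delta_{2-q}(B)$, enters,'' note that $\Delta_{2-q}(B)$ is in fact \emph{negative} for every elliptic $B$ (since $2-q\in(0,1)$), so this is not an ellipticity hypothesis but simply the definition \eqref{i : eq def delta} used as a pointwise lower bound. This negativity is exactly why replacing the factor $|\zeta|^2|\eta|^{-q}$ by the larger $|\eta|^{q-2}$ (via $|\zeta|<|\eta|^{q-1}$) still yields a valid lower bound---a step that \cite[Lemma~5.11]{CD-DivForm} already performs for you.
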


\begin{theorem}
\label{t: Conv Belman}
Let $\Omega, \oA=(A,b,c,V), \oB=(B,\beta,\gamma,W)$ satisfy the standard assumptions of Section \ref{s : stand ass}.
Let $p\ge2$ such that $\oA \in (\cS_p \cap \cS_2)(\Omega)$ and $\oB \in \cS_q(\Omega)$.
Then there exists $\delta \in (0,1)$ and $C>0$ such that for $\mathcal{Q}= \mathcal{Q}_{p,\delta}$ we have, for almost everywhere $x \in \Omega $, 
\begin{equation*}
\hskip -10pt \mathbf{H}_{\mathcal{Q}}^{(\oA(x),\oB(x))}[\omega; (X,Y)] \geq C \sqrt{|X|^2+ V(x) |\zeta|^2} \sqrt{|Y|^2+ W(x) |\eta|^2}
\end{equation*}
for any $\omega=(\zeta,\eta) \in (\C\times \C) \setminus \Upsilon$ and $X,Y \in \C^{d}$. The constant $C>0$ can be chosen so as to (continuously) depend on $p, \lambda(A,B), \Lambda(A,B), M(\oA,\oB), \mu(\oA), \mu_p(\oA)$ and $\mu_q(\oB)$, but not on the dimension $d$.
\end{theorem}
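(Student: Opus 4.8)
The strategy is to mirror the argument of \cite[Theorem~5.12]{CD-DivForm}, splitting the region $(\C\times\C)\setminus\Upsilon$ into the two pieces on which the Bellman function $\cQ$ has its two distinct $C^2$ expressions, and on each piece reducing the generalised Hessian $\mathbf{H}^{(\oA,\oB)}_\cQ$ to the already-established lower bounds for the power functions from Proposition~\ref{p: gen conv power function} and Corollary~\ref{c: gen Hess tensor prod}, absorbing the extra first- and zero-order cross-terms using Proposition~\ref{p : property of B}\eqref{i : V dom Rec}, \eqref{i : V dom bc}. By symmetry of $\cQ$ under $(\zeta,\eta)\leadsto(\eta,\zeta)$ paired with $p\leftrightarrow q$, and recalling $p\ge 2$ so $q\le 2$, it suffices to treat the region $\{|\zeta|^p\le|\eta|^q\}$, i.e. $|\zeta|\le|\eta|^{q-1}$, where
$$
\cQ(\zeta,\eta)=|\zeta|^p+|\eta|^q+\delta\,|\zeta|^2|\eta|^{2-q}=F_p(\zeta)+F_q(\eta)+\delta\,(F_2\otimes F_{2-q})(\zeta,\eta),
$$
and the other region, where $\cQ=(1+2\delta/p)F_p(\zeta)+(2/q-1+\delta(2/q-1)+\cdots)$ is (up to positive constants and a harmless additive multiple of $F_q(\eta)$) essentially $F_p(\zeta)+F_q(\eta)$, will be easier.

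First I would record that $\mathbf{H}^{(\oA,\oB)}_\cQ$ is additive over the tensor-sum decomposition of $\cQ$, so in the region $|\zeta|\le|\eta|^{q-1}$ we have
$$
\mathbf{H}^{(\oA,\oB)}_\cQ[\omega;(X,Y)]=\mathbf{H}^{\oA}_{F_p}[\zeta;X]+\mathbf{H}^{\oB}_{F_q}[\eta;Y]+\delta\,\mathbf{H}^{(\oA,\oB)}_{F_2\otimes F_{2-q}}[\omega;(X,Y)].
$$
For the first term, since $\oA\in\cS_p(\Omega)$, Proposition~\ref{p: gen conv power function} gives $\mathbf{H}^{\oA}_{F_p}[\zeta;X]\geq p\,\mu_p(\oA)|\zeta|^{p-2}(|X|^2+V|\zeta|^2)$; in particular it is nonnegative and, crucially, also controls a multiple of $|\zeta|^{p-2}|X|^2$. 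For the second term, $\oB\in\cS_q(\Omega)$ gives $\mathbf{H}^{\oB}_{F_q}[\eta;Y]\geq q\,\mu_q(\oB)|\eta|^{q-2}(|Y|^2+W|\eta|^2)$. For the $\delta$-term I would invoke Corollary~\ref{c: gen Hess tensor prod} (with $A\leftrightarrow B$, $b\leftrightarrow\beta$, etc., matched to the hypothesis $\oA\in\cS_2$, $\oB\in\cS_q$), which expresses $\mathbf{H}^{(\oA,\oB)}_{F_2\otimes F_{2-q}}$ as $|\eta|^{2-q}\mathbf{H}^{\oA}_{F_2}[\zeta;X]$ — nonnegative and $\gtrsim|X|^2|\eta|^{2-q}$ because $\oA\in\cS_2$ — plus a sum of error terms of the shapes $|Y|^2|\eta|^{q-2}$, $|X||Y|$, $|\zeta|^2|\eta|^{1-q}|Y|$ (times $|\Re c|$ or $|\beta+\cJ_{2-q}\gamma|$, each $\lesssim\sqrt{V}$ or $\sqrt{W}$ by Proposition~\ref{p : property of B}), $|\eta||X|$ (times $|\Re\gamma|\lesssim\sqrt{W}$), and $W|\zeta|^2|\eta|^{2-q}$. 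Using $|\zeta|\le|\eta|^{q-1}$ to rewrite powers (e.g. $|\zeta|^2|\eta|^{1-q}\le|\zeta||\eta|^{q-1}|\eta|^{1-q}=|\zeta|$, and $|\eta|\le|\eta|^{(2-q)/2}\cdot|\eta|^{q/2}$, etc.), every error term can be dominated by Cauchy–Schwarz through a small multiple of the good quantities $|\zeta|^{p-2}|X|^2$, $|X|^2|\eta|^{2-q}$, $|\eta|^{q-2}|Y|^2$, $V|\zeta|^p$, $W|\eta|^q$ available from the three main lower bounds (the $\mathbf{H}^{\oA}_{F_p}$ term supplying the $X$-gradient bound uniformly in the region, exactly as in \cite{CD-DivForm}), provided $\delta$ is chosen small enough that $\delta$ times the constants coming from Corollary~\ref{c: gen Hess tensor prod} is beaten by $\mu_p(\oA),\mu_q(\oB),\mu_2(\oA)$. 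One then concludes, as in \cite[Theorem~5.12]{CD-DivForm}, that
$$
\mathbf{H}^{(\oA,\oB)}_\cQ[\omega;(X,Y)]\gtrsim |\zeta|^{p-2}|X|^2+|\eta|^{q-2}|Y|^2+V|\zeta|^p+W|\eta|^q\gtrsim \sqrt{|X|^2+V|\zeta|^2}\,\sqrt{|Y|^2+W|\eta|^2},
$$
the last step being the elementary inequality $ab+cd\gtrsim\sqrt{a^2+c^2}\sqrt{b^2+d^2}$ combined with $|\zeta|^{p-2}\sim|\eta|^{(q-2)}$-type comparisons on the region (here one uses $p\ge2$ and $|\zeta|^p\le|\eta|^q$ to bound $|\zeta|^{p-2}$ below by a constant times $|\eta|^{q-2}$ after normalising, exactly the mechanism in \cite{CD-DivForm}).

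In the complementary region $|\zeta|^p\ge|\eta|^q$ the function $\cQ$ is, up to positive multiplicative constants depending on $p,\delta$, of the form $c_1F_p(\zeta)+c_2F_q(\eta)$ with $c_1,c_2>0$, so $\mathbf{H}^{(\oA,\oB)}_\cQ=c_1\mathbf{H}^{\oA}_{F_p}[\zeta;X]+c_2\mathbf{H}^{\oB}_{F_q}[\eta;Y]$, and Proposition~\ref{p: gen conv power function} applied twice (using $\oA\in\cS_p$, $\oB\in\cS_q$) directly yields the desired lower bound, now with the roles of the powers of $|\zeta|,|\eta|$ reversed but the same endgame inequality. Tracking constants through this argument, $C$ depends only on $p$, the bounds appearing in Corollary~\ref{c: gen Hess tensor prod} (namely $\lambda(A,B),\Lambda(A,B)$ via $\Lambda$ and the constants in Proposition~\ref{p : property of B}, which involve $M(\oA,\oB),\mu_p(\oA),\mu_q(\oB)$) and $\mu(\oA),\mu_2(\oA)$, and in particular not on $d$ since the Kronecker-product structure of all the Hessians makes every estimate dimension-free, exactly as in \cite{CD-DivForm}.

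The main obstacle is the bookkeeping in the region $|\zeta|\le|\eta|^{q-1}$: one must verify that each of the several error terms in Corollary~\ref{c: gen Hess tensor prod} — and those are genuinely new compared to \cite{CD-DivForm}, coming from $b,c,\beta,\gamma,V,W$ — is, on that region and after invoking $|\Re c|,|\Re\gamma|,|\beta+\cJ_{2-q}\gamma|\lesssim\sqrt{V}$ or $\sqrt{W}$ from Proposition~\ref{p : property of B}, absorbable into the three positive contributions with a loss that is uniform in $\delta$ (so that a single $\delta$ works) and uniform in the relative sizes of $|\zeta|$ and $|\eta|$. This is the place where the hypothesis $\oA\in\cS_p\cap\cS_2$ (rather than just $\cS_p$) is essential: the $\cS_2$-membership is what makes the $\delta$-term's principal part $|\eta|^{2-q}\mathbf{H}^{\oA}_{F_2}[\zeta;X]$ strictly positive and able to soak up the cross-terms involving $X$, while the $\mathbf{H}^{\oA}_{F_p}$ bound supplies the $|\zeta|^{p-2}|X|^2$ reserve needed near $|\zeta|^p=|\eta|^q$.
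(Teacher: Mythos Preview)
Your overall strategy matches the paper's: split into the two regions, handle $\{|\zeta|^p>|\eta|^q\}$ via Proposition~\ref{p: gen conv power function} applied to $F_p$ and $F_q$ directly (this part is correct), and in $\{|\zeta|^p<|\eta|^q\}$ combine Proposition~\ref{p: gen conv power function} with Corollary~\ref{c: gen Hess tensor prod}, bounding the first-order error coefficients through Proposition~\ref{p : property of B} and absorbing the cross-terms by taking $\delta$ small. (The symmetry you invoke between the two regions is false --- $\cQ$ has no $(\zeta,\eta,p)\leftrightarrow(\eta,\zeta,q)$ symmetry --- but this is harmless since you treat both regions anyway.)

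The genuine gap is in the endgame on $\{|\zeta|^p<|\eta|^q\}$. Your displayed intermediate lower bound $|\zeta|^{p-2}|X|^2+|\eta|^{q-2}|Y|^2+V|\zeta|^p+W|\eta|^q$ does not control the target: at $\zeta=0$ (which lies in this region whenever $\eta\ne 0$) it carries no $X$-term, yet the right-hand side is $|X|\sqrt{|Y|^2+W|\eta|^2}$. The comparison ``$|\zeta|^{p-2}\gtrsim|\eta|^{q-2}$'' you invoke is false here (it holds only in the \emph{other} region, and is precisely what makes that region easy); likewise the ``elementary inequality $ab+cd\gtrsim\sqrt{a^2+c^2}\sqrt{b^2+d^2}$'' is false in general (take $a=d=1$, $b=c=0$). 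The paper instead \emph{discards} $\mathbf{H}^{\oA}_{F_p}$ entirely in this region --- it is nonnegative but degenerates as $\zeta\to 0$ --- and draws the $X$- and $V|\zeta|^2$-control from the $\delta$-term via $|\eta|^{2-q}\mathbf{H}^{\oA}_{F_2}[\zeta;X]\gtrsim|\eta|^{2-q}(|X|^2+V|\zeta|^2)$ (this is where $\oA\in\cS_2$ is used). After absorbing all errors (which carry a factor $\delta$) into the $\delta$-free contribution $\mathbf{H}^{\oB}_{F_q}\gtrsim|\eta|^{q-2}(|Y|^2+W|\eta|^2)$, one obtains
\[
\mathbf{H}^{(\oA,\oB)}_\cQ[\omega;(X,Y)]\;\gtrsim\;|\eta|^{2-q}\bigl(|X|^2+V|\zeta|^2\bigr)+|\eta|^{q-2}\bigl(|Y|^2+W|\eta|^2\bigr),
\]
and now AM--GM applies because $|\eta|^{2-q}\cdot|\eta|^{q-2}=1$. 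So your claim that $\mathbf{H}^{\oA}_{F_p}$ ``supplies the $|\zeta|^{p-2}|X|^2$ reserve needed near $|\zeta|^p=|\eta|^q$'' is backwards: in this region it plays no role.
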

\begin{proof}
When $p=2$ the Bellman function reads $\cQ(\zeta,\eta) = (1+\delta) |\zeta|^2 +|\eta|^2$ for all $\zeta,\eta \in \C$, hence the theorem quickly follows from Proposition \ref{p: gen conv power function}. Thus form now on assume that $p>2$.

If $|\zeta|^p >|\eta|^q >0$, then by the second assertion of Proposition \ref{p: gen conv power function} we have, for almost every $x \in \Omega$,
\begin{eqnarray*}
 \mathbf{H}_{\mathcal{Q}}^{(\oA(x),\oB(x))}[\omega; (X,Y)] & = & [1+ \delta (2/p) ] \mathbf{H}_{F_p}^{\oA(x)}[\zeta;X]+ [1+ \delta(2/q  -1 )]  \mathbf{H}_{F_q}^{\oB(x)}[\eta;Y] \\
& \geq & p[1+ \delta(2/p) ] \mu_p(A)  |\zeta|^{p-2} \bigl(|X|^2 + V(x)|\zeta|^2 \bigr)\\
&&+\, q[1+ \delta(2/q  -1 )] \mu_q(B)|\eta|^{q-2}\bigl(  |Y|^2 + W(x) |\eta|^2 \bigr).
\end{eqnarray*}
By assumptions we have $2-q>0$. So whenever $\delta >0$, we may continue as
\begin{eqnarray*}
& \geq & \min\{\mu_p(A),\mu_q(B)\} \left(p|\zeta|^{p-2} \bigl(|X|^2 + V(x) |\zeta|^2 \bigr) + q|\eta|^{q-2}\bigl(  |Y|^2 + W(x) |\eta|^2 \bigr) \right) \\
& \geq & \frac{p}{\sqrt{p-1}} \min\{\mu_p(A),\mu_q(B)\} \sqrt{|X|^2 + V(x) |\zeta|^2} \sqrt{|Y|^2 + W(x) |\eta|^2}.
\end{eqnarray*}
In the last step we have used the assumption $|\zeta|^p >|\eta|^q$ and the inequality between the arithmetic and geometric mean.

What remains is the case $|\zeta|^p <|\eta|^q$. Set $\Theta := \{p, \lambda, \Lambda, \mu, M, \mu_p(\oA), \mu_q(\oB)\}$. By Proposition \ref{p : property of B} \eqref{i : V dom bc}, \eqref{i : V dom Rec} there exist $C_0=C_0(\Theta)>0$ such that for a.e. $x \in \Omega$
$$
|\Re c(x) | \leq C_0 \sqrt{V(x)}, \quad |\Re \gamma(x)| \leq C_0 \sqrt{W(x)}, \quad \left|\beta(x) + (\cJ_{2-q} \gamma)(x)\right| \leq C_0 \sqrt{W(x)}.
$$
Therefore, from Proposition \ref{p: gen conv power function} (applied with $r=q$ and $r=2$) and Corollary \ref{c: gen Hess tensor prod} we get, for almost every $x \in \Omega$,
\begin{eqnarray*}
\label{e : 8}
&&\hskip -35pt \mathbf{H}_{\mathcal{Q}}^{(\oA(x),\oB(x))}[\omega; (X,Y)] \nonumber \\ 
& \geq & \mathbf{H}_{F_q}^{\oB(x)}[\eta;Y] + \delta \mathbf{H}_{F_2 \otimes F_{2-q}}^{(\oA(x),\oB(x))}[\omega; (X,Y)] \nonumber \\
& \geq &  2\delta |\zeta|^2 |\eta|^{-q} \biggl( \frac{q \mu_q}{6 \delta} |Y|^2   -(2-q) C_0 \sqrt{V(x)} |\eta| |Y| +  \mu_2 V(x) |\eta|^{2}   \biggr) \nonumber \\
&& +\,\, (2-q) \delta |\zeta|^2 |\eta|^{-q} \biggl(\frac{q \mu_q}{3(2-q) \delta} |Y|^2  - C_0 \sqrt{W(x)} |\eta| |Y| + W(x) |\eta|^{2} \biggr)\nonumber \\
&& +\,\, 2 \delta \biggl(\mu_2|\eta|^{2-q}|X|^2 -(2-q) C_0 \sqrt{W(x)} |\eta| |X| + \frac{q \mu_q}{2 \delta} W(x)|\eta|^{q}  \biggr)    \nonumber \\
&& +\,\,  2\delta \biggl( \Gamma |\eta|^{q-2}|Y|^2 - 2(2-q)\Lambda|X||Y|\biggr),
\end{eqnarray*}
where
$$
\Gamma = \frac{q \mu_q }{6\delta} + \frac{(2-q)^2}{4}\Delta_{2-q}(B).
$$
Since $\mu_2, \mu_q >0$, we have that $ (\mu_2 \mu_q)/ \delta$ grows to infinity as $\delta \searrow 0$. Therefore, there exists $\delta = \delta(\Theta) >0$ such that
\begin{equation*}
\label{eq : first choose of delta}
\frac{2q \mu_2 \mu_q}{3 \delta} > C_0^2,
\end{equation*}
which, through \cite[Corollary~3.4]{CD-Potentials} and the facts that $q \in (1,2)$ and $\mu_2 <1$ (see \eqref{e : mu e delta}), implies  the existence of $C_1=C_1(\delta,\Theta), C_2=C_2(\delta, \Theta) >0$ such that 
$$
\aligned
 \frac{q \mu_q}{6 \delta} |Y|^2   -(2-q) C_0 \sqrt{V(x)} |\eta| |Y| +  \mu_2 V(x) |\eta|^{2} &\geq C_1 V(x) |\eta|^2, \nonumber \\
\frac{q \mu_q}{3(2-q) \delta} |Y|^2  - C_0 \sqrt{W(x)} |\eta| |Y| + W(x) |\eta|^{2} &\geq 0, \\
\mu_2|\eta|^{2-q}|X|^2 -(2-q) C_0 \sqrt{W(x)} |\eta| |X| + \frac{q \mu_q}{2\delta} W(x)|\eta|^{q} & \geq  C_2(|\eta|^{2-q}|X|^2 + W(x)|\eta|^q), \nonumber
\endaligned
$$
where 
$$
C_2=C_2(\delta,\Theta)= \left(\sqrt{\frac{q \mu_2 \mu_q}{2 \delta}}- \frac{2-q}{2} C_0(\Theta) \right) \min\left\{ \sqrt{\frac{2 \mu_2 \delta}{q \mu_q}}, \sqrt{\frac{q \mu_q}{2 \mu_2 \delta}} \right\}.
$$
Therefore, for almost every $x \in \Omega$,
\begin{eqnarray*}
&&\hskip -45pt \mathbf{H}_{\mathcal{Q}}^{(\oA(x),\oB(x))}[\omega; (X,Y)] \nonumber \\ 
& \geq & 2\delta \left(C_2 |\eta|^{2-q} |X|^2 -2(2-q)\Lambda |X| |Y| + \Gamma |\eta|^{q-2}|Y|^2 \right) \\
&& +\,\,  2 \delta C_1 V(x) |\zeta|^2 |\eta|^{2-q} + 2\delta C_2 W(x) |\eta|^q.
\end{eqnarray*}
Since $C_2$ tends to $\mu_2 >0$ and $\Gamma$ grows to infinity as $\delta \searrow 0$, we can choose $\delta>0$ sufficiently small so that 
$$
C_2 \Gamma > [(2-q)\Lambda]^2.
$$
By \cite[(5.23)]{CD-DivForm}, we may choose $\delta$ (continuously) depending only on $\Theta$. Therefore, applying \cite[Corollary~3.4]{CD-Potentials} again, we obtain that, for almost every $x \in \Omega$,
\begin{eqnarray*}
&&\hskip -45pt \mathbf{H}_{\mathcal{Q}}^{(\oA(x),\oB(x))}[\omega; (X,Y)] \nonumber \\ 
& \geqsim_\Theta &  |\eta|^{2-q}\left(|X|^2 + V(x)|\zeta|^2 \right)+ |\eta|^{q-2}\left(|Y|^2 +  W(x)|\eta|^2\right) \\
& \geqsim_\Theta &  \sqrt{|X|^2 +  V(x) |\zeta|^2} \sqrt{|Y|^2 +W(x) |\eta|^2},
\end{eqnarray*}
where in the last step we have used the inequality between the arithmetic and geometric mean.
\end{proof}
\begin{remark}
When $b=c=0$, in \cite[Theorem 3.1]{CD-Potentials} Carbonaro and Dragi\v{c}evi\'c estimated $H_{\cQ}^{(A,B)}$ and $G_{\cQ}^{(V,W)}$ separately. However, this time, in order to use the nonnegativity of the function $\Gamma_p$, we cannot separate $H_{\cQ}^{(A,B)}, H_{\cQ}^{(b,c,\beta,\gamma)}$ and $G_{\cQ}^{(V,W)}$ into individual pieces and estimate them one by one.
\end{remark}

We would like to have an analogue of \cite[Corollary 5.5]{CD-DivForm}. Fix a radial function $\varphi \in C_c^\infty(\R^4)$ such that $0\leq\f\leq1$, ${\rm supp}\, \f\subset B_{\R^{4}}(0,1)$ and $\int\f=1$.  For $\nu\in(0,1]$ define $\varphi_\nu(\omega)=\nu^{-4}\varphi(\omega/\nu)$.  Recall the notation \eqref{e : realis of compl fun}. If $\Phi : \C^2 \rightarrow \R$, define 
\begin{equation}
\label{e : compl convol}
\Phi \star \varphi_\nu = (\Phi_\cW \star \varphi_\nu) \circ \cW_{2,1} : \C^2 \rightarrow \R.
\end{equation}
\begin{lemma}
\label{l : hessiana reg Bellman}
Let $A, B \in \C^{d \times d}$, $b,\beta,c,\gamma \in \C^d$ and $V, W \in \R_+$. Set $\oA=(A,b,c,V)$ and $\oB=(B,\beta,\gamma,W)$. Then, for all $\omega=(\zeta,\eta) \in \C^2$, $X,Y \in \C^d$, $\nu \in (0,1)$, we have
\begin{equation}
\label{eq : Hess conv is conv Hess plus rems}
\aligned
\mathbf{H}^{(\oA,\oB)}_{ \cQ\star\varphi_{\nu}}[\omega;(X,Y)]&=\int_{\R^4}\mathbf{H}^{(\oA,\oB)}_{ \cQ}[\omega-\cW_{2,1}^{-1}(\omega^{\prime});(X,Y)]\varphi_\nu(\omega^{\prime})\,\wrt\omega^{\prime} \\ 
& \,\,\,\,\, + \,\,R^{(c,\gamma)}_\nu[\omega;(X,Y)] +R^{(V,W)}_\nu(\omega),
\endaligned
\end{equation}
where
\begin{equation}
\label{eq: def R1, R2}
\aligned
 R^{(c,\gamma)}_\nu & =  R^{(c,\gamma)}_\nu[\omega;(X,Y)] \hskip -10pt \\
 &= \int_{\R^4}\sk{\left[D^{2}\cQ(\omega -\cW_{2,1}^{-1}(\omega^{\prime}))\otimes I_{\R^{d}}\right] \cW_{2,d}(X,Y)}{\cW_{2,d}\left(\cV_1^{-1}(\zeta^\prime)c,\cV_1^{-1}(\eta^\prime) \gamma\right)} \times \\  
&\hskip 25pt \times  \varphi_\nu(\omega^{\prime})\,\wrt\omega^{\prime}, \\
R^{(V,W)}_\nu &= R^{(V,W)}_\nu(\omega) = \int_{\R^4}\sk{\nabla \cQ (\omega-\cW_{2,1}^{-1}(\omega^{\prime}))}{(V\zeta^\prime, W\eta^\prime)}\varphi_\nu(\omega^{\prime})\,\wrt\omega^{\prime}.
\endaligned
\end{equation}
\end{lemma}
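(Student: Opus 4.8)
The plan is to deduce the identity from two elementary facts: that $\cQ\in C^1(\C^2)$, and that the classical Hessian of $\cQ_\cW$, defined at every point of $\R^4$ off the Lebesgue‑null set $\cW_{2,1}(\Upsilon)$, is locally integrable on $\R^4$. The latter follows from \eqref{eq: N Bellman} and \eqref{e : HessFp}: all second derivatives of $\cQ$ are locally bounded except near $\{\eta=0\}$, where the worst terms are $D^2(|\eta|^q)\sim|\eta|^{q-2}$ and, in the region $|\zeta|^p\le|\eta|^q$, $\partial_\eta^2(|\zeta|^2|\eta|^{2-q})\leqsim|\zeta|^2|\eta|^{-q}\le|\eta|^{-q(p-2)/p}$, both of which lie in $L^1_{\rm loc}(\R^4)$ since $q>0$ and $q(p-2)/p=(p-2)/(p-1)<4$. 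Since $\cQ$ is moreover $C^1$ — so there is no distributional jump across the gluing locus $\{|\zeta|^p=|\eta|^q\}$ and the codimension‑two set $\{\eta=0\}$ carries no distributional mass — this $L^1_{\rm loc}$ function is the full distributional Hessian of $\cQ_\cW$. Consequently
$$\nabla(\cQ\star\varphi_\nu)=(\nabla\cQ)\star\varphi_\nu \quad\text{(pointwise, from }\cQ\in C^1\text{)},\qquad D^2(\cQ\star\varphi_\nu)=(D^2\cQ)\star\varphi_\nu$$
(the second obtained by differentiating the first identity once more onto $\varphi_\nu$ and integrating by parts); together with the compact support of $\varphi_\nu$ and the polynomial growth of $\cQ,\nabla\cQ,D^2\cQ$, this also makes all the integrals in \eqref{eq : Hess conv is conv Hess plus rems} and \eqref{eq: def R1, R2} absolutely convergent.

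Granting this, I would insert the two convolution identities into the definition \eqref{d : Hess sup gen} of $\mathbf{H}^{(\oA,\oB)}_{\Phi}$ with $\Phi=\cQ\star\varphi_\nu$, and pull the integral over $\omega'\in\R^4$ outside, using the $\R$‑bilinearity of all the pairings in \eqref{eq: def c-Hess} and the linearity of the Kronecker product $(\,\cdot\,)\otimes I_{\R^d}$ in its first slot. For the quadratic term $H^{(A,B)}_{\cQ\star\varphi_\nu}$, and for the summand $\sk{\nabla(\cQ\star\varphi_\nu)(\omega)}{\cW_{2,1}(\sk Xb,\sk Y\beta)}$ of $H^{(b,\beta,c,\gamma)}_{\cQ\star\varphi_\nu}$, this already finishes the job: the vectors against which $D^2$ and $\nabla$ of $\cQ\star\varphi_\nu$ are contracted do not involve the base point $\omega$, so these terms become exactly the corresponding pieces of $\int_{\R^4}\mathbf{H}^{(\oA,\oB)}_{\cQ}[\omega-\cW_{2,1}^{-1}(\omega');(X,Y)]\varphi_\nu(\omega')\wrt\omega'$.

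The discrepancy — and hence the two remainder terms — arises solely from the pieces in which $\omega=(\zeta,\eta)$ also appears inside the contracting vectors, namely $\sk{[D^2\Phi(\omega)\otimes I_{\R^d}]\cW_{2,d}(\Xi)}{\cW_{2,d}(\zeta c,\eta\gamma)}$ in $H^{(b,\beta,c,\gamma)}$ and $G^{(V,W)}_\Phi[\omega]=\sk{\nabla\Phi(\omega)}{\cW_{2,1}(V\zeta,W\eta)}$. Inside the integral over $\omega'$, write $\omega'=\cW_{2,1}(\zeta',\eta')$ with $\zeta',\eta'\in\R^2$ and split $\zeta=(\zeta-\cV_1^{-1}(\zeta'))+\cV_1^{-1}(\zeta')$, $\eta=(\eta-\cV_1^{-1}(\eta'))+\cV_1^{-1}(\eta')$. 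Since $D^2\cQ$ and $\nabla\cQ$ there are evaluated at $\omega-\cW_{2,1}^{-1}(\omega')=(\zeta-\cV_1^{-1}(\zeta'),\eta-\cV_1^{-1}(\eta'))$, the contributions of $\zeta-\cV_1^{-1}(\zeta')$ and $\eta-\cV_1^{-1}(\eta')$ complete exactly the missing pieces of $\int_{\R^4}\mathbf{H}^{(\oA,\oB)}_{\cQ}[\omega-\cW_{2,1}^{-1}(\omega');(X,Y)]\varphi_\nu(\omega')\wrt\omega'$, while the contributions of $\cV_1^{-1}(\zeta')$ and $\cV_1^{-1}(\eta')$ are, by $\R$‑linearity of $\cV_1$ and the definitions \eqref{eq: def R1, R2}, precisely $R^{(c,\gamma)}_\nu[\omega;(X,Y)]$ (from the $H^{(b,\beta,c,\gamma)}$ piece) and $R^{(V,W)}_\nu(\omega)$ (from $G^{(V,W)}$). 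Adding the three contributions $H^{(A,B)}+H^{(b,\beta,c,\gamma)}+G^{(V,W)}$ yields \eqref{eq : Hess conv is conv Hess plus rems}.

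The only step demanding genuine care is the first paragraph, and within it the identity $D^2(\cQ\star\varphi_\nu)=(D^2\cQ)\star\varphi_\nu$ near $\Upsilon$: one must check the local integrability of the classical Hessian of $\cQ$ and that it has no singular distributional part — which is exactly what $\cQ\in C^1$ provides. Everything after that is a routine, if slightly tedious, bookkeeping of $\R$‑bilinear pairings, parallel to \cite[Corollary~5.5]{CD-DivForm} but with the two extra lower‑order contributions $R^{(c,\gamma)}_\nu$ and $R^{(V,W)}_\nu$.
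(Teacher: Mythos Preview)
Your proposal is correct and follows essentially the same approach as the paper: use $\cQ\in C^1$ together with local integrability of the classical Hessian to pass derivatives through the convolution, then split the $\omega$-dependent contracting vectors $(\zeta c,\eta\gamma)$ and $(V\zeta,W\eta)$ as $(\omega-\cW_{2,1}^{-1}(\omega'))+\cW_{2,1}^{-1}(\omega')$ to isolate the remainders. One small slip: for $|\eta|^{-\alpha}$ to lie in $L^1_{\rm loc}(\R^4)$ the relevant threshold is $\alpha<2$ (the singularity is in the two-dimensional $\eta$-variable), not $\alpha<4$; your actual exponents $(p-2)/(p-1)$ and $2-q$ are both below $1$, so the conclusion stands.
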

\begin{remark}
\label{r : no inv und conv}
We would like to use \eqref{eq : Hess conv is conv Hess plus rems} in order to estimate $\mathbf{H}_{\cQ\star\varphi_\nu}^{(\oA,\oB)}$ from below. The integral $\int_{\R^4}\mathbf{H}_{\cQ}$ in \eqref{eq : Hess conv is conv Hess plus rems} will be estimated by means of Theorem~\ref{t: Conv Belman}.
Note that in the unperturbed case,  Carbonaro and Dragi\v{c}evi\'c \cite{CD-DivForm, CD-Mixed} proved that
$$
H_{\cQ\star\varphi_\nu}^{(A,B)}[\omega; (X,Y)] = \int_{\R^4}H^{(A,B)}_{ \cQ}[\omega-\cW_{2,1}^{-1}(\omega^{\prime});(X,Y)]\varphi_\nu(\omega^{\prime})\,\wrt\omega^{\prime}.
$$ 
This time, the presence of the terms $R_\nu^{(c,\gamma)}$ and $R_\nu^{(V,W)}$ in \eqref{eq : Hess conv is conv Hess plus rems} is due to the fact that in the definition \eqref{eq: def c-Hess}, the element $\omega=(\zeta,\eta)$ appears not only as a variable of the function $\cQ$, but also in the components of $(\zeta c,\eta \gamma)$ and $(V\zeta, W\eta)$. For this reason we cannot proceed exactly as in \cite{CD-Mixed}.
  The terms $R_\nu^{(c,\gamma)}$ and $R_\nu^{(V,W)}$ are remainders, in the sense that  they are going to disappear as  $\nu \rightarrow 0$. In order to prove that, we will need their upper  estimates which will be established next; see Lemma~\ref{l : lemma on reminds}.
\end{remark}
\begin{proof}[Proof of Lemma~\ref{l : hessiana reg Bellman}]
By definition \eqref{eq: def c-Hess} we have
$$
\aligned
H^{(b,\beta,c,\gamma)}_{\cQ}[\omega;(X,Y)]  =&  \sk{\left[D^{2}\cQ(\omega)\otimes I_{\R^{d}}\right] \cW_{2,d}(X,Y)}{\cW_{2,d}(\zeta c, \eta \gamma)}_{\R^{4d}} \\
&+ \sk{\nabla\cQ(\omega)}{\cW_{2,1}(\sk{X}{b}, \sk{Y}{\beta})}_{\R^{4}} \\
=:& \, G^{(b,\beta)}_{ \cQ}[\omega; (X,Y)] + H^{(c,\gamma)}_{ \cQ}[\omega; (X,Y)],
\endaligned
$$
for all $\omega=(\zeta,\eta) \in \C^2 \setminus \Upsilon$ and all $X,Y \in \C^d$.
Since $ \cQ_\cW\in C^1(\R^4)$ and its second-order partial derivatives exist on $\R^4\setminus\cW(\Upsilon)$ and are locally integrable in $\R^4$, by the ACL characterisation of Sobolev spaces (see, for example, \cite[Th\'eor\`eme~V, p. 57]{Schwartz} or \cite[Theorem~11.45]{Leoni}) we have
\begin{equation}
\label{e : hess Q reg}
\aligned
\mathbf{H}^{(\oA,\oB)}_{ \cQ\star\varphi_{\nu}}[\omega;(X,Y)]&=\int_{\R^4}H^{(A,B)}_{ \cQ}[\omega-\cW_{2,1}^{-1}(\omega^{\prime});(X,Y)]\varphi_\nu(\omega^{\prime})\,\wrt\omega^{\prime} \\ 
&\,\,\,\,\, + \, \,\int_{\R^4}G^{(b,\beta)}_{ \cQ}[\omega-\cW_{2,1}^{-1}(\omega^{\prime});(X,Y)]\varphi_\nu(\omega^{\prime})\,\wrt\omega^{\prime} \\ 
&\,\,\,\,\, + \,\, \sk{\left[D^{2}(\cQ \star \varphi)(\omega)\otimes I_{\R^{d}}\right] \cW_{2,d}(X,Y)}{\cW_{2,d}(\zeta c,\eta \gamma)} \\
&\,\,\,\,\,+ \,\, \sk{\nabla (\cQ \star \varphi)(\omega)}{\cW_{2,1}(V\zeta, W\eta)}, 
\endaligned
\end{equation}
for all $\omega \in \C^2$ and all $X,Y \in \C^d$.
Writing the third and the fourth terms of the right-hand side of \eqref{e : hess Q reg} as
\begin{eqnarray*}
&&\hskip -50pt \sk{\left[D^{2}(\cQ \star \varphi)(\omega)\otimes I_{\R^{d}}\right] \cW_{2,d}(X,Y)}{\cW_{2,d}(\zeta c,\eta \gamma)}\\
 &=&  \int_{\R^4}H^{(c,\gamma)}_{ \cQ}[\omega-\cW_{2,1}^{-1}(\omega^{\prime});(X,Y)]\varphi_\nu(\omega^{\prime})\,\wrt\omega^{\prime} + R_\nu^{(c,\gamma)}[\omega;(X,Y)], \\
&&\hskip -50pt \sk{\nabla (\cQ \star \varphi)(\omega)}{\cW_{2,1}(V\zeta, W\eta)}\\
 &=&  \int_{\R^4}G^{(V,W)}_{ \cQ}[\omega-\cW_{2,1}^{-1}(\omega^{\prime})]\varphi_\nu(\omega^{\prime})\,\wrt\omega^{\prime} + R_\nu^{(V,W)}(\omega),
\end{eqnarray*}
we get \eqref{eq : Hess conv is conv Hess plus rems}.
\end{proof}

\begin{lemma}
\label{l : lemma on reminds}
Let $\nu \in (0,1)$, $c, \gamma \in \C^d$ and $V,W \in \R_+$. Then
\begin{enumerate}[{\rm (i)}]
\item
\label{i : reminds dominated}
for all $\omega=(\zeta,\eta) \in \C^2$ and all $X,Y \in \C^d$,
\begin{equation*}
\aligned
|R^{(c,\gamma)}_\nu[\omega; (X,Y)]| &\leqsim  \nu ^{q-1}
 \max\{|c|,|\gamma|\} \left(1+|\zeta|^{p-2}+|\eta|^{2-q} \right) |(X,Y)|,\\ 
|R^{(V,W)}_\nu(\omega)| &\leqsim  \nu
 \max\{V,W\} \left(1+|\zeta|^{p-1}+|\eta|^{q-1} + |\eta| \right);  
\endaligned
\end{equation*}
\item
\label{i: reminds tends to 0}
$R^{(c,\gamma)}_\nu[\omega;(X,Y)]$ and $R^{(V,W)}_\nu(\omega)$ converge to $0$ as $\nu \rightarrow 0_+$ for all $\omega \in\C^2$, $X,Y \in \C^d$.
\end{enumerate}
\end{lemma}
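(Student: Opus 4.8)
The plan is to bound $R^{(V,W)}_\nu$ and $R^{(c,\gamma)}_\nu$ separately and then deduce (ii) from (i): for fixed $\omega$, $X$, $Y$ the right-hand sides in (i) contain the vanishing factors $\nu$, resp.\ $\nu^{q-1}$ (recall $q>1$), while all other factors are independent of $\nu$, so (ii) is immediate. Since $\cQ=\cQ_{p,\delta}$ is the Bellman function we have $p>2$ and $q\in(1,2)$, whence $p-1,p-2,q-1,2-q>0$, $q-2\in(-1,0)$ and $\nu^{q-2}\ge1$ for $\nu\in(0,1)$. The common mechanism is that $\varphi_\nu$ is supported in $B_{\R^4}(0,\nu)$, so on its support $|\cW_{2,1}^{-1}(\omega')|<\nu$ and the linear-in-$\omega'$ vectors $(V\zeta',W\eta')$ and $\cW_{2,d}(\cV_1^{-1}(\zeta')c,\cV_1^{-1}(\eta')\gamma)$ have norms $\leqsim\nu\max\{V,W\}$, resp.\ $\leqsim\nu\max\{|c|,|\gamma|\}$.

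For $R^{(V,W)}_\nu$ I would pull $\nu\max\{V,W\}$ out of the integral and invoke the gradient estimate in \eqref{eq: N 5}: $|(\nabla\cQ)(\zeta'',\eta'')|\leqsim_{p,\delta}|\zeta''|^{p-1}+|\eta''|+|\eta''|^{q-1}$. Evaluating at $(\zeta'',\eta'')=\omega-\cW_{2,1}^{-1}(\omega')$ and using $|\cW_{2,1}^{-1}(\omega')|<\nu<1$ with $p-1,q-1>0$ gives $|(\nabla\cQ)(\omega-\cW_{2,1}^{-1}(\omega'))|\leqsim_{p,\delta}1+|\zeta|^{p-1}+|\eta|+|\eta|^{q-1}$; since $\int\varphi_\nu=1$, integrating yields the second estimate of (i).

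For $R^{(c,\gamma)}_\nu$ the new input is a pointwise (a.e.) Hessian bound. Differentiating \eqref{eq: N Bellman} twice and distinguishing the two regions $|\zeta|^p\gtrless|\eta|^q$ — in the region $|\zeta|^p<|\eta|^q$ using $|\zeta|<|\eta|^{q-1}$ (hence $|\zeta|^2|\eta|^{-q}\le|\eta|^{q-2}$) together with $|\zeta||\eta|^{1-q}\le\tfrac12(|\zeta|^2|\eta|^{-q}+|\eta|^{2-q})$ — one gets
$$
|(D^2\cQ)(\zeta,\eta)|\leqsim_{p,\delta}|\zeta|^{p-2}+|\eta|^{q-2}+|\eta|^{2-q},\qquad(\zeta,\eta)\in\C^2\setminus\Upsilon,
$$
which suffices since the null set $\cW_{2,1}(\Upsilon)$ does not affect the integral (as in the proof of Lemma~\ref{l : hessiana reg Bellman}). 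Pulling out $\nu\max\{|c|,|\gamma|\}$ and $|(X,Y)|$, it remains to check
$$
\int_{\R^4}|(D^2\cQ)(\omega-\cW_{2,1}^{-1}(\omega'))|\,\varphi_\nu(\omega')\wrt\omega'\leqsim_{p,\delta}\nu^{q-2}\bigl(1+|\zeta|^{p-2}+|\eta|^{2-q}\bigr).
$$
The $|\zeta|^{p-2}$ and $|\eta|^{2-q}$ parts go exactly as in the previous step (bound the shifted argument by $|\zeta|+\nu$, resp.\ $|\eta|+\nu$, use $p-2,2-q>0$ and $\nu^{q-2}\ge1$). The hard part, and the main obstacle, is the singular term $|\eta|^{q-2}$: using $\varphi_\nu\leqsim\nu^{-4}$ on $B_{\R^4}(0,\nu)$ and integrating out $\zeta'$ (on which the integrand does not depend), it reduces to $\nu^{-2}\int_{|\eta''|\le\nu}|\eta-\eta''|^{q-2}\wrt\eta''$, and one then splits into $|\eta|\ge2\nu$ (where $|\eta-\eta''|\ge|\eta|/2$, giving $\leqsim\nu^2|\eta|^{q-2}\le\nu^2\nu^{q-2}$) and $|\eta|<2\nu$ (where $\{|\eta''|\le\nu\}\subseteq\{|\eta-\eta''|\le3\nu\}$ and $\int_{|w|\le3\nu}|w|^{q-2}\wrt w\leqsim\nu^q$, the local integrability here being precisely where $q<2$ enters). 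Both cases contribute $\leqsim\nu^{q-2}$, so altogether $|R^{(c,\gamma)}_\nu[\omega;(X,Y)]|\leqsim\nu\cdot\nu^{q-2}\max\{|c|,|\gamma|\}\bigl(1+|\zeta|^{p-2}+|\eta|^{2-q}\bigr)|(X,Y)|$, which is the first estimate of (i); and (ii) follows as noted above.
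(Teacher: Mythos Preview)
Your proof is correct and follows essentially the same strategy as the paper's: exploit that $\varphi_\nu$ is supported in $B_{\R^4}(0,\nu)$, use pointwise bounds on $\nabla\cQ$ and $D^2\cQ$, and then observe that the resulting factors $\nu$ and $\nu^{q-1}$ force convergence to zero. The paper's own proof is a two-line citation to \cite[Lemma~14(iii)]{CD-Mixed} and \cite[(29)]{CD-Mixed} for the relevant derivative and convolution estimates, whereas you reconstruct these bounds by hand---in particular, your explicit derivation of the Hessian estimate $|D^2\cQ(\zeta,\eta)|\leqsim_{p,\delta}|\zeta|^{p-2}+|\eta|^{q-2}+|\eta|^{2-q}$ on $\C^2\setminus\Upsilon$ and your treatment of the singular $|\eta|^{q-2}$ term via the two-case integral estimate (exploiting local integrability of $|w|^{q-2}$ in $\R^2$ for $q\in(1,2)$) are precisely the content hidden behind those citations. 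Your version has the advantage of being self-contained within the present paper (modulo \eqref{eq: N 5}), at the cost of some length.
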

\begin{proof}
By using \cite[Lemma~14(iii)]{CD-Mixed}, the second and the third estimates of \cite[(29)]{CD-Mixed} and the fact that the support of both the integrand of $R^{(c,\gamma)}_\nu$ and $R^{(V,W)}_\nu$ is contained in  $B_{\R^4}(0,\nu)$, we get item \eqref{i : reminds dominated}.
Sending $\nu$ to $0$, we prove item \eqref{i: reminds tends to 0}. 
\end{proof}
\begin{corollary}\label{p: N gen conv reg} 
Let $\Omega, \oA=(A,b,c,V), \oB=(B,\beta,\gamma,W)$ satisfy the standard assumptions of Section \ref{s : stand ass}.
Let $p\ge2$ such that $\oA \in (\cS_p \cap \cS_2)(\Omega)$ and $\oB \in \cS_q(\Omega)$. Then for every $\omega =(\zeta,\eta) \in\C\times\C$, $X,Y\in\C^{d}$ and almost every $x\in\Omega$,
$$
\liminf_{\nu \rightarrow 0} \mathbf{H}_{\cQ\star\varphi_{\nu}}^{(\oA(x),\oB(x))}[\omega;(X,Y)]
\geq C \sqrt{|X|^2+V(x)|\zeta|^2}\sqrt{|Y|^2+ W(x)|\eta|^2}.
$$
The constant $C>0$ can be chosen so as to (continuously) depend on $p, \lambda(A,B)$, $\Lambda(A,B), M(\oA,\oB), \mu(\oA), \mu_p(\oA)$ and $\mu_q(\oB)$, but not on the dimension $d$.
\end{corollary}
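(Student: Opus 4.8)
The plan is to feed the decomposition of Lemma~\ref{l : hessiana reg Bellman} into the pointwise lower bound of Theorem~\ref{t: Conv Belman}, discard the remainders with the help of Lemma~\ref{l : lemma on reminds}\eqref{i: reminds tends to 0}, and pass to the limit $\nu\to0_+$. Fix a point $x\in\Omega$ outside the Lebesgue-null set on which either the standing hypotheses on $\oA(x),\oB(x)$ or the conclusion of Theorem~\ref{t: Conv Belman} fail, and abbreviate $\oA=\oA(x)$, $\oB=\oB(x)$, $V=V(x)$, $W=W(x)$; fix also $\omega=(\zeta,\eta)\in\C\times\C$ and $X,Y\in\C^d$. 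By \eqref{eq : Hess conv is conv Hess plus rems} we may write $\mathbf{H}^{(\oA,\oB)}_{\cQ\star\varphi_\nu}[\omega;(X,Y)]=\cI_\nu+R^{(c,\gamma)}_\nu[\omega;(X,Y)]+R^{(V,W)}_\nu(\omega)$, where $\cI_\nu:=\int_{\R^4}\mathbf{H}^{(\oA,\oB)}_{\cQ}[\omega-\cW_{2,1}^{-1}(\omega');(X,Y)]\,\varphi_\nu(\omega')\wrt\omega'$ is the convolution term and $R^{(c,\gamma)}_\nu, R^{(V,W)}_\nu$ are the remainders of \eqref{eq: def R1, R2}.

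First I would bound $\cI_\nu$ from below. Put $(\zeta',\eta')=\cW_{2,1}^{-1}(\omega')$, so that $\omega-\cW_{2,1}^{-1}(\omega')=(\zeta-\zeta',\eta-\eta')$; since $\cW_{2,1}(\Upsilon)$ is Lebesgue-null in $\R^4$, this point lies in $(\C\times\C)\setminus\Upsilon$ for a.e. $\omega'$, and there Theorem~\ref{t: Conv Belman} applies and yields
$$
\mathbf{H}^{(\oA,\oB)}_{\cQ}[\omega-\cW_{2,1}^{-1}(\omega');(X,Y)]\ \geq\ C\,\sqrt{|X|^2+V|\zeta-\zeta'|^2}\,\sqrt{|Y|^2+W|\eta-\eta'|^2},
$$
with $C>0$ the dimension-free constant of that theorem, depending only on $p,\lambda(A,B),\Lambda(A,B),M(\oA,\oB),\mu(\oA),\mu_p(\oA),\mu_q(\oB)$ and, in particular, uniform in $\omega,X,Y$ and in the fixed $x$. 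Integrating against $\varphi_\nu$ gives $\cI_\nu\geq C\,(\psi\star\varphi_\nu)(\omega)$, where $\psi(\zeta,\eta):=\sqrt{|X|^2+V|\zeta|^2}\,\sqrt{|Y|^2+W|\eta|^2}$ and $\star$ is the convolution of \eqref{e : compl convol}. Since $\psi$ is continuous on $\C^2$ and $(\varphi_\nu)_{\nu\in(0,1]}$ is a standard approximate identity, $(\psi\star\varphi_\nu)(\omega)\to\psi(\omega)$ as $\nu\to0_+$, whence $\liminf_{\nu\to0}\cI_\nu\geq C\,\psi(\omega)$.

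Finally, by Lemma~\ref{l : lemma on reminds}\eqref{i: reminds tends to 0} both $R^{(c,\gamma)}_\nu[\omega;(X,Y)]$ and $R^{(V,W)}_\nu(\omega)$ tend to $0$ as $\nu\to0_+$. Using that $\liminf_\nu(a_\nu+b_\nu+c_\nu)\geq\liminf_\nu a_\nu+\lim_\nu b_\nu+\lim_\nu c_\nu$ whenever the last two limits exist, I conclude
$$
\liminf_{\nu\to0}\mathbf{H}^{(\oA,\oB)}_{\cQ\star\varphi_\nu}[\omega;(X,Y)]\ \geq\ C\,\psi(\omega)\ =\ C\sqrt{|X|^2+V(x)|\zeta|^2}\,\sqrt{|Y|^2+W(x)|\eta|^2},
$$
which is the asserted inequality, with the stated (dimension-free) dependence of $C$. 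All the substantive work has already been done in Theorem~\ref{t: Conv Belman}; the only mildly delicate point here is the legitimacy of applying Theorem~\ref{t: Conv Belman} under the integral sign defining $\cI_\nu$, which is harmless because the singular set $\Upsilon$ is null and the integrand stays locally bounded near it by the size estimates \eqref{eq: N 5} for $\cQ$ and its derivatives.
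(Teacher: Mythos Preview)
Your proof is correct and follows essentially the same route as the paper: decompose via Lemma~\ref{l : hessiana reg Bellman}, apply Theorem~\ref{t: Conv Belman} under the integral (legitimate since $\cW_{2,1}(\Upsilon)$ is null), let the mollified continuous function $\psi$ converge, and kill the remainders with Lemma~\ref{l : lemma on reminds}\eqref{i: reminds tends to 0}. One small imprecision in your closing parenthetical: the integrand of $\cI_\nu$ is not locally \emph{bounded} near $\Upsilon$ (the Hessian of $|\eta|^{2-q}$ blows up like $|\eta|^{-q}$), only locally \emph{integrable}---but this is exactly what is used in the proof of Lemma~\ref{l : hessiana reg Bellman} to make the convolution well-defined, so no harm is done.
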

\begin{proof}
By combining Theorem~\ref{t: Conv Belman} and Lemma~\ref{l : hessiana reg Bellman}, we infer that for almost every $x \in \Omega$,
\begin{equation}
\aligned
\label{eq: est Hess Gen Q*phi}
& \hskip -20pt \mathbf{H}^{(\oA(x),\oB(x))}_{ \cQ\star\varphi_{\nu}}[\omega;(X,Y)] \\
&\geqsim \int_{\R^4} \sqrt{|X|^2+V(x)|\zeta-\cV_1^{-1}(\zeta^\prime)|^2}\sqrt{|Y|^2+ W(x)|\eta - \cV_1^{-1}(\eta^\prime)|^2}\varphi_\nu(\omega^{\prime})\,\wrt\omega^{\prime} \\ 
& \,\,\,\,\, + \,\,R^{(c(x),\gamma(x))}_\nu[\omega;(X,Y)] +R^{(V(x),W(x))}_\nu(\omega), 
\endaligned
\end{equation}
for all $\omega=(\zeta,\eta) \in \C^2$ and $X,Y \in \C^d$.

Since the function $\C^2 \ni (\zeta,\eta) \mapsto  \sqrt{|X|^2+V(x)|\zeta|^2}\sqrt{|Y|^2+ W(x)|\eta|^2}$ is continuous in $\C^2$ for all $X,Y \in \C^d$ and for all $x\in \Omega$, we get
\begin{eqnarray}
\label{eq: F*phi tends to F}
&&  \hskip -50pt \lim_{\nu \rightarrow 0}  \int_{\R^4} \sqrt{|X|^2+V(x)|\zeta-\cV_1^{-1}(\zeta^\prime)|^2}\sqrt{|Y|^2+ W(x)|\eta - \cV_1^{-1}(\eta^\prime)|^2}\varphi_\nu(\omega^{\prime})\,\wrt\omega^{\prime} \nonumber \\
&=&  \sqrt{|X|^2+V(x)|\zeta|^2}\sqrt{|Y|^2+ W(x)|\eta|^2}
\end{eqnarray}
for all $\zeta, \eta \in \C$, $X,Y \in \C^d$ and $x \in \Omega$.
Combining Lemma \ref{l : lemma on reminds}\eqref{i: reminds tends to 0}, \eqref{eq: est Hess Gen Q*phi} and \eqref{eq: F*phi tends to F}, we conclude.
\end{proof}

\section{$L^p$ contractivity and analyticity of  $(T^{A,b,c,V,\oV}_{t})_{t>0}$} \label{s : Lp contr}
Let $\oA=(A,b,c,V) \in \cB(\Omega)$. Let prove now Theorem \ref{t : contract} and Corollary \ref{c: N analytic sem}.
\subsection{Proof of Theorem \ref{t : contract}}\label{ss : Lp contr}
Let $(\Omega, \mu)$ be a measure space, $\gotb$ a sesquilinear form defined on the domain $\Dom(\gotb) \subset L^2=L^2(\Omega)$ and $1<p<\infty$. Denote 
$$
\Dom_p(\gotb) := \{ u \in \Dom(\gotb) : |u|^{p-2} u \in \Dom(\gotb) \}.
$$
We say that $\gotb$ is {\it $L^p$-dissipative} if 
$$
\Re \gotb(u, |u|^{p-2} u) \geq 0 \quad \forall u \in \Dom_p(\gotb).
$$
The notion of $L^p$-dissipativity of sesquilinear forms  was introduced by Cialdea and Maz'ya in \cite{CiaMaz} for forms defined on $C_c^1(\Omega)$. Then it was extended by Carbonaro and Dragi\v{c}evi\'c in \cite[Definition 7.1]{CD-DivForm}. 

In order to prove the $L^p$-contractivity of $(T_t^{A,b,c,V,\oV})_{t>0}$, we follow the proof of the implication $(a) \Rightarrow (b)$ in \cite[Theorem~1.3]{CD-DivForm} for which the following theorem due to Nittka \cite[Theorem~4.1]{Nittka} is essential. We reproduce it in the form it appeared in \cite[Theorem~ 2.2]{CD-Potentials}.
\begin{theorem}[Nittka]\label{t: Nittka}
Let $(\Omega, \mu)$ be a measure space. Suppose that the sesquilinear form $\gota$ on $L^2=L^2(\Omega,\mu)$ is densely defined, accretive, continuous and closed. Let $\oL$ be the operator associated with $\gota$.

Take $p \in (1,\infty)$ and define $B^p := \{u \in L^2 \cap L^p : \|u\|_p \leq 1 \}$. Let $\bP_{B^p}$ be the orthogonal projection $L^2 \rightarrow B^p$. Then the following assertions are equivalent:
\begin{itemize}
\item $\| {\rm exp}(-t\oL) f\|_p \leq \|f\|_p$ for all $f \in L^2 \cap L^p$ and all $t \geq 0$;
\item $\Dom(\gota)$ is invariant under $\bP_{B^p}$ and $\gota$ is $L^p$-dissipative.
\end{itemize}
\end{theorem}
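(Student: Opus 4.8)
Theorem~\ref{t: Nittka} is due to Nittka \cite{Nittka}; the plan is to derive it from Ouhabaz's criterion for the invariance of a closed convex set under the semigroup generated by a closed accretive form, together with an explicit description of the orthogonal projection $\bP_{B^p}$. First I would invoke Ouhabaz's invariance theorem (see \cite{O}): for $\gota$ densely defined, accretive, continuous and closed on $L^2$, and for a closed convex set $C\subseteq L^2$ with $L^2$-orthogonal projection $P_C$, one has $\exp(-t\oL)C\subseteq C$ for all $t\ge 0$ if and only if $P_C\,\Dom(\gota)\subseteq\Dom(\gota)$ and $\Re\gota(P_Cu,u-P_Cu)\ge 0$ for all $u\in\Dom(\gota)$. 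The set $C=B^p$ is convex and $L^2$-closed (if $u_n\to u$ in $L^2$ with $\norm{u_n}{p}\le 1$, pass to an a.e.\ convergent subsequence and apply Fatou to get $\norm{u}{p}\le 1$), so the criterion applies. By homogeneity of the semigroup and of the $L^p$-norm, $\exp(-t\oL)B^p\subseteq B^p$ for all $t\ge 0$ is equivalent to the first assertion of the theorem: if $f\in L^2\cap L^p$ and $f\ne 0$ then $f/\norm{f}{p}\in B^p$, whence $\exp(-t\oL)f\in L^p$ with $\norm{\exp(-t\oL)f}{p}\le\norm{f}{p}$, and conversely the contractivity bound forces $\exp(-t\oL)B^p\subseteq B^p$.

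It remains to identify $\bP_{B^p}$ explicitly and to match the two form conditions above with the pair appearing in the theorem. The key is a pointwise computation: for each $\mu\ge 0$ the radial map $z\mapsto z+\mu|z|^{p-2}z$ is a bijection of $\C$, so $v+\mu|v|^{p-2}v=u$ determines $v=g_\mu(u)$ pointwise, and $\mu\mapsto\norm{g_\mu(u)}{p}$ decreases continuously from $\norm{u}{p}$ to $0$. Hence there is a unique $\mu(u)\ge 0$, with $\mu(u)=0$ precisely when $\norm{u}{p}\le 1$, such that $w:=g_{\mu(u)}(u)$ satisfies $\norm{w}{p}\le 1$, with equality when $\norm{u}{p}>1$. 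For this $w$ the variational characterisation $\Re\sk{u-w}{z-w}_{L^2}\le 0$ for every $z\in B^p$ follows directly from $u-w=\mu(u)\,|w|^{p-2}w$ and Hölder's inequality (when $\norm{w}{p}=1$, $\big|\sk{|w|^{p-2}w}{z}_{L^2}\big|\le\norm{z}{p}\le 1=\norm{w}{p}^p$). Therefore $\bP_{B^p}u=w$ and $u-\bP_{B^p}u=\mu(u)\,|w|^{p-2}w$, so $\Re\gota(\bP_{B^p}u,\,u-\bP_{B^p}u)=\mu(u)\,\Re\gota(w,|w|^{p-2}w)$.

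From this identity the rest falls into place. The invariance of $\Dom(\gota)$ under $\bP_{B^p}$ is literally the same hypothesis on both sides, so it remains only to show, granting that invariance, that $\Re\gota(\bP_{B^p}u,u-\bP_{B^p}u)\ge 0$ for all $u\in\Dom(\gota)$ is equivalent to $L^p$-dissipativity of $\gota$. In one direction: if $\mu(u)>0$ then, $\Dom(\gota)$ being a vector space and $u-\bP_{B^p}u=\mu(u)|w|^{p-2}w\in\Dom(\gota)$, one has $|w|^{p-2}w\in\Dom(\gota)$, hence $w\in\Dom_p(\gota)$ and the inequality becomes $\mu(u)\Re\gota(w,|w|^{p-2}w)\ge 0$ (the case $\mu(u)=0$ being trivial). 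In the other direction, given $w\in\Dom_p(\gota)\setminus\{0\}$ — which automatically lies in $L^p$, by interpolating $w\in L^2$ with $w\in L^{2(p-1)}$ (the latter since $|w|^{p-2}w\in L^2$) — one rescales so that $\norm{w}{p}=1$ and takes $u:=w+|w|^{p-2}w\in\Dom(\gota)$, for which $\bP_{B^p}u=w$; the hypothesis then yields $\Re\gota(w,|w|^{p-2}w)\ge 0$, and the homogeneity $\gota(tw,|tw|^{p-2}tw)=t^p\gota(w,|w|^{p-2}w)$ removes the rescaling. The main obstacle is precisely this circle of identifications: one must treat the pointwise nonlinear equation $v+\mu|v|^{p-2}v=u$ with care (its solvability, the monotonicity of $\mu\mapsto\norm{g_\mu(u)}{p}$, and the variational characterisation of the minimiser) and exploit homogeneity to pass between the normalised condition delivered by Ouhabaz's criterion and the unnormalised notion of $L^p$-dissipativity and of $\Dom_p(\gota)$; once that is in place, the surrounding steps are routine applications of standard form theory.
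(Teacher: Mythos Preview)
The paper does not prove Theorem~\ref{t: Nittka}: it is stated as a quotation of Nittka's result \cite[Theorem~4.1]{Nittka} (in the form of \cite[Theorem~2.2]{CD-Potentials}) and used as a black box. Your argument is correct and is, in essence, Nittka's original proof: reduce to Ouhabaz's invariance criterion for the closed convex set $B^p$, compute $\bP_{B^p}$ explicitly via the pointwise radial equation $w+\mu|w|^{p-2}w=u$, and use the resulting identity $u-\bP_{B^p}u=\mu(u)\,|w|^{p-2}w$ together with homogeneity to match the Ouhabaz form condition with $L^p$-dissipativity. One small remark: to see that $\|g_\mu(u)\|_p\to 0$ for every $u\in L^2$ (even when $u\notin L^p$), the clean estimate is $|g_\mu(u)|^p\le |g_\mu(u)|\cdot|g_\mu(u)|^{p-1}\le |u|\cdot(|u|/\mu)=|u|^2/\mu$, which gives $\|g_\mu(u)\|_p^p\le \|u\|_2^2/\mu$; this covers the existence of $\mu(u)$ without needing $u\in L^p$ a priori.
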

The $L^p$-dissipativity of the form \eqref{e : form sesq} is closely related to our $ \mathbf{H}_{F_p}^{\oA}$, as we show next.
\begin{proposition}\label{p : LpDiss and HessFp}
Suppose that $B \in \C^{d\times d}$, $\beta, \gamma \in \C^d$ and $W \in \R_+$, $p >1 $ and $f \in \Dom_p(\gota)$. Then, setting $\oB = (B,\beta,\gamma,W)$,
$$
  \mathbf{H}_{F_p}^\oB[f; \nabla f] = p\, \Re\left( \sk{B\nabla f}{\nabla\left(\mod{f}^{p-2}f\right)} + {\sk{\nabla f}{\beta}}\mod{f}^{p-2}\overline{f}+ f\sk{\gamma}{\nabla\left(\mod{f}^{p-2}f\right)}\right)  + W|f|^p.
$$
\end{proposition}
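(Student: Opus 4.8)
The plan is to compute both sides of the claimed identity directly and match them term by term, exploiting the product structure of $F_p(\zeta) = |\zeta|^p$ and the explicit formulas \eqref{e : GradFp} and \eqref{e : HessFp} for its gradient and Hessian. First I would set $\zeta = f(x)$ and $X = \nabla f(x)$ and unwind the definition \eqref{d : Hess sup gen 1dim}, so that $\mathbf{H}_{F_p}^\oB[f;\nabla f]$ splits as $H^B_{F_p}[f;\nabla f] + H^{(\beta,\gamma)}_{F_p}[f;\nabla f] + G^W_{F_p}[f]$. The main computational input is the pointwise chain-rule identity
$$
\nabla\bigl(|f|^{p-2} f\bigr) = |f|^{p-2}\Bigl(\nabla f + (p-2)\,\Re\!\bigl(\overline{f}\,\nabla f / |f|^2\bigr)\, f \Bigr),
$$
valid where $f \neq 0$, which is exactly the content of the Hessian formula \eqref{e : HessFp} rewritten in complex notation; the real-form identities \eqref{eq: real form} and \eqref{e : realis compl prod} then translate the inner products $\sk{\cdot}{\cdot}_{\R^{2d}}$ appearing in the definition of $H^B_{F_p}$ into the complex inner product $p\,\Re\sk{B\nabla f}{\nabla(|f|^{p-2}f)}_{\C^d}$ (up to the factor $|f|^{p-2}$ that the Hessian carries).

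Next I would handle the first-order terms. The term $H^{(\beta,\gamma)}_{F_p}$ has two pieces: the one involving $D^2 F_p$ paired against $\cV_d(\zeta\gamma)$ becomes, after the same manipulation, $p\,\Re\bigl(f\sk{\gamma}{\nabla(|f|^{p-2}f)}\bigr)$ — here one uses that $D^2F_p(\zeta)$ applied to $\cV_d(X)$ and paired with $\cV_1(\zeta\gamma)$ reproduces the same combination $\nabla f + (p-2)\Re(\cdots)f$ that appears in $\nabla(|f|^{p-2}f)$, now tested against $\gamma$; the piece $\sk{\nabla F_p(\zeta)}{\cV_1(\sk{X}{\beta})}_{\R^2}$ becomes $p\,|f|^{p-2}\Re\bigl(\overline{f}\,\sk{\nabla f}{\beta}\bigr) = p\,\Re\bigl(\sk{\nabla f}{\beta}\,|f|^{p-2}\overline{f}\bigr)$ by \eqref{e : GradFp} and \eqref{e : realis compl prod}. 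Finally $G^W_{F_p}[f] = \sk{\nabla F_p(f)}{\cV_1(W f)}_{\R^2} = W|f|^{p-2}\Re\sk{f}{f}_{\C} = W|f|^p$, again straight from \eqref{e : GradFp}. Summing the three contributions gives exactly the right-hand side of the proposition.

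The main obstacle is bookkeeping rather than conceptual: one must be careful that the formulas \eqref{e : GradFp}, \eqref{e : HessFp} only hold for $\zeta \neq 0$, so the identity is first established on $\{f \neq 0\}$; on $\{f=0\}$ both sides vanish a.e. since $\nabla f = 0$ a.e. on that level set (for $p \geq 2$ the functions $|f|^{p-2}f$ and $F_p$ are $C^1$ and the product rule extends, while for $1<p<2$ one works on $\{f\neq 0\}$ and notes $|u|^{p-2}u \in \Dom(\gota)$ is part of the hypothesis $f\in\Dom_p(\gota)$, so all quantities are well defined). The only other point requiring mild care is keeping the overall factor $p$ (coming from $(\nabla F_p)(\zeta) = p|\zeta|^{p-2}\cV_1(\zeta)$ and $(D^2F_p)(\zeta) = p|\zeta|^{p-2}(\cdots)$) consistently outside the real parts, which matches the $p$ in front of the right-hand side. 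I do not expect any genuine difficulty beyond this; the statement is essentially a translation of \eqref{e : GradFp}–\eqref{e : HessFp} into the language of the form $\gota$ via the realification maps $\cV_d$, $\cW_{k,d}$.
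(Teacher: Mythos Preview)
Your approach is correct and essentially identical to the paper's: both expand $\mathbf{H}_{F_p}^{\oB}[f;\nabla f]$ term by term via the gradient and Hessian formulas \eqref{e : GradFp}--\eqref{e : HessFp} together with the chain rule for $\nabla(|f|^{p-2}f)$. The paper merely packages the computation through the operator $\cJ_p$, quoting the identity $\nabla(|f|^{p-2}f)=|f|^{p-2}\,\mathrm{sign}\,f\cdot\cJ_p(\mathrm{sign}\,\overline f\cdot\nabla f)$ and then invoking Lemma~\ref{l : 11} and \cite[Lemma~5.6]{CD-DivForm} rather than expanding the Hessian by hand; note in passing that your $G^W_{F_p}[f]$ line drops the factor $p$ coming from $\nabla F_p$, so the zero-order term is actually $pW|f|^p$ (the printed statement appears to carry the same typo, as one sees from its use in the proof of Theorem~\ref{t : contract}).
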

\begin{proof}
By \cite[(5.5)]{CD-DivForm}, see also \cite[Lemma 2.3]{CD-Potentials},
$$
\nabla\left(\mod{f}^{p-2}f\right) = \mod{f}^{p-2}\textrm{sign} f \cdot \cJ_p\left( \textrm{sign} \overline{f} \cdot \nabla f \right)
$$
for all $f \in H^1(\Omega)$ such that $|f|^{p-2}f \in H^1(\Omega)$.
In order to finish the proof it now suffices to recall \cite[Lemma 5.6]{CD-DivForm} and Lemma \ref{l : 11} applied with $k=1$.
\end{proof}

\begin{proof}[Proof of Theorem \ref{t : contract}]
We will use Nittka's invariance criterion (Theorem \ref{t: Nittka}). Under our assumptions on $\phi$, the sesquilinear form $\gotb := e^{i\phi}\gota$ is densely defined, closed and sectorial. It is well-known that a sectorial form is accretive and continuous; see for example \cite[Proposition 1.8]{O}. Therefore, it falls into the framework of Nittka's criterion. The operator associated with $\gotb$ is $e^{i\phi} \oL^{A,b,c,V}$. 

The invariance of $\Dom(\gotb)=\Dom(\gota)$ under $\bP_{B^p}$ was proved in \cite[Theorem 1.2]{CD-Potentials}. We have to prove just the $L^p$-dissipativity of $\gotb$.

Let $u \in \Dom_p(\gotb)$. By Proposition \ref{p : LpDiss and HessFp}, applied with $\oB = \oA_\phi := (e^{i\phi}A,e^{i\phi}b,e^{i\phi}c,(\cos\phi)V)$, we get 
$$
\Re \gotb(u, |u|^{p-2} u) = p^{-1} \int_{\Omega}  \mathbf{H}_{F_p}^{\oA_\phi}[u; \nabla u].
$$
Hence the theorem follows from Proposition \ref{p: gen conv power function}.
\end{proof}

Without assumptions on the lower order terms, we obtain the $L^p$-quasi-contractivity of the semigroup $(T_t)_{t>0}$ generated by $-\oL$ (i.e., the $L^p$-contractivity of the semigroup generated by $\oL + \omega$, for some $\omega \geq0$) provided that $A$ is $p$-elliptic. Compare the next result with \cite[Theorem 6]{CiaMaz}; see also \cite[Proposition 5.18]{CD-DivForm}. 

\begin{corollary}
\label{c : contract}
Let $\Omega, A, b, c, \oV$ satisfy the standard assumptions of Section \ref{s : stand ass}. Let $V \in L_{\rm{loc}}^1(\Omega,\R)$ be bounded from below and $p>1$ such that $A \in \cA_p(\Omega)$.
Then $(T^{A,b,c,V,\oV}_{t})_{t>0}$ extends to a quasi-contraction semigroup on $L^p(\Omega)$.
\end{corollary}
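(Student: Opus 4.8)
The plan is to reduce to Theorem~\ref{t : contract} with $\phi=0$ by shifting the potential by a large constant. Since $V$ is bounded from below, fix $k_0\ge0$ with $V\ge-k_0$ a.e.\ on $\Omega$, and for $\omega>k_0$ put $V_\omega:=V+\omega$, which is then bounded below by the positive constant $\omega-k_0$, and set $\oA_\omega:=(A,b,c,V_\omega)$. Adding the constant $\omega$ to the potential adds $\omega\sk{\cdot}{\cdot}_{L^2}$ to the form $\gota_{A,b,c,V}$, hence adds $\omega I$ to the associated operator; moreover the form domains coincide because $V$ is bounded below (so that $\int_\Omega(V+k_0)|u|^2<\infty$ iff $\int_\Omega V_\omega|u|^2<\infty$), and subtracting a real multiple of $\sk{\cdot}{\cdot}_{L^2}$ keeps a sectorial form sectorial. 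Thus, once $\oA_\omega\in\cB(\Omega)$, we have $\oL+\omega I=\oL^{A,b,c,V_\omega}$ and $T^{A,b,c,V,\oV}_t=e^{\omega t}\,T^{\oA_\omega,\oV}_t$, so it suffices to show that for $\omega$ large $(T^{\oA_\omega,\oV}_t)_{t>0}$ extends to a contraction semigroup on $L^p(\Omega)$; quasi-contractivity of the original semigroup follows with exponent $\omega$.

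First I would check $\oA_\omega\in\cB(\Omega)$ for $\omega$ large. Condition \eqref{eq: sect form above} is immediate: since $b,c\in L^\infty$ and $V_\omega\ge\omega-k_0$, we have $\mod{b-c}\le\|b-c\|_\infty(\omega-k_0)^{-1/2}\sqrt{V_\omega}$. For \eqref{eq: sect form below}, use $\Re\sk{A\xi}{\xi}\ge\lambda(A)\mod{\xi}^2$ and Young's inequality to absorb $\Re\sk{b+c}{\xi}$ into $\tfrac{\lambda(A)}{2}\mod{\xi}^2$ up to an additive constant, obtaining $\Re\sk{A\xi}{\xi}+\Re\sk{b+c}{\xi}+V_\omega\ge\tfrac{\lambda(A)}{2}\mod{\xi}^2+V_\omega-\tfrac{\|b+c\|_\infty^2}{2\lambda(A)}$; if $\omega-k_0\ge\|b+c\|_\infty^2/\lambda(A)$ the last two terms are $\ge V_\omega/2$, so \eqref{eq: sect form below} holds with $\mu=\min\{\lambda(A)/2,1/2\}$.

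Next I would verify the hypothesis $\Gamma_p^{\oA_\omega}\ge0$ of Theorem~\ref{t : contract} (which, for $\phi=0$, is exactly $\Gamma_p^{(e^{i\phi}A,e^{i\phi}b,e^{i\phi}c,(\cos\phi)V_\omega)}\ge0$). Since $A\in\cA_p(\Omega)$, property \eqref{i : eq def delta} of $\cJ_p$ gives $\Re\sk{A\xi}{\cJ_p\xi}\ge\tfrac p2\Delta_p(A)\mod{\xi}^2$ with $\tfrac p2\Delta_p(A)>0$; absorbing $\Re\sk{b+\cJ_pc}{\xi}$ by Young's inequality (bounding $\|b+\cJ_pc\|_\infty\le\|b\|_\infty+(p^*-1)\|c\|_\infty$ via \eqref{i : norm Jp}) into $\tfrac p2\Delta_p(A)\mod{\xi}^2$ up to an additive constant $C_p$, one gets from \eqref{e : def Gammap} that $\Gamma_p^{\oA_\omega}(x,\xi)\ge V_\omega(x)-C_p\ge\omega-k_0-C_p\ge0$ for $\omega$ large. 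Fixing $\omega$ larger than the finitely many thresholds above, and noting $0<\pi/2-\theta_0$ always holds, Theorem~\ref{t : contract} with $\phi=0$ gives that $(e^{-t\oL^{A,b,c,V_\omega}})_{t>0}=(T^{\oA_\omega,\oV}_t)_{t>0}$ extends to a strongly continuous semigroup of contractions on $L^p(\Omega)$, hence $\|T^{A,b,c,V,\oV}_t\|_{L^p\to L^p}\le e^{\omega t}$ for all $t\ge0$, which is the claim.

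I do not expect a genuine obstacle: the argument is elementary once Theorem~\ref{t : contract} is in hand, the only care required being the simultaneous choice of a single $\omega$ large enough to secure both $\oA_\omega\in\cB(\Omega)$ and $\Gamma_p^{\oA_\omega}\ge0$, plus the routine remark that shifting the potential by a constant shifts the generator by $\omega I$ and leaves the form domain unchanged because $V$ is bounded below.
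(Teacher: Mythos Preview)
Your proposal is correct and follows essentially the same approach as the paper's own proof: shift the potential by a large constant $\omega$, verify that $\oA_\omega\in\cB(\Omega)$ and that $\Gamma_p^{\oA_\omega}\ge0$, then invoke Theorem~\ref{t : contract} with $\phi=0$ to obtain contractivity of the shifted semigroup and hence quasi-contractivity of the original one. The only cosmetic difference is that you verify the quadratic lower bounds via Young's inequality whereas the paper phrases them as a discriminant condition, and you spell out explicitly the (routine) identification $\oL^{A,b,c,V_\omega}=\oL+\omega I$ with matching form domains.
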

\begin{proof}
We want to find $\omega > |{\rm ess} \inf V|$ such that $(T_t^{A,b,c,V+\omega,\oV})_{t>0}$ extends to a strongly continuous semigroup of contractions on $L^p(\Omega)$. To this end, in light of Theorem \ref{t : contract} applied with $\phi =0$, it is sufficient to find $\omega, \mu >0$ such that
$$
\aligned
\Gamma_2^{A,b,c,V+\omega}(\xi) &\geq  \mu\left( |\xi|^2 + V + \omega\right), \\
\Gamma_p^{A,b,c,V+\omega}(\xi) &\geq  0,
\endaligned
$$
for all $\xi \in \C^d$. The first inequality would follow if we had $\mu < \min\{\lambda, 1\}$, where $\lambda =\lambda(A)$ is such that
$$
(\lambda - \mu) |\xi|^2 + {\rm ess} \inf \underset{|\sigma|=1}{\min} \, \Re\sk{b+c}{\sigma} |\xi| + (1-\mu) ({\rm ess} \inf V + \omega) \geq 0.
$$
This holds for $\omega$ large enough. In the same way, we are able to find $\omega$ such that the second inequality holds.
\end{proof}

\begin{proof}[Proof of Corollary \ref{c: N analytic sem}]
By Proposition \ref{p: more on B}\eqref{i : interpolation for B} and Proposition \ref{p : property of B}\eqref{i : cont B in phi}, there exists $\theta>0$ such that $(e^{i\phi}A,\,e^{i\phi}b,\,e^{i\phi}c,\,(\cos\phi)V) \in \cS_s(\Omega)$ for all $\phi \in [-\theta,\theta \,]$ and all $s \in [r,p]$. The contractivity part now follows from Theorem \ref{t : contract} 
 and the relation
\begin{equation}\label{e: compl times sem}
T_{t e^{i\phi}} = \textrm{exp}\left( -te^{i\phi} \oL \right),
\end{equation}
whereupon analyticity is a consequence of a standard argument \cite[Chapter~II, Theorem~4.6]{EN}.
\end{proof}

\subsection{Comparison between \eqref{e : weak Gamma cond} and the Cialdea-Maz'ya condition \cite[(2.25)]{CiaMaz}}\label{ss : compar CM}
Let $(C_c(\Omega, \C))^*$ denote the continuous dual of $C_c(\Omega, \C)$ endowed with the uniform norm. In \cite{CiaMaz} Cialdea and Maz'ya studied the $L^p$-dissipativity of the sesquilinear form $\widetilde\gota$, defined by
\begin{eqnarray}
\label{e : def sf CM}
\aligned
\Dom(\widetilde\gota)&= C_c^1(\Omega),  
\\
\widetilde\gota(u,v)&= \displaystyle \int_{\Omega}\sk{A\nabla u}{\nabla v} + \sk{\nabla u}{b}\overline{v} + u\sk{c}{\nabla v} + V u \overline{v}. \endaligned
\end{eqnarray}
Here $A$ is a $d \times d$ matrix with entries $a_{hk} \in(C_c(\Omega, \C))^*$, $b=(b_1, \dots, b_d)$ and $c=(c_1,\dots,c_d)$ stand for vectors with $b_j, c_j \in  (C_c(\Omega, \C))^*$ and $V$ is a complex-valued scalar distribution in $(C_c^1(\Omega))^*$.  By Riesz-Markov theorem every element of $(C_c(\Omega, \C))^*$ may be interpreted as a regular complex Borel measure on $\Omega$. Therefore, for instance, the meaning of the first and the last terms in \eqref{e : def sf CM} is
$$
\aligned
\displaystyle \int_{\Omega}\sk{A\nabla u}{\nabla v} &= \sum_{k,h = 1}^d \displaystyle \int_{\Omega} \partial_k u\,  \partial_h  \overline{v} \, \wrt a_{hk}, \\
\displaystyle \int_{\Omega}  V u \overline{v}&= \sk{V}{u \overline{v}}.
\endaligned
$$
They introduced the notion of $L^p$-dissipativity and, under the additional assumptions that $\Omega \subset \R^d$ is a bounded domain with sufficiently regular boundary \cite[p. 1087]{CiaMaz}, the entries of $A$ and $b$ belong to $C^1(\overline{\Omega})$, $c$ is zero and $V \in C(\overline{\Omega})$, they proved in \cite[Theorem~6]{CiaMaz} that the semigroup $T_t^{\oA,H_0^1}$ is $L^p$-quasicontractive if and only if 
$$
|p-2| | \sk{\Im A(x) \xi}{\xi}| \leq 2 \sqrt{p-1} \sk{\Re A(x) \xi}{\xi}|,
$$
for any $x\in\Omega, \xi \in \R^d$. The above inequality is equivalent is to saying that $\Delta_p(A_s)\geq0$, where $A_s = (A+ A^T)/2$ is the symmetric part of $A$; see \cite[Proposition~5.18]{CD-DivForm}.

Without these further assumptions on $\Omega$ and on the coefficients $A,b,c$ and $V$, in \cite[Corollary 4]{CiaMaz} they showed that $\widetilde\gota$ is $L^p$-dissipative provided that there exist two real constants $\theta$ and $\nu$ such that
\begin{eqnarray}
\label{e : CM suff cond}
&& \hskip -20pt\frac{4 }{pq} \sk{\Re A \alpha}{\alpha} + \sk{\Re A \beta}{\beta} + 2 \sk{\left(p^{-1} \Im A + q^{-1} \Im A^*\right)\alpha}{\beta} \nonumber  \\
&& +\,\,  \sk{\Im(b+c)}{\beta} + 2 \sk{\Re\left( \theta \frac{b}{p} + \nu \frac{c}{q} \right)}{\alpha} + \\
&& +\,\,  \Re\left[V-{\rm div}\left((1-\theta)\frac{b}{p} + (1-\nu)\frac{c}{q}\right)\right] \geq 0 \quad \forall \alpha,\beta \in \R^d, \nonumber
\end{eqnarray}
where $q$ is the conjugate exponent of $p$. Condition \eqref{e : CM suff cond} has to be understood in the sense of measures and distributions, that is, it means that, for every $\alpha, \beta \in \R^d$,
\begin{eqnarray*}
&& \hskip -20pt \displaystyle\int_\Omega \biggl[\biggl( \frac{4 }{pq}\sk{\Re A \alpha}{\alpha} + \sk{\Re A \beta}{\beta} + 2 \sk{\left(p^{-1} \Im A + q^{-1} \Im A^*\right)\alpha}{\beta} \nonumber  \\
&& +\,\,  \sk{\Im(b+c)}{\beta} + 2 \sk{\Re\left( \theta \frac{b}{p} + \nu \frac{c}{q} \right)}{\alpha} +\Re V \biggr) \varphi \biggr] \\
&&  - \, \,\sk{{\rm div}\left((1-\theta)\frac{\Re b}{p} + (1-\nu)\frac{\Re c}{q}\right)}{\varphi} \geq 0,
\end{eqnarray*}
for any nonnegative $\varphi \in C_c^1(\Omega)$, where the last term of the left-hand side of the previous inequality stands for the action of the distribution ${\rm div}\left((1-\theta)(\Re b/p) + (1-\nu)(\Re c/q)\right)$ on $\varphi$.

The distributional divergence appears after two integrations by parts. In fact, let $p \geq 2$ and $u$ be in $C_c^1(\Omega)$. Setting $v = |u|^{(p-2)/2}u$, which belongs to $C_c^1(\Omega)$, and noticing that $2 \,\Re(\overline{v}\nabla v) = \nabla (|v|^2)$, we have
$$
\aligned
\int_\Omega \Re \left(\sk{\nabla u}{b} |u|^{p-2} \overline{u}\right) =& \displaystyle \int_\Omega \frac{2}{p}\sk{\Re b}{\Re(\overline{v}\nabla v)} + \sk{\Im b}{\Im(\overline{v}\nabla v)}\\
=&  \displaystyle \int_\Omega \left( \frac{2 \theta}{p} \sk{\Re b}{\Re(\overline{v}\nabla v)}+ \sk{\Im b}{\Im(\overline{v}\nabla v)} \right) \\
& - \, \, \frac{1-\theta}{p} \sk{{\rm div}( \Re b)}{|v|^2}.
\endaligned
$$
Similarly, we get the divergence of $(1-\nu) (\Re c /q)$; see \cite[p. 1071 and p. 1078]{CiaMaz}.

Our assumptions on the domain $\Dom(\gota)$ of the form $\gota$, defined in \eqref{e : form sesq}, are weaker than those on the domain of $\widetilde\gota$, because the latter, which consists of all compactly supported functions on $\Omega$ of class $C^1$, is a proper subspace of  $\Dom(\gota)$. Moreover, since in generality that we consider the domain $\Omega$ may be completely irregular, $\Dom(\widetilde\gota)$ might not even be dense in $\Dom(\gota)$ with respect to the Sobolev norm $\| \cdot\|_{H^1(\Omega)}$, for instance if $\Dom(\gota) = H^1(\Omega)$.  In particular, for an arbitrary $u \in \Dom_p(\gota)$, we are not able to justify the distributional integration by parts
$$
\int_\Omega \sk{\Re b}{\nabla (|v|^2)} = -\sk{{\rm div}(\Re b)}{|v|^2},
$$
where $v= |u|^{(p-2)/2}u$ and $b \in L^\infty(\Omega,\C^d)$. Due to the weaker assumptions, our sufficient condition for the $L^p$-contractivity of the semigroup naturally turns out to imply \eqref{e : CM suff cond}.
\begin{proposition}
\label{l : charach Cialdea}
Let $A : \Omega  \rightarrow \C^{d,d}$, $b, \, c : \Omega \rightarrow \C^d$, $V : \Omega \rightarrow \R_+$ and $p>1$. Suppose that $\Gamma_p^{A,b,c,V}(x,\xi) \geq 0$ for almost all $x \in \Omega$ and all $\xi \in \C^d$. Then \eqref{e : CM suff cond} holds with $\theta=\nu=1$.
\end{proposition}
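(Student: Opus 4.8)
The plan is to reduce the statement to one linear change of variables. Recall from \eqref{e : def Gammap} that $\Gamma_p^{A,b,c,V}(x,\xi) = \Re\sk{A(x)\xi}{\cJ_p\xi}_{\C^d} + \Re\sk{b(x)+(\cJ_pc)(x)}{\xi}_{\C^d} + V(x)$, and from \eqref{i : form for Jp} that $\cJ_p\xi = p\,\Re\xi - \overline\xi$. The idea is to pick $\xi\in\C^d$ depending on a pair $(\alpha,\beta)\in\R^d\times\R^d$ so that $\Gamma_p^{A(x),b(x),c(x),V(x)}(x,\xi)$ becomes, summand by summand, exactly the left-hand side of \eqref{e : CM suff cond} evaluated with $\theta=\nu=1$; the hypothesis then transcribes verbatim into the desired condition. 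This is essentially a realification computation, but it is quickest to carry it out by hand.

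Concretely, I would fix $\alpha,\beta\in\R^d$ and set $\xi=\tfrac2p\alpha+i\beta$, so that $\Re\xi=\tfrac2p\alpha$, $\Im\xi=\beta$ and hence $\cJ_p\xi=\tfrac2q\alpha+i\beta$. Expanding the three summands of $\Gamma_p^{A(x),b(x),c(x),V(x)}(x,\xi)$ into real and imaginary parts --- using that $\Re\sk{Az}{w}_{\C^d}=\sk{\Re A\,z_1-\Im A\,z_2}{w_1}+\sk{\Im A\,z_1+\Re A\,z_2}{w_2}$ when $z=z_1+iz_2$, $w=w_1+iw_2$ with $z_j,w_j\in\R^d$, and symmetrising the mixed term via $\sk{\Im A\,\beta}{\alpha}=\sk{(\Im A)^T\alpha}{\beta}=-\sk{\Im(A^*)\alpha}{\beta}$ --- one finds
\[
\Re\sk{A\xi}{\cJ_p\xi}_{\C^d}=\frac{4}{pq}\sk{\Re A\,\alpha}{\alpha}+\sk{\Re A\,\beta}{\beta}+2\sk{\bigl(p^{-1}\Im A+q^{-1}\Im A^{*}\bigr)\alpha}{\beta}.
\]
Likewise, writing $\cJ_p c=(p-1)\Re c+i\,\Im c$ gives $\Re\sk{b+\cJ_p c}{\xi}_{\C^d}=2\sk{p^{-1}\Re b+q^{-1}\Re c}{\alpha}+\sk{\Im(b+c)}{\beta}$, and the last summand is simply $V=\Re V$ since $V\ge0$. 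Adding the three identities shows that $\Gamma_p^{A(x),b(x),c(x),V(x)}(x,\xi)$ coincides with the left-hand side of \eqref{e : CM suff cond} for $\theta=\nu=1$; the distributional divergence term there is absent because it carries the factor $1-\theta=1-\nu=0$.

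Finally I would conclude: since $\Gamma_p^{A,b,c,V}(x,\xi)\ge0$ for a.e.\ $x$ and every $\xi\in\C^d$, it holds in particular for $\xi=\tfrac2p\alpha+i\beta$ with $\alpha,\beta\in\R^d$ arbitrary, which is precisely \eqref{e : CM suff cond} with $\theta=\nu=1$; in its distributional formulation, integrating the a.e.\ nonnegative integrand against a nonnegative test function $\varphi\in C_c^1(\Omega)$ yields a nonnegative number. There is no genuine obstacle here: the only points needing care are the choice of the normalising factor $2/p$ in the substitution --- it is exactly what forces the $\Re A$-quadratic terms to acquire the coefficients $4/(pq)$ and $1$ demanded by \eqref{e : CM suff cond} --- and keeping track of the transpose/adjoint when symmetrising the $\Im A$ cross term.
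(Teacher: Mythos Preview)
Your proof is correct and follows essentially the same approach as the paper: both arguments identify the Cialdea--Maz'ya expression with $\theta=\nu=1$ with $\Gamma_p^{A,b,c,V}(x,\xi)$ via the change of variables $\xi=\tfrac{2}{p}\alpha+i\beta$ (the paper phrases it as replacing $\alpha$ by $\tfrac{p}{2}\alpha$ in \eqref{e : CM suff cond}, which is the inverse substitution). Your explicit expansion of the $\Re A$, $\Im A$ and first-order terms is slightly more detailed than the paper's, which condenses the computation into the single identity $b+\cJ_p c=\Re(b+(p-1)c)+i\Im(b+c)$.
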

\begin{proof}
Recalling \eqref{eq: real form}, replacing $\alpha$ with $\alpha p /2$ and using the identity 
$$
b+\cJ_pc = \Re(b+(p-1)c) +i\Im(b+c),
$$
the condition \eqref{e : CM suff cond} with $\theta=\nu=1$ can be expressed as: for a.e. $x \in \Omega$
\begin{alignat*}{2}
 \Re \sk{A(x) (\alpha+i\beta)}{\cJ_p(\alpha +i\beta)} + \Re\sk{b+\cJ_pc}{\alpha +i\beta} + V(x) \geq 0 \quad \forall \alpha,\beta \in \R^d.
\tag*{\qedhere}
\end{alignat*}
\end{proof}

\section{Proof of the bilinear embedding}\label{s : proof bil emb}
We modify the heat-flow method of \cite{CD-Mixed} and \cite{CD-Potentials}. Observe that we cannot estimate $\mathbf{H}^{\oA,\oB}$ splitting it into the three terms of the definition \eqref{d : Hess sup gen}, as Carbonaro and Dragi\v{c}evi\'c implicitly did in \cite{CD-Potentials} when the first-order terms were zero. This is because our condition \eqref{e : equiv B} involves the matrix ($A$), as well as the first- and zero-order terms ($b, c$ and $V$, respectively) at the same time, without ``decoupling''. 
On the other hand, in \cite{CD-Mixed} the authors strongly exploited that the sequences appearing in the limit argument in the heat-flow method were $(A,B)$-convex, so that they could use Fatou's lemma. The crucial point there was that the $(A,B)$-convexity is invariant under convolution, while, as already observed in Remark~\ref{r : no inv und conv}, the $(\oA, \oB)$-convexity might be not. Therefore, we would like  to justify the passage of the limits through the integrals by means of the Lebesgue's dominated convergence theorem. However, under the general assumptions on the coefficients, it is not straightforward to uniformly dominate the integrands by integrable functions. Therefore, we will initially prove the bilinear embedding under further assumptions on the coefficients and then we will deduce the arbitrary case by a limit argument.



Without loss of generality we assume $p \geq 2$. Let $\oA=(A,b,c,V), \oB=(B,\beta,\gamma,W)$ satisfy the standard assumptions of Section \ref{s : stand ass}. Suppose that  $\oA, \oB  \in\cB_p(\Omega)$. 
 Fix two closed subspaces $\oV$ and $\oW$ of $H^{1}(\Omega)$ of the type discussed in Section~\ref{s: boundary}. In the spirit of \cite{CD-DivForm}, we will for the moment also
\begin{center}
{\it assume that} $A,B \in C^\infty(\Omega)$, $b,c,\beta,\gamma,V,W \in C_c^\infty(\Omega)$.
\end{center}
 The smoothness of the coefficients implies that $T_t^{\oA,\oV} f, T_t^{\oB,\oW} g \in C^\infty(\Omega)$ for all $f,g \in L^2(\Omega)$; see Lemma~\ref{l : Ttf smooth}. In particular, they and their respective gradients are locally bounded on $\Omega$. This fact, combined with the assumption on the supports of the first- and zero-order terms, will allow us to uniformly dominate certain intergrands by integrable functions and then apply the Lebesgue's dominated convergence theorem. Once the proof with these further assumptions is over, we will apply the regularisation argument from the Appendix to pass to the case of arbitrary coefficients. 

Now, we will prove that 
\begin{eqnarray}
\label{eq: N bil plus}
 \displaystyle\int^{\infty}_{0}\!\int_{\Omega}\sqrt{\mod{\nabla T^{\oA,\oV}_{t}f}^2 + V \mod{T^{\oA,\oV}_{t}f}^2}\sqrt{\mod{\nabla T^{\oB,\oW}_{t}g}^2 +  W \mod{T^{\oB,\oW}_{t}g}^2} \leqsim \left( \|f\|_p^p+\|g\|_q^q \right), 
\end{eqnarray}
for all $f,g\in (L^{p}\cap L^{q})(\Omega)$. Once \eqref{eq: N bil plus} is proved, \eqref{eq: N bil} follows by replacing $f$ and $g$ in \eqref{eq: N bil plus} with $sf$ and $s^{-1}g$ and minimising the right-hand side with respect to $s >0$.
\medskip

We now start the heat-flow method, but for simplicity we omit the subscript $\cW$.  Moreover, we simply denote
\begin{eqnarray*}
\aligned
T_t^{\oA} &= T_t^{\oA,\oV},& & \qquad & T_t^{\oB} &= T_t^{\oB,\oW},& \quad &\forall t >0,&\\
\oL^{\oA} &=\oL^{\oA,\oV},& & \qquad & \oL^{\oB} &= \oL^{\oB,\oW}.&
\endaligned
\end{eqnarray*}
Observe that the boundedness of the potentials $V, W$ implies that
\begin{equation}\label{e : Lrho}
\Dom(\gota_{\oA,\oV}) =\oV, \qquad \Dom(\gota_{\oB,\oW}) =\oW.
\end{equation}
We also recall that for $r \in (1,\infty)$ the operator $-\oL_r^\oA$ stands for the generator of $(T_t^\oA)$ on $L^r(\Omega)$, whenever the semigroup extends on $L^r(\Omega)$.

Let $\cQ=\cQ_{\delta}$ as in \eqref{eq: N Bellman}. Fix $\delta>0$ such that Theorem~\ref{t: Conv Belman} holds. Let us start with a reduction in the spirit of \cite[Section~6.1]{CD-Mixed}; see also \cite[Proposition~7.2]{CDKS-Tril}.

\begin{proposition}
\label{p : reduction be}
Let $p \geq 2$, $q=p/(p-1)$. Let $ \oA=(A,b,c,V), \oB=(B,\beta,\gamma,W)$, $\oV, \oW$ as above. Assume that
\begin{equation}
\label{eq : reduction be}
\hskip -10 pt  \int_\Omega \sqrt{|\nabla u|^2+V|u|^2}\sqrt{|\nabla v|^2+ W|v|^2} \leqsim 2\,\Re\int_\Omega\left( (\partial_\zeta \cQ)(u,v)\oL^{\oA}u+ (\partial_\eta \cQ)(u,v)\oL^{\oB} v\right)
\end{equation}
for $u \in \Dom(\oL^\oA)$, $v \in \Dom(\oL^\oB)$ such that $u,v \in C^\infty(\Omega)$ and $u,v,\oL^\oA u, \oL^\oB v \in (L^p \cap L^q)(\Omega)$. Then the bilinear estimate \eqref{eq: N bil plus} holds for all $f,g \in (L^p \cap L^q)(\Omega)$.
\end{proposition}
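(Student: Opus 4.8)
The plan is to run the heat-flow argument for the Bellman functional
$$
\cE(t)=\int_\Omega\cQ\bigl(T^\oA_t f,\,T^\oB_t g\bigr),\qquad t>0,
$$
deriving the bilinear estimate \eqref{eq: N bil plus} from the hypothesis \eqref{eq : reduction be} by differentiating $\cE$ and integrating in $t$. First I would fix the functional-analytic setting. Since $\oA,\oB\in\cB(\Omega)$ and $\oA,\oB\in\cB_p(\Omega)=(\cS_p\cap\cS_q)(\Omega)$, Corollary~\ref{c: N analytic sem} (applied with $r=q$) shows that $(T^\oA_t)$ and $(T^\oB_t)$ are analytic and contractive on $L^s(\Omega)$ for every $s\in[q,p]$, while Theorem~\ref{t : contract} with $\phi=0$ shows they extend to strongly continuous contraction semigroups on $L^p(\Omega)$ and on $L^q(\Omega)$. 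Hence, for $f,g\in(L^p\cap L^q)(\Omega)$ and $t>0$, the functions $u(t):=T^\oA_t f$ and $v(t):=T^\oB_t g$ lie in $C^\infty(\Omega)$ (by smoothness of the coefficients and Lemma~\ref{l : Ttf smooth}), satisfy $u(t)\in\Dom(\oL^\oA)$, $v(t)\in\Dom(\oL^\oB)$, and $u(t),v(t),\oL^\oA u(t)=-\partial_t u(t),\oL^\oB v(t)=-\partial_t v(t)$ all belong to $(L^p\cap L^q)(\Omega)$; moreover, by analyticity, all these quantities are bounded in $L^p\cap L^q$ uniformly for $t$ in any compact subinterval of $(0,\infty)$. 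In particular $\cE(t)$ is finite by the size bound $0\le\cQ(\zeta,\eta)\leqsim|\zeta|^p+|\eta|^q$ in \eqref{eq: N 5}, and $u(t),v(t)$ are admissible test functions in \eqref{eq : reduction be}.

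Next I would show that $\cE$ is locally absolutely continuous on $(0,\infty)$ with
$$
-\cE'(t)=2\,\Re\int_\Omega\Bigl((\partial_\zeta\cQ)(u(t),v(t))\,\oL^\oA u(t)+(\partial_\eta\cQ)(u(t),v(t))\,\oL^\oB v(t)\Bigr)\qquad\text{for a.e.\ }t>0.
$$
Indeed, by analyticity $u,v\in C^1\bigl((0,\infty);(L^p\cap L^q)(\Omega)\bigr)$ with $\partial_t u=-\oL^\oA u$ and $\partial_t v=-\oL^\oB v$, so for a.e.\ $x\in\Omega$ the map $t\mapsto\cQ(u(t,x),v(t,x))$ is absolutely continuous on compact subintervals; the chain rule for the $C^1$ function $\cQ$ gives its $t$-derivative as $2\Re[(\partial_\zeta\cQ)(u,v)\partial_t u+(\partial_\eta\cQ)(u,v)\partial_t v]$, and Fubini's theorem then yields the displayed identity, the interchange being legitimate because this integrand lies in $L^1_{\text{loc}}\bigl((0,\infty)\times\Omega\bigr)$ by the gradient bounds $|\partial_\zeta\cQ|\leqsim\max\{|\zeta|^{p-1},|\eta|\}$, $|\partial_\eta\cQ|\leqsim|\eta|^{q-1}$ in \eqref{eq: N 5}, H\"older's inequality, and the local boundedness of $u,v,\partial_t u,\partial_t v$ in $L^p\cap L^q$ noted above. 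The right-hand side above is precisely the right-hand side of \eqref{eq : reduction be} evaluated at $u=u(t)$, $v=v(t)$, so applying the hypothesis gives
$$
\int_\Omega\sqrt{|\nabla u(t)|^2+V|u(t)|^2}\;\sqrt{|\nabla v(t)|^2+W|v(t)|^2}\ \leqsim\ -\cE'(t)\qquad\text{for a.e.\ }t>0,
$$
and in particular $\cE$ is nonincreasing on $(0,\infty)$.

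Finally I would integrate this inequality over $t\in(\varepsilon,\tau)$ with $0<\varepsilon<\tau<\infty$, obtaining
$$
\int_\varepsilon^\tau\!\!\int_\Omega\sqrt{|\nabla u|^2+V|u|^2}\;\sqrt{|\nabla v|^2+W|v|^2}\ \leqsim\ \cE(\varepsilon)-\cE(\tau)\ \le\ \cE(\varepsilon),
$$
since $\cE(\tau)\ge0$ because $\cQ\ge0$. Letting $\varepsilon\downarrow0$, $\tau\uparrow\infty$ and invoking monotone convergence on the left-hand side (its integrand being nonnegative), it remains to identify and estimate $\lim_{\varepsilon\downarrow0}\cE(\varepsilon)$: strong continuity of the semigroups on $L^p$ and $L^q$ gives $T^\oA_\varepsilon f\to f$ in $L^p$ and $T^\oB_\varepsilon g\to g$ in $L^q$, whence $\cQ(T^\oA_\varepsilon f,T^\oB_\varepsilon g)\to\cQ(f,g)$ in $L^1(\Omega)$ by continuity of $\cQ$ and a generalised dominated convergence theorem with dominating functions $|T^\oA_\varepsilon f|^p+|T^\oB_\varepsilon g|^q\to|f|^p+|g|^q$ in $L^1$; hence $\cE(\varepsilon)\to\int_\Omega\cQ(f,g)\leqsim\|f\|_p^p+\|g\|_q^q$ again by \eqref{eq: N 5}. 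This yields \eqref{eq: N bil plus} for all $f,g\in(L^p\cap L^q)(\Omega)$. The main obstacle is the middle step: because $f,g$ are merely in $L^p\cap L^q$ and $\cQ$ is only of class $C^1$, establishing the local absolute continuity of $\cE$ together with the identity for $-\cE'$ requires combining carefully the analyticity of the semigroups (to control $u,v,\partial_t u,\partial_t v$ in $L^p\cap L^q$ locally uniformly in $t$) with the precise size and gradient estimates for the Bellman function from \eqref{eq: N 5}.
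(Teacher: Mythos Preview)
Your proposal is correct and follows essentially the same heat-flow approach as the paper: define $\cE(t)=\int_\Omega\cQ(T^\oA_t f,T^\oB_t g)$, differentiate to identify $-\cE'(t)$ with the right-hand side of \eqref{eq : reduction be} evaluated at $(u,v)=(T^\oA_t f,T^\oB_t g)$, apply the hypothesis, and integrate in $t$ using $\cE(0)\leqsim\|f\|_p^p+\|g\|_q^q$. The only noteworthy difference is that where you sketch the differentiability of $\cE$ and the formula for $-\cE'$ directly (via the chain rule, the size estimates \eqref{eq: N 5}, and Fubini), the paper invokes \cite[Proposition~C.1]{CD-Mixed} as a black box for the same conclusion.
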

\begin{proof}
Fix $f,g\in (L^{p}\cap L^{q})(\Omega)$. Define
$$
\cE(t)=\int_{\Omega}\cQ\left(T^{\oA}_{t}f,T^{\oB}_{t}g\right),\quad t>0.
$$
The estimates \eqref{eq: N 5}, the analyticity and the contractivity of $(T^{\oA}_{t})_{t>0}$ and $(T^{\oB}_{t})_{t>0}$ both in $L^p$ and in $L^q$ (see Theorem~\ref{t : contract} and Corollary~\ref{c: N analytic sem}) and \cite[Proposition~C.1]{CD-Mixed} imply that $\cE$ is well defined, continuous on $[0,\infty)$, differentiable on $(0,\infty)$ with a continuous derivative and
\begin{equation}\label{e : der flow}
 -\cE^{\prime}(t) =2\,\Re\int_{\Omega}\left(\partial_{\zeta}\cQ\left(T^{\oA}_{t}f,T^{\oB}_{t}g\right)\oL^{\oA}T^{\oA}_{t}f  +\partial_{\eta}\cQ\left(T^{\oA}_{t}f,T^{\oB}_{t}g\right)\oL^{\oB}T^{\oB}_{t}g\right).
\end{equation}
The upper pointwise estimates on $\cQ$ (see, for example, \cite[Proposition~5.1]{CD-DivForm}) yield
\begin{equation}\label{e : upp est flow}
-\int_0^\infty \cE^\prime(t) \wrt t \leq \cE(0) \leqsim \|f\|^p_p+\|g\|^q_q.
\end{equation}  
Analyticity of the semigroups gives $T_t^\oA f \in \Dom(\oL^\oA_p) \cap \Dom(\oL^\oA_q)$ and $T_t^\oB g \in \Dom(\oL^\oB_p) \cap \Dom(\oL^\oB_q)$. By the consistency of the semigroups an H\"older's inequality, we have
$$
\Dom(\oL_p^\oA) \cap \Dom(\oL_q^\oA) \subset \Dom(\oL_2^\oA) \subset \oV
$$
and
$$
\Dom(\oL_p^\oB) \cap \Dom(\oL_q^\oB) \subset \Dom(\oL_2^\oB) \subset \oW.
$$
Moreover, the smoothness of $\oA$ and $\oB$ together with Lemma~\ref{l : Ttf smooth} implies that $T_t^\oA f$, $T_t^\oB g \in C^\infty(\Omega)$. Therefore, \eqref{eq : reduction be} applied with the couple $(u,v)=(T_t^\oA f, T_t^\oB g)$, combined with \eqref{e : der flow} and \eqref{e : upp est flow}, gives the desired estimate \eqref{eq: N bil plus}.
\end{proof}
When the first- and zero-order terms are zero, in order to establish an analogue of \eqref{eq : reduction be}, that is \cite[(36)]{CD-Mixed}, Carbonaro and Dragi\v{c}evi\'c approximated the Bellman function $\cQ$ by a sequence $(\cR_{n,\nu})_{n \in \N}$, $\nu \in (0,1)$, of smooth $(A,B)$-convex functions proving that 
\begin{equation}\label{eq: N final Unp}
\aligned
2\,\Re&\int_\Omega\left( (\partial_\zeta \cQ)(u,v)\oL^{A}u+ (\partial_\eta \cQ)(u,v)\oL^{B} v\right)\\
&= \lim_{\nu\rightarrow 0_{+}} \lim_{n\rightarrow +\infty} \int_\Omega H^{(A,B)}_{\cR_{n,\nu}}[(u,v);(\nabla u, \nabla v)]
\endaligned
\end{equation}
and
\begin{equation}\label{eq: N final2 Unp}
\aligned
\lim_{\nu\rightarrow 0_{+}} \lim_{n\rightarrow +\infty} \int_\Omega H^{(A,B)}_{\cR_{n,\nu}}[(u,v);(\nabla u, \nabla v)]\geqsim \int_\Omega |\nabla u| |\nabla v|;
\endaligned
\end{equation}
see \cite[Section~6.1]{CD-Mixed}. While the identity \eqref{eq: N final Unp} is easily adapted to our case since it follows from properties of the sequence $(\cR_{n,\nu})_{n\in\N_+}$ which do not depend on the matrices $A$ and $B$ (\eqref{eq: conv Rnnu}, \eqref{eq: est Rnnu not n}, \eqref{l: invariance} below), the estimate \eqref{eq: N final2 Unp} is not. In fact, it was proved by strongly exploiting  the $(A,B)$-convexity of $\cR_{n,\nu}$ for all $n \in \N_+$ and $\nu \in (0,1)$. The crucial point there was that the $(A,B)$-convexity is invariant under convolution, while, as already observed in Remark~\ref{r : no inv und conv}, the $(\oA, \oB)$-convexity might be not. Therefore, in this case we do not know if $\cR_{n,\nu}$ are $(\oA,\oB)$-convex for all $n \in \N_+$ and $\nu \in (0,1)$. 

\subsection{The approximation sequence $\left(\cR_{n,\nu}\right)_{n \in \N_+}$ of \cite{CD-Mixed}}
For $\nu \in (0,1)$, consider the sequence $\left(\cR_{n,\nu}\right)_{n \in \N_+}$ of smooth $(A,B)$-convex functions  defined in \cite{CD-Mixed}. In \cite{CD-Mixed} the authors proved that
\begin{itemize}
\item
\begin{equation}
\label{eq: conv Rnnu}
\aligned
D\cR_{n,\nu}  &\rightarrow D(\cQ\star\varphi_{\nu}),\\
D^2\cR_{n,\nu} &\rightarrow D^2(\cQ\star\varphi_{\nu})
\endaligned
\end{equation}
pointwise in $\C^2$ as $n\rightarrow\infty$;
\item
there exists $C=C(\nu)>0$ that does not depend on $n$ such that
\begin{equation}
\label{eq: est Rnnu not n}
\mod{(D\cR_{n,\nu})(\omega)}\leq C(\nu)\left(|\omega|^{p-1}+|\omega|^{q-1}
\right),
\end{equation}
for all $\omega\in\C^{2}$, $n\in\N_{+}$ and $\nu\in (0,1)$;
\item
for all $n\in\N_{+}$, $\nu>0$, $u \in \oV$ and $v \in \oW$,
\begin{equation}\label{l: invariance}
(\partial_{\zeta}\cR_{n,\nu})(u,v)\in\oV\quad \text{and}\quad (\partial_{\eta}\cR_{n,\nu})(u,v)\in\oW.
\end{equation}
\end{itemize}
Moreover, it is easy to see that there exists $C=C(\nu) >0$ that does not depend on $n$ such that
\begin{equation}
\label{eq: est 2Rnnu not n}
|D^2R_{n,\nu}(\omega)| \leq C(\nu) (|\omega|^{p-2} +1),
\end{equation}
for all $\omega\in\C^{2}$, $n\in\N_{+}$ and $\nu\in (0,1)$.

\subsection{Proof of \eqref{eq : reduction be}}
Let $u,v$ as in the assumptions of Proposition~\ref{p : reduction be}.


As in \cite{CD-Mixed}, by using \eqref{eq: conv Rnnu}, \eqref{eq: est Rnnu not n}, \cite[Lemma~4(ii)]{CD-Mixed}, the facts that $\cQ \in C^1(\C^2)$, $u, v$, $\oL^\oA u,\oL^\oB v \in (L^p \cap L^q)(\Omega)$ and the Lebesgue's dominated convergence theorem twice, we deduce that
\begin{equation}\label{eq: N final}
\aligned
2\,\Re&\int_\Omega\left( (\partial_\zeta \cQ)(u,v)\oL^{\oA}u+ (\partial_\eta \cQ)(u,v)\oL^{\oB} v\right)\\
&= \lim_{\nu\rightarrow 0_{+}} \lim_{n\rightarrow +\infty} 2\,\Re\int_\Omega\left( (\partial_\zeta \cR_{n,\nu})(u,v)\oL^{\oA}u+ (\partial_\eta \cR_{n,\nu})(u,v)\oL^{\oB} v\right).
\endaligned
\end{equation}
By \eqref{l: invariance} and \eqref{e : Lrho} we can integrate by parts the integral on the right-hand side of \eqref{eq: N final} and, by means of the chain-rule for the composition of smooth functions with vector-valued Sobolev functions, deduce that
\begin{equation}\label{eq: N final 2}
\aligned
2\,\Re&\int_\Omega\left( (\partial_\zeta  \cR_{n,\nu})(u,v)\oL^{\oA}u+ (\partial_\eta  \cR_{n,\nu})(u,v)\oL^{\oB} v\right)\\
&= \int_\Omega \mathbf{H}^{(\oA,\oB)}_{\cR_{n,\nu}}[(u,v);(\nabla u, \nabla v)].
\endaligned
\end{equation}
So far we have repeated the same steps of \cite[Section~6.1]{CD-Mixed}, adapting them to our case, establishing an analogue of \eqref{eq: N final Unp}. Now if we knew, as in \cite{CD-Mixed}, that
 \begin{equation}\label{eq: N final 44}
\aligned
\hskip -65pt \lim_{n \rightarrow +\infty} \int_\Omega \mathbf{H}^{(\oA,\oB)}_{\cR_{n,\nu}}[(u,v);(\nabla u, \nabla v)]\geq \int_\Omega  \mathbf{H}^{(\oA,\oB)}_{\cQ \star \varphi_\nu}[(u,v);(\nabla u, \nabla v)]
\endaligned
\end{equation}
and
\begin{equation}\label{eq: N final 45}
\aligned
\liminf_{\nu \rightarrow 0} \int_\Omega  \mathbf{H}^{(\oA,\oB)}_{\cQ \star \varphi_\nu}[(u,v);(\nabla u, \nabla v)] \geqsim \int_\Omega \sqrt{|\nabla u|^2+V|u|^2}\sqrt{|\nabla v|^2+ W|v|^2},
\endaligned
\end{equation}
we could prove \eqref{eq : reduction be} by combining these two estimates with \eqref{eq: N final} and \eqref{eq: N final 2}. 

Notice that, by \eqref{eq: conv Rnnu} and Corollary~\ref{p: N gen conv reg}, we have
$$
\aligned
\lim_{n \rightarrow +\infty}  \mathbf{H}^{(\oA,\oB)}_{\cR_{n,\nu}}[(u,v);(\nabla u, \nabla v)]&=  \mathbf{H}^{(\oA,\oB)}_{\cQ \star \varphi_\nu}[(u,v);(\nabla u, \nabla v)],\\
\liminf_{\nu \rightarrow 0}  \mathbf{H}^{(\oA,\oB)}_{\cQ \star \varphi_\nu}[(u,v);(\nabla u, \nabla v)] &\geqsim  \sqrt{|\nabla u|^2+V|u|^2}\sqrt{|\nabla v|^2+ W|v|^2}.
\endaligned
$$
Therefore, proving \eqref{eq: N final 44} and \eqref{eq: N final 45} reduces to justifying the passage of the limits through the integrals. While in \cite{CD-Mixed} it was sufficient applying Fatou's lemma since both $\cR_{n,\nu}$ and $\cQ \star \varphi_\nu$ are $(A,B)$-convex, now we will also need the Lebesgue's dominated convergence theorem. We will strongly exploit that $u,v \in C^\infty(\Omega)$.

\subsubsection{Proof of \eqref{eq: N final 44}}
We should proceed in a slightly different way than \cite{CD-Mixed}, since we do not know that $\cR_{n,\nu}$ are $(\oA,\oB)$-convex in $\C^2$  for all $n \in \N_+$ and $\nu \in (0,1)$; see Remark~\ref{r : no inv und conv}. However, by \cite[Theorem~16]{CD-Mixed} $\cR_{n,\nu}$ are $(A,B)$-convex for all $n \in \N_+$ and $\nu \in (0,1)$. Therefore, it is convenient to split the integrand of the right-hand side of \eqref{eq: N final 2} according to the definition \eqref{d : Hess sup gen}, obtaining that
\begin{equation}\label{eq: N final 3}
\aligned
&\int_\Omega \mathbf{H}^{(\oA,\oB)}_{\cR_{n,\nu}}[(u,v);(\nabla u, \nabla v)] \\
&= \int_\Omega H^{(A,B)}_{\cR_{n,\nu}}[(u,v);(\nabla u, \nabla v)] +  \int_\Omega \left(H^{(b,\beta,c,\gamma)}_{\cR_{n,\nu}}[(u,v);(\nabla u, \nabla v)] + G^{(V,W)}_{\cR_{n,\nu}}[(u,v)]\right) \\
&=: I_1^n + I_2^n.
\endaligned
\end{equation}
By using \eqref{eq: conv Rnnu}, \cite[Theorem~16]{CD-Mixed} and Fatou's lemma we deduce that
\begin{equation}
\label{e : est I1n}
\lim_{n \rightarrow +\infty} I_1^n \geq \int_\Omega H^{(A,B)}_{\cQ \star \varphi}[(u,v);(\nabla u, \nabla v)]. 
\end{equation}
Recall that $u, v \in C^\infty(\Omega)$ and $b,\beta,c,\gamma,V,W$ are compactly supported. Hence by using \eqref{eq: est Rnnu not n} and \eqref{eq: est 2Rnnu not n} we get
\begin{equation}
\label{eq: est CDL nnu}
\aligned
\left|H^{(b,\beta,c,\gamma)}_{\cR_{n,\nu}}[(u,v);(\nabla u, \nabla v)]\right| &\leqsim C(\nu)\left( |(u,v)|^{p-1} + |(u,v)|^{q-1} \right)|(\nabla u, \nabla v)| \mathds{1}_K \in L^1(\Omega), \\
\left|G^{(V,W)}_{\cR_{n,\nu}}[(u,v)]\right|& \leqsim C(\nu) \left( |(u,v)|^{p} + |(u,v)|^{q} \right)\mathds{1}_K \in L^1 (\Omega),
\endaligned
\end{equation}
where $K$ is a compact in $\Omega$ containing the supports of $b,\beta,c,\gamma, V,W$.
Therefore, by applying \eqref{eq: conv Rnnu}, \eqref{eq: est CDL nnu} and the Lebesgue's dominated convergence theorem we deduce that
\begin{equation}
\label{e : est I2n}
\lim_{n \rightarrow +\infty} I_2^n = \int_\Omega \left(H^{(b,\beta,c,\gamma)}_{\cQ \star \varphi}[(u,v);(\nabla u, \nabla v)] + G^{(V,W)}_{\cQ \star \varphi}[(u,v)]\right).
\end{equation}
Hence we obtain \eqref{eq: N final 44} by combining \eqref{eq: N final 3}, \eqref{e : est I1n} and \eqref{e : est I2n}.

\subsubsection{Proof of \eqref{eq: N final 45}}
By Corollary~\ref{p: N gen conv reg}, for almost $x\in \Omega$,
\begin{equation}
\label{eq : strict gen conv liminf Q*phi}
\liminf_{\nu \rightarrow 0} \mathbf{H}_{\cQ\star\varphi_{\nu}}^{(\oA,\oB)}[(u,v);(\nabla u,\nabla v)]
\geqsim \sqrt{|\nabla u|^2+V|u|^2}\sqrt{|\nabla v|^2+ W|v|^2}.
\end{equation}
If we knew that
\begin{equation}
\label{e : ipot est}
\liminf_{\nu \rightarrow 0} \int_\Omega H_{\cQ\star\varphi_\nu}^{(\oA,\oB)} \geq \int_\Omega \liminf_{\nu \rightarrow 0} H_{\cQ\star\varphi_\nu}^{(\oA,\oB)},
\end{equation}
we could get \eqref{eq: N final 45} as in \cite{CD-Mixed}. 
But we cannot proceed as in \cite{CD-Mixed} to justify \eqref{e : ipot est}: in fact, we cannot use Fatou's Lemma, not knowing that $\cQ \star \varphi_\nu$ are $(\oA,\oB)$-convex in $\C^2$. 
However, the identity \eqref{eq : Hess conv is conv Hess plus rems} implies that
\begin{equation}
\label{e : Hess conv is conv Hess plus rems (u,v)}
\aligned
\mathbf{H}^{(\oA,\oB)}_{\cQ \star \varphi_\nu}[(u,v);(\nabla u, \nabla v)] = \int_{\R^4}&\mathbf{H}^{(\oA,\oB)}_{ \cQ}[(u,v)-\cW_{2,1}^{-1}(\omega^{\prime});(\nabla u,\nabla v)]\varphi_\nu(\omega^{\prime})\,\wrt\omega^{\prime} \\
&+ R^{(c,\gamma)}_\nu[(u,v);(\nabla u, \nabla v)] +R^{(V,W)}_\nu(u,v) \\
=:  J_\nu &+ R^{(c,\gamma)}_\nu + R^{(V,W)}_\nu,
\endaligned
\end{equation}
for all $\nu \in (0,1)$. The integral  $J_\nu$ is nonnegative for all $\nu \in (0,1)$. Therefore, by using Fatou's lemma, Lemma~\ref{l : lemma on reminds}\eqref{i: reminds tends to 0} and \eqref{e : Hess conv is conv Hess plus rems (u,v)}, we get
\begin{equation}
\label{e : lim Jnu}
\liminf_{\nu \rightarrow 0} \int_\Omega J_\nu  \geq \int_\Omega \liminf_{\nu \rightarrow 0} J_\nu = \int_\Omega \liminf_{\nu \rightarrow 0} \mathbf{H}^{(\oA,\oB)}_{\cQ \star \varphi_\nu}[(u,v);(\nabla u, \nabla v)]
\end{equation}
 On the other hand, sending $\nu \rightarrow 1$ on the right-hand side of Lemma~\ref{l : lemma on reminds}\eqref{i : reminds dominated} implies that there exists a constant $C>0$, that does not depend on $\nu$, such that
\begin{equation*}
\label{e : est resti(u,v)}
\aligned
|R^{(c,\gamma)}_\nu| &\leq C \left(1+|u|^{p-2}+|v|^{2-q}\right) |(\nabla u, \nabla v)| \mathds{1}_K \in L^1(\Omega), \\
|R^{(V,W)}_\nu| &\leq C  \left(1+ |u|^{p-1} + |v|^{q-1} +|v|\right) \mathds{1}_K  \in L^1(\Omega).
\endaligned
\end{equation*}
All the right-hand terms of these estimates are integrable functions, since $u,v \in C^\infty(\Omega)$. Therefore, by using Lemma~\ref{l : lemma on reminds}\eqref{i: reminds tends to 0} and the Lebesgue's dominated convergence theorem, we obtain that
\begin{equation}
\label{e : int rmds vanishes}
\aligned
&\lim_{\nu \rightarrow 0} \int_\Omega R^{(c,\gamma)}_\nu[(u,v);(\nabla u, \nabla v)] =0,\\
&\lim_{\nu \rightarrow 0} \int_\Omega R^{(V,W)}_\nu(u,v) =0.
\endaligned
\end{equation}
Then, by combining \eqref{e : Hess conv is conv Hess plus rems (u,v)}, \eqref{e : lim Jnu} and \eqref{e : int rmds vanishes}, we get \eqref{e : ipot est} and thus \eqref{eq: N final 45}, as explained above.

\section{Maximal regularity and functional calculus: proof of Theorem~\ref{t: N principal}}\label{s: max funct}
The following result is modelled after \cite[Proposition 20]{CD-Mixed}. See \cite[Sections~7.1 and 7.2]{CD-Mixed} for the necessary terminology and references.
\begin{proposition}\label{p: N D-V bis} 
Let $\Omega, \oA=(A, b, c , V), \oV$ satisfy the standard assumptions of Section \ref{s : stand ass}. Suppose that  $p >1$ and $\oA \in (\cS_p \cap \cS_2)(\Omega)$. Let $-\oL_p$ be the generator of $(T_{t})_{t>0}$ on $L^{p}(\Omega)$. If $\omega_{H^{\infty}}(\oL_{p})<\pi/2$, then $\oL_{p}$ has parabolic maximal regularity.
 \end{proposition}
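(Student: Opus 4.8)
The plan is to deduce parabolic maximal regularity of $\oL_p$ from the Dore--Venni theorem, using the hypothesis $\omega_{H^{\infty}}(\oL_{p})<\pi/2$ to produce bounded imaginary powers of $\oL_p$ whose power angle is strictly below $\pi/2$, together with the fact that $L^p(\Omega)$ is a UMD Banach space. First I would record that $\oL_p$ is a densely defined sectorial operator on $L^p(\Omega)$ with $\omega(\oL_p)<\pi/2$: since $\oA\in\cS_2(\Omega)=\cB(\Omega)$ and $\oA\in\cS_p(\Omega)$, Corollary~\ref{c: N analytic sem} applies (with lower exponent $2$ and upper exponent $p$ if $p\ge 2$, and with lower exponent $p$ and upper exponent $2$ if $1<p<2$), so $(T_t)_{t>0}$ extends to a bounded analytic semigroup on $L^p(\Omega)$; hence $-\oL_p$ is its generator, $\oL_p$ has dense domain, and $\omega(\oL_p)<\pi/2$. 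In particular the quantity $\omega_{H^{\infty}}(\oL_{p})$ in the hypothesis is well defined.

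Next comes the key implication: a bounded holomorphic functional calculus of angle $<\pi/2$ yields bounded imaginary powers of angle $<\pi/2$. Fix $\nu$ with $\omega_{H^{\infty}}(\oL_{p})<\nu<\pi/2$. For each $s\in\R$ the function $z\mapsto z^{is}$ belongs to $H^{\infty}(\bS_{\nu})$ with $\|z^{is}\|_{H^{\infty}(\bS_{\nu})}=\sup_{|\arg z|<\nu}e^{-s\arg z}=e^{\nu|s|}$, so applying the bounded $H^{\infty}(\bS_{\nu})$-calculus of $\oL_p$ shows that $\oL_p^{is}$ is a bounded operator on $L^p(\Omega)$ with $\|\oL_p^{is}\|\le C\,e^{\nu|s|}$ for all $s\in\R$, where $C$ is the bound of the calculus. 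Thus $\oL_p$ has bounded imaginary powers whose power angle $\theta_{\oL_p}$ satisfies $\theta_{\oL_p}\le\nu<\pi/2$.

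Finally I would invoke the Dore--Venni theorem in the form of \cite{DoreVenni,PrussSohr}: on a UMD Banach space, a sectorial operator with bounded imaginary powers of power angle $<\pi/2$ has maximal regularity for the abstract Cauchy problem $u'(t)+\oL_p u(t)=f(t)$, $u(0)=0$. Since $L^p(\Omega)$ is UMD for every $p\in(1,\infty)$, this applies directly and gives the parabolic maximal regularity of $\oL_p$. The only point requiring a word of care is the possible non-injectivity of $\oL_p$; this is handled by first running the argument for $\oL_p+\varepsilon$ with $\varepsilon>0$ small (still sectorial, still with a bounded $H^{\infty}$-calculus of angle $<\pi/2$, and now invertible) and then removing the bounded perturbation $\varepsilon I$, which preserves maximal regularity on finite time intervals. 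I do not foresee a genuine obstacle: the statement essentially repackages standard results, and the only real input — that $\oL_p$ is sectorial of angle $<\pi/2$ on $L^p(\Omega)$ — is precisely Corollary~\ref{c: N analytic sem}.
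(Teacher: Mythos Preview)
Your proposal is correct and follows the same route the paper has in mind: the paper does not spell out a proof here but simply points to \cite[Proposition~20 and Sections~7.1--7.2]{CD-Mixed}, where the argument is precisely the Dore--Venni/Pr\"uss--Sohr scheme you describe (UMD space, bounded $H^{\infty}$-calculus of angle $<\pi/2$ $\Rightarrow$ BIP with power angle $<\pi/2$ $\Rightarrow$ maximal regularity). Your use of Corollary~\ref{c: N analytic sem} to supply sectoriality of $\oL_p$ on $L^p(\Omega)$ and your handling of possible non-injectivity via the $\varepsilon$-shift are the standard ingredients and match what \cite{CD-Mixed} does.
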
 
\subsection{Proof of Theorem~\ref{t: N principal}}
We prove Theorem~\ref{t: N principal} as Carbonaro and  Dragi\v{c}evi\'c proved \cite[Theorem~3]{CD-Mixed} in \cite[Section~7.3]{CD-Mixed} when $b=c=V=0$.
Without loss of generality we suppose $p\ge2$. Let  $A,b,c,V$ satisfy the standard assumptions of Section \ref{s : stand ass}. In light of Proposition~\ref{p: N D-V bis} it suffices to show that 
$$
(A,b,c,V) \in \cB_p(\Omega) \implies \omega_{H^{\infty}}(\oL_{p})<\pi/2.
$$ 

Observe that $\oL_2^{A^{*},c,v,V} = \left( \oL^{A,b,c,V}\right)_2^*$, so $T^{A^{*},c,b,V}_{t}=\left(T^{A,b,c,V}_{t}\right)^{*}$ for all $t> 0$.  Set $T_t = T^{A,b,c,V}_{t}$ and $T_t^*=T^{A^{*},c,b,V}_{t}$ for all $t>0$. 

By Proposition \ref{p : property of B}\eqref{i : cont B in phi} and Proposition \ref{p: more on B}\eqref{i : invarianza per coniugazione}, there exists $\theta\in (0,\pi/2)$ such that 
$$
\aligned
(e^{\pm i\theta}A,e^{\pm i\theta}b,e^{\pm i\theta}c,(\cos\theta)V) & \in \cB_p(\Omega),\\
(e^{\mp i\theta}A^{*},e^{\mp i\theta}c,e^{\mp i\theta}b,(\cos\theta)V) &\in \cB_p(\Omega).
\endaligned
$$
Moreover, for every $r \in [q,p]$ both $(T_{t})_{t>0}$ and $(T^{*}_{t})_{t>0}$ are analytic (and contractive) in $L^r(\Omega)$ in the cone $\bS_{\theta}$; see Corollary \ref{c: N analytic sem}.
\smallskip

By Theorem~\ref{t: N bil compl} and \eqref{e: compl times sem} there exists $C>0$ such that   
\begin{equation*}
\aligned
\int^{\infty}_{0}\!&\int_{\Omega}\sqrt{\mod{\nabla T_{te^{\pm i\theta}}f}^2 +  (\cos\theta)V \mod{T_{te^{\pm i\theta}}f}^2 }\sqrt{\mod{\nabla T^{*}_{te^{\mp i\theta}}g}^2+  (\cos\theta)W \mod{T^{*}_{te^{\mp i\theta}}g}^2} \wrt x \wrt t \\
&\leq C \norm{f}{p}\norm{g}{q}, 
\endaligned
\end{equation*}
for all $f,g\in (L^{p}\cap L^{q})(\Omega)$. Therefore,
\begin{equation}\label{eq: bilineq*}
\hskip -3pt \int^{\infty}_{0}\!\int_{\Omega}\sqrt{\mod{\nabla T_{te^{\pm i\theta}}f}^2 + V \mod{T_{te^{\pm i\theta}}f}^2 }\sqrt{\mod{\nabla T^{*}_{te^{\mp i\theta}}g}^2+ W \mod{T^{*}_{te^{\mp i\theta}}g}^2} \wrt x \wrt t \leq \frac{C}{\cos\theta} \norm{f}{p}\norm{g}{q}, 
\end{equation}
for all $f,g\in (L^{p}\cap L^{q})(\Omega)$.

On the other hand, the integration by parts \eqref{eq: ibp} and Proposition~\ref{p : property of B}\eqref{i : V dom Rec} give
\begin{equation}
\label{eq: CDMY-bil}
\aligned
\biggl|\int_{\Omega}\oL_{2}&T_{te^{\pm i\theta}}f\,\,\overline{T^{*}_{te^{\mp i\theta}}g }\wrt x\biggr| \\
&\leqsim   \int_{\Omega}\sqrt{\mod{\nabla T_{te^{\pm i\theta}}f}^2 +  V \mod{T_{te^{\pm i\theta}}f}^2 }\sqrt{\mod{\nabla T^{*}_{te^{\mp i\theta}}g}^2+  V \mod{T^{*}_{te^{\mp i\theta}}g}^2} \wrt x,
\endaligned
\end{equation}
By combining \eqref{eq: bilineq*} and \eqref{eq: CDMY-bil}, we deduce that
$$
\int^{\infty}_{0}\!\mod{\int_{\Omega}\oL_{p}T_{2te^{\pm i\theta}}f\, \overline{g }\wrt x}\wrt t \, \leqsim \, \norm{f}{p}\norm{g}{q},
$$
for all $f,g\in (L^{p}\cap L^{q})(\Omega)$. Analyticity of $(T_{t})_{t>0}$ in $L^{p}(\Omega)$, Fatou's lemma and a density argument show that
\begin{equation}\label{eq: N CDMY}
\int^{\infty}_{0}\!\mod{\int_{\Omega}\oL_{p}T_{te^{\pm i\theta}}f\, \overline{g }\wrt x}\wrt t \, \leqsim \, \norm{f}{p}\norm{g}{q},
\end{equation}
for all $f\in L^{p}(\Omega)$ and all $g\in L^{q}(\Omega)$. 

We now apply \cite[Theorem~4.6 and Example~4.8]{CDMY} to the dual subpair $\sk{\ovR(\oL_{p})}{\ovR(\oL^{*}_{q})}$ and the dual operators $(\oL_{p})_{\vert\vert}$, $(\oL^{*}_{q})_{\vert\vert}$ \cite[p.~64]{CDMY}, and deduce from \eqref{eq: N CDMY} that $\omega_{H^{\infty}}(\oL_{p})\leq \pi/2-\theta$.

\numberwithin{theorem}{section}
\numberwithin{equation}{section}

\appendix  
\section{Approximation with regular-coefficient operators}\label{a: A}
We learnt the following regularisation method by Carbonaro and Dragi\v{c}evi\'c in \cite[Appendix]{CD-DivForm}.

For any $\phi \in (0,\pi/2)$ denote $\phi^*:=\pi/2-\phi$. Write $I=[0,1]$. Consider a one-parameter family $\{A_s \, : \, s \in I\}$ of $d \times d$ complex matrix functions on $\Omega$,  two one-parameter families $\{b_s \, : \, s \in I\}, \, \{c_s \, : \, s \in I\}$ of $d$-dimensional complex vector-valued functions on $\Omega$ and a one parameter family $\{V_s \, : \, s \in I\}$ of nonnegative functions on $\Omega$. Suppose that there exist $\mu,M,C >0$ such that
\begin{enumerate}[(H1)]
\item
\label{h1}
 $\oA_s:=(A_s,b_s,c_s,V_s) \in \cB_{\mu,M}(\Omega)$ for all $s\in I$;
\item
\label{h4}
$\max\{\|A_s\|_\infty,\|b_s\|_\infty,\|c_s\|_\infty,\|V_s\|_\infty\} \leq C$ for all $s \in I$.
\item
\label{h3}
For a.e. $x \in \Omega$
\begin{equation*}
\aligned
\lim_{s \rightarrow 0} \| A_s(x) -A_0(x)\|_{\cB(\C^d)}&= 0,\\
\lim_{s \rightarrow 0} \| b_s(x) -b_0(x)\|_{\cB(\C^d,\C)}&= 0,\\
\lim_{s \rightarrow 0} \| c_s(x) -c_0(x)\|_{\cB(\C,\C^d)}&= 0,\\
\lim_{s \rightarrow 0} \| V_s(x) -V_0(x)\|_{\cB(\C)}&= 0.
\endaligned
\end{equation*}
\end{enumerate}
Denote by $\gota_s$ the sesquilinear form associated with $\oA_s$ and by $\oL_s = \oL^{\oA_s}$ the operator on $L^2(\Omega)$ associated with $\gota_s$. Recall from Section~\ref{s: Neumann introduction} that each $\oL_s$ is sectorial with sectoriality angle $\omega(\oL_s) \leq \vartheta_0$, where $\vartheta_0=\theta_0(\mu,M,\Lambda)$ is a positive angle smaller than $\pi/2$. 
In particular,
\begin{equation}
\label{eq: unif sect}
\underset{\zeta \in \C\setminus \bS_\vartheta}{\sup}\|\zeta (\zeta - \oL_s)^{-1}\|_{L^2(\Omega)} < \infty, \quad \forall \vartheta \in (\vartheta_0,\pi/2).
\end{equation}
Set $\cH_0 = L^2(\Omega)$ and $\cH_1 =L^2(\Omega,\C^d)$. For $\zeta \in \C \setminus \bS_{\vartheta_0}$ define the operators \\$G_{\oL_s}(\zeta) :\cH_0 \rightarrow \cH_1$ and $S_{\oL_s}(\zeta) \, : \, C_c^\infty(\Omega,\C^d) \rightarrow \cH_0$ by
$$
G_{\oL_s}(\zeta) := \nabla (\zeta - \oL_s)^{-1}, \qquad S_{\oL_s}(\zeta) := (\zeta - \oL_s)^{-1} \div.
$$
\begin{lemma}
\label{l : est resolvents}
Assume {\rm (H1)} holds. Then for every $\vartheta \in (\vartheta_0, \pi/2)$ there exists $C=C(\mu,M,\Lambda,\vartheta)>0$ such that
\begin{equation*}
\begin{aligned}
|\zeta|^{1/2} \|G_{\oL_s}(\zeta) f \|_{\cH_1} &\leq C \|f\|_{\cH_0},\\
|\zeta|^{1/2} \|S_{\oL_s}(\zeta) F \|_{\cH_0} &\leq C \|F\|_{\cH_1}, \\
 \|\nabla S_{\oL_s}(\zeta) F \|_{\cH_1} &\leq C \|F\|_{\cH_1},
\end{aligned}
\end{equation*}
for all $s \in I$, $f \in L^2(\Omega)$, $F \in C_c^\infty(\Omega;\C^d)$ and $\zeta \in \C \setminus \bS_\vartheta$. Moreover, the very same estimates hold with $\oL_s$ replaced by $\oL_s^*$.
\end{lemma}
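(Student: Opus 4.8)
The plan is to reduce everything to the case $\oL_s = \oL$ fixed and derive the three estimates from sectoriality together with the form inequality \eqref{e : equiv B}, tracking all constants so that they depend only on $\mu, M, \Lambda, \vartheta$. First I would fix $\vartheta \in (\vartheta_0, \pi/2)$ and $\zeta \in \C \setminus \bS_\vartheta$, set $f \in L^2(\Omega)$ and $u = (\zeta - \oL_s)^{-1} f \in \Dom(\oL_s) \subseteq \Dom(\gota_s)$, so that $(\zeta - \oL_s) u = f$. Testing the resolvent equation against $u$ in $L^2(\Omega)$ gives
\begin{equation*}
\zeta \|u\|_2^2 - \gota_s(u) = \sk{f}{u}_{L^2(\Omega)}.
\end{equation*}
Taking real and imaginary parts and using that $\gota_s$ is accretive and sectorial of angle $\vartheta_0$, one gets, in the standard way, a lower bound $\Re\bigl[(\zeta - \gota_s(u)/\|u\|_2^2)\bigr] \gtrsim_{\vartheta} |\zeta|$ (this is precisely what \eqref{eq: unif sect} encodes), hence $|\zeta| \|u\|_2 \lesssim_\vartheta \|f\|_2$ and also $|\gota_s(u)| \lesssim_\vartheta \|f\|_2 \|u\|_2$. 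Combining these with $\Re \gota_s(u) \geq \mu \int_\Omega (|\nabla u|^2 + V_s |u|^2)$ from \eqref{e : equiv B} (recall $\oA_s \in \cB_{\mu,M}$ so in particular $\cS_2$, i.e. \eqref{eq: sect form below} holds with that $\mu$) yields
\begin{equation*}
\mu \int_\Omega |\nabla u|^2 \leq \Re \gota_s(u) \leq |\gota_s(u)| \lesssim_\vartheta \|f\|_2 \|u\|_2 \lesssim_\vartheta |\zeta|^{-1} \|f\|_2^2,
\end{equation*}
which is the first estimate $|\zeta|^{1/2} \|G_{\oL_s}(\zeta) f\|_{\cH_1} \leq C \|f\|_2$ with $C = C(\mu,M,\Lambda,\vartheta)$ — note $M$ enters only through $\vartheta_0 = \theta_0(\mu,M,\Lambda)$.

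For the second estimate I would argue by duality, using the last sentence of the lemma: since $\oL_s^*$ is the operator associated with the adjoint form $\gota_s^*(u,v) = \overline{\gota_s(v,u)}$, which is again sectorial of angle $\vartheta_0$ with the same $\mu$ in its lower bound (the real part is unchanged under adjunction, and $(A_s,b_s,c_s,V_s) \in \cB_{\mu,M}$ iff $(A_s^*,c_s,b_s,V_s) \in \cB_{\mu,M}$ by Proposition~\ref{p: more on B}\eqref{i : invarianza per coniugazione} applied at $p=2$), the first estimate also holds for $\oL_s^*$, i.e. $|\zeta|^{1/2}\|\nabla(\bar\zeta - \oL_s^*)^{-1} g\|_{\cH_1} \leq C \|g\|_2$. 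Now for $F \in C_c^\infty(\Omega,\C^d)$ and $g \in L^2(\Omega)$,
\begin{equation*}
\sk{S_{\oL_s}(\zeta) F}{g}_{L^2} = \sk{(\zeta-\oL_s)^{-1}\div F}{g}_{L^2} = \sk{\div F}{(\bar\zeta - \oL_s^*)^{-1} g}_{L^2} = -\sk{F}{\nabla(\bar\zeta - \oL_s^*)^{-1} g}_{\cH_1},
\end{equation*}
the last step being the definition of distributional divergence, valid because $(\bar\zeta - \oL_s^*)^{-1}g \in \Dom(\gota_s^*) \subseteq H^1(\Omega)$ (and $F$ is compactly supported, so the boundary term vanishes even on irregular $\Omega$ — this is exactly where $F \in C_c^\infty$ is used). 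Taking absolute values and the sup over $\|g\|_2 \leq 1$ gives $\|S_{\oL_s}(\zeta)F\|_2 \leq \|F\|_{\cH_1} \cdot C |\zeta|^{-1/2}$, the second estimate.

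For the third estimate, apply the first estimate of the lemma (already proved) to the function $f = \div F$... except $\div F$ is a distribution, so instead write $\nabla S_{\oL_s}(\zeta) F = G_{\oL_s}(\zeta)\bigl((\zeta - \oL_s) S_{\oL_s}(\zeta) F\bigr)$ is circular; better: set $w = (\zeta - \oL_s)^{-1}\div F = S_{\oL_s}(\zeta) F$, so $(\zeta - \oL_s) w = \div F$ in the form sense, i.e. $\zeta \sk{w}{v}_{L^2} - \gota_s(w,v) = \sk{\div F}{v} = -\sk{F}{\nabla v}_{\cH_1}$ for all $v \in \Dom(\gota_s)$. Test with $v = w$:
\begin{equation*}
\zeta \|w\|_2^2 - \gota_s(w) = -\sk{F}{\nabla w}_{\cH_1},
\end{equation*}
whence $\mu \int_\Omega |\nabla w|^2 \leq \Re \gota_s(w) = \Re(\zeta)\|w\|_2^2 + \Re\sk{F}{\nabla w}_{\cH_1} \leq |\zeta|\|w\|_2^2 + \|F\|_{\cH_1}\|\nabla w\|_{\cH_1}$. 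Using the already established $|\zeta|^{1/2}\|w\|_2 = |\zeta|^{1/2}\|S_{\oL_s}(\zeta)F\|_2 \leq C\|F\|_{\cH_1}$, the first term is $\leq C^2 \|F\|_{\cH_1}^2$, and then an absorption (Young's inequality $ab \leq \mu a^2/2 + b^2/(2\mu)$ applied to $\|F\|_{\cH_1}\|\nabla w\|_{\cH_1}$) gives $\|\nabla w\|_{\cH_1}^2 \lesssim_{\mu} \|F\|_{\cH_1}^2$, the third estimate. I expect the main subtlety — not a deep obstacle but the point requiring care — to be verifying that the duality identity and the integration by parts $\sk{\div F}{v} = -\sk{F}{\nabla v}$ hold for $v \in \Dom(\gota_s) \subseteq \oV \subseteq H^1(\Omega)$ with $F \in C_c^\infty$ and no regularity of $\partial\Omega$; this is clean precisely because $F$ has compact support in the open set $\Omega$, so the pairing never sees the boundary and the mixed boundary conditions encoded in $\oV$ are irrelevant. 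The uniformity in $s$ is automatic since every constant produced depends on $\oA_s$ only through $\mu, M, \Lambda$ (via $\vartheta_0$), which are fixed by (H1).
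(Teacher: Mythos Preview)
Your proposal is correct and follows essentially the same route as the paper. The paper only spells out the first inequality (via $\mu\|\nabla u\|^2 \leq \Re\gota_s(u) = \Re(\zeta)\|u\|^2 - \Re\sk{f}{u}$ combined with the uniform resolvent bound \eqref{eq: unif sect}), then defers the second and third to \cite[Lemma~A.1]{CD-DivForm}; your duality argument for $S_{\oL_s}$ and your form-testing-plus-absorption for $\nabla S_{\oL_s}$ are exactly the standard details that reference would supply, and your observation about the adjoint ($\oL_s^* = \oL^{A_s^*,c_s,b_s,V_s}$ with $(A_s^*,c_s,b_s,V_s)\in\cB_{\mu,M}$ via Proposition~\ref{p: more on B}\eqref{i : invarianza per coniugazione}) matches the paper verbatim.
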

\begin{proof}
Since the proof is basically the same as that of \cite[Lemma~A.1]{CD-DivForm} for unperturbed divergence-form operators, let us prove only the first inequality. The condition (H1) implies that
$$
\aligned
\mu \|\nabla(\zeta -\oL_s)^{-1}f\|_{\cH_1}^2 & \leq \Re\gota_s\left((\zeta -\oL_s)^{-1}f,(\zeta -\oL_s)^{-1}f\right) \\
&= \Re \sk{\oL_s (\zeta -\oL_s)^{-1}f}{(\zeta -\oL_s)^{-1}f}_{\cH_0}\\
&= \Re \sk{\zeta(\zeta -\oL_s)^{-1}f}{(\zeta -\oL_s)^{-1}f}_{\cH_0}-\Re\sk{f}{(\zeta -\oL_s)^{-1}f}_{\cH_0}.
\endaligned
$$
The first inequality of the lemma now follows from \eqref{eq: unif sect}. Since $\oL_s^*= \oL^{A_s^*,c_s,b_s,V_s}$ and $(A_s^*,c_s,b_s,V_s) \in \cB_{\mu,M}(\Omega)$ by Proposition~\ref{p: more on B}\eqref{i : invarianza per coniugazione}, the same estimate clearly holds with $\oL_s$ replaced by $\oL_s^*$. 
\end{proof}
\begin{remark}
\label{r : div e grad res cont in L2}
The preceding lemma implies that for $\zeta \in \C \setminus \bS_{\theta_0}$ the operators $S_{\oL_s}(\zeta)$ and $\nabla S_{\oL_s}(\zeta)$ admit unique extensions to bounded operators $\cH_1 \rightarrow \cH_0$ and $\cH_1 \rightarrow \cH_1$, respectively. Moreover, $S_{\oL_s}^*(\zeta)= G_{\oL_s^*}(\overline{\zeta}).$
\end{remark}
\begin{lemma}
\label{l : diff resolv}
For every $s \in I$, $\zeta \in \C \setminus \bS_{\vartheta_0}$ and $f \in L^2(\Omega)$ we have
$$
\aligned
(\zeta-\oL_0)^{-1}f - (\zeta-\oL_s)^{-1}f = S_{\oL_s}&(\zeta) \circ M_{A_0 -A_s} \circ G_{\oL_0}(\zeta)f \\
&+ (\zeta - \oL_s)^{-1} \circ M_{b_0-b_s} \circ G_{\oL_0}(\zeta)f \\
& + S_{\oL_s}(\zeta) \circ M_{c_0 -c_s} \circ (\zeta - \oL_0)^{-1}f \\
&+  (\zeta - \oL_s)^{-1}\circ  M_{V_0-V_s} \circ (\zeta - \oL_0)^{-1}f
\endaligned
$$
where we denote
$$
\aligned
M_{A_0 -A_s} &: \cH_1 \rightarrow \cH_1 \text{ the operator of multiplication by } A_0 -A_s,\\
M_{b_0 -b_s} &: \cH_1 \rightarrow \cH_0 \text{ the operator } \sk{\cdot}{b_0 -b_s}, \\
M_{c_0 -c_s} &: \cH_0 \rightarrow \cH_1 \text{ the operator of multiplication by } c_0 -c_s, \\
M_{V_0 -V_s} &: \cH_0 \rightarrow \cH_0 \text{ the operator of multiplication by } V_0 -V_s.
\endaligned
$$
\end{lemma}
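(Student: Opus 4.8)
The plan is to carry out the second resolvent identity at the level of the sesquilinear forms and then read off the four summands. Since the potentials are uniformly bounded by {\rm (H2)}, every $\gota_s$ has domain $\oV$. Put $u:=(\zeta-\oL_0)^{-1}f\in\Dom(\oL_0)\subseteq\oV$ and $w:=(\zeta-\oL_s)^{-1}f\in\Dom(\oL_s)\subseteq\oV$. By the definition of the operators associated with the forms, $\gota_0(u,v)=\zeta\langle u,v\rangle_{\cH_0}-\langle f,v\rangle_{\cH_0}$ and $\gota_s(w,v)=\zeta\langle w,v\rangle_{\cH_0}-\langle f,v\rangle_{\cH_0}$ for every $v\in\oV$. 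Subtracting, and splitting $\gota_0(u,v)=\gota_s(u,v)+\bigl(\gota_0(u,v)-\gota_s(u,v)\bigr)$, one obtains
$$
\zeta\langle u-w,v\rangle_{\cH_0}-\gota_s(u-w,v)=\gota_0(u,v)-\gota_s(u,v),\qquad\forall\,v\in\oV .
$$

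Next I would evaluate the right-hand side directly from \eqref{e : form sesq}: it equals
$$
\int_\Omega\langle(A_0-A_s)\nabla u,\nabla v\rangle+\int_\Omega\langle\nabla u,b_0-b_s\rangle\overline v+\int_\Omega u\,\langle c_0-c_s,\nabla v\rangle+\int_\Omega(V_0-V_s)u\,\overline v .
$$
Collecting the terms tested against $\nabla v$ and those tested against $\overline v$, this is $\langle F,\nabla v\rangle_{\cH_1}+\langle g,v\rangle_{\cH_0}$ with $F:=(A_0-A_s)\nabla u+(c_0-c_s)u\in\cH_1$ and $g:=\langle\nabla u,b_0-b_s\rangle+(V_0-V_s)u\in\cH_0$, the memberships following from {\rm (H2)} and $u\in\oV$; moreover $F=M_{A_0-A_s}G_{\oL_0}(\zeta)f+M_{c_0-c_s}(\zeta-\oL_0)^{-1}f$ and $g=M_{b_0-b_s}G_{\oL_0}(\zeta)f+M_{V_0-V_s}(\zeta-\oL_0)^{-1}f$. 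The candidate for $u-w$ is then $R:=S_{\oL_s}(\zeta)F+(\zeta-\oL_s)^{-1}g\in\oV$. For the resolvent part, $\zeta\langle(\zeta-\oL_s)^{-1}g,v\rangle_{\cH_0}-\gota_s\bigl((\zeta-\oL_s)^{-1}g,v\bigr)=\langle g,v\rangle_{\cH_0}$ directly. For the $S$-part, one first checks from the definition $S_{\oL_s}(\zeta)=(\zeta-\oL_s)^{-1}\div$ and one integration by parts that $\zeta\langle S_{\oL_s}(\zeta)\Phi,v\rangle_{\cH_0}-\gota_s\bigl(S_{\oL_s}(\zeta)\Phi,v\bigr)=\langle\Phi,\nabla v\rangle_{\cH_1}$ for $\Phi\in C_c^\infty(\Omega,\C^d)$, $v\in\oV$ (the sign being the one encoded in Remark~\ref{r : div e grad res cont in L2}), and then extends this to $\Phi=F\in\cH_1$ by density: approximating $F$ in $\cH_1$ by such $\Phi$, Lemma~\ref{l : est resolvents} — in particular the estimate $\|\nabla S_{\oL_s}(\zeta)\Phi\|_{\cH_1}\lesssim\|\Phi\|_{\cH_1}$ — yields convergence of $S_{\oL_s}(\zeta)\Phi$ in $\oV$, which licenses passing to the limit inside the continuous form $\gota_s$. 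Adding the two contributions, $R$ solves $\zeta\langle R,v\rangle_{\cH_0}-\gota_s(R,v)=\langle F,\nabla v\rangle_{\cH_1}+\langle g,v\rangle_{\cH_0}$ for all $v\in\oV$, the same identity as $u-w$.

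Finally, $R-(u-w)\in\oV$ satisfies $\zeta\langle R-(u-w),v\rangle_{\cH_0}=\gota_s(R-(u-w),v)$ for all $v\in\oV$, so $R-(u-w)\in\Dom(\oL_s)$ with $(\zeta-\oL_s)(R-(u-w))=0$; since $\zeta$ lies in the resolvent set of $\oL_s$ (being outside $\bS_{\vartheta_0}$, cf. \eqref{eq: unif sect}), this forces $R=u-w$. Expanding $F$ and $g$ and using linearity of $S_{\oL_s}(\zeta)$ and $(\zeta-\oL_s)^{-1}$ produces exactly the four stated terms. I expect the only genuinely delicate point to be the density/extension step for the form characterization of $S_{\oL_s}(\zeta)$: on $C_c^\infty(\Omega,\C^d)$ the object $\div\Phi$ is an honest $L^2$ function and the identity is immediate, but to apply it to $F=(A_0-A_s)\nabla u+(c_0-c_s)u$, which is only in $\cH_1$, one must pass through the bounded extension of $S_{\oL_s}(\zeta)$ furnished by Lemma~\ref{l : est resolvents} and Remark~\ref{r : div e grad res cont in L2} — which is precisely where the uniform resolvent estimates are used.
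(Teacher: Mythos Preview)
Your proof is correct and takes a somewhat different route from the paper's. The paper argues by duality: it writes the algebraic second resolvent identity, pairs the result with an arbitrary $g\in\Dom(\oL_s^*)$, and recognises the pairing as $\gota_0\bigl((\zeta-\oL_0)^{-1}f,(\bar\zeta-\oL_s^*)^{-1}g\bigr)-\gota_s\bigl((\zeta-\oL_0)^{-1}f,(\bar\zeta-\oL_s^*)^{-1}g\bigr)$; expanding the form difference yields the four inner products, and the adjoint relation $S_{\oL_s}^*(\zeta)=G_{\oL_s^*}(\bar\zeta)$ from Remark~\ref{r : div e grad res cont in L2} converts them back into the four operator terms paired against $g$. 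Density of $\Dom(\oL_s^*)$ finishes.

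Your argument instead works entirely on the $\oL_s$-side: you characterise $u-w$ weakly by $\zeta\langle\,\cdot\,,v\rangle-\gota_s(\,\cdot\,,v)=\langle F,\nabla v\rangle+\langle g,v\rangle$, build the explicit candidate $R$, verify that $R$ solves the same variational problem, and then invoke injectivity of $\zeta-\oL_s$. The trade-off is that the paper's duality step absorbs the extension of $S_{\oL_s}(\zeta)$ into the adjoint identity and so needs no separate approximation, whereas you must run a density argument (using the third estimate of Lemma~\ref{l : est resolvents} to get convergence of $S_{\oL_s}(\zeta)\Phi$ in $\oV$, so that $\gota_s$ is continuous in the first slot). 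Conversely, your route avoids introducing $\oL_s^*$ and its resolvent altogether, which some readers may find cleaner. Both arguments rest on the same analytic input, namely the uniform bounds of Lemma~\ref{l : est resolvents}.
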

\begin{proof}
Let $\zeta \in \C \setminus \bS_{\vartheta_0}$. The identity
$$
(\zeta-\oL_0)^{-1} - (\zeta-\oL_s)^{-1} = (\zeta-\oL_s)^{-1}\oL_0(\zeta-\oL_0)^{-1} -\oL_s(\zeta-\oL_s)^{-1}(\zeta-\oL_0)^{-1}
$$
implies that, for all $f \in L^2(\Omega)$ and $g \in \Dom(\oL_s^*)$,
$$
\aligned
& \hskip-30 pt\sk{(\zeta-\oL_0)^{-1}f - (\zeta-\oL_s)^{-1}f}{g}_{\cH_0} \\
=&\sk{\oL_0(\zeta -\oL_0)^{-1}f}{(\overline{\zeta}-\oL_s^*)^{-1}g}_{\cH_0} -\overline{\sk{\oL_s^*(\overline{\zeta}-\oL_s^*)^{-1}g}{(\zeta -\oL_0)^{-1}f}}_{\cH_0} \\
=&\,\gota_0\left((\zeta -\oL_0)^{-1}f,(\overline{\zeta}-\oL_s^*)^{-1}g\right) - \gota_s\left((\zeta -\oL_0)^{-1}f,(\overline{\zeta}-\oL_s^*)^{-1}g\right) \\
= &\sk{M_{A_0-A_s} G_{\oL_0}(\zeta)f}{G_{\oL_s^*}(\overline{\zeta})g}_{\cH_1} + \sk{M_{b_0-b_s} G_{\oL_0}(\zeta)f}{(\overline{\zeta}-\oL_s^*)^{-1}g}_{\cH_0}\\
 &+ \sk{M_{c_0-c_s} (\zeta -\oL_0)^{-1}f}{G_{\oL_s^*}(\overline{\zeta})g}_{\cH_1} + \sk{M_{V_0-V_s}(\zeta -\oL_0)^{-1}f}{(\overline{\zeta} -\oL_s^*)^{-1}g}_{\cH_0}.
\endaligned
$$
We conclude by invoking Remark \ref{r : div e grad res cont in L2}.
\end{proof}
\begin{lemma}
\label{l : conv pot and grad sem}
Fix $f \in L^2(\Omega)$. Assuming {\rm (H1)}, {\rm (H2)} and {\rm (H3)}, for every $z \in \bS_{\vartheta_0^*}$ we have
\begin{eqnarray}
\label{eq: conv pot sem}
V_s^{1/2} T_z^{\oA_s} f &\hskip -5pt \rightarrow V_0^{1/2} T_z^{\oA_0} f \quad &\text{ in } \, L^2(\Omega),\\
\label{eq: conv grad sem}
\nabla T_z^{\oA_s} f  &\hskip -18pt\rightarrow\nabla T_z^{\oA_0} f \quad &\text{ in }\, L^2(\Omega;\C^d),
\end{eqnarray}
as $s \rightarrow 0$.
\end{lemma}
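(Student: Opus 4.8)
The plan is to reduce the statement to the convergence of the resolvents and then transfer it to the semigroups via the Cauchy integral representation, using the uniform estimates of Lemma~\ref{l : est resolvents} and the resolvent difference formula of Lemma~\ref{l : diff resolv}. First I would note that (H2) bounds $\Lambda(A_s)\le C$ uniformly in $s$, so that together with (H1) the operators $\oL_s$ are sectorial with a common angle $\vartheta_0<\pi/2$ and the estimates \eqref{eq: unif sect} and those of Lemma~\ref{l : est resolvents} hold with constants depending only on $\mu,M,C$: for a fixed $\vartheta\in(\vartheta_0,\pi/2)$ and every $\zeta\in\C\setminus\bS_\vartheta$,
$$
\|(\zeta-\oL_s)^{-1}\|_{\cB(\cH_0)}\leqsim|\zeta|^{-1},\qquad \|G_{\oL_s}(\zeta)\|_{\cB(\cH_0,\cH_1)}\leqsim|\zeta|^{-1/2},\qquad \|\nabla S_{\oL_s}(\zeta)\|_{\cB(\cH_1)}\leqsim 1,
$$
uniformly in $s$, and — since $V_s\in L^\infty$ by (H2) — also $\|V_s^{1/2}(\zeta-\oL_s)^{-1}\|_{\cB(\cH_0)}\leqsim|\zeta|^{-1}$.

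Next I would prove, for each fixed $\zeta\in\C\setminus\bS_\vartheta$, that $(\zeta-\oL_s)^{-1}f\to(\zeta-\oL_0)^{-1}f$ in $\cH_0$ and $G_{\oL_s}(\zeta)f\to G_{\oL_0}(\zeta)f$ in $\cH_1$ as $s\to0$. The key observation is that, by (H2)+(H3) and the dominated convergence theorem on $\Omega$ applied to the fixed functions $(\zeta-\oL_0)^{-1}f$ and $G_{\oL_0}(\zeta)f$, each of $M_{A_0-A_s}G_{\oL_0}(\zeta)f$, $M_{b_0-b_s}G_{\oL_0}(\zeta)f$, $M_{c_0-c_s}(\zeta-\oL_0)^{-1}f$ and $M_{V_0-V_s}(\zeta-\oL_0)^{-1}f$ tends to $0$ in the relevant $L^2$-space (the multipliers are uniformly bounded and go to $0$ a.e.). Plugging this into the four-term identity of Lemma~\ref{l : diff resolv} and using the uniform operator bounds just recorded gives the first convergence; applying $\nabla$ to the same identity — legitimate since $\nabla S_{\oL_s}(\zeta)$ and $G_{\oL_s}(\zeta)$ are uniformly bounded — gives the second.

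Then I would pass from resolvents to semigroups. Fixing $z\in\bS_{\vartheta_0^*}$ and $\vartheta\in(\vartheta_0,\pi/2-|\arg z|)$, one has the representation $T_z^{\oA_s}f=\frac{1}{2\pi i}\int_\gamma e^{-z\zeta}(\zeta-\oL_s)^{-1}f\,d\zeta$, valid for all $s\in I$ with one and the same Hankel-type contour $\gamma\subset\C\setminus\bS_\vartheta$ that encircles the spectral sector, avoids the origin, and along which $|e^{-z\zeta}|$ stays bounded and decays exponentially at infinity. Since $\nabla$ is closed and $\zeta\mapsto e^{-z\zeta}G_{\oL_s}(\zeta)f$ is Bochner integrable on $\gamma$ (its norm is $\leqsim|e^{-z\zeta}||\zeta|^{-1/2}\|f\|$, uniformly in $s$), I may commute $\nabla$ with the integral and write $\nabla T_z^{\oA_s}f-\nabla T_z^{\oA_0}f=\frac{1}{2\pi i}\int_\gamma e^{-z\zeta}\bigl(G_{\oL_s}(\zeta)f-G_{\oL_0}(\zeta)f\bigr)\,d\zeta$. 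By the previous step the integrand tends to $0$ in $\cH_1$ for every $\zeta\in\gamma$ and is dominated by the $s$-independent integrable majorant $2C|e^{-z\zeta}||\zeta|^{-1/2}\|f\|$, so a second application of dominated convergence, now on $\gamma$, yields \eqref{eq: conv grad sem}; the same argument without $\nabla$ (with majorant $\leqsim|e^{-z\zeta}||\zeta|^{-1}\|f\|$, integrable because $\gamma$ avoids the origin) gives $T_z^{\oA_s}f\to T_z^{\oA_0}f$ in $\cH_0$.

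Finally, for \eqref{eq: conv pot sem} I would use $\|V_s^{1/2}\|_\infty\le\sqrt{C}$ and split $V_s^{1/2}T_z^{\oA_s}f-V_0^{1/2}T_z^{\oA_0}f=V_s^{1/2}\bigl(T_z^{\oA_s}f-T_z^{\oA_0}f\bigr)+\bigl(V_s^{1/2}-V_0^{1/2}\bigr)T_z^{\oA_0}f$: the first summand has $\cH_0$-norm $\le\sqrt{C}\,\|T_z^{\oA_s}f-T_z^{\oA_0}f\|_2\to0$ by the previous step, while in the second $V_s^{1/2}\to V_0^{1/2}$ a.e.\ by (H3) with uniform bound $2\sqrt{C}$, so dominated convergence applied to the fixed $L^2$-function $T_z^{\oA_0}f$ finishes the proof. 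I expect the only delicate point to be the passage from resolvents to semigroups: justifying the interchange of the unbounded operator $\nabla$ with the Cauchy integral and producing a single contour together with an $s$-uniform integrable dominating function, which is exactly what the quantitative bounds $\|G_{\oL_s}(\zeta)\|\leqsim|\zeta|^{-1/2}$ and $\|\nabla S_{\oL_s}(\zeta)\|\leqsim 1$ of Lemma~\ref{l : est resolvents} are for. The first- and zero-order coefficients merely add two extra summands to Lemma~\ref{l : diff resolv}, which the same uniform bounds absorb, so no genuinely new obstacle arises compared with the unperturbed case treated in \cite{CD-DivForm}.
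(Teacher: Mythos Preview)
Your proposal is correct and follows essentially the same route as the paper: you use the uniform sectoriality from (H1)--(H2), the Cauchy integral representation of the analytic semigroup on a contour that avoids the origin, the resolvent difference identity of Lemma~\ref{l : diff resolv}, pointwise (in $\zeta$) convergence via (H2)--(H3) and dominated convergence on $\Omega$, and then dominated convergence on $\gamma$ with majorants coming from Lemma~\ref{l : est resolvents}. The paper organises the argument slightly differently---it writes the semigroup difference directly as $\frac{1}{2\pi i}\int_\gamma e^{-z\zeta}U(s,\zeta)f\,d\zeta$ and then bounds term by term via Minkowski before applying DCT on $\gamma$---and it handles \eqref{eq: conv grad sem} by simply saying ``similarly, using the first and third estimates of Lemma~\ref{l : est resolvents}'', whereas you spell out the commutation of $\nabla$ with the Cauchy integral; but these are cosmetic differences.
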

\begin{proof}
In order to prove \eqref{eq: conv pot sem}, by {\rm (H2)} and {\rm (H3)} it suffices to prove that 
\begin{equation}
\label{eq: conv sem}
T_z^{\oA_s} f \rightarrow T_z^{\oA_0} f \quad {\rm in} \,\, L^2(\Omega),
\end{equation}
as $s \rightarrow 0$.
Let $\vartheta^* \in (0,\pi/2)$ be such that $|\arg z|< \vartheta^* < \vartheta_0^*$. Fix $\delta >0$ and denote by $\gamma$ the positively oriented boundary of $\bS_\vartheta \cup \{\zeta \in \C : |\zeta|<\delta\}$. For $s \in I$ and $\zeta \in \gamma$ define 
$$
\aligned
U(s,\zeta) =\, &S_{\oL_s}(\zeta) \circ M_{A_0 -A_s} \circ G_{\oL_0}(\zeta) + (\zeta - \oL_s)^{-1} \circ M_{b_0-b_s} \circ G_{\oL_0}(\zeta)\\
& + S_{\oL_s}(\zeta) \circ M_{c_0 -c_s} \circ (\zeta - \oL_0)^{-1} +  (\zeta - \oL_s)^{-1}\circ  M_{V_0-V_s} \circ (\zeta - \oL_0)^{-1}.
\endaligned
$$
Then by \cite[Lemma~2.3.2]{Haase} and Lemma \ref{l : diff resolv},
$$
T_z^{\oA_0} f - T_z^{\oA_s} f = \frac{1}{2 \pi i} \int_\gamma e^{-z \zeta} U(s,\zeta)f \wrt\zeta.
$$
Therefore, by Minkowsky's integral inequality, \eqref{eq: unif sect} and the second estimate of Lemma \ref{l : est resolvents},
\begin{equation}
\label{eq : norm 2 diff res}
\aligned
\|T_z^{\oA_0} f - T_z^{\oA_s} f\|_2 \leqsim &\int_\gamma |e^{-z \zeta}| \cdot \| U(s,\zeta)f\|_2 \wrt|\zeta| \\
\leqsim&\int_\gamma  e^{- \Re(z \zeta)} |\zeta|^{-1/2} \|M_{A_0-A_s} G_{\oL_0}(\zeta) f \|_2 \wrt|\zeta| \\
& +\int_\gamma e^{- \Re(z \zeta)} |\zeta|^{-1} \|M_{b_0-b_s} G_{\oL_0}(\zeta) f \|_2 \wrt|\zeta| \\
& +\int_\gamma e^{- \Re(z \zeta)}  |\zeta|^{-1/2} \|M_{c_0-c_s}   (\zeta - \oL_0)^{-1}f \|_2 \wrt|\zeta|\\
& +\int_\gamma e^{- \Re(z \zeta)}|\zeta|^{-1} \|M_{V_0-V_s}   (\zeta - \oL_0)^{-1}f \|_2 \wrt|\zeta|.
\endaligned
\end{equation}
By {\rm (H2)}, {\rm (H3)}, Lemma~\ref{l : est resolvents} and the Lebesgue dominated convergence theorem we obtain
$$
\aligned
\lim_{s \rightarrow 0} \| M_{A_0 -A_s} G_{\oL_0}(\zeta) f \|_2 &= 0, \\
\lim_{s \rightarrow 0} \| M_{b_0 -b_s} G_{\oL_0}(\zeta) f \|_2 &= 0, \\
\lim_{s \rightarrow 0} \| M_{c_0 -c_s}  (\zeta - \oL_0)^{-1}f  \|_2 &= 0, \\
\lim_{s \rightarrow 0} \| M_{V_0 -V_s} (\zeta - \oL_0)^{-1}f  \|_2 &= 0,
\endaligned
$$
for all $\zeta \in \gamma$. 
Moreover, by Lemma~\ref{l : est resolvents} again and by \eqref{eq: unif sect}, we have
$$
\aligned
\| M_{A_0 -A_s} G_{\oL_0}(\zeta) f \|_2 &\leqsim |\zeta|^{-1/2}\|f\|_2, \\
\| M_{b_0 -b_s} G_{\oL_0}(\zeta) f \|_2 &\leqsim |\zeta|^{-1/2}\|f\|_2, \\
\| M_{c_0 -c_s}  (\zeta - \oL_0)^{-1}f  \|_2 &\leqsim |\zeta|^{-1}\|f\|_2, \\
\| M_{V_0 -V_s} (\zeta - \oL_0)^{-1}f  \|_2 &\leqsim |\zeta|^{-1}\|f\|_2,
\endaligned
$$
for all $\zeta \in \gamma$. Now \eqref{eq: conv sem} follows from the Lebesgue dominated convergence theorem.

Similarly one can prove \eqref{eq: conv grad sem}, using in this case the first and the third estimate of Lemma \ref{l : est resolvents} to get the analogue of \eqref{eq : norm 2 diff res}.
\end{proof}

\subsection{Convolution with approximate identity}\label{s : appr by conv}
Let $k\in C_c^\infty(\R^d)$ be a radial function such that $0\leq k\leq1$, ${\rm supp}\, k\subset B_{\R^{d}}(0,1)$ and $\int k=1$. For $\varepsilon >0$ define $k_\varepsilon(x) := \varepsilon^{-d} k(x/\varepsilon)$. Let $A \in \cA(\Omega)$, $b, c \in L^\infty(\Omega, \C^d)$ and $V \in L^\infty(\Omega, \R_+)$. Suppose that $b,c,V$ are compactly supported and that 
\begin{equation}
\label{e : same supp}
{\rm supp}\, b, \, {\rm supp}\, c \subset {\rm supp} \,V.
\end{equation}
 If $\oA=(A,b,c,V) \in \cB(\Omega)$ and $p \in (1,\infty)$, we define 
\begin{equation*}
\aligned
A_\varepsilon &:= {(\widetilde{A} * k_\varepsilon)|}_{\Omega}, \\
b_\varepsilon &:= {(b * k_\varepsilon)|}_{\Omega},\\
c_\varepsilon &:= {(c * k_\varepsilon)|}_{\Omega},\\
V_\varepsilon &:= {(V * k_\varepsilon)|}_{\Omega},\\
\oA_\varepsilon&:= (A_\varepsilon,b_\varepsilon,c_\varepsilon,V_\varepsilon),
\endaligned
\end{equation*}
where
\begin{equation*}
\widetilde{A}(x) :=
\begin{cases}
A(x),& {\rm if } \,\,\, x \in \Omega;\\
(p^*/p) I_{\R^d},& {\rm if } \,\,\, x \in \R^d \setminus \Omega.
\end{cases}
\end{equation*}
Here, $p^*= \max\{p, p/(p-1)\}$. The constant $p^*/p$ is a normalisation factor.
\begin{lemma}
\label{l : inv after reg}
Suppose that $\oA \in \cB_{\mu,M}(\Omega)$ and $\oA_\varepsilon$ are as above and $p \in (1,\infty)$.  Then
\begin{enumerate}[{\rm (i)}]
\item 
\label{i : conv coeff}
 For a.e. $x \in \Omega$
\begin{equation*}
\aligned
\lim_{\varepsilon \rightarrow 0} \| A_\varepsilon(x) -A(x)\|_{\cB(\C^d)}&= 0,\\
\lim_{\varepsilon \rightarrow 0} \| b_\varepsilon(x) -b(x)\|_{\cB(\C^d,\C)}&= 0,\\
\lim_{\varepsilon \rightarrow 0} \| c_\varepsilon(x) -c(x)\|_{\cB(\C,\C^d)}&= 0,\\
\lim_{\varepsilon \rightarrow 0} \| V_\varepsilon(x) -V(x)\|_{\cB(\C)}&= 0.
\endaligned
\end{equation*}
\item
\label{i: inv 2-cond after reg}
There exists $\varepsilon_0 \in (0,1]$ such that $\oA_\varepsilon \in \cB_{\mu,M}(\Omega)$ for all $\varepsilon \in (0,\varepsilon_0)$;
\item
\label{i: inv p-cond after reg}
If $\oA \in \cS_p(\Omega)$, then for all $\varepsilon \in (0,\varepsilon_0)$, 
$$
\aligned
\oA_\varepsilon &\in \cS_p(\Omega),\\
\mu_p(\oA_\varepsilon) &\geq\mu_p(\oA).
\endaligned
$$
Moreover, $\mu_p(\oA) = \lim_{\varepsilon \rightarrow 0} \mu_p(\oA_\varepsilon)$.

In particular, if $\oA \in \cB_p(\Omega)$ then $\oA_\varepsilon \in \cB_p(\Omega)$ for all  $\varepsilon \in (0,\varepsilon_0)$;
\item
\label{i: unif bound reg}
There exists $C>0$ such that $\max\{\|A_\varepsilon\|_\infty,\|b_\varepsilon\|_\infty,\|c_\varepsilon\|_\infty,\|V_\varepsilon\|_\infty\} \leq C$ for all $\varepsilon>0$.
\end{enumerate}
\end{lemma}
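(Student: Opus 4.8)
The plan is to prove the four items essentially separately, (i) and (iv) being routine mollifier facts and (ii)--(iii) the substantive ones. For (i): $\widetilde A\in L^\infty(\R^d,\C^{d\times d})$ and the zero-extensions of $b,c,V$ belong to $L^\infty(\R^d)$, so, $(k_\varepsilon)_\varepsilon$ being an approximate identity built from a bounded compactly supported kernel, the convolutions converge at every Lebesgue point, hence a.e.\ on $\R^d$; for $x\in\Omega$ the extensions coincide with the original coefficients, which gives the four limits. For (iv): $\|f*k_\varepsilon\|_\infty\leq\|f\|_\infty\|k_\varepsilon\|_1=\|f\|_\infty$, so one may take $C=\max\{\|A\|_\infty,\,p^*/p,\,\|b\|_\infty,\,\|c\|_\infty,\,\|V\|_\infty\}$, uniformly in $\varepsilon>0$.

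The heart of (ii) and (iii) is that the inequalities defining $\cB_{\mu,M}$ and $\cS_p$ pass to averages against a probability density. Let $\widetilde{\oA}=(\widetilde A,\widetilde b,\widetilde c,\widetilde V)$ denote the coefficients extended to $\R^d$, $A$ by $(p^*/p)I$ and $b,c,V$ by $0$, and fix $\xi\in\C^d$ and $x\in\Omega$. By the $\R$-linearity of $A\mapsto\Re\sk{A\xi}{\cJ_p\xi}$ and of $\cJ_p$ (property \eqref{i : R-lin}), together with the fact that $k_\varepsilon$ is real-valued (so that $\cJ_p c_\varepsilon=(\cJ_p c)*k_\varepsilon$), one has the exact identity
$$
\Gamma_p^{\oA_\varepsilon}(x,\xi)=\int_{\R^d}\Gamma_p^{\widetilde{\oA}}(y,\xi)\,k_\varepsilon(x-y)\wrt y,
$$
and likewise the left-hand side of \eqref{eq: sect form below} for $\oA_\varepsilon$, minus $\mu(|\xi|^2+V_\varepsilon(x))$, equals $\int_{\R^d}\Psi(y,\xi)\,k_\varepsilon(x-y)\wrt y$ with $\Psi(y,\xi)=\Re\sk{\widetilde A(y)\xi}{\xi}+\Re\sk{\widetilde b(y)+\widetilde c(y)}{\xi}+(1-\mu)\widetilde V(y)-\mu|\xi|^2$. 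On $\{y\in\Omega\}$ we have $\Psi(y,\xi)\geq0$ and $\Gamma_p^{\widetilde{\oA}}(y,\xi)\geq\mu_p(\oA)(|\xi|^2+\widetilde V(y))$ by \eqref{eq: sect form below} and \eqref{e : equiv B}; on $\{y\notin\Omega\}$ the first- and zero-order terms vanish and $\widetilde A(y)=(p^*/p)I$, and, using $\cJ_p\xi=p\,\Re\xi-\overline{\xi}$ (property \eqref{i : form for Jp}), $\Re\sk{(p^*/p)\xi}{\xi}=(p^*/p)|\xi|^2\geq|\xi|^2$ and $\Re\sk{(p^*/p)\xi}{\cJ_p\xi}=(p^*/p)\bigl((p-1)|\Re\xi|^2+|\Im\xi|^2\bigr)\geq(p^*/p)\min\{1,p-1\}\,|\xi|^2=|\xi|^2$, the identity $(p^*/p)\min\{1,p-1\}=1$ holding for every $p\in(1,\infty)$ --- which is exactly why the normalisation factor $p^*/p$ was inserted into $\widetilde A$. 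Since $\mu\leq1$, $\mu_p(\oA)\leq1$ (by \eqref{e : mu e delta}) and $\widetilde V$ vanishes off $\Omega$, both integrands are $\geq0$ a.e.\ on $\R^d$; hence \eqref{eq: sect form below} holds for $\oA_\varepsilon$ with the same $\mu$, and $\Gamma_p^{\oA_\varepsilon}(x,\xi)\geq\mu_p(\oA)(|\xi|^2+V_\varepsilon(x))$, i.e.\ $\oA_\varepsilon\in\cS_p(\Omega)$ with $\mu_p(\oA_\varepsilon)\geq\mu_p(\oA)$.

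The remaining condition \eqref{eq: sect form above} for $\oA_\varepsilon$ follows from concavity of the square root: since $k_\varepsilon\geq0$ and $\int k_\varepsilon=1$, $|b_\varepsilon-c_\varepsilon|=|(b-c)*k_\varepsilon|\leq|b-c|*k_\varepsilon\leq M\,(\sqrt{V})*k_\varepsilon\leq M\sqrt{V*k_\varepsilon}=M\sqrt{V_\varepsilon}$, the third inequality being Jensen's inequality (equivalently Cauchy--Schwarz). Together with the previous paragraph this proves (ii) and the membership statements of (iii). For the last assertion of (iii), the averaging identity for $\Gamma_q^{\oA_\varepsilon}$ holds as well, and on $\{y\notin\Omega\}$ one has $\Gamma_q^{(\widetilde A(y),0,0,0)}(y,\xi)=(p^*/p)\Re\sk{\xi}{\cJ_q\xi}\geq(p^*/p)\min\{1,q-1\}\,|\xi|^2>0$, so $\Gamma_q^{\oA_\varepsilon}(x,\xi)\geq c\,(|\xi|^2+V_\varepsilon(x))$ for some $c>0$; hence $\oA_\varepsilon\in\cS_q(\Omega)$, and therefore $\oA_\varepsilon\in\cB_p(\Omega)=\cS_p(\Omega)\cap\cS_q(\Omega)$ whenever $\oA\in\cB_p(\Omega)$. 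The auxiliary $\varepsilon_0$ serves only to keep ${\rm supp}\,b_\varepsilon$, ${\rm supp}\,c_\varepsilon$, ${\rm supp}\,V_\varepsilon$ compactly contained in $\Omega$ and to preserve the inclusions \eqref{e : same supp}, which is immediate since these supports contract to the original compact sets as $\varepsilon\to0$.

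Finally, for $\mu_p(\oA)=\lim_{\varepsilon\to0}\mu_p(\oA_\varepsilon)$: the bound $\mu_p(\oA_\varepsilon)\geq\mu_p(\oA)$ gives $\liminf_{\varepsilon\to0}\mu_p(\oA_\varepsilon)\geq\mu_p(\oA)$, and $\mu_p(\oA_\varepsilon)\leq1$ by \eqref{e : mu e delta}, so it remains to show $\limsup_{\varepsilon\to0}\mu_p(\oA_\varepsilon)\leq\mu_p(\oA)$. I would argue by contradiction: if $\mu_p(\oA_{\varepsilon_j})\to\ell>\mu_p(\oA)$ along some $\varepsilon_j\downarrow0$, then for every $\xi\in\C^d$ and a.e.\ $x\in\Omega$, $\Gamma_p^{\oA_{\varepsilon_j}}(x,\xi)\geq\mu_p(\oA_{\varepsilon_j})(|\xi|^2+V_{\varepsilon_j}(x))$; letting $j\to\infty$ and using the a.e.\ convergence of the coefficients from (i) and the joint continuity of $(A,b,c,V)\mapsto\Gamma_p^{(A,b,c,V)}(x,\xi)$ gives $\Gamma_p^{\oA}(x,\xi)\geq\ell(|\xi|^2+V(x))$ for a.e.\ $x$; running this over a countable dense set of $\xi$'s and invoking the continuity of $\xi\mapsto\Gamma_p^{\oA}(x,\xi)$ makes it hold for a.e.\ $x$ and all $\xi$, so $\mu_p(\oA)\geq\ell$, a contradiction. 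I expect the only real friction to be bookkeeping: writing $\Gamma_p^{\oA_\varepsilon}$ faithfully as the $k_\varepsilon(x-\cdot)$-average of $\Gamma_p^{\widetilde{\oA}}$ --- which hinges on the $\R$-linearity of $\cJ_p$ and on $k_\varepsilon$ being real-valued --- and checking that the $(p^*/p)I$-extension is exactly strong enough, uniformly in $p\in(1,\infty)$, to dominate $\mu_p(\oA)|\xi|^2$ on the part of each mollification ball lying outside $\Omega$.
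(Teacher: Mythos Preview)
Your proposal is correct and follows essentially the same approach as the paper: write $\Gamma_r^{\oA_\varepsilon}(x,\xi)$ as the $k_\varepsilon$-average of $\Gamma_r^{\widetilde{\oA}}(\cdot,\xi)$, use the $\cS_r$ bound on the part lying in $\Omega$ and the explicit lower bound for $(p^*/p)I$ off $\Omega$, handle \eqref{eq: sect form above} by Cauchy--Schwarz/Jensen, and deduce the limit of $\mu_p(\oA_\varepsilon)$ from the a.e.\ convergence in (i). The only cosmetic difference is that the paper splits the convolution integral along $\{x-y\in\mathrm{supp}\,V\}$ versus its complement (using \eqref{e : same supp}) and records the bound $\Gamma_r^{\oA_\varepsilon}\geq\min\{\mu_r(\oA),\,rp^*/(pr^*)\}(|\xi|^2+V_\varepsilon)$ for general $r$ before specialising to $r=2$ and $r=p$, whereas you split along $\Omega$ versus $\R^d\setminus\Omega$ and compute $\Re\sk{\xi}{\cJ_p\xi}=(p-1)|\Re\xi|^2+|\Im\xi|^2$ directly; your observation that the $\Gamma$-bounds actually hold for all $\varepsilon>0$ and that $\varepsilon_0$ is only needed to keep the supports of $b_\varepsilon,c_\varepsilon,V_\varepsilon$ compact in $\Omega$ is a fair reading.
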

\begin{proof}
Let prove only the first limit in \eqref{i : conv coeff}. The others are proved in the same way. As in the proof of \cite[Lemma~A.5(i)]{CD-DivForm}, it is enough to show that for a.e. $x \in \Omega$ we have $ A_\varepsilon(x)\xi \rightarrow A(x)\xi$ for all $\xi \in \C^d$. This is true because each $a_{ij} \in L^\infty(\Omega) \subset L^1_{\rm loc}(\Omega)$ so that $(\widetilde{a_{ij}} * k_\varepsilon)(x)$ tends to $\widetilde{a_{ij}}(x) = a_{ij}(x)$ for a.e. $x \in \Omega$, as $\varepsilon \rightarrow 0$.

Let $r \in (1,\infty)$ and $\oA \in \cS_r(\Omega)$.
Then, by the Cauchy-Schwarz inequality and the fact that $\int k=1$, we get for all $x \in \Omega$ and all $\varepsilon>0$,
$$
\aligned
|b_\varepsilon(x) -c_\varepsilon(x)| &\leq \int_{\R^d} \left|b(x-y)-c(x-y) \right| k^{1/2}_\varepsilon(y) \cdot k^{1/2}_\varepsilon(y) \wrt y  \\
& \leq \left(\int_{\R^d} |b(x-y) -c(x-y)|^2 k_\varepsilon(y) \wrt y\right)^{1/2} \\
&\leq M \left(\int_{\R^d}V(x-y) k_\varepsilon(y) \wrt y\right)^{1/2} \\
&= M \sqrt{V_\varepsilon(x)}.
\endaligned
$$
It remains to estimate $\Gamma_r^{\oA_\varepsilon}$ from below. Let $\varepsilon_0 \in (0,1]$ such that 
\begin{equation}
\label{eq: transl supp still in Om}
{\rm supp V} + \overline{B(0,\varepsilon)} \subseteq \Omega, \quad \forall\epsilon \in (0,\varepsilon_0).
\end{equation}
Let $x \in \Omega$. Then, by combining \eqref{e : same supp}, \eqref{eq: transl supp still in Om} and the fact that ${\rm supp}\, k_\varepsilon \subset B_{\R^{d}}(0,\varepsilon)$, we obtain that, for all $\varepsilon \in (0,\varepsilon_0)$,
\begin{equation}
\label{e : A8}
\aligned
\Gamma_r^{\oA_\varepsilon}(x,\xi)   =&  \int_{\R^d} \Gamma_r^{\widetilde{A},b,c,V}(x-y,\xi) k_\varepsilon(y) \,\wrt y \\
 =&  \int_{\{y \in B(0,\varepsilon) \,:\, x-y \,\in \,{\rm supp}V\}} \Gamma_r^\oA(x-y,\xi) k_\varepsilon(y) \,\wrt y \\
& +  \int_{\{y\in B(0,\varepsilon) \,:\, x-y \, \not\in\, {\rm supp}V\}}  \Re\sk{\widetilde A(x-y)\xi}{\cJ_r \xi}  k_\varepsilon(y) \,\wrt y \\
=&: I_1+I_2,
\endaligned
\end{equation}
for all $\xi \in \C^d$. Since $\oA \in\cS_r(\Omega)$, we estimate $I_1$ by
\begin{equation}
\label{e : A9}
\aligned
I_1 &\geq \mu_r(\oA) \int_{\{y \in B(0,\varepsilon) \,:\, x-y \,\in\, {\rm supp}V\}}  \left(|\xi|^2 + V(x-y)\right) k_\varepsilon(y) \wrt y \\
&= \mu_r(\oA) |\xi|^2  \int_{\{y \in B(0,\varepsilon) \,:\, x-y \,\in\, {\rm supp}V\}}  k_\varepsilon(y) \wrt  y + \mu_r(\oA) V_\varepsilon(x).
\endaligned
\end{equation}
On the other hand, since $A \in \cA_r(\Omega)$ and $\Delta_r(I_{\R^d})=2/r^*$, we estimate $I_2$ by
\begin{equation}
\label{e : A10}
\aligned
I_2 &\geq \min\left\{  \frac{r}{2}\Delta_r(A), \frac{r p^*}{p r^*}\right\}|\xi|^2\int_{\{y\in B(0,\varepsilon) \,:\, x-y \,\not\in \,{\rm supp}V\}}  k_\varepsilon(y) \wrt y \\
&\geq \min\left\{ \mu_r(\oA), \frac{r p^*}{p r^*}\right\}|\xi|^2 \int_{\{y\in B(0,\varepsilon) \,:\, x-y \,\not\in \,{\rm supp}V\}}   k_\varepsilon(y) \wrt y,
\endaligned
\end{equation}
where in the last inequality we used \eqref{e : mu e delta}. Hence, by combining \eqref{e : A8}, \eqref{e : A9} and \eqref{e : A10} we get
\begin{equation*}
\label{e : A11}
\aligned
 \Gamma_r^{\oA_\varepsilon}(x,\xi) &\geq \min\left\{\mu_r(\oA), \frac{rp^*}{pr^*} \right\} (|\xi|^2+ V_\varepsilon(x)).
\endaligned
\end{equation*}
for all $x \in \Omega$ and all $\xi \in \C^d$. 
Note that $p^*/p \geq 1$ for all $p \in (1,\infty)$. On the other hand, $\mu_2(\oA) \leq 1$ by \eqref{e : mu e delta}. Therefore, by taking $r=2$,  we prove \eqref{i: inv 2-cond after reg}.

By taking $r=p$ and by \eqref{e : mu e delta} again, we prove that $\cA_\varepsilon \in \cS_p(\Omega)$ and that 
\begin{equation}
\label{e : A12}
\mu_p(\oA_\varepsilon)\geq \mu_p(\oA),
\end{equation}
for all  $\varepsilon \in (0,\varepsilon_0)$. Moreover, by definition of $\mu_p$ and the continuity of \\
$x \mapsto (A_\varepsilon(x), b_\varepsilon(x), c_\varepsilon(x), V_\varepsilon(x))$, we get $\Gamma_p^{\oA_\varepsilon}(x,\xi) \geq \mu_p(\oA_\varepsilon) (|\xi|^2 + V_\varepsilon(x))$ for all $x \in \Omega$, $\xi \in \C^d$ and $\varepsilon >0$. The limit in \eqref{i: inv p-cond after reg} now follows from \eqref{i : conv coeff} and \eqref{e : A12}.

Item \eqref{i: unif bound reg} follows by the definition of convolution and by the boundedness of $A,b,c,V$.
\end{proof}

\subsection{Truncations of first- and zero-order terms by cut-off functions}\label{s : appr by cut}
Let $\{K_n\}_{n \in \N_+}$ be a family of compact subsets in $\Omega$ such that $K_n \subset {\rm int}(K_{n+1})$ for all $n \in \N_+$ and $\bigcup_{n=1}^\infty {\rm int}(K_n)=\Omega$. Let $\{\psi_n\}_{n \in \N_+}$ a family of compactly supported $C^\infty$ functions in $\Omega$ such that $0 \leq \psi_n \leq 1$, $\psi_n = 1$ on a neighbourhood of $K_n$ and ${\rm supp} \, \psi_n \subset {\rm int}(K_{n+1}) $ for all $n \in \N_+$.  If $A \in \cA(\Omega)$, $b,c \in L^\infty(\Omega,\C^d)$ and $V \in L^\infty(\Omega,\R_+)$, we define 
\begin{equation}
\label{d : terms cut n}
\aligned
b_n &:= \psi_n b,\\
c_n &:= \psi_n c,\\
V_n &:= \psi_n V,\\
\oA_n&:= (A,b_n,c_n,V_n).
\endaligned
\end{equation}
\begin{lemma}
\label{l : inv after cut}
For every $\oA= (A,b,c,V) \in \cB_{\mu,M}(\Omega)$ and $\oA_n$  as above and $p \in (1,\infty)$ we have
\begin{enumerate}[{\rm (i)}]
\item 
\label{i : conv coeff cut}
 For a.e. $x \in \Omega$,
\begin{equation*}
\aligned
\lim_{n \rightarrow \infty} \| b_n(x) -b(x)\|_{\cB(\C^d,\C)}&= 0,\\
\lim_{n \rightarrow \infty} \| c_n(x) -c(x)\|_{\cB(\C,\C^d)}&= 0,\\
\lim_{n \rightarrow \infty} \| V_n(x) -V(x)\|_{\cB(\C)}&= 0.
\endaligned
\end{equation*}
\item
\label{i: inv 2-cond after cut}
$\oA_n \in \cB_{\mu,M}(\Omega)$ for all $n \in \N_+$;
\item
\label{i: inv p-cond after cut}
If $\oA \in \cS_p(\Omega)$, then for all $n \in \N_+$,
$$
\aligned
\oA_n &\in \cS_p(\Omega),\\
\mu_p(\oA_n) &\geq \mu_p(\oA).
\endaligned
$$
Moreover, $\lim_{n \rightarrow \infty} \mu_p(\oA_n) = \mu_p(\oA_n)$.

In particular, if $\oA \in \cB_p(\Omega)$ then $\oA_n \in \cB_p(\Omega)$ for all $n \in \N_+$;
\item
\label{i : 1 supp in 0 supp}
${\rm supp} \, b_n, \, {\rm supp} \, c_n \subset {\rm supp} \, V_n$ for all $n \in \N_+$;
\item
\label{i: unif bound cut}
There exists $C>0$ such that $\max\{\|b_n\|_\infty,\|c_n\|_\infty,\|V_n\|_\infty\} \leq C$ for all $n \in \N_+$.
\end{enumerate}
\end{lemma}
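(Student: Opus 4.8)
The plan is to follow the scheme of Lemma~\ref{l : inv after reg}, the argument being in fact shorter because each cut-off $\psi_n$ equals $1$ on the exhausting compacta $K_n$, so the truncated data agree with the original ones on larger and larger sets. First I would dispose of \eqref{i : conv coeff cut} and \eqref{i: unif bound cut}: since the $K_n$ increase with $\bigcup_n{\rm int}(K_n)=\Omega$, every $x\in\Omega$ lies in some $K_m$, hence $\psi_n(x)=1$ and $(b_n,c_n,V_n)(x)=(b,c,V)(x)$ for all $n\ge m$, so these sequences are eventually constant (which is \eqref{i : conv coeff cut}), while \eqref{i: unif bound cut} holds with $C=\max\{\|b\|_\infty,\|c\|_\infty,\|V\|_\infty\}$ since $0\le\psi_n\le1$.

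The heart of the matter is an affine-interpolation trick in the parameter $t=\psi_n(x)\in[0,1]$. For \eqref{i: inv 2-cond after cut}, fixing $x$ and $\xi$, the map $t\mapsto\Re\sk{A(x)\xi}{\xi}+t\,\Re\sk{b(x)+c(x)}{\xi}+tV(x)-\mu(|\xi|^2+tV(x))$ is affine, nonnegative at $t=1$ by \eqref{eq: sect form below}, and nonnegative at $t=0$ because the dilation $\xi\leadsto s\xi$, $s\to\infty$, applied to \eqref{eq: sect form below} forces $\Re\sk{A(x)\xi}{\xi}\ge\mu|\xi|^2$; hence it is nonnegative on all of $[0,1]$, which is \eqref{eq: sect form below} for $\oA_n$. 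Since moreover $|b_n-c_n|=\psi_n|b-c|\le\psi_n M\sqrt V\le M\sqrt{\psi_nV}=M\sqrt{V_n}$, condition \eqref{eq: sect form above} also passes to $\oA_n$, so $\oA_n\in\cB_{\mu,M}(\Omega)$. For \eqref{i: inv p-cond after cut} I would run the same interpolation with $\Gamma_p^{\oA_n}(x,\xi)=\Re\sk{A(x)\xi}{\cJ_p\xi}+t\,\Re\sk{b(x)+\cJ_pc(x)}{\xi}+tV(x)$ ($t=\psi_n(x)$, using the $\R$-linearity of $\cJ_p$, \eqref{i : R-lin}) in place of \eqref{eq: sect form below}: it is affine in $t$, nonnegative at $t=1$ by \eqref{e : equiv B}, and nonnegative at $t=0$ since \eqref{i : eq def delta} and \eqref{e : mu e delta} yield $\Re\sk{A(x)\xi}{\cJ_p\xi}\ge\tfrac{p}{2}\Delta_p(A)|\xi|^2\ge\mu_p(\oA)|\xi|^2$. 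This, with \eqref{eq: sect form above} for $\oA_n$ as above and the fact that the matrix $A$ is unchanged, gives $\oA_n\in\cS_p(\Omega)$ and $\mu_p(\oA_n)\ge\mu_p(\oA)$; the $\cB_p$-assertion then follows from $\cB_p=\cS_p\cap\cS_q$.

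For the limit, I would set $\mu^*=\limsup_n\mu_p(\oA_n)$ and pass to a subsequence $(n_k)$ attaining it; since \eqref{e : equiv B} holds for each $\oA_{n_k}$ off a null set, it holds for all $k$ off the (still null) union, and on that full-measure set $\oA_{n_k}(x)=\oA(x)$, $V_{n_k}(x)=V(x)$ once $n_k$ is large, by \eqref{i : conv coeff cut}; letting $k\to\infty$ gives $\Gamma_p^{\oA}(x,\xi)\ge\mu^*(|\xi|^2+V(x))$ a.e., whence $\mu_p(\oA)\ge\mu^*$, and together with $\mu_p(\oA_n)\ge\mu_p(\oA)$ this gives $\mu_p(\oA_n)\to\mu_p(\oA)$.

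The one point that needs genuine input, and that I expect to be the (mild) main obstacle, is \eqref{i : 1 supp in 0 supp}. Here I would use $\cB_{\mu,M}(\Omega)\subseteq\cB(\Omega)=\cS_2(\Omega)$, so that Proposition~\ref{p : property of B}\eqref{i : V dom Rec} with $p=2$ gives $|b(x)|,|c(x)|\le C\sqrt{V(x)}$ for a.e. $x$, hence the essential supports of $b$ and $c$ lie in that of $V$. Then if $x_0\notin{\rm supp}(\psi_nV)$, choosing an open $U\ni x_0$ with $\psi_nV=0$ a.e. on $U$, for a.e. $x\in U$ either $\psi_n(x)=0$ or $V(x)=0$, and in the second case $b(x)=c(x)=0$; thus $\psi_nb=\psi_nc=0$ a.e. on $U$, i.e. $x_0\notin{\rm supp}(\psi_nb)\cup{\rm supp}(\psi_nc)$, which is \eqref{i : 1 supp in 0 supp}. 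Everything else reduces to convexity of affine functions and the eventual constancy of the cut-off sequences.
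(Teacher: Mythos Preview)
Your proof is correct and follows essentially the same approach as the paper. Your ``affine interpolation trick'' in the parameter $t=\psi_n(x)\in[0,1]$ is exactly the paper's convex decomposition $\Gamma_r^{\oA_n}(\cdot,\xi)=(1-\psi_n)\Re\sk{A\xi}{\cJ_r\xi}+\psi_n\Gamma_r^{\oA}(\cdot,\xi)$, phrased pointwise; the paper then invokes \eqref{e : mu e delta} and $0\le\psi_n\le1$ just as you do, and handles \eqref{i : 1 supp in 0 supp} via Proposition~\ref{p : property of B}\eqref{i : V dom Rec} in the same way.
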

\begin{proof}
Items \eqref{i : conv coeff cut} and \eqref{i: unif bound cut} follow by the definition \eqref{d : terms cut n}.

Clearly, for all $\xi \in \C^d$ and all $r \in (1,\infty)$,
\begin{equation*}
\aligned
\Gamma_{r}^{\oA_n} (\cdot, \xi) &= (1-\psi_n) \Re\sk{A \xi}{\cJ_r \xi} + \psi_n \Gamma_r^\oA(\cdot, \xi), \\
|b_n-c_n|^2 &\leq \psi_n M^2\, V_n.
\endaligned
\end{equation*}
By taking $r =2$ and by combining  \eqref{e : mu e delta} with the fact that $0 \leq \psi_n \leq 1$, we get  \eqref{i: inv 2-cond after cut}. 

Similarly, by taking $r=p$ and by definition of $\mu_p$, we obtain that $\oA_n \in \cS_p(\Omega)$ and that $\mu_p(\oA_n) \geq \mu_p(\oA)$, for all $n \in \N_+$. Since $\oA_n \in \cS_p(\Omega)$, we have $\Gamma_p^{\oA_n}(x, \xi) \geq \mu_p(\oA_n) (|\xi|^2+V_n(x))$ for almost everywhere $x \in \Omega$ and all $\xi \in \C^d$ and $n \in \N_+$. The limit in \eqref{i: inv p-cond after cut} now follows by \eqref{i : conv coeff cut}.

Item \eqref{i : 1 supp in 0 supp} follows by the definition \eqref{d : terms cut n} and Proposition~\ref{p : property of B}\eqref{i : V dom Rec}.
\end{proof}

\subsection{Bilinear embedding with bounded potentials}\label{s : b.e. bound pot}
Let $\Omega \subset \R^d$ open, $p >1$, $q=p/(p-1)$ and $\oV$ and $\oW$ be two closed subspaces of $H^{1}(\Omega)$ of the type described in Section~\ref{s: boundary}. We prove now the bilinear embedding for arbitrary $A,B \in \cA(\Omega)$, $b,c,\beta,\gamma \in L^\infty(\Omega,\C^d)$ and $V,W \in L^\infty(\Omega,\R_+)$ such that $(A,b,c,V), (B,\beta, \gamma, W) \in \cB_p(\Omega)$. We will do it in two steps.
\begin{itemize}
\item
First, we will prove the bilinear embedding for all  $\oC=(C,d,e,U), \oD=(D,l,m,Z) \in \cB_p(\Omega)$ satisfying the assumptions of Section~\ref{s : appr by conv}, that is, 
$$
\aligned
{\rm supp}\, d, \, {\rm supp}\, e &\subset {\rm supp} \,U,\\
{\rm supp}\, l, \, {\rm supp}\, m &\subset {\rm supp} \,Z,
\endaligned
$$ 
and $d,e,l,m,U,Z$ are compactly supported. Fix $f,g \in (L^p \cap L^q)(\Omega)$. Let $\oC_\varepsilon, \oD_\varepsilon$ be the smooth approximations introduced in Section~\ref{s : appr by conv}. By Lemma~\ref{l : conv pot and grad sem}, Lemma~\ref{l : inv after reg} and a standard theorem in measure theory, there exists a sequence $(\varepsilon_n)_{n\in\N}$ such that $\nabla T_t^{\oC_{\varepsilon_n}}f \rightarrow \nabla T_t^{\oC}f$ and $U_{\varepsilon_n}^{1/2} T_t^{\oC_{\varepsilon_n}}f \rightarrow U^{1/2} T_t^{\oC}f$ as $n \rightarrow \infty$ almost everywhere on $\Omega$ and for all $t>0$; the same for $(\oD,g)$. Recall that we have already established in Section~\ref{s : proof bil emb} the bilinear embedding for smooth matrices and smooth and compactly supported first- and zero-order coefficients. Consequently, Fatou's lemma and Lemma~\ref{l : inv after reg}\eqref{i: inv 2-cond after reg},\eqref{i: inv p-cond after reg} give
$$
\aligned
\displaystyle\int^{\infty}_{0}\!&\int_{\Omega}\sqrt{\mod{\nabla T^{\oC}_{t}f}^2 + U \mod{T^{\oC}_{t}f}^2}\sqrt{\mod{\nabla T^{\oD}_{t}g}^2 +  Z \mod{T^{\oD}_{t}g}^2}  \\
&\leq \liminf_{n\rightarrow \infty} \displaystyle\int^{\infty}_{0}\!\int_{\Omega}\sqrt{\mod{\nabla T^{\oC_{\varepsilon_n}}_{t}f}^2 + U_{\varepsilon_n} \mod{T^{\oC_{\varepsilon_n}}_{t}f}^2}\sqrt{\mod{\nabla T^{\oD_{\varepsilon_n}}_{t}g}^2 +  Z_{\varepsilon_n} \mod{T^{\oD_{\varepsilon_n}}_{t}g}^2}\\
   &\leq \liminf_{n\rightarrow \infty}C_n \norm{f}{p}\norm{g}{q}, 
\endaligned
$$
where $C_n$ continuously depends on $p,\Lambda,\mu,M,\mu_p(\oC_{\varepsilon_n}),\mu_q(\oD_{\varepsilon_n})$. Therefore, \\Lemma~\ref{l : inv after reg}\eqref{i: inv p-cond after reg} implies that
\begin{equation}
\label{eq: N bil eps}
\aligned
\displaystyle\int^{\infty}_{0}\!\int_{\Omega}\sqrt{\mod{\nabla T^{\oC}_{t}f}^2 + U \mod{T^{\oC}_{t}f}^2}\sqrt{\mod{\nabla T^{\oD}_{t}g}^2 +  Z \mod{T^{\oD}_{t}g}^2} \leq C \norm{f}{p}\norm{g}{q},
\endaligned
\end{equation}
where $C$ continuously depends on $p,\Lambda,\mu,M,\mu_p(\oC),\mu_q(\oD)$.
%
\item
Let $\oA=(A,b,c,V), \oB=(B,\beta,\gamma,W)$ as above. Fix $f,g \in (L^p \cap L^q)(\Omega)$. Let $\oA_n, \oB_n$ as in \eqref{d : terms cut n}. By arguing just as in the previous case, only replacing Lemma~\ref{l : inv after reg} by Lemma~\ref{l : inv after cut}, we deduce that there exists a sequence $(n_j)_{j \in \N}$ such that $\nabla T_t^{\oA_{n_j}}f \rightarrow \nabla T_t^{\oA}f$ and $V_{n_j}^{1/2} T_t^{\oA_{n_j}}f \rightarrow V^{1/2} T_t^{\oA}f$ as $j \rightarrow \infty$ almost everywhere on $\Omega$ and for all $t>0$; the same for $(\oB,g)$. Consequently, Fatou's lemma, \eqref{eq: N bil eps} applied with $\oC=\oA_{n_j}$ and $\oD=\oB_{n_j}$, and Lemma~\ref{l : inv after cut} give
$$
\aligned
\displaystyle\int^{\infty}_{0}\!&\int_{\Omega}\sqrt{\mod{\nabla T^{\oA}_{t}f}^2 + V \mod{T^{\oA}_{t}f}^2}\sqrt{\mod{\nabla T^{\oB}_{t}g}^2 +  W \mod{T^{\oB}_{t}g}^2}  \\
&\leq \liminf_{n\rightarrow \infty} \displaystyle\int^{\infty}_{0}\!\int_{\Omega}\sqrt{\mod{\nabla T^{\oA_{n_j}}_{t}f}^2 + V_{n_j} \mod{T^{\oA_{n_j}}_{t}f}^2}\sqrt{\mod{\nabla T^{\oB_{n_j}}_{t}g}^2 +  W_{n_j} \mod{T^{\oB_{n_j}}_{t}g}^2}\\
   &\leq C \norm{f}{p}\norm{g}{q}.
\endaligned
$$
%
%
\end{itemize}

\section{Interior elliptic regularity}
In Section~\ref{s: Neumann introduction} we showed that we can associate a contractive and analytic semigroup $(T_t)_{t>0}$ on $L^2(\Omega)$ with the sesquilinear form defined in \eqref{e : form sesq}, provided that its coefficients satisfy conditions \eqref{eq: sect form below} and \eqref{eq: sect form above}. In this section we will see that if we assume that the coefficients are also smooth, then $T_t f$ is also smooth for all $t>0$ and $f \in L^2(\Omega)$.
\medskip

 For every function $f$ on $\R^d$ and $y \in \R^d \setminus\{0\}$, we introduce the functions $\tau_{y}f$, $\delta_{y}f$ on $\R^d$ by the  following rule:
$$
\aligned
\tau_{y} f(x) &= f(x-y), \\
\delta_{y}f(x) &= \frac{\tau_{-y}f(x) - f(x)}{|y|},
\endaligned
$$
for every $x \in \R^d$. 

We will use the following lemmas to prove Lemma~\ref{l : weak sol in H2}. See, for example, \cite[Proposition~4.8]{Giaq} for their proofs.
\begin{lemma}
\label{l : l1}
Let $f \in H^{1}_{\rm loc}(\R^d)$, $\nu \in \BS^{d-1}$, $R>0$ and $x_0 \in \R^d$. Then
$$
\int_{B(x_0,R)} |\delta_{h\nu} f|^2 \leq \int_{B(x_0, R+h)} |\partial_\nu f|^2, \quad \forall h >0.
$$
\end{lemma}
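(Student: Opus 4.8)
The plan is to reduce to the case of a smooth $f$ by mollification and then apply the one-dimensional fundamental theorem of calculus in the direction $\nu$. Recall from the definitions that, since $|h\nu|=h$, one has $\delta_{h\nu}f(x)=h^{-1}\bigl(f(x+h\nu)-f(x)\bigr)$. Let $\rho\in C_c^\infty(\R^d)$ be a standard mollifier, $\rho_\e(x)=\e^{-d}\rho(x/\e)$, and set $f_\e:=f*\rho_\e$; then $f_\e\in C^\infty(\R^d)$ and $f_\e\to f$ in $H^1\bigl(B(x_0,R+h+1)\bigr)$ as $\e\to0$. Consequently $\delta_{h\nu}f_\e\to\delta_{h\nu}f$ in $L^2\bigl(B(x_0,R)\bigr)$ (it is built from translations and differences of functions converging in $L^2_{\rm loc}$) and $\partial_\nu f_\e\to\partial_\nu f$ in $L^2\bigl(B(x_0,R+h)\bigr)$, so it suffices to prove the inequality for each $f_\e$ and then let $\e\to0$.

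So assume $f\in C^\infty(\R^d)$. For every $x\in\R^d$,
\[
\delta_{h\nu}f(x)=\frac1h\int_0^h\partial_\nu f(x+t\nu)\wrt t=\int_0^1\partial_\nu f(x+sh\nu)\wrt s,
\]
whence, by the Cauchy--Schwarz inequality applied to this average over $s\in[0,1]$,
\[
|\delta_{h\nu}f(x)|^2\leq\int_0^1|\partial_\nu f(x+sh\nu)|^2\wrt s.
\]
Integrating over $B(x_0,R)$ and applying Tonelli's theorem,
\[
\int_{B(x_0,R)}|\delta_{h\nu}f|^2\leq\int_0^1\!\!\int_{B(x_0,R)}|\partial_\nu f(x+sh\nu)|^2\wrt x\wrt s.
\]
For fixed $s\in[0,1]$ the change of variables $y=x+sh\nu$ turns the inner integral into $\int_{B(x_0+sh\nu,\,R)}|\partial_\nu f(y)|^2\wrt y$; since $|sh\nu|=sh\leq h$ we have $B(x_0+sh\nu,R)\subseteq B(x_0,R+h)$, so the inner integral is bounded by $\int_{B(x_0,R+h)}|\partial_\nu f|^2$, which is independent of $s$. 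Integrating in $s$ over $[0,1]$ yields exactly the asserted bound.

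I do not expect any genuine obstacle: this is the classical difference-quotient estimate (as in \cite[Proposition~4.8]{Giaq}). The only step deserving a moment's care is the mollification, namely checking that $f_\e$ is defined on a ball large enough to control both $\delta_{h\nu}f_\e$ on $B(x_0,R)$ (which involves $f_\e$ on $\{x+t\nu:\ x\in B(x_0,R),\ 0\le t\le h\}\subseteq B(x_0,R+h)$) and $\partial_\nu f_\e$ on $B(x_0,R+h)$, and that the $L^2$-convergences $f_\e\to f$, $\nabla f_\e\to\nabla f$ on that ball suffice to pass both sides of the inequality to the limit; all of this is routine.
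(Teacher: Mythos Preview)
Your proof is correct and is precisely the standard argument; the paper does not give its own proof of this lemma but simply refers to \cite[Proposition~4.8]{Giaq}, whose proof is essentially what you have written.
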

\begin{lemma}
\label{l : l2}
Let $f \in L^2_{\rm loc}(\R^d)$. Then $f \in H^1_{\rm loc}(\R^d)$ if and only if 
$$
\limsup_{h \rightarrow 0} \| \delta_{h\nu} f \|_{L^2_{\rm loc}} < \infty, \quad \forall \nu \in \BS^{d-1}.
$$
In this case,
$$
\|\partial_\nu f \|_{L^2(B)} \leq \limsup_{h \rightarrow 0} \|\delta_{h\nu}f\|_ {L^2(B)},
$$ 
for every compact set $B \subset \R^d$.
\end{lemma}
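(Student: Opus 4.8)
The plan is to treat the two implications separately: the forward one follows at once from Lemma~\ref{l : l1}, while the converse rests on a weak-compactness argument in $L^2$ together with a discrete integration-by-parts identity.

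\emph{If $f\in H^1_{\rm loc}(\R^d)$, the difference quotients are locally bounded.} Fix $\nu\in\BS^{d-1}$ and a compact set $B\subset\R^d$. Cover $B$ by finitely many balls $B(x_i,R_i)$, $i=1,\dots,N$. For $0<h\le1$, Lemma~\ref{l : l1} gives
$$
\|\delta_{h\nu}f\|_{L^2(B)}^2 \le \sum_{i=1}^N \int_{B(x_i,R_i+h)} |\partial_\nu f|^2 \le \sum_{i=1}^N \int_{B(x_i,R_i+1)} |\partial_\nu f|^2 <\infty,
$$
because $\partial_\nu f=\langle\nabla f,\nu\rangle\in L^2_{\rm loc}(\R^d)$ and the enlarged balls have compact closure. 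Taking $\limsup_{h\to0}$ yields the stated finiteness.

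\emph{If the difference quotients are locally bounded, then $f\in H^1_{\rm loc}(\R^d)$.} Fix a bounded open set $\Omega'$ and a bounded open $\Omega''$ with $\overline{\Omega'}\subset\Omega''$. Fix $i\in\{1,\dots,d\}$ and set $\nu=e_i$. By hypothesis there is a sequence $h_k\downarrow0$, which we may assume smaller than ${\rm dist}(\Omega',\partial\Omega'')$ so that $\delta_{h_ke_i}f$ is well defined on $\Omega'$, along which $(\delta_{h_ke_i}f)_k$ is bounded in $L^2(\Omega')$; passing to a subsequence, $\delta_{h_ke_i}f\rightharpoonup g_i$ weakly in $L^2(\Omega')$ for some $g_i\in L^2(\Omega')$. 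For $\varphi\in C_c^\infty(\Omega')$ and $h$ small compared with ${\rm dist}({\rm supp}\,\varphi,\partial\Omega')$, the change of variables $x\mapsto x+he_i$ gives the discrete integration-by-parts identity
$$
\int_{\Omega'}(\delta_{he_i}f)\,\varphi = \int_{\Omega'} f\,(\delta_{-he_i}\varphi).
$$
Letting $h\to0$ along the chosen subsequence, the left-hand side tends to $\int g_i\varphi$ by weak convergence, while $\delta_{-he_i}\varphi\to-\partial_i\varphi$ uniformly and $f\in L^1_{\rm loc}$, so the right-hand side tends to $-\int f\,\partial_i\varphi$. Hence $\partial_i f=g_i$ in the distributional sense on $\Omega'$, so $\nabla f\in L^2(\Omega';\R^d)$; combined with $f\in L^2(\Omega')$ this shows $f\in H^1(\Omega')$. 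As $\Omega'$ is arbitrary, $f\in H^1_{\rm loc}(\R^d)$.

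\emph{The quantitative bound.} Fix $\nu\in\BS^{d-1}$, a compact set $B$ and a bounded open $\Omega'\supset B$. Choose $h_k\downarrow0$ realizing $\limsup_{h\to0}\|\delta_{h\nu}f\|_{L^2(B)}$; by the hypothesis $(\delta_{h_k\nu}f)_k$ is bounded in $L^2(\Omega')$, so a subsequence converges weakly there, and by the identification just obtained its weak limit is $\partial_\nu f$. Restricting to $B$ (a bounded linear, hence weakly continuous, operation) we get $\delta_{h_{k_j}\nu}f\rightharpoonup\partial_\nu f$ in $L^2(B)$, and weak lower semicontinuity of the norm gives
$$
\|\partial_\nu f\|_{L^2(B)} \le \liminf_{j\to\infty}\|\delta_{h_{k_j}\nu}f\|_{L^2(B)} = \limsup_{h\to0}\|\delta_{h\nu}f\|_{L^2(B)}.
$$

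\emph{Main obstacle.} The only genuinely delicate step is the converse implication: one must perform the weak-compactness extraction on a slightly enlarged domain so that the difference quotients are defined, and then correctly identify the weak limit with the weak derivative via the discrete integration-by-parts formula, keeping careful track of the admissible test functions and of the uniform convergence $\delta_{-he_i}\varphi\to-\partial_i\varphi$. The remaining points — the direct estimate from Lemma~\ref{l : l1} and the lower-semicontinuity argument — are routine.
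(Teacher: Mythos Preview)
Your proof is correct and follows the standard textbook route (weak $L^2$-compactness of the difference quotients combined with the discrete integration-by-parts identity to identify the weak limit with the distributional derivative, then weak lower semicontinuity for the quantitative bound). The paper does not supply its own proof of this lemma: it simply cites \cite[Proposition~4.8]{Giaq}, which contains precisely the argument you wrote out. So there is nothing to compare --- you have reproduced the classical proof that the citation points to.
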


Let $\Omega \subseteq \R^d$ open, $A$ a complex $d \times d$ matrix-valued function on $\Omega$, $b,c$ two complex $d$-dimensional vector-valued functions on $\Omega$  and $V$ a nonnegative function on $\Omega$. Set $\oA=(A,b,c,V)$. We will write that
$$
\gota_\oA(u,v) = \displaystyle \int_{\Omega}\left[\sk{A\nabla u}{\nabla v}_{\C^{d}} + \sk{\nabla u}{b}_{\C^d}\overline{v} + u\sk{c}{\nabla v}_{\C^d} + V u \overline{v}\right]
$$
for every $u, v \in H^1_{\rm loc}(\Omega)$ for which the integral on the right-hand side is finite. Note that now we are not assuming that the domain of the form is of the type described in Section~\ref{s: boundary}.
\begin{lemma}
\label{l : weak sol in H22}
Let $\Omega \subset \R^d$ open, $A \in  L^\infty(\Omega, \C^{d \times d})$, $b,c \in L^\infty(\Omega, \C^{d})$ and \\ $V \in  L^\infty(\Omega, \C)$. Take $u \in H^1_{\rm loc}(\Omega)$. Suppose that there exists $f \in L^2_{\rm loc}(\Omega)$ such that
\begin{equation}
\label{eq: weak sol 2}
\gota_\oA(u, \varphi) = \int_\Omega f \, \overline{\varphi}, \quad \forall \varphi \in C_c^\infty(\Omega).
\end{equation}
Then 
$$
\gota_\oA(u, \varphi) = \int_\Omega f \, \overline{\varphi}, \quad \forall \varphi \in H^1_c(\Omega),
$$
where $H^1_c(\Omega)$ consists of all functions in $H^1(\Omega)$ having compact support in $\Omega$.
\end{lemma}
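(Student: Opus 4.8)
The plan is to deduce the identity for arbitrary $\varphi \in H^1_c(\Omega)$ from the case $\varphi \in C_c^\infty(\Omega)$ in \eqref{eq: weak sol 2} by a density argument, approximating $\varphi$ in the $H^1$-norm by test functions whose supports all lie in a fixed compact subset of $\Omega$, and then passing to the limit using the continuity of both sides of the equation.

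First I would fix $\varphi \in H^1_c(\Omega)$, set $K := {\rm supp}\,\varphi$ (a compact subset of $\Omega$), and choose $\varepsilon_0>0$ so small that $K' := K + \overline{B_{\R^d}(0,\varepsilon_0)}$ is still contained in $\Omega$. The zero-extension $\widetilde\varphi$ of $\varphi$ to $\R^d$ belongs to $H^1(\R^d)$ and has compact support $K$ (the zero-extension of an $H^1$-function with compact support inside an open set lies in $H^1$ of the ambient space). With $k_\varepsilon$ the mollifier introduced in Section~\ref{s : appr by conv}, the functions $\varphi_\varepsilon := (\widetilde\varphi * k_\varepsilon)|_\Omega$ then belong to $C_c^\infty(\Omega)$ for every $\varepsilon \in (0,\varepsilon_0)$, have support contained in $K'$, and satisfy $\varphi_\varepsilon \to \varphi$ in $H^1(\Omega)$ as $\varepsilon \to 0$.

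Next I would exploit the $L^\infty$-bounds on $A,b,c,V$: for every $w \in H^1(\Omega)$ with ${\rm supp}\,w \subset K'$ one has, by the Cauchy--Schwarz inequality applied term by term in the definition of $\gota_\oA$,
$$
|\gota_\oA(u,w)| \leq C\bigl(\|A\|_\infty+\|b\|_\infty+\|c\|_\infty+\|V\|_\infty\bigr)\,\|u\|_{H^1(K')}\,\|w\|_{H^1(\Omega)},
$$
which is finite because $u \in H^1_{\rm loc}(\Omega)$, and likewise $\bigl|\int_\Omega f\,\overline{w}\bigr| \leq \|f\|_{L^2(K')}\,\|w\|_{L^2(\Omega)}$, finite because $f \in L^2_{\rm loc}(\Omega)$. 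Applying \eqref{eq: weak sol 2} to $\varphi_\varepsilon \in C_c^\infty(\Omega)$ and letting $\varepsilon \to 0$, these two estimates applied to $w = \varphi_\varepsilon - \varphi$ show that both $\gota_\oA(u,\varphi_\varepsilon) \to \gota_\oA(u,\varphi)$ and $\int_\Omega f\,\overline{\varphi_\varepsilon} \to \int_\Omega f\,\overline{\varphi}$, whence $\gota_\oA(u,\varphi) = \int_\Omega f\,\overline{\varphi}$.

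There is no serious obstacle here; the argument is the routine combination of mollification with the continuity of the form. The only point requiring a little care is keeping the supports of all the approximants $\varphi_\varepsilon$ inside a single compact set $K' \subset \Omega$, so that the $L^\infty$-bounds on $A,b,c,V$ may be combined with the local $H^1$-integrability of $u$ and the local $L^2$-integrability of $f$; this is guaranteed by the choice of $\varepsilon_0$ above.
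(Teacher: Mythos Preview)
Your proof is correct and follows essentially the same density argument as the paper: approximate $\varphi\in H^1_c(\Omega)$ by smooth functions with supports contained in a fixed compact $K'\Subset\Omega$, then pass to the limit using the continuity of $w\mapsto\gota_\oA(u,w)$ and $w\mapsto\int_\Omega f\,\overline{w}$ on such test functions. The only cosmetic difference is that you construct the approximants explicitly by mollification, whereas the paper cites \cite[Lemma~2.18(b) and Lemma~3.15]{AdFour} to obtain a sequence $(\varphi_n)\subset C_c^\infty(\Omega')$ converging in $H^1(\Omega')$ for some $\Omega'\Subset\Omega$ containing ${\rm supp}\,\varphi$.
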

\begin{proof}
Let $u \in H^1_{\rm loc}(\Omega)$ and $f \in L^2_{\rm loc}(\Omega)$ for which \eqref{eq: weak sol 2} holds. Let $\varphi \in H^1(\Omega)$ with compact support in $\Omega$ and $\Omega^\prime \Subset \Omega$ such that ${ \rm supp}\, \varphi \Subset \Omega^\prime$. By \cite[Lemma~2.18(b) and Lemma~3.15]{AdFour} there exists a sequence $(\varphi_n)_{n \in \N}$ in $C_c^\infty(\Omega^\prime)$ such that 
\begin{equation}
\label{eq : apx approx}
\varphi_n \rightarrow \varphi \,\, \text{ in } H^1(\Omega^\prime).
\end{equation}
Therefore, by using also the assumption on $u$ and the boundedness of $A,b,c$ and $V$ (thanks to which we have, for instance, that $Vu \in L^2(\Omega^\prime)$), we get
\begin{equation}
\label{eq: apx first eq}
\aligned
\gota_\oA(u, \varphi) &= \displaystyle \int_{\Omega^\prime}\left[\sk{A\nabla u}{\nabla \varphi}_{\C^{d}} + \sk{\nabla u}{b}_{\C^d}\overline{\varphi} + u\sk{c}{\nabla \varphi}_{\C^d} + V u \overline{\varphi}\right]\\
&= \lim_{n \rightarrow \infty} \displaystyle \int_{\Omega^\prime}\left[\sk{A\nabla u}{\nabla \varphi_n}_{\C^{d}} + \sk{\nabla u}{b}_{\C^d}\overline{\varphi_n} + u\sk{c}{\nabla \varphi_n}_{\C^d} + V u \overline{\varphi_n}\right]\\
& = \lim_{n \rightarrow \infty} \gota_\oA(u, \varphi_n).
\endaligned
\end{equation}
Since $\varphi_n \in C^\infty_c(\Omega^\prime)$ for all $n \in \N$ and $\Omega^\prime \Subset \Omega$, by \eqref{eq: weak sol 2} we have
$$
 \gota_\oA(u, \varphi_n) =  \int_\Omega f \, \overline{\varphi_n} = \int_{\Omega^\prime} f \, \overline{\varphi_n},
$$
for all $n \in \N$.
On the other hand, $f \in L^2(\Omega^\prime)$. Hence by \eqref{eq : apx approx} we obtain
\begin{equation}
\label{eq: apx second eq}
\hskip-37,5 pt \lim_{n \rightarrow \infty} \gota_\oA(u, \varphi_n) = \int_{\Omega^\prime} f \, \overline{\varphi} = \int_\Omega f \, \overline{\varphi}.
\end{equation}
We conclude by combining \eqref{eq: apx first eq} and \eqref{eq: apx second eq}.
\end{proof}

The following lemma was proved in \cite[Theorem~4.9]{Giaq} when $b,c,V=0$. We adapt its proof to the general case when $b,c$ and $V$ are not null.
\begin{lemma}
\label{l : weak sol in H2}
Let $\Omega \subset \R^d$ open, $A \in \left({\rm Lip}_{\rm loc} \cap L^\infty\right)(\Omega, \C^{d \times d})$, $b,c \in \left({\rm Lip}_{\rm loc} \cap L^\infty\right)(\Omega, \C^{d})$ and $V \in \left({\rm Lip}_{\rm loc} \cap L^\infty\right)(\Omega, \C)$ such that $\oA=(A,b,c,\Re V) \in \cB(\Omega)$. Take $u \in H^1_{\rm loc}(\Omega)$. Suppose that there exists $f \in L^2_{\rm loc}(\Omega)$ such that
\begin{equation}
\label{eq: weak sol}
\gota_\oA(u, \varphi) = \int_\Omega f \, \overline{\varphi}, \quad \forall \varphi \in C_c^\infty(\Omega).
\end{equation}
Then $u \in H^2_{\rm loc}(\Omega)$.
\end{lemma}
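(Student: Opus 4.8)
The plan is to adapt the classical Nirenberg difference-quotient argument (as in \cite[Theorem~4.9]{Giaq}) to the perturbed form $\gota_\oA$. Fix $\Omega' \Subset \Omega'' \Subset \Omega$ and a cut-off $\eta \in C_c^\infty(\Omega'')$ with $\eta \equiv 1$ on $\Omega'$ and $0 \le \eta \le 1$. The first step is to record that, by Lemma~\ref{l : weak sol in H22}, the weak formulation \eqref{eq: weak sol} holds for all test functions in $H^1_c(\Omega)$, not merely for $\varphi \in C_c^\infty(\Omega)$; this is what makes it legitimate to plug in the non-smooth test functions built from difference quotients. Then, for $\nu \in \BS^{d-1}$ and $|h|$ small enough that $\operatorname{supp}(\eta^2 \tau_{h\nu} u) \Subset \Omega''$, I would test \eqref{eq: weak sol} with $\varphi = \delta_{-h\nu}(\eta^2 \delta_{h\nu} u)$, which indeed lies in $H^1_c(\Omega)$ because $u \in H^1_{\mathrm loc}(\Omega)$ and $\eta$ is smooth and compactly supported.

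The second step is the algebra: using the discrete product and integration-by-parts rules for $\delta_{h\nu}$ (the identities $\delta_{h\nu}(\varphi\psi) = (\tau_{-h\nu}\varphi)\delta_{h\nu}\psi + (\delta_{h\nu}\varphi)\psi$ and $\int (\delta_{-h\nu}\varphi)\psi = \int \varphi \, \delta_{h\nu}\psi$), I would move one difference quotient onto $u$ in every term of $\gota_\oA$. The principal term $\int \langle A\nabla u, \nabla\varphi\rangle$ produces, after this manipulation, a good term $\int \eta^2 \langle (\tau_{h\nu}A)\nabla \delta_{h\nu} u, \nabla \delta_{h\nu}u\rangle$ — controlled from below by $\lambda(A)\int \eta^2 |\nabla\delta_{h\nu}u|^2$ via ellipticity, using that $\tau_{h\nu}A$ is still elliptic with the same constants — plus error terms. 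The error terms fall into two families: (a) those in which $\delta_{h\nu}$ has hit the coefficient, e.g. $\int \eta^2 \langle (\delta_{h\nu}A)\nabla u, \nabla\delta_{h\nu}u\rangle$ and the analogous ones with $b,c,V$; since $A,b,c,V \in \operatorname{Lip}_{\mathrm loc}$, the difference quotients $\delta_{h\nu}A$ etc.\ are bounded uniformly in $h$ on $\Omega''$, so these are bounded by $C\|\nabla\delta_{h\nu}u\|_{L^2(\Omega'')}\big(\|u\|_{H^1(\Omega'')} + \text{lower}\big)$; and (b) those in which a derivative has landed on $\eta$, bounded by $C\|\nabla\eta\|_\infty$ times similar quantities. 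On the right-hand side, $\big|\int f\,\overline\varphi\big| \le \|f\|_{L^2(\Omega'')}\|\varphi\|_{L^2} \lesssim \|f\|_{L^2(\Omega'')}\|\nabla\delta_{h\nu}u\|_{L^2(\Omega'')}$ by Lemma~\ref{l : l1}. Here I would also use Lemma~\ref{l : l1} throughout to replace any $\|\delta_{h\nu}u\|_{L^2}$ appearing in error terms by $\|\nabla u\|_{L^2(\Omega'')}$, uniformly in $h$.

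The third step is to collect everything into an inequality of the form
$$
\lambda(A)\int_{\Omega''}\eta^2|\nabla\delta_{h\nu}u|^2 \le \varepsilon \int_{\Omega''}\eta^2|\nabla\delta_{h\nu}u|^2 + C_\varepsilon\Big(\|u\|_{H^1(\Omega'')}^2 + \|f\|_{L^2(\Omega'')}^2\Big),
$$
where the constant is independent of $h$. Absorbing the $\varepsilon$-term on the left (choosing $\varepsilon < \lambda(A)$) gives $\sup_{|h|<h_0}\|\delta_{h\nu}(\partial_j u)\|_{L^2(\Omega')}^2 \lesssim \|u\|_{H^1(\Omega'')}^2 + \|f\|_{L^2(\Omega'')}^2 < \infty$ once we note $\eta \equiv 1$ on $\Omega'$; applying this to each $\nu = e_j$ and to the already-known first derivatives $\partial_j u \in L^2_{\mathrm loc}$, Lemma~\ref{l : l2} yields $\partial_j u \in H^1_{\mathrm loc}(\Omega)$ for every $j$, i.e.\ $u \in H^2_{\mathrm loc}(\Omega)$.

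The main obstacle I anticipate is purely bookkeeping rather than conceptual: one must organise the several new error terms coming from $b,c$ and $V$ so that each is genuinely absorbable — in particular checking that no term survives that requires more than one order of differentiability on $u$ or on the coefficients, and that the Lipschitz (not merely $L^\infty$) hypothesis is exactly what is needed to bound $\delta_{h\nu}$ of each coefficient uniformly in $h$. A minor subtlety is that $V$ is complex-valued here, so one should be a little careful to keep $|V|$ rather than $\Re V$ in the estimates of the zero-order error terms; this causes no difficulty since $V \in L^\infty$. Note that accretivity/sectoriality of $\gota_\oA$ plays no role in this local regularity statement — only ellipticity of $A$ and local boundedness/Lipschitz regularity of all coefficients are used.
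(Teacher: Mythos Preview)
Your proof is correct and follows the same Nirenberg difference-quotient strategy as the paper. The only notable difference lies in the coercivity step: the paper invokes the hypothesis $\oA\in\cB(\Omega)$ (specifically \eqref{eq: sect form below}) to bound the full translated integrand $\Re\bigl[\sk{\tau_{-h}A\,\nabla v}{\nabla v}+\sk{\nabla v}{\tau_{-h}b}\bar v+v\sk{\tau_{-h}c}{\nabla v}+\tau_{-h}V|v|^2\bigr]$ from below by $\mu\,\eta^2|\nabla v|^2$ at once, absorbing the $b,c,V$ contributions into the coercivity constant, whereas you use only the ellipticity $\lambda(A)$ on the principal part and treat the first- and zero-order terms as further absorbable errors (bounding $\|\eta v\|_2$ by $\|\nabla u\|_{L^2(\Omega'')}$ via Lemma~\ref{l : l1}). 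Your organisation is the standard textbook one and, as you correctly observe, shows that the hypothesis $\oA\in\cB(\Omega)$ is in fact inessential for this local regularity lemma; the paper's use of it is a convenience rather than a necessity. Otherwise the two arguments coincide in structure --- the paper's device of testing with $\tau_h\varphi$ and subtracting is equivalent to your direct test with $\varphi=\delta_{-h\nu}(\eta^2\delta_{h\nu}u)$ --- and both conclude via Lemmas~\ref{l : l1} and~\ref{l : l2}.
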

\begin{proof}
Fix $j \in \{1, \dots, d\}$ and set 
$$
\aligned
\tau_h &=\tau_{he_j}, \\
\delta_h  &= \delta_{he_j}, \\
\oA_h&= (\tau_{-h}A,\tau_{-h}b,\tau_{-h}c,\tau_{-h}V) = \tau_{-h}\oA, \\
\delta_h(\oA) &=( \delta_h A, \delta_h b, \delta_h c, \delta_h V),
\endaligned
$$
where $e_j \in \R^d$ is the unit vector $(0,\dots,0,1,0,\dots,0)$ with $1$ in the $j$-th position.

Let $u \in H^1_{\rm loc}(\Omega)$ and $f \in L^2_{\rm loc}(\Omega)$ for which \eqref{eq: weak sol} holds.  Let $\varphi \in H^1(\Omega)$ with compact support in $\Omega$. If $h$ is small enough, $\tau_h \varphi$ also belongs to $H^1(\Omega)$ and has compact support in $\Omega$. Therefore, Lemma~\ref{l : weak sol in H22} implies that 
\begin{equation}
\label{eq: eq C1}
\gota_\oA(u, \tau_h \varphi) = \int_{\Omega} f \, \tau_h \overline{\varphi},
\end{equation}
for $|h|\ll 1$.

On the other hand, by $\nabla\tau_h= \tau_h\nabla$, a change of variable and \eqref{eq: eq C1}, we find that
\begin{equation}
\label{eq: eq C2}
\gota_{ \oA_h}(\tau_{-h} u, \varphi) =\gota_\oA(u, \tau_h \varphi) =\int_{\Omega} f \, \tau_h \overline{\varphi}.
\end{equation}
Subtracting \eqref{eq: weak sol} from equation \eqref{eq: eq C2}, we get
$$
\gota_{\oA_h}( \tau_{-h}u, \varphi)- \gota_\oA(u,  \varphi)= \int_\Omega f (\tau_h \overline{\varphi}- \overline{\varphi}).
$$
Observe that $\gota_{\oA_h}( \tau_{-h}u, \varphi)$ is well-defined for small $|h|$, even though $\oA$ is defined on $\Omega$. This is due to the fact that $\varphi$ has compact support. 

By writing $\tau_{-h}u=\tau_{-h} u -u+u$ and by dividing by $h$ both terms of the previous identity, we have
\begin{equation}
\label{eq: eq C3}
\gota_{\oA_h}(\delta_h u, \varphi)= - \gota_{\delta_h(\oA)}(u,\varphi) + \int_\Omega f \, \overline{\delta_{-h} \varphi}.
\end{equation}
Now we will choose an adequate $\varphi$. Let $x_0 \in \Omega$ and $R>0$ such that $\overline{B_{2R}}:=\overline{B(x_0, 2R)} \subset \Omega$. Fix $\eta \in C_c^\infty(\Omega)$ such that $0 \leq \eta \leq 1$, ${\rm supp }\,\eta \subseteq B_R$ and $\eta =1$ on $B_{R/2}$.
Define
\begin{equation*}
\aligned
v &:= \delta_h u, \\
\varphi &:= \eta^2 v
\endaligned
\end{equation*}
for $|h| \ll 1$.
By plugging this particular $\varphi$ in \eqref{eq: eq C3}, we obtain
\begin{equation}
\label{eq: eq C4}
\gota_{\oA_h}(v, \eta^2 v) = - \gota_{\delta_h(\oA)}(u,\eta^2 v) + \int_\Omega f \, \overline{\delta_{-h} (\eta^2 v)}.
\end{equation}
By writing $\nabla(\eta^2 v) = \eta^2 \nabla v  + 2 \eta v \nabla\eta$ in the definition of $\gota_{\oA_h}$ and $\gota_{\delta_h(\oA)}$, \eqref{eq: eq C4} yields
$$
\aligned
\displaystyle \int_{B_R}\eta^2&\biggl[\sk{\tau_{-h}A\nabla v}{\nabla v}+\sk{\nabla v}{\tau_{-h}b}\overline{v} + v\sk{\tau_{-h}c}{\nabla v} + \tau_{-h}V|v|^2\biggr] \\
=& -2 \displaystyle \int_{B_R} \eta \left[ \sk{\tau_{-h}A\nabla v}{\nabla \eta}\overline{v}  + |v|^2\sk{\tau_{-h}c}{\nabla \eta} \right]  \\
&- \displaystyle \int_{B_R}\eta^2\biggl[\sk{\delta_h(A)\nabla u}{\nabla v}+\sk{\nabla u}{\delta_h(b)}\overline{v} + u\sk{\delta_h(c)}{\nabla v} + \delta_h(V)u \overline{v}\biggr] \\
&- 2 \displaystyle \int_{B_R} \eta\overline{v} \left[ \sk{\delta_h(A)\nabla u}{\nabla \eta}  +u\sk{\delta_h(c)}{\nabla \eta}\right]\\
&+ \int_{B_{R+h}} f \, \overline{\delta_{-h} (\eta^2 v)}.
\endaligned
$$
Here we also used that ${\rm supp}\, \eta \subseteq B_R$.
Therefore, since $\oA \in \cB(\Omega)$, there exists $\mu \in (0,1)$ such that
$$
\aligned
\mu \int_{B_R} \eta^2 |\nabla v|^2 \leq& -2 \, \Re\displaystyle \int_{B_R} \eta \left[ \sk{\tau_{-h}A\nabla v}{\nabla \eta}\overline{v}  + |v|^2\sk{\tau_{-h}c}{\nabla \eta} \right]  \\
& -\Re \displaystyle \int_{B_R}\eta^2\biggl[\sk{\delta_h(A)\nabla u}{\nabla v}+\sk{\nabla u}{\delta_h(b)}\overline{v} + u\sk{\delta_h(c)}{\nabla v} + \delta_h(V)u \overline{v}\biggr] \\
&- 2 \, \Re\displaystyle \int_{B_R} \eta\overline{v} \left[ \sk{\delta_h(A)\nabla u}{\nabla \eta}  +u\sk{\delta_h(c)}{\nabla \eta} \right] \\
&+\Re \int_{B_{R+h}} f \, \overline{\delta_{-h} (\eta^2 v)}.
\endaligned
$$
The boundedness of $A$ and $c$ implies that $\tau_{-h}A$ and $\tau_{-h}c$ are bounded on $B_R$ uniformly in $|h|\ll 1$. Moreover, as $A$ is locally Lipschitz, $\delta_h(A)$ is also bounded on $B_R$ uniformly in $|h|\ll 1$. The same for $\delta_h(b),\delta_h(c)$ and $\delta_h(V)$. Therefore, by using also that  $\|\nabla \eta\|_\infty \leqsim_R \, 1$ and $0 \leq \eta \leq1$, we get 
\begin{equation}
\label{eq: eq CC}
\aligned
\int_{B_R} \eta^2 |\nabla v|^2 \leqsim&_{\mu,R} \int_{B_R} \left(\eta |\nabla v| |v| +  \eta |v|^2\right) \\
&+ \int_{B_R} \eta^2 \left(|\nabla u| +  | u|\right) |\nabla v| \\
&+ \int_{B_R} \eta \left(|\nabla u|+ | u| \right) | v| \\
& + \int_{B_{R+h}} |f| |\delta_{-h} (\eta^2 v)|\\
=:& \, I_1 +I_2 +I_3 +I_4.
\endaligned
\end{equation}
By repeatedly applying the inequality $2 ab \leq \varepsilon a + \varepsilon^{-1}b$ for all $a,b \in \R$, $\varepsilon >0$ and the fact that $0 \leq \eta \leq1$, we obtain the following estimates
$$
\aligned
I_1 \,&\leqsim \, \varepsilon \int_{B_R} \eta^2 |\nabla v|^2 + \varepsilon^{-1}  \int_{B_R} |v|^2 + \int_{B_R} |v|^2,\\
I_2 \,&\leqsim \, \varepsilon \int_{B_R} \eta^2 |\nabla v|^2 + \varepsilon^{-1} \int_{B_R} \left(|\nabla u|^2 + |u|^2 \right), \\
I_3\, &\leqsim  \,\int_{B_R} \left(|\nabla u|^2 + |u|^2  +|v|^2\right), \\
I_4 \,& \leqsim \,\varepsilon \int_{B_{R+h}}  |\delta_{-h}(\eta^2 v)|^2 + \varepsilon^{-1} \int_{B_{R+h}} |f|^2,
\endaligned
$$
for all $\varepsilon >0$. These estimates combined with \eqref{eq: eq CC} give
\begin{equation}
\label{eq: eq CCC}
\aligned
\int_{B_R} \eta^2 |\nabla v|^2 \leqsim& \, \varepsilon \int_{B_R} \eta^2 |\nabla v|^2 \\
& + \varepsilon^{-1} \int_{B_R} \left(|\nabla u|^2 + |u|^2 + |v|^2 \right) \\
& + \int_{B_R} \left(|\nabla u|^2 + |u|^2 + |v|^2 \right)  \\
& + \varepsilon \int_{B_{R+h}}  |\delta_{-h}(\eta^2 v)|^2 + \varepsilon^{-1} \int_{B_{R+h}} |f|^2,
\endaligned
\end{equation}
where the constant depends on $\mu$ and $R$, but not on $h$. On the other hand, by Lemma \ref{l : l1},
\begin{equation}
\label{eq: eq C6}
\aligned
\int_{B_{R+h}} |\delta_{-h}(\eta^2 v)|^2& \leq \int_{B_{R+2h}} |\nabla (\eta^2 v)|^2 \leqsim  \int_{B_{R+2h}} \eta^4 |\nabla  v|^2 + \eta^2 |v|^2 |\nabla \eta|^2 \\
&\leqsim \int_{B_R} \eta^2 |\nabla v|^2 + \int_{B_R} |v|^2,
\endaligned
\end{equation}
where in the last inequality we used that $\eta \leq 1$ and ${\rm supp}\, \eta \subseteq B_R$.
Hence \eqref{eq: eq CCC} and \eqref{eq: eq C6} give
$$
\aligned
\int_{B_R} \eta^2 |\nabla v|^2 \leqsim& \, \varepsilon \int_{B_R} \eta^2 |\nabla v|^2 \\
& + (\varepsilon^{-1} +1) \int_{B_R} \left(|\nabla u|^2 + |u|^2 + |v|^2 \right) \\
& + \varepsilon \int_{B_R} |v|^2 + \varepsilon^{-1} \int_{B_{R+h}} |f|^2.
\endaligned
$$
where the constant depends on $\mu$ and $R$, but not on $h$. Therefore, we may choose $\varepsilon$  independent of $h$ small enough such that the term with $\|\eta \nabla v\|_{L^2(B_R)}^2$ can be absorbed in the left-hand side of the inequality, obtaining 
\begin{equation*}
\aligned
\int_{B_R} \eta^2 |\nabla v|^2 \leqsim& \int_{B_R}  |\nabla u|^2 + \int_{B_R}  |u|^2 + \int_{B_R}  |v|^2 + \int_{B_{R+h}} |f|^2,
\endaligned
\end{equation*}
which implies that
\begin{equation}
\label{eq: eq C7}
\aligned
\int_{B_{R/2}}|\nabla v|^2 \leqsim\int_{B_R}  |\nabla u|^2 + \int_{B_R}  |u|^2 + \int_{B_R}  |v|^2 + \int_{B_{R+h}} |f|^2.
\endaligned
\end{equation}
By Lemma~\ref{l : l1} again,
$$
\int_{B_R}|v|^2 = \int_{B_R} |\delta_hu|^2 \leq \int_{B_{R+h}}|\nabla u|^2.
$$
Hence \eqref{eq: eq C7} yields
$$
\aligned
\int_{B_{R/2}}|\delta_h \nabla u|^2=\int_{B_{R/2}}|\nabla v|^2 \, \leqsim\int_{B_{R+h}}  |\nabla u|^2 + \int_{B_R}  |u|^2  + \int_{B_{R+h}} |f|^2.
\endaligned
$$
Since the constant in the previous estimate does not depend on $h$, we get
$$
\limsup_{h \rightarrow 0} \int_{B_{R/2}}|\delta_h \nabla u|^2 \leqsim \int_{B_R}  |\nabla u|^2 + \int_{B_R}  |u|^2  + \int_{B_R} |f|^2.
$$
From Lemma~\ref{l : l2} we infer that $u \in H^2(B_{R/2})$ and that
\begin{alignat*}{2}
\int_{B_{R/2}} |D^2 u|^2 \leqsim  \int_{B_R}  |\nabla u|^2 + \int_{B_R}  |u|^2  + \int_{B_R} |f|^2.
\tag*{\qedhere}
\end{alignat*}
\end{proof}
\begin{lemma}
Let $\Omega, A, b, c, V, \oV$ satisfy the standard assumptions of Section \ref{s : stand ass}. Suppose that $A \in C^\infty(\Omega, \C^{d \times d})$, $b,c \in C^\infty(\Omega, \C^d)$, $V \in C^\infty(\Omega, \R_+)\cap L^\infty(\Omega, \R_+)$ and that $\oA=(A,b,c,V) \in \cB(\Omega)$. Let $\oL^\oA=\oL^{\oA,\oV}$ be the operator on $L^2(\Omega)$ associated with the sesquilinear form $\gota_{\oA,\oV}$ defined in \eqref{e : form sesq}. Then
\begin{equation}
\label{eq: dom in H}
\Dom\left((\oL^\oA)^k\right) \subseteq H^{2k}_{\rm loc}(\Omega)
\end{equation}
for all $k \geq 1$. In particular, 
\begin{equation}
\label{eq: dom in smooth}
\bigcap_{k \geq 1} \Dom\left((\oL^\oA)^k\right) \subseteq C^\infty(\Omega).
\end{equation}
\end{lemma}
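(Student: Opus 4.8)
The plan is to separate a purely local elliptic-regularity bootstrap from the functional-analytic input, and then iterate. First I would prove, by induction on $m\in\N$, the auxiliary statement: \emph{under the hypotheses of the lemma (so $A,b,c,V$ are $C^\infty$, $A,b,c,V$ are bounded, and $\oA=(A,b,c,V)\in\cB(\Omega)$), if $u\in H^1_{\rm loc}(\Omega)$ and $f\in H^m_{\rm loc}(\Omega)$ satisfy $\gota_\oA(u,\varphi)=\int_\Omega f\,\overline\varphi$ for all $\varphi\in C_c^\infty(\Omega)$, then $u\in H^{m+2}_{\rm loc}(\Omega)$.} The base case $m=0$ is exactly Lemma~\ref{l : weak sol in H2}: its hypotheses hold because $C^\infty$ coefficients are locally Lipschitz, $A,b,c$ are bounded by the standard assumptions, $V$ is bounded by assumption, $V$ is real and nonnegative so $\Re V=V$, and $\oA\in\cB(\Omega)$ by assumption.

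For the inductive step from $m$ to $m+1$, given $f\in H^{m+1}_{\rm loc}(\Omega)\subseteq H^m_{\rm loc}(\Omega)$ the inductive hypothesis first yields $u\in H^{m+2}_{\rm loc}(\Omega)$, in particular $u\in H^2_{\rm loc}(\Omega)$ (this is why the induction is set up so that $u$ is already two derivatives better than $f$ before one differentiates again). Fix $j\in\{1,\dots,d\}$. Testing the weak identity against $-\partial_j\varphi\in C_c^\infty(\Omega)$ and integrating by parts in $x_j$ — legitimate since $u\in H^2_{\rm loc}(\Omega)$ and the coefficients are $C^\infty$ — and then moving the two residual $\nabla\varphi$-terms, namely $\sk{(\partial_j A)\nabla u}{\nabla\varphi}$ and $u\sk{\partial_j c}{\nabla\varphi}$, back by one further integration by parts, one arrives at an identity
\begin{equation*}
\gota_\oA(\partial_j u,\varphi)=\int_\Omega \widetilde f_{j}\,\overline\varphi,\qquad\forall\varphi\in C_c^\infty(\Omega),
\end{equation*}
where $\widetilde f_{j}$ is a fixed linear combination of $\partial_j f$ and of terms of the shape (a derivative of a coefficient)$\,\times\,$(a derivative of $u$ of order $\leq 2$), e.g.\ $\div\bigl((\partial_j A)\nabla u\bigr)$, $\sk{\nabla u}{\partial_j b}$, $\div(u\,\partial_j c)$ and $(\partial_j V)u$. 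Since $\partial_j f\in H^m_{\rm loc}(\Omega)$, $u\in H^{m+2}_{\rm loc}(\Omega)$ and all coefficients are smooth, each such term lies in $H^m_{\rm loc}(\Omega)$, so $\widetilde f_{j}\in H^m_{\rm loc}(\Omega)$. Applying the inductive hypothesis to $\partial_j u\in H^1_{\rm loc}(\Omega)$ with right-hand side $\widetilde f_{j}$ gives $\partial_j u\in H^{m+2}_{\rm loc}(\Omega)$ for every $j$; together with $u\in H^{m+2}_{\rm loc}(\Omega)$ this yields $u\in H^{m+3}_{\rm loc}(\Omega)$, closing the induction.

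Next I would deduce \eqref{eq: dom in H} by induction on $k$. For $k=1$: if $u\in\Dom(\oL^\oA)$ and $f:=\oL^\oA u\in L^2(\Omega)$, then $\gota_\oA(u,v)=\sk{f}{v}_{L^2(\Omega)}$ for all $v\in\Dom(\gota)$; since $V$ is bounded, $\Dom(\gota)=\oV\supseteq H^1_0(\Omega)\supseteq C_c^\infty(\Omega)$, so $u$ is a weak solution with right-hand side $f\in L^2_{\rm loc}(\Omega)$, and the auxiliary statement with $m=0$ gives $u\in H^2_{\rm loc}(\Omega)$. For the step $k\to k+1$: if $u\in\Dom\bigl((\oL^\oA)^{k+1}\bigr)$, set $w:=\oL^\oA u\in\Dom\bigl((\oL^\oA)^{k}\bigr)$, which lies in $H^{2k}_{\rm loc}(\Omega)$ by the inductive hypothesis; as before $u$ solves $\gota_\oA(u,\varphi)=\int_\Omega w\,\overline\varphi$ on $C_c^\infty(\Omega)$, so the auxiliary statement with $m=2k$ yields $u\in H^{2k+2}_{\rm loc}(\Omega)$. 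Finally, \eqref{eq: dom in smooth} follows at once: if $u\in\bigcap_{k\geq 1}\Dom\bigl((\oL^\oA)^{k}\bigr)$ then $u\in\bigcap_{k\geq 1}H^{2k}_{\rm loc}(\Omega)$, and the Sobolev embedding theorem applied on balls compactly contained in $\Omega$ gives $u\in C^\infty(\Omega)$.

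The hard part is the bookkeeping in the inductive step of the auxiliary statement: one must check that \emph{every} commutator term produced by differentiating the variable coefficients actually belongs to $H^m_{\rm loc}(\Omega)$ and not merely to $H^{m-1}_{\rm loc}(\Omega)$ — it is precisely the $C^\infty$-smoothness of $A,b,c,V$ that secures this — and one must make sure each integration by parts is performed only at a regularity level already established (hence the two-step use of the inductive hypothesis, first to gain $H^{m+2}_{\rm loc}$ for $u$, then to gain it for $\partial_j u$). Everything else is a routine combination of Lemma~\ref{l : weak sol in H2}, the definition of the operator powers $(\oL^\oA)^k$, and Sobolev embedding.
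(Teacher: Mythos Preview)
Your proof is correct and uses the same ingredients as the paper --- Lemma~\ref{l : weak sol in H2} as the base regularity step, differentiation of the weak equation with commutator bookkeeping, induction, and Sobolev embedding. The only difference is organizational: you first isolate a standalone auxiliary regularity statement (right-hand side in $H^m_{\rm loc}$ implies $u\in H^{m+2}_{\rm loc}$) and then apply it once per power of $\oL^\oA$, whereas the paper inlines the bootstrap inside each step of the $k$-induction, gaining one derivative at a time by applying Lemma~\ref{l : weak sol in H2} to $\partial^\alpha u$ first for $|\alpha|\leq 2k-1$ and then for $|\alpha|\leq 2k$.
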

\begin{proof}
When $k=1$, \eqref{eq: dom in H} holds by Lemma~\ref{l : weak sol in H2}.

Assume that \eqref{eq: dom in H} holds for $k \in \N_+$. Then, for all $u \in \Dom\left((\oL^\oA)^{k+1}\right)$ we have 
$$
\aligned
u &\in \Dom\left((\oL^\oA)^k\right) \subseteq H^{2k}_{\rm loc}(\Omega), \\
\oL^\oA u &\in D\left((\oL^\oA)^k\right) \subseteq H^{2k}_{\rm loc}(\Omega).
\endaligned
$$
We will first prove that $u \in H^{2k+1}_{\rm loc}(\Omega)$ and then that $u \in H^{2k+2}_{\rm loc}(\Omega)$, thus establishing \eqref{eq: dom in H} for all $k \in \N$.

Let $|\alpha| \leq 2k-1$. For all $\varphi \in C^\infty_c(\Omega)$ we have 
$$
\aligned
\sk{\partial^\alpha \oL^\oA u}{\varphi}=&\,(-1)^{|\alpha|} \sk{\oL^\oA u}{\partial^\alpha \varphi} \\
=&\, (-1)^{|\alpha|} \int_\Omega\left[ \sk{A\nabla u}{ \partial^\alpha\nabla \varphi}+\sk{\nabla u}{b}\overline{\partial^\alpha \varphi} + u\sk{c}{ \partial^\alpha \nabla\varphi} + V u \overline{\partial^\alpha \varphi} \right].
\endaligned
$$
By assumptions $A,b,c,V$ are smooth. So, by integrating by parts and by applying the product rule, we get
$$
\aligned
\sk{\partial^\alpha \oL^\oA u}{\varphi}=&  \int_\Omega \sk{A \nabla \partial^\alpha u}{\nabla \varphi} - \sum_{i,j=1}^d \int_\Omega \partial_i \left(\sum_{0 \leq \beta < \alpha} \binom{\alpha}{\beta} \left(\partial^{\alpha-\beta}a_{ij}\right) \left(\partial^{\beta}\partial_j u\right) \right)\overline{\varphi} \\
&+ \int_\Omega \sk{\nabla \partial ^\alpha u}{b} \overline{\varphi} + \sum_{i=1}^d \int_\Omega \left( \sum_{0 \leq \beta < \alpha} \binom{\alpha}{\beta} \left(\partial^{\alpha-\beta} \overline{b_i}\right) \left(\partial^{\beta} \partial_i u\right) \right)\overline{ \varphi} \\
&+  \int_\Omega \partial^\alpha u \sk{ c}{\nabla \varphi} - \sum_{i=1}^d \int_\Omega \partial_i \left(\sum_{0 \leq \beta < \alpha} \binom{\alpha}{\beta}  \left(\partial^{\alpha-\beta}c_{i}\right) \left(\partial^{\beta} u\right) \right)\overline{\varphi} \\
&+ \int_\Omega V \partial^\alpha u \overline{\varphi} + \int_\Omega \left( \sum_{0 \leq \beta < \alpha} \binom{\alpha}{\beta} \left(\partial^{\alpha-\beta} V \right) \left(\partial^{\beta} u\right) \right)\overline{ \varphi} \\
=&  \int_\Omega \left[ \sk{A \nabla \partial^\alpha u}{\nabla \varphi} + \int_\Omega \sk{\nabla \partial ^\alpha u}{b} \overline{\varphi} +  \int_\Omega \partial^\alpha u \sk{ c}{\nabla \varphi} + \int_\Omega V \partial^\alpha u \overline{\varphi} \right] \\
& + \int_\Omega P(\partial)u \overline{\varphi},
\endaligned
$$
where $P$ is a polynomial with smooth coefficients and ${\rm deg}P \leq 2k$. Therefore, by using that $u \in H^{2k}_{\rm loc}(\Omega)$, we obtain that $P(\partial) u \in L^2_{\rm loc}(\Omega)$ and $\partial^\alpha u \in  H^1_{\rm loc}(\Omega)$. Hence we may write
$$
\gota_\oA(\partial^\alpha u,\varphi) = \int_\Omega  \left( \partial^\alpha \oL^\oA u -P(\partial)u \right)\overline{\varphi}.
$$
On the other hand, $\partial^\alpha \oL^\oA u \in H^1_{\rm loc}(\Omega)$, since $\oL^\oA u \in  H^{2k}_{\rm loc}(\Omega)$. 
This shows that $\partial^\alpha \oL^\oA u \in L^2_{\rm loc}(\Omega)$ and thus $ \partial^\alpha\oL^\oA u - P(\partial)u \in L^2_{\rm loc}(\Omega)$.
Now we may apply Lemma~\ref{l : weak sol in H2} which gives $\partial^\alpha u \in H^{2}_{\rm loc}(\Omega)$ for all $|\alpha| \leq 2k-1$. Thus we proved $u \in H^{2k+1}_{\rm loc}(\Omega)$.

 Let now $|\beta| \leq 2k$. As before, we get
$$
\aligned
\hskip -40pt \int_\Omega \sk{A \nabla \partial^\beta u}{\nabla \varphi} &+ \int_\Omega \sk{\nabla \partial ^\beta u}{b} \overline{\varphi} +  \int_\Omega \partial^\beta u \sk{ c}{\nabla \varphi} + \int_\Omega V \partial^\beta u \overline{\varphi} \\
&=  \int_\Omega \left( \partial^\beta \oL^\oA u -Q(\partial)u \right)\overline{\varphi}, \quad \forall \varphi \in C^\infty_c(\Omega),
\endaligned
$$
where $Q$ is a polynomial with smooth coefficients and ${\rm deg} Q \leq 2k+1$. We proved before that $u \in H^{2k+1}_{\rm loc}(\Omega)$, hence we have $\partial^\beta u \in H^{1}_{\rm loc}(\Omega)$ and $Q(\partial)u \in L^2_{\rm loc}(\Omega)$. Therefore, repeating the same procedure as before, we obtain that $u \in H^{2k+2}_{\rm loc}(\Omega)$. By induction we conclude the proof of \eqref{eq: dom in H}.

Prove now \eqref{eq: dom in smooth}. Consider $R>0$ and $x_0 \in \Omega$ such that $\overline{B_R}=\overline{B(x_0,R)} \subseteq \Omega$. Fix $\eta \in C_c^\infty(\Omega)$ such that $0 \leq \eta \leq 1$, ${\rm supp} \, \eta \subseteq B_{R}$ and $\eta =1$ on $B_{R/2}$. Let $ u \in \bigcap_{k \geq 1} \Dom\left((\oL^\oA)^k\right)$. By \eqref{eq: dom in H}, $u \in H_{\rm loc}^{2k}(\Omega)$, for all $k \geq 1$. Therefore, $\eta u \in H^{2k}(\R^d)$, for all $k \geq 1$. By the Sobolev embedding theorem, for example see \cite[Theorem~4.12, Part~I, Case~A]{AdFour}, $\eta u \in C^\infty(\R^d)$, hence $u \in C^\infty(B(x_0,R/2))$. Since $x_0 \in \Omega$ was arbitrary, we conclude that $u \in C^\infty(\Omega)$.
\end{proof}
\begin{lemma}
\label{l : Ttf smooth}
Let $\Omega, A, b, c, V, \oV$ satisfy the standard assumptions of Section \ref{s : stand ass}. Suppose that $A \in C^\infty(\Omega, \C^{d \times d})$, $b,c \in C^\infty(\Omega, \C^d)$, $V \in C^\infty(\Omega, \R_+)\cap L^\infty(\Omega, \R_+)$ and that $\oA=(A,b,c,V) \in \cB(\Omega)$. Let $(T_t^\oA)_{t>0}$ be the semigroup on $L^2(\Omega)$ associated with the sesquilinear form $\gota_{\oA,\oV}$. Then  $T_t^{\oA}f \in C^\infty(\Omega)$ for all $t>0$ and all $f \in L^2(\Omega)$.
\end{lemma}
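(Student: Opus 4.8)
The plan is to reduce the statement to the preceding lemma by exploiting the regularizing property of analytic semigroups. Since $\oA=(A,b,c,V)\in\cB(\Omega)$, the sesquilinear form $\gota_{\oA,\oV}$ is densely defined, closed and sectorial, so $-\oL^\oA$ generates a bounded analytic semigroup $(T_t^\oA)_{t>0}$ on $L^2(\Omega)$ (see Section~\ref{s: Neumann introduction}). The point is that for such a semigroup one has $\Ran(T_t^\oA)\subseteq\bigcap_{k\ge1}\Dom\bigl((\oL^\oA)^k\bigr)$ for every $t>0$, and the preceding lemma identifies this intersection --- for smooth coefficients with $V$ bounded --- with a subspace of $C^\infty(\Omega)$ via \eqref{eq: dom in smooth}.

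Concretely, I would first recall the standard fact that if $-\oL$ generates an analytic semigroup then $\Ran(T_t)\subseteq\Dom(\oL)$ for every $t>0$ and $\oL T_t=T_t\oL$ on $\Dom(\oL)$; see e.g. \cite[Chapter~II]{EN} or \cite{Haase}. Writing $T_t=(T_{t/k})^k$ and iterating, a short induction then yields $\Ran(T_t)\subseteq\Dom(\oL^k)$ for every $k\in\N_+$ and every $t>0$. Applying this with $\oL=\oL^\oA$, for a fixed $t>0$ and $f\in L^2(\Omega)$ we get $T_t^\oA f\in\bigcap_{k\ge1}\Dom\bigl((\oL^\oA)^k\bigr)$. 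Since the hypotheses of the preceding lemma (smoothness of $A,b,c,V$, boundedness of $V$, and $\oA\in\cB(\Omega)$) are exactly those assumed here, \eqref{eq: dom in smooth} gives $T_t^\oA f\in C^\infty(\Omega)$, which is the desired conclusion.

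I do not expect a genuine obstacle: the substantive analytic work --- the interior elliptic regularity bootstrap producing $\Dom\bigl((\oL^\oA)^k\bigr)\subseteq H^{2k}_{\rm loc}(\Omega)$ and, through the Sobolev embedding theorem, membership in $C^\infty(\Omega)$ --- has already been carried out in the preceding lemma. The only minor points requiring attention are invoking the analytic-semigroup smoothing for the correct $L^2$-realisation of $\oL^\oA$ and writing the composition/commutation argument $T_t=(T_{t/k})^k$ together with $\oL^\oA T_s=T_s\oL^\oA$ cleanly.
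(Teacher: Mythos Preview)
Your proposal is correct and follows essentially the same approach as the paper: use analyticity of $(T_t^\oA)_{t>0}$ to get $\Ran(T_t^\oA)\subseteq\Dom(\oL^\oA)$, iterate via the semigroup law and the commutation $\oL^\oA T_s=T_s\oL^\oA$ on $\Dom(\oL^\oA)$ to obtain $T_t^\oA f\in\bigcap_{k\ge1}\Dom\bigl((\oL^\oA)^k\bigr)$, and then invoke \eqref{eq: dom in smooth}. The only cosmetic difference is that the paper writes $T_t=T_{t-\varepsilon}T_\varepsilon$ in the inductive step rather than $T_t=(T_{t/k})^k$.
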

\begin{proof}
Since $\oA \in \cB(\Omega)$, we know that $( T_t^\oA)_{t>0}$ is analytic and is generated by  $-\oL^\oA$, the operator on $L^2(\Omega)$ associated with the sesquilinear form $\gota_{\oA,\oV}$; see Section~\ref{s: Neumann introduction}.

Set $\oL =\oL^A$ and $T_t= T_t^\oA$ for all $t>0$.  By induction we deduce that 
\begin{equation}
\label{eq : rgTt in DL}
T_t f \in \Dom\left( \oL^k\right), \qquad\forall t>0, \quad \forall f \in L^2(\Omega)
\end{equation} 
for all $k \geq 1$. In fact, for $k=1$ this follows from the analyticity of $(T_t)_{t>0}$ and \cite[Chapter II, Theorem~4.6(c)]{EN}. Assume that \eqref{eq : rgTt in DL} holds for $k \in \N_+$. We want to prove that $\oL^k T_t f \in \Dom(\oL)$ for all $t>0$ and all $f \in L^2(\Omega)$. Fix $t>0$, $f \in L^2(\Omega)$ and choose $\varepsilon \in (0,t)$. By using the fact the generator $\oL$ commutes with the semigroup $(T_t)_{t>0}$  on $\Dom(\oL)$ \cite[Chapter II, Lemma~1.3(ii)]{EN}, the inductive hypothesis on $T_\varepsilon $ and \eqref{eq : rgTt in DL} applied for $T_{t-\varepsilon}$ with $k=1$, we get
$$
\oL^k T_t f = \oL^k T_{t-\varepsilon}T_\varepsilon f =T_{t-\varepsilon} \oL^k T_\varepsilon f \in \Dom(\oL).
$$
We conclude by invoking \eqref{eq: dom in smooth}.
\end{proof}

\section{Unbounded potentials}
In order to treat the general case with unbounded potentials, we will follow the argument used by Carbonaro and Dragi\v{c}evi\'c in \cite[Section~3.4]{CD-Potentials} when they proved \cite[Theorem~1.4]{CD-Potentials}. Like in their case, Theorem \ref{t: N bil} will follow from the special case of bounded potentials already proved in Section~\ref{s : b.e. bound pot}, once we prove the following approximation result.

Let $V \in L^{1}_{\rm loc}(\Omega)$ be a nonnegative unbounded function. For each $m \in \N_+$ define
\begin{equation}
\label{e : Vm}
V_m :=   \min\{V,m\}.
\end{equation}
 We also set $V_{\infty}=V$.
\begin{lemma}
\label{l : unif sett cut pot}
For every $\oA =(A,b,c,V) \in \cB_{\mu,M}(\Omega)$, $\oA_m=(A,b,c,V_m)$ and $p \in (1,\infty)$ we have
\begin{enumerate}[{\rm (i)}]
\item
\label{i: inv 2-cond after cut pot}
For every $\varepsilon \in (0,\mu)$ there exists $m_\varepsilon \geq0$ such that $\oA_m \in \cB_{\mu-\varepsilon,M}$ for all $m \geq m_\varepsilon$;
\item
\label{i: inv p-cond after cut pot}
If $\oA \in \cS_p(\Omega)$, then for every $\varepsilon \in (0,\mu_p(\oA))$ there exists $m_{\varepsilon,p}\geq0$ such that
$$
\aligned
\oA_m &\in \cS_p(\Omega), \\
\mu_p(\oA_m) &\geq \mu_p(\oA)-\varepsilon,
\endaligned
$$
 for all $m \geq m_{\varepsilon,p}$. Moreover, $\lim_{m \rightarrow \infty} \mu_p(\oA_m)=\mu_p(\oA)$. 

In particular, if $\cA \in \cB_p(\Omega)$ then $\cA_m \in \cB_p(\Omega)$ for all $m \geq \max\{m_{\varepsilon,p},m_{\varepsilon,q}\}$, where $q=p/(p-1)$.
\end{enumerate}
\begin{proof}
Let $r \in (1,\infty)$ and $\oA \in \cS_r(\Omega)$. Clearly, for almost all $x \in \Omega$,
$$
|b(x)-c(x)| \leq  M \sqrt{m}, \quad \forall m \geq \|b-c\|^2_\infty/M^2.
$$
On the other hand, note that for almost $x \in \Omega$, 
$$
\aligned
\Gamma_r^{A,b,c,m}(x,\xi) \geq  \frac{r}{2}\Delta_r(A) |\xi|^2 +\underset{x \in \Omega}{{\rm ess}\inf} \, \underset{|\sigma| =1}{\min} \,\Re \sk{b(x)+\cJ_rc(x)}{\sigma} |\xi| +m,
\endaligned
$$
for all $\xi \in \C^d$ and all $m \in \N$. Therefore, it suffices to show that for every $\varepsilon \in (0, \mu_r(\oA))$ there exists $\widetilde{m}_{\varepsilon,r}$ such that
$$
  \frac{r}{2}\Delta_r(A) |\xi|^2 +\underset{x \in \Omega}{{\rm ess}\inf} \, \underset{|\sigma| =1}{\min} \,\Re \sk{b(x)+\cJ_rc(x)}{\sigma} |\xi| +m  \geq (\mu_r(\oA)-\varepsilon)(|\xi|^2+m),
$$
for all $\xi \in \C^d$ and all $m \geq \widetilde{m}_{\varepsilon,r}$. This property holds by taking
$$
\widetilde{m}_{\varepsilon,r} := \frac{\left(\underset{x \in \Omega}{{\rm ess}\inf} \, \underset{|\sigma| =1}{\min} \,\Re^2\sk{b(x)+\cJ_rc(x)}{\sigma}\right)^2}{4( (r/2)\Delta_r(A)-\mu_r(\oA)+\varepsilon)\cdot(1-\mu_r(\oA)+\varepsilon)},
$$
which is a nonnegative finite constant by the boundedness of $b,c$ and  \eqref{e : mu e delta}.
%
Therefore, by taking $r=2$, we prove \eqref{i: inv 2-cond after reg} with $m_\varepsilon := \max\{\widetilde{m}_{\varepsilon,2},\|b-c\|^2_\infty/M^2\}$. By taking $r=p$, we prove that $\cA_m \in \cS_p(\Omega)$ and that
\begin{equation}
\label{e : dis mum e mu}
\mu_p(\oA_m) \geq \mu_p(\oA)-\varepsilon,
\end{equation}
for all $\varepsilon \in (0,\mu_p(\oA))$ and all $m \geq m_{\varepsilon,p} := \max\{\widetilde{m}_{\varepsilon,p},\|b-c\|^2_\infty/M^2\}$. Since $\oA_m \in \cS_p(\Omega)$, we have $\Gamma_p^{\oA_m}(x, \xi) \geq \mu_p(\oA_m) (|\xi|^2+V_m(x))$ for almost everywhere $x \in \Omega$, all $\xi \in \C^d$, all $\varepsilon  \in (0,\mu_p(\oA))$ and all $m \geq m_{\varepsilon,p}$. Therefore, since $V_m(x) \rightarrow V(x)$ for almost all $x \in \Omega$, as $m \rightarrow \infty$, we get
\begin{equation*}
\mu_p(A) \geq \limsup_{m \rightarrow \infty} \mu_p(\oA_m),
\end{equation*} 
which, combined with \eqref{e : dis mum e mu}, implies that
$$
\varepsilon + \liminf_{m \rightarrow \infty} \mu_p(\oA_m) \geq \mu_p(\oA) \geq \limsup _{m \rightarrow \infty} \mu_p(\oA_m),
$$
for all $\varepsilon \in (0,\mu_p(\oA))$.  By sending $\varepsilon \rightarrow 0$, we conclude.
\end{proof}
\end{lemma}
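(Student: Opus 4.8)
The plan is to reduce both assertions to a single quadratic inequality in the scalar $|\xi|$ with the \emph{constant} potential $m$, handled by a discriminant estimate, after a pointwise case distinction between the sets $\{V<m\}$ and $\{V\ge m\}$; throughout one leans on the elementary parameter bound \eqref{e : mu e delta} and the $L^\infty$ hypotheses on $b,c$.

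I would first dispose of the size condition \eqref{eq: sect form above} for $\oA_m=(A,b,c,V_m)$. Since $b,c\in L^\infty(\Omega,\C^d)$, as soon as $m\ge\|b-c\|_\infty^2/M^2$ we have $M\sqrt m\ge\|b-c\|_\infty\ge|b-c|$ a.e.; hence on $\{V\ge m\}$, where $V_m=m$, the bound $|b-c|\le M\sqrt{V_m}$ is immediate, and on $\{V<m\}$, where $V_m=V$, it is inherited from $\oA\in\cB_{\mu,M}(\Omega)$. So $|b-c|\le M\sqrt{V_m}$ a.e.\ for such $m$, giving $M(\oA_m)\le M$.

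For the lower bound I would work with a general exponent $r>1$ and the identity
\[
\Gamma_r^{\oA_m}(x,\xi)=\Re\sk{A(x)\xi}{\cJ_r\xi}_{\C^d}+\Re\sk{b(x)+(\cJ_rc)(x)}{\xi}_{\C^d}+V_m(x).
\]
On $\{V<m\}$ one has $\Gamma_r^{\oA_m}(x,\xi)=\Gamma_r^{\oA}(x,\xi)\ge\mu_r(\oA)(|\xi|^2+V(x))\ge(\mu_r(\oA)-\varepsilon)(|\xi|^2+V_m(x))$ straight from $\oA\in\cS_r(\Omega)$. On $\{V\ge m\}$, the bound $\Re\sk{A(x)\xi}{\cJ_r\xi}\ge\tfrac r2\Delta_r(A)|\xi|^2$ coming from the definition of $\Delta_r(A)$, together with $\Re\sk{b(x)+(\cJ_rc)(x)}{\xi}\ge\ell_r|\xi|$, where $\ell_r:=\underset{x\in\Omega}{{\rm ess}\inf}\,\underset{|\sigma|=1}{\min}\,\Re\sk{b(x)+(\cJ_rc)(x)}{\sigma}\ge-\|b+\cJ_rc\|_\infty$, reduces matters to
\[
\bigl(\tfrac r2\Delta_r(A)-\mu_r(\oA)+\varepsilon\bigr)t^2+\ell_r\,t+\bigl(1-\mu_r(\oA)+\varepsilon\bigr)m\ \ge\ 0\qquad(t\ge0).
\]
By \eqref{e : mu e delta} the leading coefficient is $\ge\varepsilon>0$ and the last factor is $\ge\varepsilon>0$, so this holds once the discriminant is nonpositive, i.e.\ as soon as
\[
m\ \ge\ \widetilde m_{\varepsilon,r}:=\frac{\ell_r^2}{4\bigl(\tfrac r2\Delta_r(A)-\mu_r(\oA)+\varepsilon\bigr)\bigl(1-\mu_r(\oA)+\varepsilon\bigr)},
\]
a finite constant since $\ell_r^2\le\|b+\cJ_rc\|_\infty^2<\infty$. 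Combining the two cases, $\Gamma_r^{\oA_m}(x,\xi)\ge(\mu_r(\oA)-\varepsilon)(|\xi|^2+V_m(x))$ for a.e.\ $x$ and all $\xi$, whenever $m\ge\widetilde m_{\varepsilon,r}$.

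It then remains to specialise. Taking $r=2$, and recalling $\cS_2(\Omega)=\cB(\Omega)$ so that $\mu_2(\oA)=\mu(\oA)\ge\mu$, yields the first assertion with $m_\varepsilon:=\max\{\widetilde m_{\varepsilon,2},\|b-c\|_\infty^2/M^2\}$. Taking $r=p$ gives $\oA_m\in\cS_p(\Omega)$ and $\mu_p(\oA_m)\ge\mu_p(\oA)-\varepsilon$ for $m\ge m_{\varepsilon,p}:=\max\{\widetilde m_{\varepsilon,p},\|b-c\|_\infty^2/M^2\}$, hence $\liminf_{m\to\infty}\mu_p(\oA_m)\ge\mu_p(\oA)$; conversely, since $V_m(x)\to V(x)$ a.e., for a.e.\ $x$ and each $\xi\ne0$ we have $\Gamma_p^{\oA_m}(x,\xi)=\Gamma_p^{\oA}(x,\xi)$ once $m\ge V(x)$, so $\mu_p(\oA_m)\le\Gamma_p^{\oA}(x,\xi)/(|\xi|^2+V(x))$ for large $m$, and taking $\limsup_m$ followed by the essential infimum over $x$ and the infimum over $\xi\ne0$ gives $\limsup_{m\to\infty}\mu_p(\oA_m)\le\mu_p(\oA)$; thus $\lim_{m\to\infty}\mu_p(\oA_m)=\mu_p(\oA)$. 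Running the lower-bound step also with $r=q$ yields $\oA_m\in\cS_q(\Omega)$, so $\oA_m\in\cB_p(\Omega)=\cS_p(\Omega)\cap\cS_q(\Omega)$ for $m\ge\max\{m_{\varepsilon,p},m_{\varepsilon,q}\}$. I do not expect a genuine obstacle here: the only points needing care are the legitimacy of reducing to the constant-potential quadratic (which is precisely the case split above) and the finiteness of $\widetilde m_{\varepsilon,r}$, both guaranteed by \eqref{e : mu e delta} and the boundedness of $b,c$.
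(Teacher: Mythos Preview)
Your proof is correct and follows essentially the same approach as the paper: both reduce to a discriminant estimate for the quadratic in $|\xi|$ with constant potential $m$, invoke \eqref{e : mu e delta} to ensure positive leading and constant coefficients, define the same threshold $\widetilde m_{\varepsilon,r}$, and obtain the limit $\mu_p(\oA_m)\to\mu_p(\oA)$ via $V_m\to V$ a.e. Your explicit case split $\{V<m\}$ versus $\{V\ge m\}$ actually makes the argument clearer than the paper's presentation, which treats only the constant-$m$ case explicitly and leaves the trivial $\{V<m\}$ case implicit.
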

\begin{lemma}
\label{l: conv sem unbound pot}
For all $f \in L^2(\Omega)$, $(A,b,c, V) \in \cB_{\mu,M}(\Omega)$ and all $t >0$ we have 
\begin{equation*}
\aligned
\nabla T_t^{\oA_m}f &\rightarrow \nabla T_t^{\oA} f \quad \quad \,\text{in } L^2(\Omega, \C^d), \\
 V_m^{1/2} T_t^{\oA_m}f &\rightarrow V^{1/2} T_t^{\oA} f \quad \text{in } L^2(\Omega)
\endaligned
\end{equation*}
as $m \rightarrow \infty$.
\end{lemma}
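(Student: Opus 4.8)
The plan is to follow \cite[Section~3.4]{CD-Potentials}, routing everything through resolvents so that the convergence of the relevant ``energies'' becomes automatic. Fix $\varepsilon\in(0,\mu)$ and let $m_\varepsilon$ be as in Lemma~\ref{l : unif sett cut pot}\eqref{i: inv 2-cond after cut pot}; writing $\mu'=\mu-\varepsilon$, we have $\oA_m=(A,b,c,V_m)\in\cB_{\mu',M}(\Omega)$ for all $m\in[m_\varepsilon,\infty]$ (here $V_\infty=V$, $\oA_\infty=\oA$), so all the forms $\gota_{\oA_m}$, $m_\varepsilon\le m\le\infty$, are sectorial of one common angle $\vartheta_0=\theta_0(\mu',M,\Lambda)<\pi/2$, they satisfy
\[
\Re\gota_{\oA_m}(w,w)\geq\mu'\bigl(\|\nabla w\|_2^2+\|V_m^{1/2}w\|_2^2\bigr),\qquad w\in\Dom(\gota_{\oA_m}),
\]
and $\Re\gota_{\oA_m}(u,u)\nearrow\Re\gota_{\oA}(u,u)$ as $m\to\infty$ for each $u\in\Dom(\gota_{\oA})$, while the form domains $\Dom(\gota_{\oA_m})=\oV$ ($m<\infty$) shrink to $\Dom(\gota_{\oA})$. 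Writing $\gota_{\oA_m}=\gota_{\oA_{m_\varepsilon}}+\tau_m$ with $\tau_m(u,v)=\int_\Omega(V_m-V_{m_\varepsilon})u\overline v$ a nondecreasing sequence of nonnegative symmetric forms, the standard monotone-convergence theory of forms (see, e.g., \cite[Chapter~VIII]{Kat}, \cite[Chapter~4]{O}) gives strong resolvent convergence $(\zeta-\oL^{\oA_m})^{-1}\to(\zeta-\oL^{\oA})^{-1}$ on $L^2(\Omega)$ for every $\zeta\notin\overline{\bS_{\vartheta_0}}$. Moreover, arguing as in the proof of Lemma~\ref{l : est resolvents} via the identity $\oL^{\oA_m}(\zeta-\oL^{\oA_m})^{-1}f=\zeta(\zeta-\oL^{\oA_m})^{-1}f-f$ together with the accretivity estimate and \eqref{eq: unif sect}, one obtains the $m$-uniform bound $|\zeta|^{1/2}\bigl(\|\nabla(\zeta-\oL^{\oA_m})^{-1}f\|_2+\|V_m^{1/2}(\zeta-\oL^{\oA_m})^{-1}f\|_2\bigr)\leqsim\|f\|_2$, for $\zeta\notin\overline{\bS_{\vartheta_0}}$ and $f\in L^2(\Omega)$.

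Now fix $\zeta\notin\overline{\bS_{\vartheta_0}}$ and $f\in L^2(\Omega)$, and set $v_m=(\zeta-\oL^{\oA_m})^{-1}f$, $v=(\zeta-\oL^{\oA})^{-1}f$; I claim that $\nabla v_m\to\nabla v$ in $L^2(\Omega,\C^d)$ and $V_m^{1/2}v_m\to V^{1/2}v$ in $L^2(\Omega)$. By the previous step $v_m\to v$ in $L^2(\Omega)$, and by the uniform bound $\{\nabla v_m\}_m$ and $\{V_m^{1/2}v_m\}_m$ are bounded in $L^2$. Hence, along any subsequence, $\nabla v_m\rightharpoonup\nabla v$ weakly in $L^2(\Omega,\C^d)$ (because $v_m\to v$ in $L^2$), and $V_m^{1/2}v_m\rightharpoonup V^{1/2}v$ weakly in $L^2(\Omega)$: testing against $\psi\in C_c^\infty(\Omega)$ gives $\sk{V_m^{1/2}v_m}{\psi}=\sk{v_m}{V_m^{1/2}\psi}\to\sk{v}{V^{1/2}\psi}=\sk{V^{1/2}v}{\psi}$, since $V^{1/2}\psi\in L^2(\Omega)$ (as $V\in L^1_{\rm loc}(\Omega)$) so that $V_m^{1/2}\psi\to V^{1/2}\psi$ in $L^2$ by dominated convergence, and $V^{1/2}v\in L^2(\Omega)$ because $v\in\Dom(\gota_{\oA})$.

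The crucial point is the convergence of the energies: since $\oL^{\oA_m}v_m=\zeta v_m-f\to\zeta v-f=\oL^{\oA}v$ in $L^2(\Omega)$, the integration by parts \eqref{eq: ibp} gives
\[
\Re\gota_{\oA_m}(v_m,v_m)=\Re\sk{\oL^{\oA_m}v_m}{v_m}\longrightarrow\Re\sk{\oL^{\oA}v}{v}=\Re\gota_{\oA}(v,v).
\]
Split $\Re\gota_{\oA_m}(v_m,v_m)=I_m+II_m+III_m$ into the contributions of the matrix term, of the first-order terms, and of the potential, and let $I,II,III$ denote the analogues for $\Re\gota_{\oA}(v,v)$. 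Each summand of $II_m$ pairs $\nabla v_m$ (weakly convergent) with a factor built from $v_m$ and a bounded coefficient $b$ or $c$ (strongly convergent in $L^2$), so $II_m\to II$, whence $I_m+III_m\to I+III$. Both $I_m$ and $III_m$ are values of nonnegative quadratic forms on $L^2$, hence weakly lower semicontinuous, so $\liminf I_m\geq I$ and $\liminf III_m\geq III$; together with $I_m+III_m\to I+III$ this forces $I_m\to I$ and $III_m\to III$. Since $w\mapsto\int_\Omega\Re\sk{Aw}{w}_{\C^d}$ is, by ellipticity, the square of a Hilbert norm on $L^2(\Omega,\C^d)$ equivalent to $\|\cdot\|_2$, the convergence $I_m\to I$ together with $\nabla v_m\rightharpoonup\nabla v$ gives $\nabla v_m\to\nabla v$ strongly in $L^2(\Omega,\C^d)$; likewise $III_m\to III$ together with $V_m^{1/2}v_m\rightharpoonup V^{1/2}v$ gives $V_m^{1/2}v_m\to V^{1/2}v$ strongly in $L^2(\Omega)$. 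Since these limits do not depend on the subsequence, the claim follows.

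Finally, I would transfer this to the semigroups through the contour representation $T_z^{\oA_m}f=\frac{1}{2\pi i}\int_\gamma e^{-z\zeta}(\zeta-\oL^{\oA_m})^{-1}f\,d\zeta$ (\cite[Lemma~2.3.2]{Haase}), where $\gamma$ is the positively oriented boundary of $\bS_\vartheta\cup\{|\zeta|<\delta\}$ for a fixed $\vartheta\in(\vartheta_0,\pi/2)$ and $\delta>0$. Because $V_m$ is bounded, the multiplication operator $V_m^{1/2}$ and the closed operator $\nabla$ may be commuted with the Bochner integral, so that $V_m^{1/2}T_z^{\oA_m}f=\frac{1}{2\pi i}\int_\gamma e^{-z\zeta}V_m^{1/2}(\zeta-\oL^{\oA_m})^{-1}f\,d\zeta$ and similarly for $\nabla T_z^{\oA_m}f$. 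By the previous two paragraphs the integrands converge in $L^2(\Omega)$ for each $\zeta\in\gamma$, while by the $m$-uniform resolvent estimate they are dominated by $C\,|e^{-z\zeta}|\,|\zeta|^{-1/2}\|f\|_2\in L^1(\gamma)$ for every $z\in\bS_{\pi/2-\vartheta}$; Bochner's dominated convergence theorem then yields $\nabla T_z^{\oA_m}f\to\nabla T_z^{\oA}f$ and $V_m^{1/2}T_z^{\oA_m}f\to V^{1/2}T_z^{\oA}f$ in $L^2(\Omega)$, the limits being identified with $\nabla T_z^{\oA}f$ and $V^{1/2}T_z^{\oA}f$ by commuting the closed operators $\nabla$ and $M_{V^{1/2}}$ with the integral. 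Taking $z=t>0$ completes the proof. The main obstacle throughout is the passage from weak to strong convergence in the middle step; working at the resolvent level is precisely what makes the needed energy convergence automatic, through $\oL^{\oA_m}(\zeta-\oL^{\oA_m})^{-1}=\zeta(\zeta-\oL^{\oA_m})^{-1}-I$, whereas arguing directly on $T_t$ would require the analyticity of the semigroup together with a Cauchy-formula computation.
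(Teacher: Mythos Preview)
Your proof is correct and follows essentially the same route as the paper: reduce to resolvent-level convergence via the Cauchy contour representation, establish $m$-uniform bounds on $\nabla(\zeta-\oL^{\oA_m})^{-1}f$ and $V_m^{1/2}(\zeta-\oL^{\oA_m})^{-1}f$ through the form coercivity, and then pass to the limit by dominated convergence. The paper simply defers the resolvent-level step \eqref{eq: unb conv sem} to \cite[Proposition~3.9]{CD-Potentials}, whereas you spell it out explicitly via monotone form convergence, weak compactness, and the energy identity $\Re\gota_{\oA_m}(v_m,v_m)=\Re\sk{\zeta v_m-f}{v_m}$ to upgrade weak to strong convergence; this is precisely the content of that reference.
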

\begin{proof}
Fix $f \in L^2(\Omega)$. Denote by $\oL_m = \oL^{\oA_m}$ the operator associated with $\oA_m= (A,b,c,V_m)$ for all $m \in \N_+ \cup \{\infty\}$. By Lemma~\ref{l : unif sett cut pot}\eqref{i: inv 2-cond after cut pot}, there exists $\widetilde{\mu} \in(0,\mu)$ and $\widetilde{m} \geq 0$ such that each $(A,b,c,V_m) \in \cB_{\widetilde{\mu},M}(\Omega)$ for all $m \in \N_+ \cup\{\infty\}$ such that $m\geq \widetilde{m}$. Therefore, each $\oL_m$ is sectorial with sectoriality angle $\omega(\oL_m) \leq \widetilde{\vartheta_0}=\vartheta_0(\widetilde{\mu},M,\Lambda)$, for all $m \in \N_+ \cup\{\infty\}$ such that $m\geq \widetilde{m}$; see Section~\ref{s: Neumann introduction}. Using the standard representation of the analytic semigroup $(T_t^{\oA_m})_{t >0}$, $m \in \N_+ \cup \{\infty\}$, $m \geq \widetilde{m}$,  by means of a Cauchy integral, earlier applied in the proof of Lemma~\ref{l : conv pot and grad sem}, we get
$$
\aligned
\|\nabla T_t^{\oA_m} f -\nabla T_t^\oA f \| &\leqsim \int_\gamma e^{-t \Re\zeta} \| \nabla(\zeta - \oL_{m})^{-1} f  -\nabla(\zeta - \oL)^{-1} f \|_2 |\wrt \zeta|,\\
\|V_m^{1/2} T_t^{\oA_m} f -V^{1/2} T_t^\oA f \| &\leqsim \int_\gamma e^{-t \Re\zeta} \| V_m^{1/2}(\zeta - \oL_{m})^{-1} f  -V^{1/2}(\zeta - \oL)^{-1} f \|_2 |\wrt \zeta|,
\endaligned
$$
where $\gamma$ is the positively oriented boundary of $\bS_\vartheta \cup \{\zeta \in \C \, : \, |\zeta|< \delta \}$, with $\theta >\widetilde{\theta_0}$ and $\delta >0$. Therefore it suffices to prove that
\begin{equation}
\label{eq: unb conv sem}
\aligned
\nabla(\zeta - \oL_{m})^{-1} f &\rightarrow \nabla(\zeta - \oL)^{-1}f  \quad  \quad \, &\text{ in } L^2(\Omega,\C^d), \quad &\forall \zeta \in \C \setminus \bS_{\widetilde{\theta_0}},\\
V_m^{1/2}(\zeta - \oL_{m})^{-1} f &\rightarrow V^{1/2}(\zeta - \oL)^{-1}f  \quad &\text{ in } L^2(\Omega), \quad \quad\,\,\, &\forall \zeta \in \C \setminus \bS_{\widetilde{\theta_0}},
\endaligned
\end{equation}
as $m \rightarrow \infty$, and that for all $t>0$ there exist $C=C(t)>0$, not depend on $m$, and $F(t, \cdot) \in L^1(\gamma)$ such that
\begin{equation}
\label{eq: unb cdl est}
\aligned
e^{-t \Re\zeta} \| \nabla(\zeta - \oL_{m})^{-1} f  \|_2  &\leq C F(t,\zeta), \\
e^{-t\Re \zeta} \|V_m^{1/2}(\zeta - \oL_{m})^{-1} f \|_2 &\leq C F(t,\zeta),
\endaligned
\end{equation}
for all $m \in \N_+ \cup \{\infty\}$, $m \geq \widetilde{m}$, and all $\zeta \in \gamma$. In fact, we would complete the proof by applying the dominated convergence theorem in the two integrals above.

In order to prove \eqref{eq: unb conv sem} and \eqref{eq: unb cdl est} one can proceed as in the proof of \cite[Proposition~3.9]{CD-Potentials}, where $b$ and $c$ are zero. We leave it to the reader to fill in the details.
\end{proof}
By combining Lemma~\ref{l : unif sett cut pot} and Lemma~\ref{l: conv sem unbound pot} and by applying the same limit argument of Section~\ref{s : b.e. bound pot}, we obtain the bilinear embedding \eqref{eq: N bil} for unbounded potentials.

\subsection{Unbounded complex potentials}
In Section~\ref{s : compl pot} a bililinear embedding theorem was stated for particular complex potentials of the type $\varrho V$, with $V \in L^1_{\rm loc}(\Omega)$ nonnegative and $\varrho \in \C$ such that $\Re \varrho >0$.

If $V$ is bounded, then the approximation argument of Section~\ref{a: A} can be applied exactly in the same way.

Otherwise, let suppose that $V$ is unbounded. We can assume that $\varrho \in \C \setminus \R$, otherwise $\varrho V$ would be real. For every $m \in \N_+ \cup \{\infty\}$, let $V_m$ be defined as in \eqref{e : Vm} with the notation $V_\infty = V$. Set $\oA = (A,b,c,\varrho V)$ and $\oA_m =(A,b,c,\varrho V_m)$ for all $m \in \N_+$. Fix $f \in L^2(\Omega)$ and $t>0$. We would like to get the analogue of Lemma~\ref{l: conv sem unbound pot}, that is, 
\begin{equation*}
\aligned
\nabla T_t^{\oA_m}f &\rightarrow \nabla T_t^{\oA} f \quad \quad \,\text{in } L^2(\Omega, \C^d), \\
  V_m^{1/2} T_t^{\oA_m}f &\rightarrow  V^{1/2} T_t^{\oA} f \quad \text{in } L^2(\Omega),
\endaligned
\end{equation*}
as $m \rightarrow \infty$, provided that $(A, b,c, \Re(\varrho) V) \in \cB(\Omega)$. Again, we would like to proceed like in \cite[Section~3.4]{CD-Potentials} where the potentials are real. For the kind of potentials we are considering now, it is straightforward to adapt their method to our case. The main step where the differences come out lies in proving the analogue of \cite[Lemma~3.8]{CD-Potentials}. For real potentials, they did that by combining a monotone convergence theorem for sequences of symmetric sesquilinear forms \cite[Theorem 3.13a, p. 461]{Kat} and a vector-valued version of Vitali's theorem \cite[Theorem~A.5]{ABHN}.
We will do the same following the sketch of its proof and highlighting what we need to change in our case, that is, for complex potentials.
More precisely, we would like to prove that, for all $f \in L^2(\Omega)$ and all $s>0$,
\begin{equation}
\label{e: Vitali conv I}
(s + \oL^{\oA_m})^{-1} f \rightarrow (s + \oL^{\oA})^{-1} f \quad \text{in } L^2(\Omega), \quad \text{as } m \rightarrow \infty.
\end{equation}
Since $(A,b,c, \Re(\varrho) V) \in \cB(\Omega)$, by arguing as in Lemma~\ref{l : unif sett cut pot}, it can be shown that there exist $\mu \in (0,1)$, $M>0$ and $\widetilde{m} \in \N_+$ such that 
$$
\aligned
(A,b,c, \Re(\varrho) V) &\in \cB_{\mu, M}(\Omega), \\
(A,b,c, \Re(\varrho) V_m) &\in \cB_{\mu, M}(\Omega), \quad \forall m \geq \widetilde{m}.
\endaligned
$$
Therefore, there exists $\vartheta_0 \in (0,\pi/2)$ such that the sesquilinear forms $\gota := \gota_\oA$ and $\gota_m := \gota_{\oA_m}$ are both sectorial of angle $\vartheta_0$, for all $m \geq \widetilde{m}$. In particular, each operator $\oL_m =\oL^{\oA_m}$ is sectorial of angle $\omega(\oL_m) \leq \vartheta_0$, for all $m \in \N_+ \cup\{\infty\}$ such that $m\geq \widetilde{m}$; see Section~\ref{s: Neumann introduction} and Section~\ref{s : compl pot}. 
It can be shown that the sesquilinear forms $\gota_z$ and $\gota_{m,z}$, defined by
$$
\aligned
\gota_z &:= \Re \gota + z \Im \gota,\\
\gota_{m,z} &:= \Re \gota_m + z \Im \gota_m,
\endaligned
$$
are closed and sectorial for all $z \in \cO :=\{z \in \C \, : \, |\Re z| <\delta\}$ and all $m \in \N_+$, $m \geq \widetilde{m}$, where $\delta=\cot \vartheta_0$. The real and the imaginary part of a sesquilinear form are defined as in \cite[Chapter VI, \S 1.1]{Kat}. They are both symmetric sesquilinear forms. Note that $\gota = \gota_i$.

Let $\oL_z$ and $\oL_{m,z}$ be the operators associated with $\gota_z$ and $\gota_{m,z}$, respectively. As explained in the sketch of the proof of \cite[Lemma~3.8]{CD-Potentials}, the maps $z \mapsto (s+\oL_z)^{-1}$ and $z \mapsto (s+\oL_{m,z})^{-1}$ are both holomorphic from $\cO$ to the space of bounded linear operators on $L^2(\Omega)$, for every $s >0$ and $m \in \N$, $m \geq \widetilde{m}$.

Now, we need to slightly change their proof due to the presence of complex potentials. We need to distinguish two cases. First, let suppose that $\Im(\varrho) >0$. Hence $\gota_{\widetilde{m},z} \leq \gota_{\widetilde{m}+1,z} \leq \dots$ is a monotone nondecreasing sequence of closed and sectorial symmetric forms, for all $z \in [0, \delta)$. Therefore, \cite[Theorem 3.13a, p. 461]{Kat} gives that for every $s >0$, $z \in [0, \delta)$ and $f \in L^2(\Omega)$ we have
\begin{equation}
\label{e : Vitali conv}
(s+\oL_{m,z})^{-1} f \rightarrow (s+\oL_z)^{-1}f \quad \text{in } L^2(\Omega), \quad \text{as } m \rightarrow \infty.
\end{equation}
As in \cite{CD-Potentials}, we conclude by using a vector-valued version of Vitali's theorem \cite[Theorem~A.5]{ABHN}, which implies that $(s+\oL_{m,z})^{-1} f \rightarrow (s+\oL_z)^{-1}f$ for all $z \in \cO$. In particular, by taking $z =i$ we get \eqref{e: Vitali conv I}. 

The case $\Im(\varrho) <0$ is treated similarly. In this case, \eqref{e : Vitali conv} holds for all $z \in (-\delta,0]$.

In \cite{CD-Potentials}, since the potential is real, the analogue of \eqref{e : Vitali conv}, that is, \cite[(3.19)]{CD-Potentials}, holds for all $z \in (-\delta,\delta)$.

\subsection*{Acknowledgements}

I was partially supported by the ``National Group for Mathematical Analysis, Probability and their Applications'' (GNAMPA-INdAM).

I would like to thank Andrea Carbonaro and Oliver Dragi\v{c}evi\'c for their support and guidance during the writing of this paper.

\bibliographystyle{amsxport}
\bibliography{biblio_mixed}

\end{document}